\date{}
\newtheorem{thm}{Theorem}[section]
\newtheorem{defn}[thm]{Definition}
\newtheorem{rem}[thm]{Remark}
\newtheorem{conj}[thm]{Conjecture}
\newtheorem{conjL'}[thm]{Conjecture $\textbf{L}(\mathcal{X}_{et},d)_{\geq0}$}
\newtheorem{prop}[thm]{Proposition}
\newtheorem{lem}[thm]{Lemma}
\newtheorem{cor}[thm]{Corollary}
\newtheorem{notation}[thm]{Notation}
\newenvironment{f-proof}[1][\sc Proof.]{\begin{trivlist}
\item[\hskip \labelsep {\bfseries #1}]}{\hfill{$\square$}\end{trivlist}}
\newcommand{\Prod}{\displaystyle\prod}
\newcommand{\bq}{\mathbb Q}
\newcommand{\bz}{\mathbb Z}
\newcommand{\br}{\mathbb R}
\newcommand{\bc}{{\mathbb C}}
\newcommand{\et}{\mathrm{et}}
\newcommand{\X}{\mathcal X}
\newcommand{\Y}{\mathcal Y}
\newcommand{\p}{\mathfrak p}
\newcommand{\mydet}{\mathrm{det}}
\DeclareMathOperator{\Spec}{Spec}
\DeclareMathOperator{\ord}{ord}
\begin{document}
\title[Zeta functions at $s=0$]{Zeta functions of regular arithmetic schemes at $s=0$}
\author{Baptiste Morin}

\maketitle

\begin{abstract}
In \cite{Lichtenbaum} Lichtenbaum conjectured the existence of a Weil-\'etale cohomology in order to describe the vanishing order and the special value of the Zeta function of an arithmetic scheme $\mathcal{X}$ at $s=0$ in terms of Euler-Poincar\'e characteristics. Assuming the (conjectured) finite generation of some \'etale motivic cohomology groups we construct such a cohomology theory for regular schemes proper over $\mathrm{Spec}(\mathbb{Z})$. In
particular, we obtain (unconditionally) the right Weil-\'etale cohomology for geometrically cellular schemes over number rings.  We state a conjecture expressing the vanishing order and the special value up to sign of the Zeta function $\zeta(\mathcal{X},s)$ at $s=0$ in terms of a perfect complex of abelian groups $R\Gamma_{W,c}(\mathcal{X},\mathbb{Z})$. Then we relate this conjecture to Soul\'e's conjecture and to the Tamagawa number conjecture of Bloch-Kato, and deduce its validity in simple cases.
\end{abstract}

\vspace{0.5cm}
{\bf Mathematics Subject Classification  (2010):} 14F20 $\cdot$ 14G10 $\cdot$ 11S40 $\cdot$ 11G40 $\cdot$ 19F27

{\bf Keywords:} Weil-\'etale cohomology, special values of Zeta functions, motivic cohomology, regulators

\footnote{B. Morin\\
CNRS\\
Universit\'e Paul Sabatier, Institut de Math\'ematiques de Toulouse,\\
118 route de Narbonne, 31062 Toulouse cedex 9, France\\
e-mail: baptiste.morin@math.univ-toulouse.fr\\
}

\section{Introduction}

In \cite{Lichtenbaum} Lichtenbaum conjectured the existence of a Weil-\'etale cohomology in order to describe the vanishing order and the special value of the Zeta function of an arithmetic scheme at $s=0$ in terms of Euler-Poincar\'e characteristics. More precisely, we have the following
\begin{conj}\label{conjLichtenbaumIntro}(Lichtenbaum)
On the category of separated schemes of finite type $\X\rightarrow\Spec(\mathbb{Z})$, there exists a cohomology theory given by abelian groups $H_{W,c}^i(\mathcal{X},\mathbb{Z})$ and real vector spaces $H_{W}^i(\mathcal{X},\tilde{\mathbb{R}})$ and $H_{W,c}^i(\mathcal{X},\tilde{\mathbb{R}})$ such that the following holds.
\begin{enumerate}
\item The groups $H_{W,c}^i(\mathcal{X},\mathbb{Z})$ are finitely generated and zero for $i$ large.
\item The natural map from $\mathbb{Z}$ to $\tilde{\mathbb{R}}$-coefficients induces isomorphisms
$$H_{W,c}^i(\mathcal{X},\mathbb{Z})\otimes\mathbb{R}\simeq H_{W,c}^i(\mathcal{X},\tilde{\mathbb{R}}).$$
\item There exists a canonical class $\theta\in H_W^1(\mathcal{X},\tilde{\mathbb{R}})$ such that cup-product with $\theta$ turns the sequence
$$...\xrightarrow{\cup\theta} H_{W,c}^i(\mathcal{X},\tilde{\mathbb{R}})\xrightarrow{\cup\theta} H_{W,c}^{i+1}(\mathcal{X},\tilde{\mathbb{R}})\xrightarrow{\cup\theta}...$$
into a bounded acyclic complex of finite dimensional vector spaces.
\item The vanishing order of the zeta function $\zeta(\mathcal{X},s)$ at $s=0$ is given by the formula
$$\mbox{\emph{ord}}_{s=0}\zeta(\mathcal{X},s)=\sum_{i\geq0}(-1)^i\cdot i\cdot \mbox{\emph{rank}}_{\mathbb{Z}}H^i_{W,c}(\mathcal{X},\mathbb{Z})$$
\item The leading coefficient $\zeta^*(\mathcal{X},0)$ in the Taylor expansion of $\zeta(\mathcal{X},s)$ at $s=0$ is given up to sign by
$$\mathbb{Z}\cdot\lambda(\zeta^*(\mathcal{X},0)^{-1})=\bigotimes_{i\in\mathbb{Z}}\mbox{\emph{det}}_{\mathbb{Z}}
H^i_{W,c}(\mathcal{X},\mathbb{Z})^{(-1)^i}$$
where $\lambda:\mathbb{R}\stackrel{\sim}{\rightarrow} (\bigotimes_{i\in\mathbb{Z}}\mbox{\emph{det}}_{\mathbb{Z}}
H^i_{W,c}(\mathcal{X},\mathbb{Z})^{(-1)^i})\otimes \br$ is induced by \emph{(2)} and \emph{(3)}.

\end{enumerate}
\end{conj}
Such a cohomology theory for smooth varieties over finite fields was defined in \cite{Lichtenbaum-finite-field}, but similar attempts to construct such cohomology groups for flat arithmetic schemes failed.  In \cite{Lichtenbaum} Lichtenbaum gave the first construction for number rings. He defined a Weil-\'etale topology which bears the same relation to the usual \'etale topology as the Weil group does to the Galois group. Under a vanishing statement, he was able to show that his cohomology miraculously yields the value of Dedekind zeta functions at $s=0$. But this cohomology with coefficients in $\mathbb{Z}$ was then shown by Flach to be infinitely generated hence non-vanishing in even degrees $i\geq4$ \cite{MatFlach}. Consequently, Lichtenbaum's complex computing the cohomology with $\mathbb{Z}$-coefficients needs to be artificially truncated in the case of number rings, and is not helpful for flat schemes of dimension greater than $1$. However, the Weil-\'etale topology yields the expected cohomology with $\tilde{\mathbb{R}}$-coefficients, and this fact extends to higher dimensional arithmetic schemes \cite{Flach-moi}. 

The first goal of this paper is to define the right Weil-\'etale cohomology with $\mathbb{Z}$-coefficients for arithmetic schemes satisfying the following conjecture \ref{conjLintro} (see Theorem \ref{thm1intro} below), in order to state a precise version of Conjecture \ref{conjLichtenbaumIntro}. We consider a regular scheme $\mathcal{X}$ proper over $\mathrm{Spec}(\mathbb{Z})$ of pure Krull dimension $d$, and we denote by $\mathbb{Z}(d)$ Bloch's cycle complex.
\begin{conj}\label{conjLintro}(Lichtenbaum)
The \'etale motivic cohomology groups $H^i(\mathcal{X}_{et},\mathbb{Z}(d))$ are finitely generated for $0\leq i\leq 2d$.
\end{conj}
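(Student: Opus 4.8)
The plan is not to prove this conjecture in general -- as stated it is a form of the Bass-type finite-generation conjecture for motivic cohomology, refined by Beilinson--Soul\'e vanishing, and lies out of reach -- but to reduce it to classical finiteness results and to establish it in the cases used in the rest of the paper, namely $\mathcal{X}=\mathrm{Spec}(\mathcal{O}_F)$ for $\mathcal{O}_F$ a number ring and $\mathcal{X}=\mathbb{P}^r_{\mathcal{O}_F}$. The restriction to $0\le i\le 2d$ is the natural one: it is exactly the range in which $\mathbb{Z}(d)$ is expected to behave ``motivically'', cohomology above $2d$ being governed by a different and generically non-finitely-generated mechanism (Brauer-group-type, and divisible, contributions).

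For $d=1$ one has $\mathbb{Z}(1)_{et}\simeq\mathbb{G}_m[-1]$, so the only possibly nonzero groups in the range are $H^1(\mathcal{X}_{et},\mathbb{Z}(1))=\mathcal{O}_F^\times$ and $H^2(\mathcal{X}_{et},\mathbb{Z}(1))=\mathrm{Pic}(\mathcal{O}_F)$, and finite generation is Dirichlet's unit theorem together with the finiteness of the ideal class group. For $\mathcal{X}=\mathbb{P}^r_{\mathcal{O}_F}$, $d=r+1$, I would apply the projective bundle formula in \'etale motivic cohomology,
$$H^i(\mathbb{P}^r_{\mathcal{O}_F,et},\mathbb{Z}(d))\simeq\bigoplus_{j=0}^{r}H^{i-2j}\big(\mathrm{Spec}(\mathcal{O}_F)_{et},\mathbb{Z}(d-j)\big),$$
which reduces the statement to the finite generation of $H^m(\mathrm{Spec}(\mathcal{O}_F)_{et},\mathbb{Z}(n))$ for all $n\ge1$ and $m\le 2n$. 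For $m\le n+1$ the Beilinson--Lichtenbaum theorem (Voevodsky--Rost) identifies this group with $H^m(\mathrm{Spec}(\mathcal{O}_F)_{Zar},\mathbb{Z}(n))$, which is nonzero only for $m\in\{1,2\}$ and is then finitely generated by the classical results of Borel, Quillen and Soul\'e on the motivic cohomology of number rings. For $n+2\le m\le 2n$ the Zariski group vanishes for dimension reasons, so the \'etale group is torsion; its finiteness I would extract from the bounded \'etale cohomological dimension of $\mathrm{Spec}(\mathcal{O}_F[1/\ell])$ and the Quillen--Lichtenbaum conjecture, treating the $2$-primary contribution of the real places separately. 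Summing over $j$ then gives the conjecture for $\mathbb{P}^r_{\mathcal{O}_F}$.

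The main obstacle is that none of this extends to an arbitrary regular $\mathcal{X}$ proper over $\mathbb{Z}$. Even rationally, finite generation of $H^i(\mathcal{X}_{et},\mathbb{Q}(d))$ in the middle degrees is essentially the Parshin/Beilinson--Soul\'e package and is unknown already for a surface over $\mathbb{Z}$; integrally one moreover needs finite generation of the higher Chow groups $CH^d(\mathcal{X},2d-i)$, hence a form of the Bass conjecture. Of the groups in the range, only the top one, $H^{2d}(\mathcal{X}_{et},\mathbb{Z}(d))$ -- the \'etale-modified group of zero-cycles -- is known to be finitely generated, via Kato--Saito unramified class field theory. For this reason the statement is kept as a hypothesis for general $\mathcal{X}$.
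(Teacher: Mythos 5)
This is a conjecture, and the paper only establishes it in special cases; your reduction follows the paper's route. For $\mathcal{X}=\mathrm{Spec}(\mathcal{O}_F)$ the argument is exactly that $\mathbb{Z}(1)\simeq\mathbb{G}_m[-1]$ reduces everything to Dirichlet's unit theorem and finiteness of the class group. For $\mathbb{P}^n_{\mathcal{O}_F}$ it is Theorem \ref{thm-Pn}(1), which uses the projective bundle formula to reduce to finite generation of $H^m(\mathrm{Spec}(\mathcal{O}_F)_{et},\mathbb{Z}(n))$ and then, exactly as you propose, splits at $m=n+1$: a Beilinson--Lichtenbaum range where the \'etale group agrees with Zariski motivic cohomology, and a range $m>n+1$ where the group is torsion and is identified with Galois cohomology of $F$ via a localization sequence (Lemma \ref{lem-proj-bundle-formula}, Lemma \ref{lem-etale-mot}). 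One presentational gap you should close: you invoke the projective bundle formula directly for \'etale motivic cohomology, whereas the paper first decomposes $Rp_*\mathbb{Z}(n)$ on the Zariski site and only then transports this to the \'etale side through the base-change quasi-isomorphism $\epsilon^*Rp_*\mathbb{Z}(n)\simeq Rf_*\epsilon^*\mathbb{Z}(n)$, valid for $n\geq\dim\mathcal{X}$ (Lemma \ref{lem-basechange-et-Zar}); some such comparison is genuinely needed before your displayed \'etale decomposition is available. You also omit the characteristic-$p$ cases, where the paper deduces the conjecture for varieties in Geisser's class $A(\mathbb{F}_q)$ (Corollary \ref{corAk}) from Geisser's theorem via the equivalence $\textbf{L}(Y_{et},d)\Leftrightarrow\textbf{L}(Y_W,d)$ of Proposition \ref{prop-equiconjectures-char-p}.
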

Using some purity for the cycle complex $\mathbb{Z}(d)$ on the \'etale site (which is the key result of \cite{Geisser-Duality}) we construct in the appendix a class $\mathcal{L}(\mathbb{Z})$ (see Definition \ref{def-C(Z)}) of separated schemes of finite type over $\mathrm{Spec}(\mathbb{Z})$ satisfying Conjecture \ref{conjLintro}. This class $\mathcal{L}(\mathbb{Z})$ contains any geometrically cellular scheme over a number ring (see Definition \ref{def-cellular}), and includes the class $A(\mathbb{F}_q)$ of smooth  projective varieties over $\mathbb{F}_q$ which can be constructed out of products of smooth projective curves by union, base extension, blow-ups and quasi-direct summands in the category of Chow motives ($A(\mathbb{F}_q)$ was introduced by Soul\'e in \cite{Soul84}).

In order to state our first main result, we need to fix some notation. For a scheme $\mathcal{X}$ separated and of finite type over $\Spec(\bz)$, we consider the quotient topological space $\mathcal{X}_{\infty}:=\mathcal{X}(\mathbb{C})/G_{\mathbb{R}}$ where $\mathcal{X}(\mathbb{C})$ is endowed with the complex topology and we set $\overline{\mathcal{X}}:=(\mathcal{X},\mathcal{X}_{\infty})$. We denote by $\overline{\X}_{et}$ the Artin-Verdier \'etale topos of $\overline{\X}$ which comes with a closed embedding $u_{\infty}:Sh(\X_{\infty})\rightarrow \overline{\X}_{et}$ in the sense of topos theory, where $Sh(\X_{\infty})$ is the category of sheaves on the topological space $\X_{\infty}$  (see \cite{Flach-moi} Section 4). The Weil-\'etale topos over the archimedean place $\mathcal{X}_{\infty,W}$ is associated to the trivial action of the topological group $\mathbb{R}$ on the topological space $\mathcal{X}_{\infty}$ (see \cite{Flach-moi} Section 6). Our first main result is the following 

\begin{thm}\label{thm1intro}
Let $\mathcal{X}$ be a regular scheme proper over $\Spec(\bz)$. Assume that $\mathcal{X}\in\mathcal{L}(\bz)$, or more generally that the connected components of $\X$ satisfy Conjecture \ref{conjLintro}. Then there exist complexes $\mbox{\emph{R}}\Gamma_{W}(\overline{\mathcal{X}},\mathbb{Z})$ and $\mbox{\emph{R}}\Gamma_{W,c}(\mathcal{X},\mathbb{Z})$ such that the following properties hold.
\begin{itemize}
\item If $\X$ has pure dimension $d$, there is an exact triangle
\begin{equation}\label{fundamentaltriangle}
\mbox{\emph{RHom}}(\tau_{\geq0}\mbox{\emph{R}}\Gamma(\mathcal{X},\mathbb{Q}(d)),\mathbb{Q}[-2d-2])\rightarrow \mbox{\emph{R}}\Gamma(\overline{\mathcal{X}}_{et},\mathbb{Z})\rightarrow \mbox{\emph{R}}\Gamma_W(\overline{\mathcal{X}},\mathbb{Z}).
\end{equation}
\item The complex $\mbox{\emph{R}}\Gamma_{W}(\overline{\mathcal{X}},\mathbb{Z})$ is contravariantly functorial.

\item There exists a unique morphism
$i^*_{\infty}:\mbox{\emph{R}}\Gamma_W(\overline{\mathcal{X}},\mathbb{Z})\rightarrow \mbox{\emph{R}}\Gamma(\mathcal{X}_{\infty,W},\mathbb{Z})$ which renders the following square commutative.
\[ \xymatrix{
 \mbox{\emph{R}}\Gamma(\overline{\mathcal{X}}_{et},\mathbb{Z})\ar[d]^{u^*_{\infty}}\ar[r]& \mbox{\emph{R}}\Gamma_W(\overline{\mathcal{X}},\mathbb{Z})\ar[d]^{i^*_{\infty}}\\
\mbox{\emph{R}}\Gamma(\mathcal{X}_{\infty},\mathbb{Z})\ar[r]&\mbox{\emph{R}}\Gamma(\mathcal{X}_{\infty,W},\mathbb{Z})
}
\]
We define $\mbox{\emph{R}}\Gamma_{W,c}(\mathcal{X},\mathbb{Z})$ so that there is an exact triangle
$$\mbox{\emph{R}}\Gamma_{W,c}(\mathcal{X},\mathbb{Z})\rightarrow\mbox{\emph{R}}\Gamma_{W}(\overline{\mathcal{X}},\mathbb{Z})\stackrel{i_{\infty}^*}{\rightarrow} \mbox{\emph{R}}\Gamma(\mathcal{X}_{\infty,W},\mathbb{Z}).$$

\item The cohomology groups $H_{W}^i(\overline{\mathcal{X}},\mathbb{Z})$ and $H_{W,c}^i(\mathcal{X},\mathbb{Z})$ are finitely generated for all $i$ and zero for $i$ large.
\item The cohomology groups $H_{W}^i(\overline{\mathcal{X}},\mathbb{Z})$ form an integral model for $l$-adic cohomology: for any prime number $l$ and any $i\in\mathbb{Z}$ there is a canonical isomorphism
    $$H_{W}^i(\overline{\mathcal{X}},\mathbb{Z})\otimes\mathbb{Z}_l\simeq H^i(\overline{\mathcal{X}}_{et},\mathbb{Z}_l).$$
\item  If $\mathcal{X}$ has characteristic $p$ then there is a canonical isomorphism in the derived category
$$\mbox{\emph{R}}\Gamma(\mathcal{X}_{W},\mathbb{Z})\stackrel{\sim}{\longrightarrow} \mbox{\emph{R}}\Gamma_W(\mathcal{X},\mathbb{Z})$$
where $\mbox{\emph{R}}\Gamma(\mathcal{X}_{W},\mathbb{Z})$ is the cohomology of the Weil-\'etale topos \cite{Lichtenbaum-finite-field} and $\mbox{\emph{R}}\Gamma_W(\mathcal{X},\mathbb{Z})$ is the complex defined in this paper. Moreover the exact triangle (\ref{fundamentaltriangle}) is isomorphic to Geisser's triangle (\cite{Geisser-Weiletale} Corollary 5.2 for $\mathcal{G}^{\cdot}=\bz$). 
\item If $\mathcal{X}=\mbox{\emph{Spec}}(\mathcal{O}_F)$ is the spectrum of a totally imaginary number ring then there is a canonical isomorphism in the derived category
$$\mbox{\emph{R}}\Gamma_{W}(\overline{\mathcal{X}},\mathbb{Z})\stackrel{\sim}{\longrightarrow}\tau_{\leq3}\mbox{\emph{R}}\Gamma(\overline{\mathcal{X}}_{W},\mathbb{Z})$$
where $\tau_{\leq3}\mbox{\emph{R}}\Gamma(\overline{\mathcal{X}}_{W},\mathbb{Z})$ is the truncation of Lichtenbaum's complex \cite{Lichtenbaum}
and $\mbox{\emph{R}}\Gamma_{W}(\overline{\mathcal{X}},\mathbb{Z})$ is the complex defined in this paper.  
\end{itemize}
\end{thm}
Theorem \ref{thm1intro} is proven in \ref{def-cohomology}, \ref{finitelygenerated-cohomology}, \ref{cor-functoriality}, \ref{cor-comp-l-adic}, \ref{thm-comparison-char-p}, \ref{thm-compare-nbrrings} and \ref{prop-iinfty}. Notice that the same formalism is used to treat flat arithmetic schemes and schemes over finite fields (see \cite{Lichtenbaum} Question 1 in the Introduction). The basic idea behind the proof of Theorem \ref{thm1intro} can be explained as follows. The Weil group is defined as an extension of the Galois group by the id\`ele class group corresponding to the fundamental class of class field theory (more precisely, as the limit of these group extensions). In this paper we use \'etale duality for arithmetic schemes rather than class field theory, in order to obtain a canonical "extension" of the \'etale $\mathbb{Z}$-cohomology by the dual of motivic $\mathbb{Q}(d)$-cohomology. More precisely, our Weil-\'etale complex is defined as the cone of a map
$$\alpha_{\X}:\mbox{RHom}(\tau_{\geq0}\mbox{R}\Gamma(\mathcal{X},\mathbb{Q}(d)),\mathbb{Q}[-2d-2])\stackrel{}{\longrightarrow} \mbox{R}\Gamma(\overline{\mathcal{X}}_{et},\mathbb{Z}),$$
where $\alpha_{\X}$ is constructed out of \'etale duality. Here the scheme $\X$ is of pure dimension $d$. This idea was suggested by works of Burns \cite{Burns}, Geisser \cite{Geisser-Weiletale} and the author \cite{On the WE}. The techniques involved in this paper rely on results due to Geisser and Levine on Bloch's cycle complex (see \cite{Geisser-MotvicCohDed}, \cite{Geisser-Duality}, \cite{Geisser-Levine00}, \cite{Geisser-Levine01} and \cite{Levine-localization}).

From now on, Conjecture 1.1 is understood as a list of expected properties for the complex $\mbox{R}\Gamma_{W,c}(\mathcal{X},\mathbb{Z})$ of Theorem \ref{thm1intro}. The second goal of this paper is to relate Conjecture \ref{conjLichtenbaumIntro} to Soul\'e's conjecture \cite{Soule} and to the Tamagawa number conjecture of Bloch-Kato (in the formulation of Fontaine and Perrin-Riou, see \cite{Fontaine92} and \cite{Fontaine-Perrin-Riou}). To this aim, we need to assume the following conjecture, which is a special case of a natural refinement for arithmetic schemes of the classical conjecture of Beilinson relating motivic cohomology to Deligne cohomology.  Let $\mathcal{X}$ be a regular, proper and flat arithmetic scheme of pure dimension $d$.
\begin{conj}\label{confBFintro}
(Beilinson) The Beilinson regulator $$H^{2d-1-i}(\mathcal{X},\mathbb{Q}(d))_{\mathbb{R}}\rightarrow H^{2d-1-i}_{\mathcal{D}}(\mathcal{X}_{/\mathbb{R}},\mathbb{R}(d))$$ is an isomorphism for $i\geq 1$ and there is an
exact sequence
$$0\rightarrow
H^{2d-1}(\mathcal{X},\mathbb{Q}(d))_{\mathbb{R}}\rightarrow H^{2d-1}_{\mathcal{D}}(\mathcal{X}_{/\mathbb{R}},\mathbb{R}(d))\rightarrow CH^0(\mathcal{X}_{\mathbb{Q}})^*_{\mathbb{R}}\rightarrow 0
$$
\end{conj}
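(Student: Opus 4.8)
In full generality Conjecture \ref{confBFintro} is the twist-$d$ instance of the refinement for flat arithmetic schemes of the conjecture of Beilinson, and is wide open; the realistic goal, and all that will actually be needed below, is to establish it unconditionally for $\mathrm{Spec}(\mathcal{O}_F)$ and for projective spaces over number rings. The plan is to reduce the projective-space case to the number-ring case by a projective-bundle d\'evissage, and then to settle the number-ring case via the classical theorems of Dirichlet and Borel.

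First I would set up the d\'evissage. For $\mathcal{X}=\mathbb{P}^n_{\mathcal{O}_F}$, so that $d=n+1$, the projective bundle formula gives a canonical decomposition
\[H^j(\mathbb{P}^n_{\mathcal{O}_F},\mathbb{Q}(n+1))\;\simeq\;\bigoplus_{k=0}^{n}H^{j-2k}(\mathrm{Spec}(\mathcal{O}_F),\mathbb{Q}(n+1-k)),\]
and there is an identical decomposition of real Deligne cohomology; since the Beilinson regulator is multiplicative and matches the motivic class of a hyperplane with its Deligne-cohomology analogue, these two decompositions are compatible with the regulator. As moreover $CH^0((\mathbb{P}^n_{\mathcal{O}_F})_{\mathbb{Q}})^{*}\simeq CH^0(\mathrm{Spec}(F))^{*}$, substituting the decompositions into the statement of the conjecture reduces it, for $\mathbb{P}^n_{\mathcal{O}_F}$, to the following assertions: for every $m$ with $2\leq m\leq n+1$ the group $H^i(\mathrm{Spec}(\mathcal{O}_F),\mathbb{Q}(m))$ vanishes for $i\neq 1$ and the regulator $H^1(\mathrm{Spec}(\mathcal{O}_F),\mathbb{Q}(m))_{\mathbb{R}}\rightarrow H^1_{\mathcal{D}}(\mathrm{Spec}(\mathcal{O}_F)_{/\mathbb{R}},\mathbb{R}(m))$ is an isomorphism; and, for $m=1$, the sequence $0\rightarrow\mathcal{O}_F^{\times}\otimes\mathbb{R}\rightarrow H^1_{\mathcal{D}}(\mathrm{Spec}(\mathcal{O}_F)_{/\mathbb{R}},\mathbb{R}(1))\rightarrow CH^0(\mathrm{Spec}(F))^{*}_{\mathbb{R}}\rightarrow 0$ is exact. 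Taking $n=0$ this also disposes of the case $\mathcal{X}=\mathrm{Spec}(\mathcal{O}_F)$ itself.

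It then remains to feed in what is known about the rational motivic cohomology of a number ring. The $m=1$ line is precisely the regulator exact sequence of Dirichlet's unit theorem, using $H^1(\mathrm{Spec}(\mathcal{O}_F),\mathbb{Q}(1))=\mathcal{O}_F^{\times}\otimes\mathbb{Q}$, the vanishing of $H^i(\mathrm{Spec}(\mathcal{O}_F),\mathbb{Q}(1))$ for $i\neq 1$, and the identification of $CH^0(\mathrm{Spec}(F))^{*}_{\mathbb{R}}\simeq\mathbb{R}$ with the usual "trace" cokernel. For $m\geq 2$, the vanishing of $H^i(\mathrm{Spec}(\mathcal{O}_F),\mathbb{Q}(m))$ for $i\neq 1$ (equivalently the finiteness of $K_{2m-2}(\mathcal{O}_F)$) and the bijectivity of the regulator on $H^1\otimes\mathbb{R}$ follow from Borel's computation of the ranks and regulators of the groups $K_{2m-1}(\mathcal{O}_F)$, together with the comparison (Beilinson, Rapoport, Burgos) identifying the Borel regulator with the Beilinson regulator up to a nonzero rational scalar. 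This proves Conjecture \ref{confBFintro} unconditionally for $\mathrm{Spec}(\mathcal{O}_F)$ and for $\mathbb{P}^n_{\mathcal{O}_F}$ over an arbitrary number field; the hypothesis that $F$ be abelian is not used here and enters only later, when one pins down the special value in Conjecture \ref{conjLichtenbaumIntro} via the Tamagawa number conjecture.

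The real obstacle is the general case, where essentially none of the above is available. For an arbitrary regular proper flat $\mathcal{X}$ the surjectivity of the Beilinson regulator amounts to exhibiting sufficiently many classes in $H^{2d-1-i}(\mathcal{X},\mathbb{Q}(d))$ --- equivalently in the higher $K$-theory or higher Chow groups of $\mathcal{X}$ --- and no general construction of such classes is known; even the finite-dimensionality of these $\mathbb{Q}$-vector spaces is open, being a consequence of the finite-generation conjectures of Bass and Soul\'e. This is why Conjecture \ref{confBFintro} has to be carried as a hypothesis throughout, only its instances for number rings and for projective spaces over them being unconditional.
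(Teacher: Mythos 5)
The statement you were given is a genuine conjecture, carried as a hypothesis throughout the paper; no proof exists, and you are right to flag this at the outset. For the two families where the paper does establish it unconditionally --- $\mathrm{Spec}(\mathcal{O}_F)$ and $\mathbb{P}^n_{\mathcal{O}_F}$, i.e.\ Conjecture $\textbf{B}(\mathcal{X},d)$ in Theorem \ref{thm-Pn}(1) --- your argument is essentially the paper's: a projective-bundle d\'evissage reducing each twist to the number ring, followed by Dirichlet's unit theorem in twist one and Borel's theorem (with the Borel--Beilinson regulator comparison) in higher twists. You merely spell out the classical number-theoretic inputs that the paper's terse proof (citing Lemmas \ref{lem-basechange-et-Zar}, \ref{lem-proj-bundle-formula} and \ref{lem-etale-mot}) leaves implicit.
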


Now we can state our second main result. If $\mathcal{X}$ is defined over a number ring $\mathcal{O}_F$ we set $\X_F=\X\otimes_{\mathcal{O}_F}F$ and $\X_{\overline{F}}=\X\otimes_{\mathcal{O}_F}\overline{F}$.
\begin{thm}\label{the2Intro} Assume that $\mathcal{X}$ satisfies Conjectures \ref{conjLintro} and \ref{confBFintro}.
\begin{itemize}
\item Statements \emph{(1)}, \emph{(2)} and \emph{(3)} of Conjecture \ref{conjLichtenbaumIntro} hold for $\X$.
\item Assume that $\X$ is projective over $\bz$. Then Statement \emph{(4)} of Conjecture \ref{conjLichtenbaumIntro}, i.e. the identity
$$\mbox{\emph{ord}}_{s=0}\zeta(\mathcal{X},s)=\sum_{i\geq0}(-1)^i\cdot i\cdot \mbox{\emph{rank}}_{\mathbb{Z}}H^i_{W,c}(\mathcal{X},\mathbb{Z})$$
is equivalent to Soul\'e's Conjecture \cite{Soule} for the vanishing order of $\zeta(\mathcal{X},s)$ at $s=0$.
\item Assume that $\mathcal{X}$ is smooth projective over a number ring $\mathcal{O}_F$, and that the representations $H^i(\X_{\overline{F},\et},\bq_l)$ of $G_F$ satisfies $H^1_f(F,H^i(\X_{\overline{F},\et},\bq_l))=0$. Then Statement \emph{(5)} of Conjecture \ref{conjLichtenbaumIntro}, i.e. the identity
$$\mathbb{Z}\cdot\lambda(\zeta^*(\mathcal{X},0)^{-1})=\bigotimes_{i\in\mathbb{Z}}\mbox{\emph{det}}_{\mathbb{Z}}
H^i_{W,c}(\mathcal{X},\mathbb{Z})^{(-1)^i}$$
is equivalent to the Tamagawa number conjecture \cite{Fontaine92} for the motive $\bigoplus_{i=0}^{2d-2}h^i(\mathcal{X}_{F})[-i]$.
\end{itemize}
\end{thm}
The proof of the last statement of Theorem \ref{the2Intro} was already given in \cite{Flach-moi}, assuming expected properties of Weil-\'etale cohomology. This proof is based on the fact that $\bigotimes_{i\in\mathbb{Z}}\mathrm{det}_{\mathbb{Z}}
H^i_{W,c}(\mathcal{X},\mathbb{Z})^{(-1)^i}$ provides the fundamental line (in the sense of \cite{Fontaine92}) with a canonical $\bz$-structure. This result shows that the Weil-\'etale point of view is compatible with the Tamagawa number conjecture of Bloch-Kato, answering a question of Lichtenbaum (see \cite{Lichtenbaum} Question 2 in the Introduction).

We obtain examples of flat arithmetic schemes satisfying Conjecture \ref{conjLichtenbaumIntro}. 

\begin{thm}
For any number field $F$, Conjecture \ref{conjLichtenbaumIntro} holds for $\X=\Spec(\mathcal{O}_F)$.

Let $\X$ be a smooth projective scheme over the number ring $\mathcal{O}_{\X}(\X)=\mathcal{O}_F$, where $F$ is an abelian number field.  Assume that $\X_F$ admits a smooth cellular decomposition (see Definition \ref{def-cellular}) and that $\X\in \mathcal{L}(\mathbb{Z})$ (see Definition \ref{def-C(Z)}). Then Conjecture \ref{Lichtenbaum Conjecture} holds for $\X$.
\end{thm}
We refer to \cite{Deninger08} for a proof of a dynamical system analogue of Conjecture \ref{conjLichtenbaumIntro}.

\vspace{0.5cm}

{\bf {Notations.}} We denote by $\mathbb{Z}$ the ring of integers, and by $\mathbb{Q}$, $\mathbb{Q}_p$, $\mathbb{R}$ and $\mathbb{C}$ the fields of rational, $p$-adic, real and complex numbers respectively. An arithmetic scheme is a scheme which is separated and of finite type over $\mathrm{Spec}(\mathbb{Z})$. An arithmetic scheme $\X$ is said to be proper (respectively flat) if the map $\X\rightarrow \mathrm{Spec}(\mathbb{Z})$ is proper (respectively flat).

For a field $k$, we choose a separable closure $\bar{k}/k$ and we denote by $G_k=\textrm{Gal}(\bar{k}/k)$ the absolute Galois group of $k$. For a  proper scheme $\mathcal{X}$ over $\Spec(\mathbb{Z})$ we denote by $\mathcal{X}_{\infty}:=\mathcal{X}(\mathbb{C})/G_{\mathbb{R}}$ the quotient topological space of $\mathcal{X}(\mathbb{C})$ by $G_{\mathbb{R}}$ where $\mathcal{X}(\mathbb{C})$ is given with the complex topology. We consider the Artin-Verdier \'etale topos $\overline{\mathcal{X}}_{et}$ given with the open-closed decomposition of topoi
$$\varphi:\mathcal{X}_{et}\rightarrow\overline{\mathcal{X}}_{et}\leftarrow Sh(\mathcal{X}_{\infty}):u_{\infty}$$
where $\mathcal{X}_{et}$ is the usual \'etale topos of the scheme $\mathcal{X}$ (i.e. the category of sheaves of sets on the small \'etale site of $\mathcal{X}$) and $Sh(\mathcal{X}_{\infty})$ is the category of sheaves of sets on the topological space $\mathcal{X}_{\infty}$ (see \cite{Flach-moi} Section 4). For any abelian sheaf $A$ on ${\mathcal{X}}_{et}$ and any $n>0$ the sheaf $R^n\varphi_*A$ is a 2-torsion sheaf concentrated on $\mathcal{X}(\mathbb{R})$. In particular, if $\mathcal{X}(\mathbb{R})=\emptyset$ then $\varphi_*\simeq R\varphi_*$.
For $\mathcal{X}(\mathbb{C})=\emptyset$ one has $\mathcal{X}_{et}=\overline{\mathcal{X}}_{et}$. For $T=\mathcal{X}_{et},\overline{\mathcal{X}}_{et}$ or $Sh(\mathcal{X}_{\infty})$, or more generally for any Grothendieck topos $T$, we denote by $\Gamma(T,-)$ the global section functor, by $R\Gamma(T,-)$ its total right derived functor and by $H^i(T,-):=H^i(R\Gamma(T,-))$ its cohomology.

Let $\mathbb{Z}(n):=z^n(-,2n-*)$ be Bloch's cycle complex (see \cite{Bloch86}, \cite{Geisser-MotvicCohDed}, \cite{Geisser-MotCoh-K-theory}, \cite{Levine-motivic-and-K-theory}  and \cite{Levine-localization}), which we consider as a complex of abelian sheaves on the small \'etale (or Zariski) site of $\mathcal{X}$. For an abelian group $A$ we define $A(n)$ to be $\mathbb{Z}(n)\otimes A$. Note that $\mathbb{Z}(n)$ is a complex of flat sheaves, hence tensor product and derived tensor product with $\mathbb{Z}(n)$ agree. We denote by $H^i(\mathcal{X}_{et},A(n))$ the \'etale hypercohomology of $A(n)$, and by $H^i(\mathcal{X},A(n)):=H^i(\mathcal{X}_{Zar},A(n))$ its Zariski hypercohomology. For $\mathcal{X}(\mathbb{R})=\emptyset$, we still denote by $A(n)=\varphi_*A(n)$ the push-forward of the cycle complex $A(n)$ on $\overline{\mathcal{X}}_{et}$, and by $H^i(\overline{\mathcal{X}}_{et},A(n))$ the \'etale hypercohomology of $A(n)$ (in this paper $A=\mathbb{Z},\mathbb{Z}/m\mathbb{Z},\mathbb{Q}$ or $\mathbb{Q}/\mathbb{Z}$). Notice that the complex of \'etale sheaves $\mathbb{Z}/m\mathbb{Z}(n)$ is not in general quasi-isomorphic to $\mu_m^{\otimes n}[0]$ (however, see \cite{Geisser-MotvicCohDed} Theorem 1.2(4)).

We denote by $\mathcal{D}$ the derived category of the category of abelian groups. More generally, we write $\mathcal{D}(R)$ for the derived category of the category of $R$-modules, where $R$ is a commutative ring. An exact (i.e. distinguished) triangle in $\mathcal{D}(R)$ is (somewhat abusively) depicted as follows
$$C'\rightarrow C\rightarrow C''$$
where $C$, $C'$ and $C''$ are objects of $\mathcal{D}(R)$. For an object $C$ of $\mathcal{D}(R)$ we write $C_{\leq n}$ (respectively $C_{\geq n}$) for the truncated complex so that $H^i(C_{\leq n})=H^i(C)$ for $i\leq n$ and $H^i(C_{\leq n})=0$ otherwise (respectively  so that $H^i(C_{\geq n})=H^i(C)$ for $i\geq n$ and $H^i(C_{\geq n})=0$ otherwise).

Let $R$ be a principal ideal domain. For a finitely generated free $R$-module $L$ of rank $r$, we set $\mathrm{det}_{R}(L)=\bigwedge_{R}^{r}L$ and $\mathrm{det}_{R}(L)^{-1}=\mathrm{Hom}_{R}(\mathrm{det}_{R}(L),R)$. For a finitely generated $R$-module $M$, one may choose a resolution given by an exact sequence $0\rightarrow L_{-1}\rightarrow L_0\rightarrow M\rightarrow 0,$ where $L_{-1}$ and $L_0$ are finitely generated free $R$-modules. Then
$$\mathrm{det}_{R}(M):=\mathrm{det}_{R}(L_0)\otimes_{R} \mathrm{det}_{R}(L_{-1})^{-1}$$
does not depend on the resolution. If $C\in\mathcal{D}(R)$ is a complex such that $H^i(C)$ is finitely generated for all $i\in\mathbb{Z}$ and $H^i(C)=0$ for almost all $i$, we denote $\mathrm{det}_{R}(C):=\bigotimes_{i\in\mathbb{Z}} \mathrm{det}_{R}H^i(C)^{(-1)^{i}}$.

For an abelian group $A$ we write $A_{tor}$ and $A_{div}$ for the maximal torsion and the maximal divisible subgroups of $A$ respectively, and we set $A_{codiv}=A/A_{div}$ and $A_{cotor}=A/A_{tor}$. We denote by ${_n}A$ and $A_n$ the kernel and the cokernel of the map $n:A\rightarrow A$ (multiplication by $n$). We write $A^D=\textrm{Hom}(A,\bq/\bz)$ for the Pontryagin dual of a finite abelian group $A$.

\tableofcontents

\section{Weil-\'etale cohomology}

\subsection{The conjectures $\textbf{L}(\mathcal{X}_{et},d)$ and $\textbf{L}(\mathcal{X}_{et},d)_{\geq0}$}
Let  $\mathcal{X}$ be a proper, regular and connected arithmetic scheme of dimension $d$. The following conjecture is an \'etale version of (a special case of) the motivic Bass Conjecture (see \cite{Kahn-K-theory} Conjecture 37). 

\begin{conj}\label{conj-clef}\emph{$\textbf{L}(\mathcal{X}_{et},d)$}  For $i\leq 2d$, the abelian group $H^i(\mathcal{X}_{et},\mathbb{Z}(d))$ is finitely generated.
\end{conj}

In order to define the Weil-\'etale cohomology we shall consider schemes satisfying a weak version of the previous conjecture.
\begin{conj}\label{conj-clef'} \emph{$\textbf{L}(\mathcal{X}_{et},d)_{\geq0}$} For $0\leq i\leq 2d$,
the abelian group $H^i(\mathcal{X}_{et},\mathbb{Z}(d))$ is finitely generated.
\end{conj}

\subsection{The morphism $\alpha_{\mathcal{X}}$}\label{sect-alpha}
Let $\mathcal{X}$ be a proper regular connected arithmetic scheme. The goal of this section is to construct (conditionally) a certain morphism $\alpha_{\X}$ in the derived category of abelian groups $\mathcal{D}$, in order to define in Section \ref{sect-RGW} the Weil-\'etale complex $\mathrm{R}\Gamma_W(\overline{\X},\mathbb{Z})$ as the cone of $\alpha_{\X}$.

\subsubsection{The case $\mathcal{X}(\mathbb{R})=\emptyset$}\label{sectNotX(R)} In this subsection, $\mathcal{X}$ denotes a proper, regular and connected arithmetic scheme of dimension $d$ such that $\mathcal{X}(\mathbb{R})$ is empty. In particular  $\varphi_*$ is exact (since $\X(\mathbb{R})=\emptyset$), hence $H^i(\overline{\X}_{et},\mathbb{Z}(d)):=H^i(\overline{\X}_{et},\varphi_*\mathbb{Z}(d))\simeq H^i(\X_{et},\mathbb{Z}(d))$.
\begin{lem}\label{firstLemma}
We have
\begin{eqnarray*}
H^{i}(\overline{\mathcal{X}}_{et},\mathbb{Z}(d))&=&0\mbox{ for $i>2d+2$}\\
&\simeq&\mathbb{Q}/\mathbb{Z} \mbox{ for $i=2d+2$}\\
&=&0\mbox{ for $i=2d+1$}\\
&\simeq&H^{i}(\mathcal{X},\mathbb{Q}(d))\mbox{ for $i<0$.}
\end{eqnarray*}
\end{lem}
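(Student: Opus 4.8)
The plan is to reduce the computation of the Artin--Verdier \'etale hypercohomology $H^i(\overline{\mathcal{X}}_{et},\mathbb{Z}(d))$ in the stated range to known facts about the \'etale motivic cohomology of $\mathcal{X}$ itself, exploiting the hypothesis $\mathcal{X}(\mathbb{R})=\emptyset$, which gives $\varphi_*=R\varphi_*$ and hence $H^i(\overline{\mathcal{X}}_{et},\mathbb{Z}(d))=H^i(\mathcal{X}_{et},\mathbb{Z}(d))$ for all $i$ (since $R\varphi_*$ of the push-forward complex is just the push-forward, and $\varphi_*$ is exact on the relevant objects). So everything comes down to the four assertions for $H^i(\mathcal{X}_{et},\mathbb{Z}(d))$.

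For the vanishing $H^i(\mathcal{X}_{et},\mathbb{Z}(d))=0$ for $i>2d+2$ and the identification $H^{2d+2}(\mathcal{X}_{et},\mathbb{Z}(d))\simeq\mathbb{Q}/\mathbb{Z}$, I would invoke Artin--Verdier / Geisser-type duality for the regular proper arithmetic scheme $\mathcal{X}$ of dimension $d$: the complex $\mathbb{Z}(d)$ is (up to a shift) a dualizing complex, and the trace map realizes $H^{2d+2}(\mathcal{X}_{et},\mathbb{Z}(d))$ as $\mathbb{Q}/\mathbb{Z}$ while forcing higher cohomology to vanish, just as in the number-ring case $d=1$ where $\mathbb{Z}(1)\simeq\mathbb{G}_m[-1]$ and $H^4(\mathcal{X}_{et},\mathbb{G}_m[-1])=H^3(\mathcal{X}_{et},\mathbb{G}_m)\simeq\mathbb{Q}/\mathbb{Z}$. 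This is the characteristic-zero analogue of the facts cited in the proof of Proposition \ref{prop-equiconjectures-char-p} (the bullet list there, taken from \cite{Kahn-equivalence-rationnelle}); the same references, adapted to the flat case via the known duality results (e.g. Geisser, Spie{\ss}, or the duality \cite{Flach-moi} underlying the Artin--Verdier topos), give the claim. The vanishing $H^{2d+1}(\mathcal{X}_{et},\mathbb{Z}(d))=0$ is then extracted from the same duality: it is dual to an $H^1$ with divisible, hence here trivial, contribution — concretely, $H^{2d+1}(\mathcal{X}_{et},\mathbb{Z}(d))$ pairs with $H^1(\mathcal{X}_{et},\mathbb{Z})=\mathrm{Hom}_{cont}(\pi_1,\mathbb{Z})=0$ (no nontrivial continuous homomorphism to $\mathbb{Z}$ from a profinite group), modulo checking that no extra divisible part survives, which again follows from the structure of the duality pairing together with the coefficient sequence $0\to\mathbb{Z}(d)\to\mathbb{Q}(d)\to\mathbb{Q}/\mathbb{Z}(d)\to0$.

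For the last assertion, $H^i(\mathcal{X}_{et},\mathbb{Z}(d))\simeq H^i(\mathcal{X},\mathbb{Q}(d))$ for $i<0$: the point is that in negative degrees the \'etale and Zariski hypercohomologies of $\mathbb{Z}(d)$ agree and are uniquely divisible. One route is to use the Beilinson--Lichtenbaum / Geisser--Levine comparison between $\mathbb{Z}(d)_{et}$ and $\mathbb{Z}(d)_{Zar}$ together with the fact that $\mathbb{Z}(d)$ is concentrated in degrees $\leq d$ (and the relevant higher \'etale sheaves $R^q\epsilon_*\mathbb{Z}(d)$ are torsion, hence contribute nothing in sufficiently negative degrees), reducing to $H^i(\mathcal{X}_{Zar},\mathbb{Z}(d))$; then one observes that $H^i(\mathcal{X}_{Zar},\mathbb{Z}(d))$ for $i<0$ is torsion-free — indeed the motivic cohomology groups $H^i(\mathcal{X},\mathbb{Z}(d))$ vanish integrally below a certain degree by the Nisnevich/Zariski descent spectral sequence and the vanishing of Bloch's higher Chow groups in negative weight-relative degrees — so that $H^i(\mathcal{X},\mathbb{Z}(d))=H^i(\mathcal{X},\mathbb{Q}(d))$ there. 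This is exactly the input the text promises Lemma \ref{firstLemma} will supply for Proposition \ref{prop-L-L+}, so the argument should be self-contained modulo standard comparison theorems.

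The main obstacle I anticipate is the first two bullets in characteristic zero: unlike the finite-field case, where the cited duality (\cite{Kahn-equivalence-rationnelle}, Geisser) is directly available, here one must assemble the Artin--Verdier duality for a general regular proper \emph{flat} $\mathcal{X}$ with $\mathcal{X}(\mathbb{R})=\emptyset$ and pin down that $H^{2d+2}$ is exactly $\mathbb{Q}/\mathbb{Z}$ (not something larger) and that $H^{2d+1}$ vanishes on the nose. I would handle this by the coefficient triangle $\mathbb{Z}(d)\to\mathbb{Q}(d)\to\mathbb{Q}/\mathbb{Z}(d)$: with $\mathbb{Q}$-coefficients the hypercohomology vanishes above degree $2d$ (Bloch's complex has cohomological dimension $\leq 2d$ with $\mathbb{Q}$-coefficients on a $d$-dimensional scheme, since $\mathbb{Q}(d)_{et}=\mathbb{Q}(d)_{Zar}$), so $H^{i}(\mathcal{X}_{et},\mathbb{Z}(d))\simeq H^{i-1}(\mathcal{X}_{et},\mathbb{Q}/\mathbb{Z}(d))$ for $i\geq 2d+2$, and the right-hand side is computed by $l$-adic duality for the $(d+1)$-dimensional "arithmetic" scheme $\overline{\mathcal{X}}$, which yields exactly $H^{2d+1}(\mathcal{X}_{et},\mathbb{Q}/\mathbb{Z}(d))\simeq(H^0_c)^\vee\simeq\mathbb{Q}/\mathbb{Z}$ and vanishing above, while $H^{2d}(\mathcal{X}_{et},\mathbb{Q}/\mathbb{Z}(d))$ dual to $H^1_c$ being torsion and $H^{2d}(\mathcal{X}_{et},\mathbb{Q}(d))$ being the corank-controlled part gives, via the long exact sequence, the desired $H^{2d+1}(\mathcal{X}_{et},\mathbb{Z}(d))=0$.
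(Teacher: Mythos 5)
Your plan is sound and matches the paper's strategy for the cases $i>2d+2$, $i=2d+2$ and $i<0$: reduce via $\phi_*=R\phi_*$, use the triangle $\mathbb{Z}(d)\to\mathbb{Q}(d)\to\mathbb{Q}/\mathbb{Z}(d)$ together with $H^{>2d}(\mathcal{X},\mathbb{Q}(d))=0$ to convert to finite coefficients, and invoke Geisser's duality $\mathrm{Ext}^j_{\overline{\mathcal{X}}}(\mathbb{Z}/n,\mathbb{Z}(d))\simeq H^{2d+2-j}(\overline{\mathcal{X}}_{et},\mathbb{Z}/n)^D$ (together with $\mathrm{Ext}^{j-1}_{\overline{\mathcal{X}},\mathbb{Z}/n}\simeq\mathrm{Ext}^j_{\overline{\mathcal{X}}}$) to pin those down; for $i<0$ the vanishing of $H^{<0}(\overline{\mathcal{X}}_{et},\mathbb{Q}/\mathbb{Z}(d))$ does the rest.

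The genuine gap is in your handling of $i=2d+1$. You propose to read the vanishing off a $\mathbb{Z}$-coefficient duality pairing $H^{2d+1}(\overline{\mathcal{X}}_{et},\mathbb{Z}(d))\leftrightarrow H^1(\overline{\mathcal{X}}_{et},\mathbb{Z})$ and the fact that $H^1(\overline{\mathcal{X}}_{et},\mathbb{Z})=\mathrm{Hom}_{cont}(\pi_1,\mathbb{Z})=0$. That duality (Lemma \ref{prop-duality-Z-coefs} in the paper) is established \emph{later}, under the hypothesis $\textbf{L}(\overline{\mathcal{X}}_{et},d)_{\geq0}$, and its $i=1$ case is deduced \emph{from} the statement you are currently trying to prove — so this route is circular and also invokes a hypothesis not available to Lemma \ref{firstLemma}, which is unconditional. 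What the paper actually does is isolate, via the triangle, the two sources of $H^{2d+1}(\overline{\mathcal{X}}_{et},\mathbb{Z}(d))$: the image of $H^{2d}(\mathcal{X},\mathbb{Q}(d))$ and $H^{2d}(\overline{\mathcal{X}}_{et},\mathbb{Q}/\mathbb{Z}(d))$. In the flat case, higher class field theory gives $CH^d(\mathcal{X})\otimes\mathbb{Q}=0$ hence $H^{2d}(\mathcal{X},\mathbb{Q}(d))=0$; then the finite-coefficient duality identifies $H^{2d}(\overline{\mathcal{X}}_{et},\mathbb{Q}/\mathbb{Z}(d))$ with $\mathrm{Hom}(\pi_1(\overline{\mathcal{X}}_{et})^{ab},\widehat{\mathbb{Z}})^D=0$, using finiteness of $\pi_1(\overline{\mathcal{X}}_{et})^{ab}$ — both inputs are class field theory, and neither appears in your plan. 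In the characteristic-$p$ case the situation is different: there $CH^d(\mathcal{X})\otimes\mathbb{Q}\simeq\mathbb{Q}\neq0$, and what one needs instead is that the map $CH^d(\mathcal{X})\otimes\mathbb{Q}\to H^{2d}(\mathcal{X}_{et},\mathbb{Q}/\mathbb{Z}(d))\simeq\mathbb{Q}/\mathbb{Z}$ is surjective. Your proposal does not distinguish the two cases nor name either mechanism; the vague appeal to ``the corank-controlled part'' in your last paragraph does not substitute for these specific class-field-theoretic facts.
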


\begin{proof}
For any positive integer $n$ and any $i\in\mathbb{Z}$, one has
\begin{eqnarray}
H^{i-1}(\overline{\mathcal{X}}_{et},\mathbb{Z}/n\mathbb{Z}(d))
&=&\mbox{Ext}^{i-1}_{\overline{\mathcal{X}},\mathbb{Z}/n\mathbb{Z}}(\mathbb{Z}/n\mathbb{Z},\mathbb{Z}/n\mathbb{Z}(d))\\
\label{oneiso}&\simeq&\mbox{Ext}^{i}_{\overline{\mathcal{X}}}(\mathbb{Z}/n\mathbb{Z},\mathbb{Z}(d))\\
\label{twoiso}&\simeq& H^{2d+2-i}(\overline{\mathcal{X}}_{et},\mathbb{Z}/n\mathbb{Z})^D
\end{eqnarray}
where (\ref{oneiso}) and (\ref{twoiso}) are given by (\cite{Geisser-Duality} Lemma 2.4) and (\cite{Geisser-Duality} Theorem 7.8) respectively, and $ H^{2d+2-i}(\overline{\mathcal{X}}_{et},\mathbb{Z}/n\mathbb{Z})^D$ denotes the Pontryagin dual of the finite group $H^{2d+2-i}(\overline{\mathcal{X}}_{et},\mathbb{Z}/n\mathbb{Z})$.

The complex $\mathbb{Z}(d)$ consists of flat sheaves, hence tensor product and derived tensor product with $\mathbb{Z}(d)$ agree. Therefore, applying $\mathbb{Z}(d)\otimes^L(-)$ to the exact sequence $0\rightarrow \mathbb{Z}\rightarrow\mathbb{Q}\rightarrow\mathbb{Q}/\mathbb{Z}\rightarrow 0$ yields an exact triangle:
\begin{equation}\label{oneexacttriangle}
\mathbb{Z}(d)\rightarrow\mathbb{Q}(d)\rightarrow\mathbb{Q}/\mathbb{Z}(d).
\end{equation}
Moreover, we have $H^{i}(\overline{\mathcal{X}}_{et},\mathbb{Q}(d))\simeq
H^{i}(\mathcal{X},\mathbb{Q}(d))$ for any $i$ (see \cite{Geisser-MotvicCohDed} Proposition 3.6) and $H^{i}(\mathcal{X},\mathbb{Q}(d))=0$ for any $i>2d$ (see \cite{Levine-motivic-and-K-theory} Lemma 11.1). Hence (\ref{oneexacttriangle})
gives
$$H^{i}(\overline{\mathcal{X}}_{et},\mathbb{Z}(d))\simeq H^{i-1}(\overline{\mathcal{X}}_{et},\mathbb{Q}/\mathbb{Z}(d))
\simeq\underrightarrow{\mbox{lim}}\,H^{i-1}(\overline{\mathcal{X}}_{et},\mathbb{Z}/n\mathbb{Z}(d))$$
for $i\geq2d+2$. The result for $i\geq2d+2$ then follows from (\ref{twoiso}).

By (\cite{SGA4} X.6.2), the scheme $\mathcal{X}$ has $l$-cohomological dimension $2d+1$ for any prime number $l$ (note that $\mathcal{X}(\mathbb{R})=\emptyset$), hence (\ref{twoiso}) shows that $H^{i}(\overline{\mathcal{X}}_{et},\mathbb{Q}/\mathbb{Z}(d))=0$ for $i<0$. The result for $i<0$ now follows from $H^{i}(\overline{\mathcal{X}}_{et},\mathbb{Q}(d))\simeq H^{i}(\mathcal{X},\mathbb{Q}(d))$ and from the exact triangle (\ref{oneexacttriangle}).

It remains to treat the case $i=2d+1$. Assume that $\mathcal{X}$ is flat over $\mathbb{Z}$. Then we have (see \cite{Kato-Saito86} Theorem 6.1(2))
\begin{equation}\label{flat-vanish}
H^{2d}(\mathcal{X},\mathbb{Q}(d))\simeq CH^{d}(\mathcal{X})\otimes\mathbb{Q}=0
\end{equation}
hence $H^{i}(\overline{\mathcal{X}}_{et},\mathbb{Q}(d))=0$ for $i\geq2d$. Here $CH^{d}(\mathcal{X})$ denotes the Chow group of cycles of codimension $d$. We obtain
\begin{eqnarray*}
H^{2d+1}(\overline{\mathcal{X}}_{et},\mathbb{Z}(d))&\simeq& H^{2d}(\overline{\mathcal{X}}_{et},\mathbb{Q}/\mathbb{Z}(d))\\
&\simeq&\underrightarrow{\mbox{lim}}\, (H^{1}(\overline{\mathcal{X}}_{et},\mathbb{Z}/n\mathbb{Z})^D)\\
&\simeq&(\underleftarrow{\mbox{lim}}\,\mbox{Hom}(\pi_1(\overline{\mathcal{X}}_{et}),\mathbb{Z}/n\mathbb{Z}))^D\\
&\simeq&\mbox{Hom}(\pi_1(\overline{\mathcal{X}}_{et})^{ab},\widehat{\mathbb{Z}})^D\\
&=&0
\end{eqnarray*}
where the first isomorphism (respectively the second) is given by (\ref{oneexacttriangle}) (respectively by (\ref{twoiso})), and the last isomorphism follows from the fact that the abelian fundamental group $\pi_1(\overline{\mathcal{X}}_{et})^{ab}$ is finite (see \cite{Kato-Saito86} Theorem 9.10 and \cite{Wiesend07} Corollary 3).

Assume now that $\mathcal{X}$ is a smooth proper scheme over a finite field (hence $\overline{\mathcal{X}}=\mathcal{X}$). One has
\begin{eqnarray}
\label{onebyGeisser}H^{2d}(\mathcal{X}_{et},\mathbb{Q}/\mathbb{Z}(d))&\simeq&\underrightarrow{\mbox{lim}}\, (H^{1}(\mathcal{X}_{et},\mathbb{Z}/n\mathbb{Z})^D)\\
&\simeq&\underrightarrow{\mbox{lim}}\, (\mathrm{Hom}(\pi_1(\mathcal{X}_{et}),\mathbb{Z}/n\mathbb{Z})^D)\\
\label{oenbyCFT}&\simeq&\underrightarrow{\mbox{lim}}\, (CH^d(\mathcal{X})\otimes \mathbb{Z}/n\mathbb{Z})\\
\label{last}&\simeq&CH^d(\mathcal{X})\otimes \mathbb{Q}/\mathbb{Z}
\end{eqnarray}
where (\ref{onebyGeisser}) is given by (\ref{twoiso}) while (\ref{oenbyCFT}) follows from class field theory (see \cite{Wiesend07} Corollary 3). The exact triangle (\ref{oneexacttriangle}) yields an exact sequence
\begin{equation}\label{regla}
H^{2d}(\mathcal{X},\mathbb{Q}(d))\stackrel{p}{\rightarrow} H^{2d}(\mathcal{X}_{et},\mathbb{Q}/\mathbb{Z}(d))\rightarrow H^{2d+1}(\mathcal{X}_{et},\mathbb{Z}(d))\rightarrow 0.
\end{equation}
Moreover, the isomorphism (\ref{last}) is the direct limit of the maps 
$$CH^{d}(\mathcal{X})_n=H^{2d}(\mathcal{X},\mathbb{Z}(d))_n\rightarrow H^{2d}(\mathcal{X},\mathbb{Z}/n\mathbb{Z}(d))\rightarrow H^{2d}(\mathcal{X}_{et},\mathbb{Z}/n\mathbb{Z}(d)).$$
It follows that the map $p$ in the sequence (\ref{regla}) can be identified with the morphism
$$CH^{d}(\mathcal{X})\otimes\mathbb{Q}\rightarrow CH^{d}(\mathcal{X})\otimes\mathbb{Q}/\mathbb{Z}$$
induced by the surjection $\mathbb{Q}\rightarrow \mathbb{Q}/\mathbb{Z}$. Hence $p$ is surjective, since $CH^{d}(\mathcal{X})$ is finitely generated of rank one (\cite{Kato-Saito86} Theorem 6.1), so that $H^{2d+1}(\mathcal{X}_{et},\mathbb{Z}(d))=0$. 

\end{proof}

\begin{notation}
We set $H^{j}(\overline{\mathcal{X}}_{et},\mathbb{Z}(d))_{\geq0}:=H^{j}(\mbox{\emph{R}}\Gamma(\overline{\mathcal{X}}_{et},\mathbb{Z}(d))_{\geq0})$.
\end{notation}

\begin{lem}\label{prop-duality-Z-coefs}
Assume that $\mathcal{X}$ satisfies $\textbf{\emph{L}}(\mathcal{X}_{et},d)_{\geq0}$. Then the natural map
$$H^i(\overline{\mathcal{X}}_{et},\mathbb{Z})\rightarrow \emph{Hom}(H^{2d+2-i}(\overline{\mathcal{X}}_{et},\mathbb{Z}(d))_{\geq0},\mathbb{Q}/\mathbb{Z})$$
is an isomorphism of abelian groups for $i\geq1$.
\end{lem}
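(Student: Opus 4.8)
The plan is to derive this $\mathbb{Z}$-coefficient duality statement from the already-available finite-coefficient Artin-Verdier duality of Geisser (the isomorphism $\mathrm{Ext}^{i}_{\overline{\mathcal{X}}}(\mathbb{Z}/n\mathbb{Z},\mathbb{Z}(d))\simeq H^{2d+2-i}(\overline{\mathcal{X}}_{et},\mathbb{Z}/n\mathbb{Z})^D$ used in the proof of Lemma \ref{firstLemma}) by a limit argument, using the finite generation hypothesis $\textbf{L}(\overline{\mathcal{X}}_{et},d)_{\geq0}$ to control the relevant $\lim$ and $\lim^1$ terms. First I would recall, as in Lemma \ref{firstLemma}, that for each $n$ one has a natural isomorphism $H^{j}(\overline{\mathcal{X}}_{et},\mathbb{Z}/n\mathbb{Z})\simeq \mathrm{Ext}^{2d+2-j}_{\overline{\mathcal{X}}}(\mathbb{Z}/n\mathbb{Z},\mathbb{Z}(d))^D$, and rewrite the right-hand Ext using the triangle $\mathbb{Z}(d)\xrightarrow{n}\mathbb{Z}(d)\to \mathbb{Z}/n\mathbb{Z}(d)$ together with the identification $\mathrm{Ext}^{k}_{\overline{\mathcal{X}}}(\mathbb{Z}/n\mathbb{Z},\mathbb{Z}(d))\simeq H^{k-1}(\overline{\mathcal{X}}_{et},\mathbb{Z}/n\mathbb{Z}(d))$ from \cite{Geisser-Duality} Lemma 2.4, so that everything is expressed in terms of the complex $\mathrm{R}\Gamma(\overline{\mathcal{X}}_{et},\mathbb{Z}(d))$ and its reduction mod $n$.

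Next I would pass to the (co)limit over $n$ ordered by divisibility. On the source side, $H^i(\overline{\mathcal{X}}_{et},\mathbb{Z})$ is related to $\varprojlim_n H^i(\overline{\mathcal{X}}_{et},\mathbb{Z}/n\mathbb{Z})$ via the short exact sequence coming from $\mathbb{Z}\xrightarrow{n}\mathbb{Z}\to\mathbb{Z}/n\mathbb{Z}$; since the cohomology of $\overline{\mathcal{X}}_{et}$ with $\mathbb{Z}$-coefficients is degreewise... one must instead argue that for $i\geq 1$ the natural map $H^i(\overline{\mathcal{X}}_{et},\mathbb{Z})\to\varprojlim_n H^i(\overline{\mathcal{X}}_{et},\mathbb{Z}/n\mathbb{Z})$ is an isomorphism, using that $H^{i}(\overline{\mathcal{X}}_{et},\mathbb{Z})$ is finitely generated (this is itself a consequence of Artin-Verdier duality together with $\textbf{L}(\overline{\mathcal{X}}_{et},d)_{\geq0}$, or can be proved first). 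On the target side, $\mathrm{Hom}(H^{2d+2-i}(\overline{\mathcal{X}}_{et},\mathbb{Z}(d))_{\geq0},\mathbb{Q}/\mathbb{Z})$ should be matched with $\varprojlim_n\bigl(H^{2d+2-i}(\overline{\mathcal{X}}_{et},\mathbb{Z}(d))_{\geq0}/n\bigr)^D$ plus a contribution from the Tor/torsion terms, and here $\textbf{L}(\overline{\mathcal{X}}_{et},d)_{\geq0}$ ensures $H^{j}(\overline{\mathcal{X}}_{et},\mathbb{Z}(d))_{\geq0}$ is finitely generated for all $j$ (it is $0$ in negative degrees by definition and in degrees $>2d$ by Lemma \ref{firstLemma}), so each $\mathbb{Z}/n$-reduction is finite, the relevant $\varprojlim^1$ vanishes (Mittag-Leffler, finite groups), and the Pontryagin/$\mathbb{Q}/\mathbb{Z}$-dual commutes appropriately. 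Chasing the universal-coefficient short exact sequences on both sides and comparing via the finite-level duality isomorphisms, which are natural in $n$, then yields the claimed isomorphism for $i\geq 1$; one uses $i\geq 1$ precisely so that the $H^0$ terms (where $\mathbb{Z}$-coefficients do not behave well under completion and where $H^{2d+2}(\overline{\mathcal{X}}_{et},\mathbb{Z}(d))\simeq\mathbb{Q}/\mathbb{Z}$ is divisible rather than finitely generated) do not interfere.

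The main obstacle I anticipate is the bookkeeping of the two universal-coefficient sequences and the passage to the limit: one must check that the finite-level duality pairings are compatible with the maps $\mathbb{Z}/nm\to\mathbb{Z}/n$ and $\mathbb{Z}/n\hookrightarrow\mathbb{Z}/nm$ on the appropriate sides, so that the derived limit assembles into a statement about $\mathbb{Z}$- and $\mathbb{Q}/\mathbb{Z}$-coefficients rather than $\widehat{\mathbb{Z}}$- and $\mathbb{Z}_\ell$-coefficients, and that no $\varprojlim^1$ obstruction survives. The finite generation provided by $\textbf{L}(\overline{\mathcal{X}}_{et},d)_{\geq0}$ is exactly what makes all the intervening groups finite after reduction mod $n$, killing these obstructions; without it the statement would only hold after profinite completion. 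Granting the naturality of Geisser's duality in the coefficient ring — which is standard — the rest is a formal $\varprojlim$ argument, so I would spend most of the write-up making that naturality and the degree bookkeeping precise rather than on any deep input.
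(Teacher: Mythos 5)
Your broad plan — deduce the integral duality from Geisser's finite-coefficient Artin--Verdier duality by a limit argument, with the finite generation hypothesis killing the $\varprojlim$ obstructions — is the same as the paper's, but the key step on the $\mathbb{Z}$-coefficient side is wrong. You assert that $H^i(\overline{\mathcal{X}}_{et},\mathbb{Z})$ is finitely generated for $i\geq 1$ and that consequently $H^i(\overline{\mathcal{X}}_{et},\mathbb{Z})\to\varprojlim_n H^i(\overline{\mathcal{X}}_{et},\mathbb{Z}/n\mathbb{Z})$ is an isomorphism. Neither is true. For $i\geq 2$ one has $H^i(\overline{\mathcal{X}}_{et},\mathbb{Z})\simeq H^{i-1}(\overline{\mathcal{X}}_{et},\mathbb{Q}/\mathbb{Z})$, which is a torsion group of cofinite type, not finitely generated: for $\mathcal{X}=\mathrm{Spec}(\mathcal{O}_F)$ one already has $H^3(\overline{\mathcal{X}}_{et},\mathbb{Z})\simeq (\mathcal{O}_F^\times)^D$, containing $(\mathbb{Q}/\mathbb{Z})^{r_1+r_2-1}$. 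And even if $H^i(\overline{\mathcal{X}}_{et},\mathbb{Z})$ were finitely generated, the map to $\varprojlim_n H^i(\overline{\mathcal{X}}_{et},\mathbb{Z}/n\mathbb{Z})$ would land in the profinite completion and fail to be surjective (compare $\mathbb{Z}\hookrightarrow\widehat{\mathbb{Z}}$). So the inverse-limit description you propose on the source side does not exist, and the universal-coefficient bookkeeping you outline would produce $\widehat{\mathbb{Z}}$-type objects, which is exactly the pathology you say you are trying to avoid.

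The fix, which is what the paper does, is to take a \emph{colimit in degree $i-1$} on the $\mathbb{Z}$-side rather than a limit in degree $i$. Because $\mathcal{X}$ is normal and connected, $H^i(\overline{\mathcal{X}}_{et},\mathbb{Q})=0$ for $i\geq 1$, whence $H^i(\overline{\mathcal{X}}_{et},\mathbb{Z})\simeq H^{i-1}(\overline{\mathcal{X}}_{et},\mathbb{Q}/\mathbb{Z})=\varinjlim_n H^{i-1}(\overline{\mathcal{X}}_{et},\mathbb{Z}/n\mathbb{Z})$ for $i\geq 2$ (and $H^1=0$ because $\pi_1(\overline{\mathcal{X}}_{et})$ is profinite, which handles $i=1$ together with $H^{2d+1}(\overline{\mathcal{X}}_{et},\mathbb{Z}(d))=0$ from Lemma \ref{firstLemma}). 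The inverse limit is then taken on the $\mathbb{Z}(d)$-side, where your finite generation hypothesis genuinely applies: the short exact sequences $0\to H^{j-1}(\overline{\mathcal{X}}_{et},\mathbb{Z}(d))_n\to \mathrm{Ext}^j_{\overline{\mathcal{X}}}(\mathbb{Z}/n\mathbb{Z},\mathbb{Z}(d))\to {}_nH^j(\overline{\mathcal{X}}_{et},\mathbb{Z}(d))\to 0$ have $\varprojlim_n {}_nH^j(\overline{\mathcal{X}}_{et},\mathbb{Z}(d))=0$ by $\textbf{L}(\overline{\mathcal{X}}_{et},d)_{\geq 0}$ (finitely generated in degrees $0\leq j\leq 2d+1$, uniquely divisible in degrees $<0$), giving $\varprojlim_n\mathrm{Ext}^j\simeq \widehat{H^{j-1}(\overline{\mathcal{X}}_{et},\mathbb{Z}(d))_{\geq 0}}$. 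One then Pontryagin-dualizes: on one side the double dual of a torsion group is itself, on the other the dual of the profinite completion of a finitely generated group agrees with its $\mathrm{Hom}(-,\mathbb{Q}/\mathbb{Z})$. The two moves you would need to add to your sketch are precisely the vanishing of rational cohomology in positive degrees and the resulting colimit presentation of $H^i(\overline{\mathcal{X}}_{et},\mathbb{Z})$.
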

\begin{proof}
The map of the lemma is given by the pairing
$$H^i(\overline{\mathcal{X}}_{et},\mathbb{Z})\times \mbox{Ext}^{2d+2-i}_{\overline{\mathcal{X}}}(\mathbb{Z},\mathbb{Z}(d))\rightarrow
H^{2d+2}(\overline{\mathcal{X}}_{et},\mathbb{Z}(d))\simeq\mathbb{Q}/\mathbb{Z}.$$
For $i=1$ the result follows from Lemma \ref{firstLemma}, since one has $H^1(\overline{\mathcal{X}}_{et},\mathbb{Z})=\mbox{Hom}_{cont}(\pi_1(\overline{\mathcal{X}}_{et},p),\mathbb{Z})=0$
because the fundamental group $\pi_1(\overline{\mathcal{X}}_{et},p)$ is profinite (hence compact).

The scheme $\mathcal{X}$ is connected and normal, hence $H^i(\overline{\mathcal{X}}_{et},\mathbb{Q})=H^i(\mathcal{X}_{et},\mathbb{Q})=\mathbb{Q},\,0$ for $i=0$ and $i\geq1$ respectively. We obtain $$H^i(\overline{\mathcal{X}}_{et},\mathbb{Z})=H^{i-1}(\overline{\mathcal{X}}_{et},\mathbb{Q}/\mathbb{Z})
=\underrightarrow{\mbox{lim}}\,H^{i-1}(\overline{\mathcal{X}}_{et},\mathbb{Z}/n\mathbb{Z})$$
for $i\geq2$, since $\mathcal{X}$ is quasi-compact and quasi-separated. For any positive integer $n$ the canonical map
$$\mbox{Ext}^{2d+2-i}_{\overline{\mathcal{X}}}(\mathbb{Z}/n\mathbb{Z},\mathbb{Z}(d))\times H^i(\overline{\mathcal{X}}_{et},\mathbb{Z}/n\mathbb{Z})\rightarrow\mathbb{Q}/\mathbb{Z}$$
is a perfect pairing of finite groups (see \cite{Geisser-Duality} Theorem 7.8). The short exact sequence $$0\rightarrow\mathbb{Z}\rightarrow \mathbb{Z}\rightarrow \mathbb{Z}/n\mathbb{Z}\rightarrow0$$
yields an exact sequence
$$H^{j-1}(\overline{\mathcal{X}}_{et},\mathbb{Z}(d))\rightarrow H^{j-1}(\overline{\mathcal{X}}_{et},\mathbb{Z}(d))\rightarrow \mbox{Ext}^{j}_{\overline{\mathcal{X}}}(\mathbb{Z}/n\mathbb{Z},\mathbb{Z}(d))$$
$$\rightarrow H^{j}(\overline{\mathcal{X}}_{et},\mathbb{Z}(d))\rightarrow H^{j}(\overline{\mathcal{X}}_{et},\mathbb{Z}(d)).$$
We obtain a short exact sequence
$$0\rightarrow H^{j-1}(\overline{\mathcal{X}}_{et},\mathbb{Z}(d))_n\rightarrow \mbox{Ext}^{j}_{\overline{\mathcal{X}}}(\mathbb{Z}/n\mathbb{Z},\mathbb{Z}(d)) \rightarrow {_n}H^{j}(\overline{\mathcal{X}}_{et},\mathbb{Z}(d))\rightarrow 0
$$
for any $j$. By left exactness of projective limits the sequence
$$0\rightarrow\underleftarrow{\mbox{lim}}\,H^{j-1}(\overline{\mathcal{X}}_{et},\mathbb{Z}(d))_n\rightarrow \underleftarrow{\mbox{lim}}\,\mbox{Ext}^{j}_{\overline{\mathcal{X}}}(\mathbb{Z}/n\mathbb{Z},\mathbb{Z}(d)) \rightarrow \underleftarrow{\mbox{lim}}\,_{n}H^{j}(\overline{\mathcal{X}}_{et},\mathbb{Z}(d))$$
is exact.
The module $\underleftarrow{\mbox{lim}}\,_{n}H^{j}(\overline{\mathcal{X}}_{et},\mathbb{Z}(d))$ vanishes for $0\leq j\leq 2d+1$ since $H^{j}(\overline{\mathcal{X}}_{et},\mathbb{Z}(d))$ is assumed to be finitely generated for such $j$. We have $\underleftarrow{\mbox{lim}}\,_{n}H^{j}(\overline{\mathcal{X}}_{et},\mathbb{Z}(d))=0$ for $j<0$ since $H^{j}(\overline{\mathcal{X}}_{et},\mathbb{Z}(d))$ is uniquely divisible for $j<0$ (see Lemma \ref{firstLemma}). This yields an isomorphism of profinite groups
$$\underleftarrow{\mbox{lim}}\,H^{j-1}(\overline{\mathcal{X}}_{et},\mathbb{Z}(d))_n\stackrel{\sim}{\rightarrow} \underleftarrow{\mbox{lim}}\,\mbox{Ext}^{j}_{\overline{\mathcal{X}}}(\mathbb{Z}/n\mathbb{Z},\mathbb{Z}(d))
\stackrel{\sim}{\rightarrow}(\underrightarrow{\mbox{lim}}\,H^{2d+2-j}(\overline{\mathcal{X}}_{et},\mathbb{Z}/n\mathbb{Z}))^D,$$
where the last isomorphism follows from the duality above (and from the fact that $\mbox{Ext}^{j}_{\overline{\mathcal{X}}}(\mathbb{Z}/n\mathbb{Z},\mathbb{Z}(d))$ is finite for any $n$). This gives isomorphisms of torsion groups
$$H^{2d+2-(j-1)}(\overline{\mathcal{X}}_{et},\mathbb{Z})\stackrel{\sim}{\rightarrow}
(\underrightarrow{\mbox{lim}}\,H^{2d+2-j}(\overline{\mathcal{X}}_{et},\mathbb{Z}/n\mathbb{Z}))^{DD}$$
$$\stackrel{\sim}{\rightarrow}
(\underleftarrow{\mbox{lim}}\,H^{j-1}(\overline{\mathcal{X}}_{et},\mathbb{Z}(d))_n)^D\stackrel{\sim}{\rightarrow} \mbox{Hom}(
H^{j-1}(\overline{\mathcal{X}}_{et},\mathbb{Z}(d))_{\geq0},\mathbb{Q}/\mathbb{Z})$$
for any $j\leq 2d+1$ (note that $2d+2-(j-1)\geq2 \Leftrightarrow j\leq 2d+1$). The last isomorphism above follows from the fact that $H^{j}(\overline{\mathcal{X}}_{et},\mathbb{Z}(d))$ is finitely generated for $0\leq j\leq 2d$ and uniquely divisible for $j<0$. Hence for any $i\geq2$ the natural map
$$H^i(\overline{\mathcal{X}}_{et},\mathbb{Z})\rightarrow \mbox{Hom}(H^{2d+2-i}(\overline{\mathcal{X}}_{et},\mathbb{Z}(d))_{\geq0},\mathbb{Q}/\mathbb{Z})$$
is an isomorphism.
\end{proof}

Recall that an abelian group $A$ is of \emph{cofinite type} if $A\simeq\mathrm{Hom}_{\mathbb{Z}}(B,\mathbb{Q}/\mathbb{Z})$ where $B$ is a finitely generated abelian group. If $A$ is of cofinite type then there exists an isomorphism $A\simeq(\mathbb{Q}/\mathbb{Z})^r\oplus T$, where $r\in\mathbb{N}$ and $T$ is a finite abelian group. If $\mathcal{X}$ satisfies $\textbf{L}(\mathcal{X}_{et},d)_{\geq0}$ then $H^i(\overline{\mathcal{X}}_{et},\mathbb{Z})$ is of cofinite type for $i\geq 1$ by Lemma \ref{prop-duality-Z-coefs}.

\begin{thm}\label{thm-alpha} Assume that $\mathcal{X}$ satisfies $\textbf{\emph{L}}(\mathcal{X}_{et},d)_{\geq0}$. There exists a unique morphism in $\mathcal{D}$
$$\alpha_{\mathcal{X}}:\mbox{\emph{RHom}}(\mbox{\emph{R}}\Gamma(\mathcal{X},\mathbb{Q}(d))_{\geq0},\mathbb{Q}[-2d-2])\rightarrow \mbox{\emph{R}}\Gamma(\overline{\mathcal{X}}_{et},\mathbb{Z})$$
such that $H^i(\alpha_{\mathcal{X}})$ is the following composite map
$$\mbox{\emph{Hom}}(H^{2d+2-i}(\mathcal{X},\mathbb{Q}(d))_{\geq0},\mathbb{Q})\stackrel{\sim}{\rightarrow}
\mbox{\emph{Hom}}(H^{2d+2-i}(\overline{\mathcal{X}}_{et},\mathbb{Z}(d))_{\geq0},\mathbb{Q})$$
$$\rightarrow
\mbox{\emph{Hom}}(H^{2d+2-i}(\overline{\mathcal{X}}_{et},\mathbb{Z}(d))_{\geq0},\mathbb{Q}/\mathbb{Z})
\stackrel{\sim}{\leftarrow}H^i(\overline{\mathcal{X}}_{et},\mathbb{Z})$$
for any $i\geq2$. 
\end{thm}

\begin{proof}

In order to ease the notation, we set $D_{\mathcal{X}}:= \mbox{RHom}(\mbox{R}\Gamma(\mathcal{X},\mathbb{Q}(d))_{\geq0},\mathbb{Q}[-2d-2])$. 
We consider the spectral sequence (see \cite{Verdier96} III, Section 4.6.10) 
\begin{equation}\label{VerdierSS}
E_2^{p,q}=\prod_{i\in\mathbb{Z}} \textrm{Ext}^p(H^{i-q}(D_{\mathcal{X}}),H^{i}(\overline{\mathcal{X}}_{et},\mathbb{Z}))\Rightarrow
H^{p+q}(\textrm{RHom}(D_{\mathcal{X}},\mbox{R}\Gamma(\overline{\mathcal{X}}_{et},\mathbb{Z}))).
\end{equation}
An edge morphism from (\ref{VerdierSS}) gives a  morphism
\begin{equation}\label{Bmap}
H^{0}(\textrm{RHom}(D_{\mathcal{X}},\mbox{R}\Gamma(\overline{\mathcal{X}}_{et},\mathbb{Z})))\rightarrow \prod_{i\in\mathbb{Z}} \textrm{Ext}^0(H^{i}(D_{\mathcal{X}}),H^{i}(\overline{\mathcal{X}}_{et},\mathbb{Z}))
\end{equation}
such that the composite map
$$\mbox{Hom}_{\mathcal{D}}(D_{\mathcal{X}},\mbox{R}\Gamma(\overline{\mathcal{X}}_{et},\mathbb{Z})))\stackrel{\sim}{\rightarrow} H^{0}(\textrm{RHom}(D_{\mathcal{X}},\mbox{R}\Gamma(\overline{\mathcal{X}}_{et},\mathbb{Z})))\rightarrow \prod_{i\in\mathbb{Z}} \textrm{Ext}^0(H^{i}(D_{\mathcal{X}}),H^{i}(\overline{\mathcal{X}}_{et},\mathbb{Z}))$$
is the obvious one. 

By Lemma \ref{prop-duality-Z-coefs}, $H^i(\overline{\mathcal{X}}_{et},\mathbb{Z})$ is of cofinite type for $i\neq 0$. It follows that, if both $i\neq0$ and $p\neq 0$, then $\textrm{Ext}^p(H^{i-q}(D_{\mathcal{X}}),H^{i}(\overline{\mathcal{X}}_{et},\mathbb{Z}))=0$, since $H^{i-q}(D_{\mathcal{X}})$ is uniquely divisible. Indeed, $\textrm{Ext}^p(H^{i-q}(D_{\mathcal{X}}),(\mathbb{Q}/\mathbb{Z})^r)=0$ for $p\geq1$, since $(\mathbb{Q}/\mathbb{Z})^r$ is divisible. Moreover, if $T$ is a finite abelian group of order $n$ then $\textrm{Ext}^p(H^{i-q}(D_{\mathcal{X}}),T)=0$ since $\textrm{Ext}^p(H^{i-q}(D_{\mathcal{X}}),T)$ is both uniquely divisible (since $H^{i-q}(D_{\mathcal{X}})$ is) and killed by $n$ (since $T$ is). For $i=0$, one has $\textrm{Ext}^p(H^{0-q}(D_{\mathcal{X}}),H^{0}(\overline{\mathcal{X}}_{et},\mathbb{Z}))=0$ as long as $p\geq2$, because $H^{0}(\overline{\mathcal{X}}_{et},\mathbb{Z})=\mathbb{Z}$ has an injective resolution of length one. In particular,  $E_2^{p,q}=0$ for $p\neq 0,1$, hence the spectral sequence (\ref{VerdierSS}) degenerates at $E_2$. Moreover, one has
$$E_2^{1,-1}= \prod_{i\in\mathbb{Z}} \textrm{Ext}^1(H^{i+1}(D_{\mathcal{X}}),H^{i}(\overline{\mathcal{X}}_{et},\mathbb{Z}))=\textrm{Ext}^1(H^{1}(D_{\mathcal{X}}),H^{0}(\overline{\mathcal{X}}_{et},\mathbb{Z}))=0$$
since $H^{1}(D_{\mathcal{X}})=0$. It follows that the edge morphism (\ref{Bmap}) is an isomorphism.

For $i\leq 1$, any map $H^i(\alpha_{\mathcal{X}}):H^{i}(D_{\mathcal{X}})\rightarrow H^i(\overline{\mathcal{X}}_{et},\mathbb{Z})$ must be trivial since $H^{i}(D_{\mathcal{X}})=0$. 
For any $i\geq 2$, we consider the morphism
$$\alpha^i_{\mathcal{X}}:H^{i}(D_{\mathcal{X}})=\textrm{Hom}(H^{2d+2-i}(\mathcal{X},\mathbb{Q}(d))_{\geq0},\mathbb{Q})\stackrel{\sim}{\rightarrow}
\textrm{Hom}(H^{2d+2-i}(\overline{\mathcal{X}}_{et},\mathbb{Z}(d))_{\geq0},\mathbb{Q})$$
$$\rightarrow
\textrm{Hom}(H^{2d+2-i}(\overline{\mathcal{X}}_{et},\mathbb{Z}(d))_{\geq0},\mathbb{Q}/\mathbb{Z})
\stackrel{\sim}{\leftarrow}H^i(\overline{\mathcal{X}}_{et},\mathbb{Z})$$
where the last isomorphism is given by Lemma \ref{prop-duality-Z-coefs}. But (\ref{Bmap}) is an isomorphism, hence there exists a unique map in $\mathcal{D}$
$$\alpha_{\mathcal{X}}:\mbox{RHom}(\mbox{R}\Gamma(\mathcal{X},\mathbb{Q}(d))_{\geq 0},\mathbb{Q}[-2d-2])\rightarrow \mbox{R}\Gamma(\overline{\mathcal{X}}_{et},\mathbb{Z})$$
such that $H^i(\alpha_{\mathcal{X}})=\alpha^i_{\mathcal{X}}$.
\end{proof}

\subsubsection{The general case}\label{sect-X(R)} In this subsection, we allow $\mathcal{X}(\mathbb{R})\neq\emptyset$. So $\mathcal{X}$ denotes a proper, regular and connected arithmetic scheme of dimension $d$. It is possible to define a dualizing complex $\mathbb{Z}(d)^{\overline{\mathcal{X}}}$ on  $\overline{\mathcal{X}}_{et}$, to prove Artin-Verdier duality in this setting and to define $\alpha_{\mathcal{X}}$ in the very same way as in Section \ref{sectNotX(R)}. For the sake of conciseness, we shall use a somewhat trickier construction, which is based on Milne's cohomology with compact support \cite{Milne-duality}.

We denote by 
\begin{equation}\label{Milnedef}
\mbox{R}\hat{\Gamma}_{c}(\mathcal{X}_{et},\mathcal{F}):=\mbox{R}\hat{\Gamma}_{c}(\mathrm{Spec}(\mathbb{Z})_{et},\mbox{R}f_*\mathcal{F})
\end{equation}
Milne's cohomology with compact support, where $\mbox{R}\hat{\Gamma}_{c}(\mathrm{Spec}(\mathbb{Z})_{et},-)$ is defined as in (\cite{Milne-duality} Section II.2) and $f:\X\rightarrow\mathrm{Spec}(\mathbb{Z})$ is the structure map.
Milne's definition yields a canonical map $\mbox{R}\hat{\Gamma}_{c}(\mathrm{Spec}(\mathbb{Z})_{et},-)\rightarrow \mbox{R}\Gamma(\mathrm{Spec}(\mathbb{Z})_{et},-)$ inducing
\begin{equation}\label{milnemap}
\mbox{R}\hat{\Gamma}_c(\mathcal{X}_{et},\mathbb{Z})\rightarrow \mbox{R}\Gamma(\mathcal{X}_{et},\mathbb{Z}).
\end{equation} 
We need the following lemma.

\begin{lem}\label{lem-hat-bar}
There is a morphism $\mbox{\emph{R}}\hat{\Gamma}_c(\mathcal{X}_{et},\mathbb{Z})\stackrel{c}{\rightarrow} \mbox{\emph{R}}\Gamma(\overline{\mathcal{X}}_{et},\mathbb{Z})$ satisfying the following properties. The composite map
$$\mbox{\emph{R}}\hat{\Gamma}_c(\mathcal{X}_{et},\mathbb{Z})\stackrel{c}{\rightarrow} \mbox{\emph{R}}\Gamma(\overline{\mathcal{X}}_{et},\mathbb{Z})\stackrel{\varphi^*}{\rightarrow} \mbox{\emph{R}}\Gamma(\mathcal{X}_{et},\mathbb{Z})$$
coincides with (\ref{milnemap}), $H^i(c)$ is an isomorphism for $i$ large and $H^i(c)$ has finite $2$-torsion kernel and cokernel for any $i\in\mathbb{Z}$.
\end{lem}
\begin{proof}
Recall from (\cite{Flach-moi} Section 4) that there is an isomorphism of functors $u_{\infty}^*\varphi_*\simeq \pi_*\alpha^*$ where $\alpha:Sh(G_{\mathbb{R}},\mathcal{X}(\mathbb{C}))\rightarrow \mathcal{X}_{et}$ and $\pi:Sh(G_{\mathbb{R}},\mathcal{X}(\mathbb{C}))\rightarrow Sh(\mathcal{X}_{\infty})$ are the canonical maps while $u_{\infty}:Sh(\mathcal{X}_{\infty})\rightarrow \overline{\mathcal{X}}_{et}$ and $\varphi:\mathcal{X}_{et}\rightarrow \overline{\mathcal{X}}_{et}$ are complementary closed and open embeddings (see \cite{Flach-moi} Section 4). Here $Sh(G_{\mathbb{R}},\mathcal{X}(\mathbb{C}))$ (respectively $Sh(\mathcal{X}_{\infty})$) denotes the topos  of $G_{\mathbb{R}}$-equivariant sheaves of sets on $\mathcal{X}(\mathbb{C})$ (respectively the topos of sheaves on $\mathcal{X}_{\infty}$). It is easy to see that $\alpha^*$ sends injective abelian sheaves to $\pi_*$-acyclics ones. We obtain
$$u_{\infty}^*R\varphi_*\simeq R(u_{\infty}^*\varphi_*)\simeq R(\pi_*\alpha^*)\simeq (R\pi_*)\alpha^*.$$
Consider the map
$$R\varphi_*\mathbb{Z}\rightarrow u_{\infty,*}u_{\infty}^*R\varphi_*\mathbb{Z}\simeq u_{\infty,*}R\pi_*\mathbb{Z}\rightarrow 
u_{\infty,*}\tau^{>0}R\pi_*\mathbb{Z}.$$
Since $\tau^{\leq0}R\pi_*\mathbb{Z}$ is the constant sheaf $\mathbb{Z}=u_{\infty}^*\mathbb{Z}$ put in degree $0$, we obtain an exact triangle
\begin{equation}\label{exTZbar}
\mathbb{Z}^{\overline{\mathcal{X}}}\rightarrow R\varphi_*\mathbb{Z}\rightarrow u_{\infty,*}\tau^{>0}R\pi_*\mathbb{Z}.
\end{equation}
where $\mathbb{Z}^{\overline{\mathcal{X}}}$ denotes the constant sheaf $\mathbb{Z}$ on $\overline{\mathcal{X}}_{et}$. Moreover, we have $\mbox{R}\Gamma(\overline{\mathcal{X}}_{et},u_{\infty,*}\tau^{>0}R\pi_*\mathbb{Z})\simeq\mbox{R}\Gamma(\mathcal{X}_{\infty},\tau^{>0}R\pi_*\mathbb{Z})\simeq\mbox{R}\Gamma(\mathcal{X}(\mathbb{R}),\tau^{>0}R\pi_*\mathbb{Z})$ because $u_{\infty,*}$ is exact and
$\tau^{>0}R\pi_*\mathbb{Z}$ is concentrated on $\mathcal{X}(\mathbb{R})\subset \mathcal{X}_{\infty}$.
Therefore, applying $\mbox{R}\Gamma(\overline{\mathcal{X}}_{et},-)$ to (\ref{exTZbar}), we get an exact triangle
\begin{equation}\label{exTZbarGamma}
\mbox{R}\Gamma(\overline{\mathcal{X}}_{et},\mathbb{Z})\rightarrow \mbox{R}\Gamma(\mathcal{X}_{et},\mathbb{Z})\rightarrow \mbox{R}\Gamma(\mathcal{X}(\mathbb{R}),\tau^{>0}R\pi_*\mathbb{Z}).
\end{equation}
On the other hand there is an exact triangle
\begin{equation}\label{exTZbarHatGamma}
\mbox{R}\hat{\Gamma}_c(\mathcal{X}_{et},\mathbb{Z})\rightarrow \mbox{R}\Gamma(\mathcal{X}_{et},\mathbb{Z})\rightarrow \mbox{R}\hat{\Gamma}(G_{\mathbb{R}},\mathcal{X}(\mathbb{R}),\mathbb{Z})
\end{equation}
where $\mbox{R}\hat{\Gamma}(G_{\mathbb{R}},\mathcal{X}(\mathbb{R}),-):=\mbox{R}\hat{\Gamma}(G_{\mathbb{R}},(\mbox{R}\Gamma(\mathcal{X}(\mathbb{R}),-))$ denotes equivariant Tate cohomology (which was introduced in \cite{Swan60}). Notice that the canonical map $\mbox{R}\hat{\Gamma}(G_{\mathbb{R}},\mathcal{X}(\mathbb{C}),\mathbb{Z})\rightarrow \mbox{R}\hat{\Gamma}(G_{\mathbb{R}},\mathcal{X}(\mathbb{R}),\mathbb{Z})$ is an isomorphism \cite{Swan60}.

We shall define below a canonical map $c_{\infty}:\mbox{R}\hat{\Gamma}(G_{\mathbb{R}},\mathcal{X}(\mathbb{R}),\mathbb{Z})\rightarrow \mbox{R}\Gamma(\mathcal{X}(\mathbb{R}),\tau^{>0}R\pi_*\mathbb{Z})$ such that $H^i(c_{\infty})$ is an isomorphism for $i$ large and $H^i(c_{\infty})$ has finite $2$-torsion kernel and cokernel for any $i\in\mathbb{Z}$. The map $c_{\infty}$ is compatible (in the obvious sense) with the identity map of $\mbox{R}\Gamma(\mathcal{X}_{et},\mathbb{Z})$, hence there exists a morphism $c:\mbox{R}\hat{\Gamma}_c(\mathcal{X}_{et},\mathbb{Z})\rightarrow \mbox{R}\Gamma(\overline{\mathcal{X}}_{et},\mathbb{Z})$ such that $(c,Id,c_{\infty})$ is a morphism of exact triangles from (\ref{exTZbarHatGamma}) to (\ref{exTZbarGamma}). Hence the result will follow.

It remains to define $c_{\infty}$. We denote by $\mbox{R}\Gamma(G_{\mathbb{R}},\mathcal{X}(\mathbb{R}),-):=\mbox{R}\Gamma(G_{\mathbb{R}},(\mbox{R}\Gamma(\mathcal{X}(\mathbb{R}),-))$ usual equivariant cohomology. We have isomorphisms
\begin{eqnarray}
\label{id0}\mathrm{R}\hat{\Gamma}(G_{\mathbb{R}},\mathcal{X}(\mathbb{R}),\mathbb{Z})_{>d}
&\simeq&\mathrm{R}\Gamma(G_{\mathbb{R}},\mathcal{X}(\mathbb{R}),\mathbb{Z})_{>d}\\
\label{id2}&\simeq&\mathrm{R}\Gamma(\mathcal{X}(\mathbb{R}),R\pi_*\mathbb{Z})_{>d}\\
\label{id3}&\simeq&\mathrm{R}\Gamma(\mathcal{X}(\mathbb{R}),(R\pi_*\mathbb{Z})_{>0})_{>d}\\
\label{id4}&\simeq&\mathrm{R}\Gamma(\mathcal{X}(\mathbb{R}),\mathrm{R}\Gamma(G_{\mathbb{R}},\mathbb{Z})_{>0})_{>d}\\
\label{id44}&\simeq&\mathrm{R}\Gamma(\mathcal{X}(\mathbb{R}),\mathrm{R}\hat{\Gamma}(G_{\mathbb{R}},\mathbb{Z})_{>0})_{>d}\\
\label{id5}&\simeq&\mathrm{R}\Gamma(\mathcal{X}(\mathbb{R}),\mathrm{R}\hat{\Gamma}(G_{\mathbb{R}},\mathbb{Z}))_{>d}
\end{eqnarray}
where we consider $\mathrm{R}\hat{\Gamma}(G_{\mathbb{R}},\mathbb{Z})$ and $\mathrm{R}\Gamma(G_{\mathbb{R}},\mathbb{Z})$ as complexes of constant sheaves on $\mathcal{X}(\mathbb{R})$. The canonical map $\mbox{R}\Gamma(G_{\mathbb{R}},\mathcal{X}(\mathbb{R}),\mathbb{Z})\rightarrow \mbox{R}\hat{\Gamma}(G_{\mathbb{R}},\mathcal{X}(\mathbb{R}),\mathbb{Z})$ and the corresponding morphism of hypercohomology spectral sequences give the isomorphism (\ref{id0}), since $\mathcal{X}(\mathbb{R})$ is of topological dimension $d$. Similarly (\ref{id3}) and (\ref{id5}) can be deduced from the corresponding morphisms of spectral sequences. Finally, (\ref{id4}) is given by  $(R\pi_*\mathbb{Z})_{\mid\mathcal{X}(\mathbb{R})}\simeq \mathrm{R}\Gamma(G_{\mathbb{R}},\mathbb{Z})$. Since $G_{\mathbb{R}}$ is cyclic, we have an isomorphism  
$\mathrm{R}\hat{\Gamma}(G_{\mathbb{R}},\mathbb{Z})
\stackrel{\sim}{\rightarrow}\mathrm{R}\hat{\Gamma}(G_{\mathbb{R}},\mathbb{Z})[2]$
in $\mathcal{D}$.
Applying $\mathrm{R}\Gamma(\mathcal{X}(\mathbb{R}),-)$ we obtain
$$\mathrm{R}\Gamma(\mathcal{X}(\mathbb{R}),\mathrm{R}\hat{\Gamma}(G_{\mathbb{R}},\mathbb{Z}))\stackrel{\sim}{\longrightarrow}
\mathrm{R}\Gamma(\mathcal{X}(\mathbb{R}),\mathrm{R}\hat{\Gamma}(G_{\mathbb{R}},\mathbb{Z}))[2].$$
The same is true for equivariant Tate cohomology: we have an isomorphism
$$\mathrm{R}\hat{\Gamma}(G_{\mathbb{R}},\mathcal{X}(\mathbb{R}),\mathbb{Z})\stackrel{\sim}{\longrightarrow}
\mathrm{R}\hat{\Gamma}(G_{\mathbb{R}},\mathcal{X}(\mathbb{R}),\mathbb{Z})[2].$$
Hence for any integer $k\geq1$, (\ref{id5}) induces an isomorphism
$$\mathrm{R}\hat{\Gamma}(G_{\mathbb{R}},\mathcal{X}(\mathbb{R}),\mathbb{Z})_{>d-2k}\simeq
\mathrm{R}\Gamma(\mathcal{X}(\mathbb{R}),\mathrm{R}\hat{\Gamma}(G_{\mathbb{R}},\mathbb{Z}))_{>d-2k}.$$ 
Taking $k$ large enough, we obtain a canonical isomorphism
\begin{equation}\label{jiii}
\mathrm{R}\hat{\Gamma}(G_{\mathbb{R}},\mathcal{X}(\mathbb{R}),\mathbb{Z})_{>0}\simeq
\mathrm{R}\Gamma(\mathcal{X}(\mathbb{R}),\mathrm{R}\hat{\Gamma}(G_{\mathbb{R}},\mathbb{Z}))_{>0}.
\end{equation} 
The map
$\mathrm{R}\hat{\Gamma}(G_{\mathbb{R}},\mathbb{Z})\rightarrow \mathrm{R}\hat{\Gamma}(G_{\mathbb{R}},\mathbb{Z})_{>0}$ induces
\begin{equation}\label{jeje}
\mathrm{R}\Gamma(\mathcal{X}(\mathbb{R}),\mathrm{R}\hat{\Gamma}(G_{\mathbb{R}},\mathbb{Z}))\longrightarrow \mathrm{R}\Gamma(\mathcal{X}(\mathbb{R}),\mathrm{R}\hat{\Gamma}(G_{\mathbb{R}},\mathbb{Z})_{>0}).
\end{equation}
Since $\mathrm{R}\Gamma(\mathcal{X}(\mathbb{R}),\mathrm{R}\hat{\Gamma}(G_{\mathbb{R}},\mathbb{Z})_{>0})$ is acyclic in degrees $\leq0$, (\ref{jeje}) is induced by a unique map
\begin{equation}\label{jejekr}
\mathrm{R}\Gamma(\mathcal{X}(\mathbb{R}),\mathrm{R}\hat{\Gamma}(G_{\mathbb{R}},\mathbb{Z}))_{>0}\longrightarrow \mathrm{R}\Gamma(\mathcal{X}(\mathbb{R}),\mathrm{R}\hat{\Gamma}(G_{\mathbb{R}},\mathbb{Z})_{>0})
\end{equation}
and $c_{\infty}$ is defined as the composition
$$\mathrm{R}\hat{\Gamma}(G_{\mathbb{R}},\mathcal{X}(\mathbb{R}),\mathbb{Z})\longrightarrow \mathrm{R}\hat{\Gamma}(G_{\mathbb{R}},\mathcal{X}(\mathbb{R}),\mathbb{Z})_{>0} \stackrel{(\ref{jiii})}{\longrightarrow}
\mathrm{R}\Gamma(\mathcal{X}(\mathbb{R}),\mathrm{R}\hat{\Gamma}(G_{\mathbb{R}},\mathbb{Z}))_{>0}$$
$$\stackrel{(\ref{jejekr})}{\longrightarrow}
\mathrm{R}\Gamma(\mathcal{X}(\mathbb{R}),\mathrm{R}\hat{\Gamma}(G_{\mathbb{R}},\mathbb{Z})_{>0})
\simeq \mathrm{R}\Gamma(\mathcal{X}(\mathbb{R}),\tau^{>0}R\pi_*\mathbb{Z}).$$

\end{proof}
If $\X(\mathbb{R})=\emptyset$, then the maps $c$ and $\varphi^*$ of Lemma \ref{lem-hat-bar} are isomorphisms in $\mathcal{D}$ (this follows from (\ref{exTZbarHatGamma}) and from the fact that $\varphi_*$ is exact in this case). Therefore, the following result generalizes Theorem \ref{thm-alpha}.
\begin{prop}\label{propXRneqempty}
Assume that $\mathcal{X}$ satisfies $\textbf{\emph{L}}(\mathcal{X}_{et},d)_{\geq0}$. Then there exists a unique morphism in $\mathcal{D}$
$$\alpha_{\mathcal{X}}:\mbox{\emph{RHom}}(\mbox{\emph{R}}\Gamma(\mathcal{X},\mathbb{Q}(d))_{\geq0},\mathbb{Q}[-2d-2])\rightarrow \mbox{\emph{R}}\Gamma(\overline{\mathcal{X}}_{et},\mathbb{Z})$$
such that $H^i(\alpha_{\mathcal{X}})$ is the following composite map
$$\mbox{\emph{Hom}}(H^{2d+2-i}(\mathcal{X},\mathbb{Q}(d))_{\geq0},\mathbb{Q})\stackrel{\sim}{\longrightarrow}
\mbox{\emph{Hom}}(H^{2d+2-i}(\mathcal{X}_{et},\mathbb{Z}(d))_{\geq0},\mathbb{Q})$$
$$\longrightarrow
\mbox{\emph{Hom}}(H^{2d+2-i}(\mathcal{X}_{et},\mathbb{Z}(d))_{\geq0},\mathbb{Q}/\mathbb{Z})
\stackrel{\sim}{\longleftarrow}\hat{H}_c^i(\mathcal{X}_{et},\mathbb{Z})\stackrel{H^i(c)}{\longrightarrow} H^i(\overline{\mathcal{X}}_{et},\mathbb{Z})$$
for any $i\geq2$. 
\end{prop}

\begin{proof}
We consider the composite map
$$\mbox{RHom}_{\mathcal{X}}(\mathbb{Z},\mathbb{Z}(d))\rightarrow\mbox{RHom}_{\mathrm{Spec}(\mathbb{Z})}(\mbox{R}f_*\mathbb{Z},\mbox{R}f_*\mathbb{Z}(d))
\stackrel{p}{\rightarrow}\mbox{RHom}_{\mathrm{Spec}(\mathbb{Z})}(\mbox{R}f_*\mathbb{Z},\mathbb{Z}(1)[-2d+2])$$
$$ \stackrel{q}{\rightarrow} \mbox{RHom}_{\mathcal{D}}(\mbox{R}\hat{\Gamma}_c(\mathrm{Spec}(\mathbb{Z})_{et},\mbox{R}f_*\mathbb{Z}),\mathbb{Q}/\mathbb{Z}[-2d-2]) =\mbox{RHom}_{\mathcal{D}}(\mbox{R}\hat{\Gamma}_c(\mathcal{X}_{et},\mathbb{Z}),\mathbb{Q}/\mathbb{Z}[-2d-2])$$
where $p$ is induced by the push-forward map of (\cite{Geisser-Duality} Corollary 7.2 (b)) and $q$ is given by 1-dimensional Artin-Verdier duality (see \cite{Milne-duality} II.2.6) since $\mathbb{Z}(1)\simeq\mathbb{G}_m[-1]$ (see \cite{Geisser-Duality} Lemma 7.4). This composite map induces
$$\mbox{R}\Gamma(\mathcal{X}_{et},\mathbb{Z}(d))_{\geq0}\rightarrow \mbox{RHom}(\mbox{R}\hat{\Gamma}_c(\mathcal{X}_{et},\mathbb{Z}),\mathbb{Q}/\mathbb{Z}[-2d-2])
$$
since $\mbox{R}\hat{\Gamma}_c(\mathcal{X}_{et},\mathbb{Z})$ is acyclic in degrees $>2d+2$. Indeed, $\hat{H}^i_c(\mathrm{Spec}(\mathbb{Z})_{et},F)=0$ for any $i>3$ and any torsion abelian sheaf $F$ (see \cite{Milne-duality} Theorem II.3.1(b)); hence by proper base change the hypercohomology spectral sequence associated to (\ref{Milnedef}) gives
$\hat{H}^i_c(\mathcal{X}_{et},\mathbb{Q}/\mathbb{Z})=0$ for $i>2d+1$. By adjunction, we obtain the product map
\begin{equation}\label{map-duality-cplexes2}
\mbox{R}\Gamma(\mathcal{X}_{et},\mathbb{Z}(d))_{\geq0}\otimes_{\mathbb{Z}}^L\mbox{R}\hat{\Gamma}_c(\mathcal{X}_{et},\mathbb{Z})\longrightarrow\mathbb{Q}/\mathbb{Z}[-2d-2].
\end{equation}
For any positive integer $n$ and any $i\in\mathbb{Z}$, the canonical map
\begin{equation}\label{perfpair}
\mbox{Ext}^{2d+2-i}_{\mathcal{X}}(\mathbb{Z}/n\mathbb{Z},\mathbb{Z}(d))\times \hat{H}_c^i(\mathcal{X}_{et},\mathbb{Z}/n\mathbb{Z})\rightarrow\mathbb{Q}/\mathbb{Z}
\end{equation}
is a perfect pairing of finite groups (see \cite{Geisser-Duality} Theorem 7.8). Note that (\ref{perfpair}) is compatible with (\ref{map-duality-cplexes2}). The arguments of the proof of Lemma \ref{prop-duality-Z-coefs} show that the morphism
\begin{equation}\label{cofinitetypehat}
\hat{H}_c^i(\mathcal{X}_{et},\mathbb{Z})\stackrel{\sim}{\rightarrow}\mbox{Hom}(H^{2d+2-i}(\mathcal{X}_{et},\mathbb{Z}(d))_{\geq0},\mathbb{Q}/\mathbb{Z}),
\end{equation}
induced by (\ref{map-duality-cplexes2}), is an isomorphism of abelian groups for $i\geq 2$. The argument of the proof of Theorem \ref{thm-alpha} provides us with a
unique map
$$\tilde{\alpha}_{\mathcal{X}}:\mbox{RHom}(\mbox{R}\Gamma(\mathcal{X},\mathbb{Q}(d))_{\geq0},\mathbb{Q}[-2d-2])\rightarrow 
\mbox{R}\hat{\Gamma}_c(\mathcal{X}_{et},\mathbb{Z})$$
such that $H^i(\tilde{\alpha}_{\mathcal{X}})$ is the following composite map
$$\mbox{Hom}(H^{2d+2-i}(\mathcal{X},\mathbb{Q}(d))_{\geq0},\mathbb{Q})\stackrel{\sim}{\rightarrow}
\mbox{Hom}(H^{2d+2-i}(\mathcal{X}_{et},\mathbb{Z}(d))_{\geq0},\mathbb{Q})$$
$$\rightarrow
\mbox{Hom}(H^{2d+2-i}(\mathcal{X}_{et},\mathbb{Z}(d))_{\geq0},\mathbb{Q}/\mathbb{Z})
\stackrel{\sim}{\leftarrow}\hat{H}_c^i(\mathcal{X}_{et},\mathbb{Z})$$
for any $i\geq2$. Then we define $\alpha_{\mathcal{X}}$ as the composite map
$$\mbox{RHom}(\mbox{R}\Gamma(\mathcal{X},\mathbb{Q}(d))_{\geq0},\mathbb{Q}[-\delta])\stackrel{\tilde{\alpha}_{\mathcal{X}}}{\longrightarrow} \mbox{R}\hat{\Gamma}_c(\mathcal{X}_{et},\mathbb{Z})\stackrel{c}{\longrightarrow} \mbox{R}\Gamma(\overline{\mathcal{X}}_{et},\mathbb{Z})$$
where $c$ is the map of Lemma \ref{lem-hat-bar}. It remains to show that $\alpha_{\mathcal{X}}$ does not depend on the choice of $c$ (note that $c$ is not uniquely defined). The map $\varphi^*\circ c$ is well defined (since it coincides with (\ref{milnemap}) by Lemma \ref{lem-hat-bar}), hence so is $\varphi^*\circ \alpha_{\mathcal{X}}: D_{\mathcal{X}}\rightarrow \mbox{R}\Gamma(\mathcal{X}_{et},\mathbb{Z}) $, where $D_{\mathcal{X}}:=\mbox{RHom}(\mbox{R}\Gamma(\mathcal{X},\mathbb{Q}(d))_{\geq0},\mathbb{Q}[-2d-2])$. By (\ref{exTZbarGamma}) and using the fact that $D_{\mathcal{X}}$ is a complex of $\mathbb{Q}$-vector spaces while $\mbox{R}\Gamma(\mathcal{X}(\mathbb{R}),\tau^{>0}R\pi_*\mathbb{Z})$ is a complex of $\mathbb{Z}/2\mathbb{Z}$-vector spaces, we see that composition with $\varphi^*$ induces an isomorphism $$\varphi^*\circ :\mathrm{Hom}_{\mathcal{D}}(D_{\mathcal{X}},\mbox{R}\Gamma(\overline{\mathcal{X}}_{et},\mathbb{Z}))\stackrel{\sim}{\rightarrow}\mathrm{Hom}_{\mathcal{D}}(D_{\mathcal{X}},\mbox{R}\Gamma(\mathcal{X}_{et},\mathbb{Z}))$$
so that $\alpha_{\mathcal{X}}$ does not depend on the choice of $c$. The result follows.
\end{proof}

\subsection{The complex $\mathrm{R}\Gamma_W(\overline{\mathcal{X}},\mathbb{Z})$}\label{sect-RGW}
Throughout this section $\mathcal{X}$ denotes a proper regular connected arithmetic scheme of dimension $d$ satisfying $\textbf{L}(\mathcal{X}_{et},d)_{\geq0}$.

\begin{defn}\label{def-cohomology}
There exists an exact triangle
$$\mbox{\emph{RHom}}(\mbox{\emph{R}}\Gamma(\mathcal{X},\mathbb{Q}(d))_{\geq0},\mathbb{Q}[-2d-2])
\stackrel{\alpha_{\mathcal{X}}}{\longrightarrow} \mbox{\emph{R}}\Gamma(\overline{\mathcal{X}}_{et},\mathbb{Z})\longrightarrow \mbox{\emph{R}}\Gamma_W(\overline{\mathcal{X}},\mathbb{Z})$$
well defined up to a unique isomorphism in $\mathcal{D}$. We define
$$H_W^i(\overline{\mathcal{X}},\mathbb{Z}):=H^i(\mbox{\emph{R}}\Gamma_W(\overline{\mathcal{X}},\mathbb{Z})).$$
\end{defn}
The existence of such an exact triangle follows from axiom TR1 of triangulated categories. Its uniqueness is given by Theorem \ref{cor-functoriality} and can be stated as follows. If we have two objects $\mbox{R}\Gamma_W(\overline{\mathcal{X}},\mathbb{Z})$ and $\mbox{R}\Gamma_W(\overline{\mathcal{X}},\mathbb{Z})'$ given with exact triangles as in Definition \ref{def-cohomology}, then there exists a \emph{unique} map
$f:\mbox{R}\Gamma_W(\overline{\mathcal{X}},\mathbb{Z})\rightarrow\mbox{R}\Gamma_W(\overline{\mathcal{X}},\mathbb{Z})'$ in $\mathcal{D}$ which yields a morphism of exact triangles
\[ \xymatrix{
\mbox{RHom}(\mbox{R}\Gamma(\mathcal{X},\mathbb{Q}(d))_{\geq0},\mathbb{Q}[-2d-2])\ar[d]^{Id}\ar[r]^{
\,\,\,\,\,\,\,\,\,\,\,\,\,\,\,\,\,\,\,\,\,\,\,\,\,\,\,\,\,\,\,\,\,\,\,{\alpha}_{\mathcal{X}}}
&\mbox{R}\Gamma(\overline{\mathcal{X}}_{et},\mathbb{Z})\ar[r]\ar[d]^{Id}
&\mbox{R}\Gamma_W(\overline{\mathcal{X}},\mathbb{Z})\ar[d]^f\\
\mbox{RHom}(\mbox{R}\Gamma(\mathcal{X},\mathbb{Q}(d))_{\geq0},\mathbb{Q}[-2d-2])\ar[r]^{
\,\,\,\,\,\,\,\,\,\,\,\,\,\,\,\,\,\,\,\,\,\,\,\,\,\,\,\,\,\,\,\,\,\,\,{\alpha}_{\mathcal{X}}}
&\mbox{R}\Gamma(\overline{\mathcal{X}}_{et},\mathbb{Z})\ar[r]
&\mbox{R}\Gamma_W(\overline{\mathcal{X}},\mathbb{Z})'
}
\]

\begin{prop}\label{finitelygenerated-cohomology} The following assertions are true.
\begin{enumerate}
\item The group $H_W^i(\overline{\mathcal{X}},\mathbb{Z})$ is finitely generated for any $i\in\mathbb{Z}$.  One has
$H_W^i(\overline{\mathcal{X}},\mathbb{Z})=0$ for $i<0$, $H_W^0(\overline{\mathcal{X}},\mathbb{Z})=\mathbb{Z}$ and $H_W^i(\overline{\mathcal{X}},\mathbb{Z})=0$ for $i$ large.
\item If $\mathcal{X}(\mathbb{R})=\emptyset$ then there is an exact sequence
$$0\rightarrow H^i(\overline{\mathcal{X}}_{et},\mathbb{Z})_{codiv} \rightarrow H_W^i(\overline{\mathcal{X}},\mathbb{Z})\rightarrow \mbox{\emph{Hom}}_{\mathbb{Z}}(H^{2d+2-(i+1)}(\overline{\mathcal{X}}_{et},\mathbb{Z}(d))_{\geq0},\mathbb{Z})\rightarrow 0$$
for any $i\in\mathbb{Z}$.
\item For any $\mathcal{X}$ and any $i\in\mathbb{Z}$, there is an exact sequence
$$0\rightarrow H^i(\overline{\mathcal{X}}_{et},\mathbb{Z})_{codiv} \rightarrow H_W^i(\overline{\mathcal{X}},\mathbb{Z})\rightarrow \textrm{\emph{Ker}}(H^{i+1}(\alpha_{\mathcal{X}}))\rightarrow 0$$
where $\textrm{\emph{Ker}}(H^{i+1}(\alpha_{\mathcal{X}}))\subseteq\mbox{\emph{Hom}}_{\mathbb{Z}}(H^{2d+2-(i+1)}(\mathcal{X}_{et},\mathbb{Q}(d))_{\geq0},\mathbb{Q})$ is a $\mathbb{Z}$-lattice.
\end{enumerate}
\end{prop}
\begin{proof}In order to ease the notation we set $\delta:=2d+2$. The exact triangle of Defintion \ref{def-cohomology} yields a long exact sequence
$$...\rightarrow \mbox{Hom}(H^{\delta-i}(\mathcal{X},\mathbb{Q}(d))_{\geq0},\mathbb{Q})\rightarrow
H^i(\overline{\mathcal{X}}_{et},\mathbb{Z})\rightarrow H_W^i(\overline{\mathcal{X}},\mathbb{Z})$$
$$\rightarrow \mbox{Hom}(H^{\delta-(i+1)}(\mathcal{X},\mathbb{Q}(d))_{\geq0},\mathbb{Q})\rightarrow
H^{i+1}(\overline{\mathcal{X}}_{et},\mathbb{Z})\rightarrow...$$
and a short exact sequence
\begin{equation}\label{oneexactsequence}
0\rightarrow \mbox{Coker}H^i(\alpha_{\mathcal{X}})\rightarrow H_W^i(\overline{\mathcal{X}},\mathbb{Z})\rightarrow \mbox{Ker}H^{i+1}(\alpha_{\mathcal{X}})\rightarrow0.
\end{equation}

Assume for the moment that $\X(\br)=\emptyset$. For $i\geq1$, the morphism
$$H^i(\alpha_{\mathcal{X}}):\mbox{Hom}(H^{\delta-i}(\mathcal{X},\mathbb{Q}(d))_{\geq0},\mathbb{Q})\rightarrow
H^i(\overline{\mathcal{X}}_{et},\mathbb{Z})
$$
is the following composite map:
$$\mbox{Hom}(H^{\delta-i}(\mathcal{X},\mathbb{Q}(d))_{\geq0},\mathbb{Q})\stackrel{\sim}{\rightarrow}\mbox{Hom}(H^{\delta-i}(\overline{\mathcal{X}}_{et},\mathbb{Z}(d))_{\geq0},\mathbb{Q})$$
$$\rightarrow \mbox{Hom}(H^{\delta-i}(\overline{\mathcal{X}}_{et},\mathbb{Z}(d))_{\geq0},\mathbb{Q}/\mathbb{Z})\simeq
H^i(\overline{\mathcal{X}}_{et},\mathbb{Z})$$
Since $H^{\delta-i}(\overline{\mathcal{X}}_{et},\mathbb{Z}(d))_{\geq0}$ is assumed to be finitely generated, the image of the morphism $H^i(\alpha_{\mathcal{X}})$ is the maximal divisible subgroup of $H^i(\overline{\mathcal{X}}_{et},\mathbb{Z})$, which we denote by $H^i(\overline{\mathcal{X}}_{et},\mathbb{Z})_{div}$. For $i=0$ one has
$$H^0(\alpha_{\mathcal{X}}):0=\mbox{Hom}(H^{\delta}(\mathcal{X},\mathbb{Q}(d)),\mathbb{Q})\rightarrow
H^0(\overline{\mathcal{X}}_{et},\mathbb{Z})=\mathbb{Z}$$
and $H^{\delta-i}(\mathcal{X},\mathbb{Q}(d))=H^i(\overline{\mathcal{X}}_{et},\mathbb{Z})=0$ for $i<0$. We obtain
$$\mbox{Coker}H^i(\alpha_{\mathcal{X}})=H^i(\overline{\mathcal{X}}_{et},\mathbb{Z})/H^i(\overline{\mathcal{X}}_{et},\mathbb{Z})_{div}
=:H^i(\overline{\mathcal{X}}_{et},\mathbb{Z})_{codiv}$$
for any $i\in\mathbb{Z}$. By definition of $\alpha_{\mathcal{X}}$, the kernel of $H^i(\alpha_{\mathcal{X}})$ can be identified with the kernel of the map 
$\mbox{Hom}(H^{\delta-i}(\overline{\mathcal{X}}_{et},\mathbb{Z}(d))_{\geq0},\mathbb{Q})\rightarrow \mbox{Hom}(H^{\delta-i}(\overline{\mathcal{X}}_{et},\mathbb{Z}(d))_{\geq0},\mathbb{Q}/\mathbb{Z})$. In other words, one has
$$\mathrm{Ker}H^i(\alpha_{\mathcal{X}})\simeq \mbox{Hom}(H^{\delta-i}(\overline{\mathcal{X}}_{et},\mathbb{Z}(d))_{\geq0},\mathbb{Z}).$$
The exact sequence (\ref{oneexactsequence}) therefore takes the form
$$0\rightarrow H^i(\overline{\mathcal{X}}_{et},\mathbb{Z})_{codiv} \rightarrow H_W^i(\overline{\mathcal{X}},\mathbb{Z})\rightarrow \mbox{Hom}_{\mathbb{Z}}(H^{2d+2-(i+1)}(\overline{\mathcal{X}}_{et},\mathbb{Z}(d))_{\geq0},\mathbb{Z})\rightarrow 0.$$
We obtain the second claim of the proposition.

Assume now that $\X(\br)\neq\emptyset$. Then  for $i\geq2$, the morphism $H^i(\tilde{\alpha}_{\mathcal{X}})$
is the map
$$\mbox{Hom}(H^{\delta-i}(\mathcal{X},\mathbb{Q}(d))_{\geq0},\mathbb{Q})\stackrel{\sim}{\rightarrow} \mbox{Hom}(H^{\delta-i}(\mathcal{X}_{et},\mathbb{Z}(d))_{\geq0},\mathbb{Q})$$
$$\rightarrow \mbox{Hom}(H^{\delta-i}(\mathcal{X}_{et},\mathbb{Z}(d))_{\geq0},\mathbb{Q}/\mathbb{Z})\simeq
\hat{H}_c^i(\mathcal{X}_{et},\mathbb{Z})$$
and $H^i(\alpha_{\mathcal{X}})$ is defined as the composite map 
$$H^i(\alpha_{\mathcal{X}}):\mbox{Hom}(H^{\delta-i}(\mathcal{X},\mathbb{Q}(d))_{\geq0},\mathbb{Q})\stackrel{H^i(\tilde{\alpha}_{\mathcal{X}})}{\longrightarrow}
\hat{H}_c^i(\mathcal{X}_{et},\mathbb{Z})\longrightarrow H^i(\overline{\mathcal{X}}_{et},\mathbb{Z}).$$
The same argument as in the case $\mathcal{X}(\mathbb{R})=\emptyset$ shows $\textrm{Im}(H^i(\tilde{\alpha}_{\mathcal{X}}))=\hat{H}_c^i(\mathcal{X}_{et},\mathbb{Z})_{div}$. Let us show that $h:\hat{H}_c^i(\mathcal{X}_{et},\mathbb{Z})_{div}\rightarrow H^i(\overline{\mathcal{X}}_{et},\mathbb{Z})_{div}$ is surjective for any $i\in\mathbb{Z}$. This is obvious for  $i\leq1$, since $H^i(\overline{\mathcal{X}}_{et},\mathbb{Z})_{div}=0$ for $i\leq1$.  For $i\geq2$, the maps $\hat{H}_c^i(\mathcal{X}_{et},\mathbb{Z})\rightarrow H^i(\overline{\mathcal{X}}_{et},\mathbb{Z})$ and 
$\hat{H}_c^i(\mathcal{X}_{et},\mathbb{Z})_{div}\rightarrow\hat{H}_c^i(\mathcal{X}_{et},\mathbb{Z})$
have both finite cokernels (note that $\hat{H}_c^i(\mathcal{X}_{et},\mathbb{Z})$ is of cofinite type for $i\geq 2$ by (\ref{cofinitetypehat})). It follows immediately that the cokernel of $h:\hat{H}_c^i(\mathcal{X}_{et},\mathbb{Z})_{div}\rightarrow H^i(\overline{\mathcal{X}}_{et},\mathbb{Z})_{div}$ is of finite exponent and divisible, hence vanishes. So $h$ is indeed surjective. We obtain $\textrm{Im}(H^i(\alpha_{\mathcal{X}}))=H^i(\overline{\mathcal{X}}_{et},\mathbb{Z})_{div}$ hence
$$\mbox{Coker}H^i(\alpha_{\mathcal{X}})=H^i(\overline{\mathcal{X}}_{et},\mathbb{Z})_{codiv}$$
for any $i\in\mathbb{Z}$. We now compute $\textrm{Ker}H^i(\alpha_{\mathcal{X}})$. By definition of $\tilde{\alpha}_{\mathcal{X}}$, the kernel of $H^i(\tilde{\alpha}_{\mathcal{X}})$ can be identified with the kernel of the map 
$\mbox{Hom}(H^{\delta-i}(\mathcal{X}_{et},\mathbb{Z}(d))_{\geq0},\mathbb{Q})\rightarrow \mbox{Hom}(H^{\delta-i}(\mathcal{X}_{et},\mathbb{Z}(d))_{\geq0},\mathbb{Q}/\mathbb{Z})$. Hence one has
\begin{equation}\label{lele}
\textrm{Ker}H^i(\tilde{\alpha}_{\mathcal{X}})\simeq \mbox{Hom}(H^{\delta-i}(\mathcal{X}_{et},\mathbb{Z}(d))_{\geq0},\mathbb{Z}).
\end{equation} 
We denote by $T_i$ the kernel  of the surjective map $h:\hat{H}_c^i(\mathcal{X}_{et},\mathbb{Z})_{div}\rightarrow H^i(\overline{\mathcal{X}}_{et},\mathbb{Z})_{div}$ so that there is an exact sequence 
\begin{equation}\label{ex-sequ-lele}
0\longrightarrow T_i\longrightarrow \hat{H}_c^i(\mathcal{X}_{et},\mathbb{Z})_{div}\stackrel{h}{\longrightarrow} H^i(\overline{\mathcal{X}}_{et},\mathbb{Z})_{div}\longrightarrow 0.
\end{equation}
Consider the obvious injective map $f:\textrm{Ker}H^i(\tilde{\alpha}_{\mathcal{X}})\rightarrow \textrm{Ker}H^i(\alpha_{\mathcal{X}})$, and the following morphism of exact sequences:
\[ \xymatrix{
0\ar[r]&\mbox{Ker}H^i(\tilde{\alpha}_{\mathcal{X}})\ar[r]\ar[d]^f& \mbox{Hom}(H^{\delta-i}(\mathcal{X},\mathbb{Q}(d))_{\geq0},\mathbb{Q})\ar[r]^{\hspace{1cm}H^i(\tilde{\alpha}_{\mathcal{X}})}\ar[d]^=&\hat{H}_c^{i}(\mathcal{X}_{et},\mathbb{Z})_{div}\ar[d]^h\ar[r]&0\\
0\ar[r]& \mbox{Ker}H^i(\alpha_{\mathcal{X}})\ar[r]& \mbox{Hom}(H^{\delta-i}(\mathcal{X},\mathbb{Q}(d))_{\geq0},\mathbb{Q})\ar[r]^{\hspace{1cm}H^i(\alpha_{\mathcal{X}})}& H^{i}(\overline{\mathcal{X}}_{et},\mathbb{Z})_{div}\ar[r]&0
}
\]
The snake lemma then yields an isomorphism $T_i=\textrm{Ker}(h)\simeq \textrm{Coker}(f)$. Using (\ref{lele}), we obtain an exact sequence
$$0\rightarrow \mbox{Hom}(H^{\delta-i}(\mathcal{X}_{et},\mathbb{Z}(d))_{\geq0},\mathbb{Z})\rightarrow \textrm{Ker}H^i(\alpha_{\mathcal{X}})\rightarrow T_i\rightarrow 0.$$
Moreover, $T_i$ is finite and $2$-torsion. We obtain the third claim of the proposition.

It follows from the third claim that  $H^i_W(\overline{\mathcal{X}},\mathbb{Z})$ is finitely generated for any $i\in\mathbb{Z}$. Indeed, $\textrm{Ker}H^{i+1}(\alpha_{\mathcal{X}})$ is finitely generated since it is a $\mathbb{Z}$-lattice in a finite dimensional $\mathbb{Q}$-vector space. Moreover, $\hat{H}^i_c(\mathcal{X}_{et},\mathbb{Z})$ is of cofinite type for $i\geq2$ and finitely generated for $i\leq1$. Hence $\hat{H}^i_c(\mathcal{X}_{et},\mathbb{Z})_{codiv}$ is finite for any $i\in\mathbb{Z}$ and so is $H^i(\overline{\mathcal{X}}_{et},\mathbb{Z})_{codiv}$ by (\ref{ex-sequ-lele}).

Moreover, one has $H^i(\overline{\mathcal{X}}_{et},\mathbb{Z})=0$ for $i<0$ and $H^0(\overline{\mathcal{X}}_{et},\mathbb{Z})=\mathbb{Z}$. Moreover one has $H^i(\overline{\mathcal{X}}_{et},\mathbb{Z})\simeq\hat{H}_c^i(\mathcal{X}_{et},\mathbb{Z})$ for $i$ large (by Lemma \ref{lem-hat-bar}) and $\hat{H}_c^i(\mathcal{X}_{et},\mathbb{Z})=0$ for $i>2d+2$ by (\ref{cofinitetypehat}), hence $H^i(\overline{\mathcal{X}}_{et},\mathbb{Z})=0$ for $i$ large. Finally, $\textrm{Ker}H^{i+1}(\alpha_{\mathcal{X}})\subset \mbox{Hom}(H^{2d+2-(i+1)}(\mathcal{X},\mathbb{Q}(d))_{\geq0},\mathbb{Q})=0$ for $i\leq 0$ as well as for $i\geq 2d+2$. The first claim of the proposition follows. 

\end{proof}

\begin{thm}\label{cor-functoriality}
Let $f:\mathcal{X}\rightarrow\mathcal{Y}$ be a  morphism of relative dimension $c$ between proper regular connected arithmetic schemes, such that $\textbf{\emph{L}}(\mathcal{X}_{et},d_{\mathcal{X}})_{\geq0}$ and $\textbf{\emph{L}}(\mathcal{Y}_{et},d_{\mathcal{Y}})_{\geq0}$ hold, where $d_{\mathcal{X}}$ (respectively $d_{\mathcal{Y}}$) denotes the dimension of $\mathcal{X}$ (respectively of $\mathcal{Y}$). We choose complexes $\mbox{\emph{R}}\Gamma_W(\overline{\mathcal{X}},\mathbb{Z})$ and $\mbox{\emph{R}}\Gamma_W(\overline{\mathcal{Y}},\mathbb{Z})$ as in Definition \ref{def-cohomology}. Assume either that $c\geq0$ or that $\textbf{\emph{L}}(\mathcal{X}_{et},d_{\mathcal{X}})$ holds.

Then there exists
a \emph{unique} map in $\mathcal{D}$
$$f^*_{W}:\mbox{\emph{R}}\Gamma_W(\overline{\mathcal{Y}},\mathbb{Z})\rightarrow \mbox{\emph{R}}\Gamma_W(\overline{\mathcal{X}},\mathbb{Z})$$
which sits in a morphism of exact triangles
\[ \xymatrix{
\mbox{\emph{RHom}}(\mbox{\emph{R}}\Gamma(\mathcal{X},\mathbb{Q}(d_{\mathcal{X}}))_{\geq0},\mathbb{Q}[-2d_{\mathcal{X}}-2])\ar[r]& \mbox{\emph{R}}\Gamma(\overline{\mathcal{X}}_{et},\mathbb{Z})\ar[r]& \mbox{\emph{R}}\Gamma_W(\overline{\mathcal{X}},\mathbb{Z})\\
\mbox{\emph{RHom}}(\mbox{\emph{R}}\Gamma(\mathcal{Y},\mathbb{Q}(d_{\mathcal{Y}}))_{\geq0},\mathbb{Q}[-2d_{\mathcal{Y}}-2])\ar[r]\ar[u]& \mbox{\emph{R}}\Gamma(\overline{\mathcal{Y}}_{et},\mathbb{Z})\ar[r]\ar[u]_{f^*_{et}}& \mbox{\emph{R}}\Gamma_W(\overline{\mathcal{Y}},\mathbb{Z})\ar[u]_{f^*_{W}}
}
\]
In particular, if $\mathcal{X}$ satisfies $\textbf{\emph{L}}(\mathcal{X}_{et},d_{\X})_{\geq0}$ then $\mbox{\emph{R}}\Gamma_{W}(\overline{\mathcal{X}},\mathbb{Z})$ is well defined up to a \emph{unique} isomorphism in $\mathcal{D}$.
\end{thm}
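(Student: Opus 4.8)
The plan is to build the left-hand vertical map from a proper transfer in motivic cohomology, to check that the resulting left square commutes by invoking the functoriality of Geisser's arithmetic duality, to produce the right-hand vertical on cones by axiom TR3, and finally to prove its uniqueness by a $\mathrm{Hom}$-vanishing argument in the style of Theorem \ref{thm-alpha} and Corollary \ref{cor-unicity}.

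\textbf{Construction of the left vertical.} Writing $d_{\mathcal{X}}=d_{\mathcal{Y}}+c$ and $\delta_{\mathcal{X}}=\delta_{\mathcal{Y}}+2c$, proper push-forward of cycles gives a transfer $f_*\colon \mathrm{R}\Gamma(\mathcal{X},\mathbb{Q}(d_{\mathcal{X}}))\to\mathrm{R}\Gamma(\mathcal{Y},\mathbb{Q}(d_{\mathcal{Y}}))[-2c]$ and, compatibly, $f_*\colon\mathrm{R}\Gamma(\overline{\mathcal{X}}_{et},\mathbb{Z}(d_{\mathcal{X}}))\to\mathrm{R}\Gamma(\overline{\mathcal{Y}}_{et},\mathbb{Z}(d_{\mathcal{Y}}))[-2c]$. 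The point is to make $f_*$ descend to the truncations $(-)_{\geq0}$. If $c\geq0$ then $\mathrm{R}\Gamma(\mathcal{Y},\mathbb{Q}(d_{\mathcal{Y}}))[-2c]$ is concentrated in degrees $\geq 2c\geq0$, so the composite $\mathrm{R}\Gamma(\mathcal{X},\mathbb{Q}(d_{\mathcal{X}}))\xrightarrow{f_*}\mathrm{R}\Gamma(\mathcal{Y},\mathbb{Q}(d_{\mathcal{Y}}))[-2c]\to\mathrm{R}\Gamma(\mathcal{Y},\mathbb{Q}(d_{\mathcal{Y}}))_{\geq0}[-2c]$ has target in non-negative degrees and therefore factors uniquely through $\mathrm{R}\Gamma(\mathcal{X},\mathbb{Q}(d_{\mathcal{X}}))_{\geq0}$ (there are no nonzero morphisms, and no extensions, from a complex concentrated in strictly negative degrees into one concentrated in non-negative degrees). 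If instead $c<0$ but $\mathbf{L}(\overline{\mathcal{X}}_{et},d_{\mathcal{X}})$ holds, then by Proposition \ref{prop-L-L+} the Beilinson-Soulé vanishing holds for $\mathcal{X}$, so $\mathrm{R}\Gamma(\mathcal{X},\mathbb{Q}(d_{\mathcal{X}}))_{\geq0}=\mathrm{R}\Gamma(\mathcal{X},\mathbb{Q}(d_{\mathcal{X}}))$ and $f_*$ works directly. This is exactly why the hypotheses split into ``$c\geq0$ or $\mathbf{L}(\overline{\mathcal{X}}_{et},d_{\mathcal{X}})$''. Applying $\mathrm{RHom}(-,\mathbb{Q})$ and shifting yields the left vertical $\mathrm{RHom}(\mathrm{R}\Gamma(\mathcal{Y},\mathbb{Q}(d_{\mathcal{Y}}))_{\geq0},\mathbb{Q}[-\delta_{\mathcal{Y}}])\to\mathrm{RHom}(\mathrm{R}\Gamma(\mathcal{X},\mathbb{Q}(d_{\mathcal{X}}))_{\geq0},\mathbb{Q}[-\delta_{\mathcal{X}}])$.

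\textbf{Commutativity of the left square.} Set $D_{\mathcal{Y}}:=\mathrm{RHom}(\mathrm{R}\Gamma(\mathcal{Y},\mathbb{Q}(d_{\mathcal{Y}}))_{\geq0},\mathbb{Q}[-\delta_{\mathcal{Y}}])$; it is a bounded complex of $\mathbb{Q}$-vector spaces acyclic in degrees $<2$, so the argument of Theorem \ref{thm-alpha} gives $\mathrm{Hom}_{\mathcal{D}}(D_{\mathcal{Y}},\widehat{\mathbb{Z}}/\mathbb{Z}[i])=0$ for $i\in\{-1,0\}$, whence composition with the duality morphism (\ref{map-duality-cplexes}) for $\mathcal{X}$ induces a bijection on $\mathrm{Hom}_{\mathcal{D}}(D_{\mathcal{Y}},\mathrm{R}\Gamma(\overline{\mathcal{X}}_{et},\mathbb{Z}))$. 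It thus suffices to compare $\alpha_{\mathcal{X}}\circ(D_{\mathcal{Y}}\to D_{\mathcal{X}})$ and $f^*_{et}\circ\alpha_{\mathcal{Y}}$ after composing with (\ref{map-duality-cplexes}); unwinding the construction of $\alpha_{\mathcal{X}}$ and $\alpha_{\mathcal{Y}}$ this reduces to the compatibility of Geisser's duality pairing with $f^*_{et}$ on the $\mathbb{Z}$-side and with $f_*$ on the $\mathbb{Z}(d)$-side, together with compatibility of the trace maps $H^{\delta_{\mathcal{X}}}(\overline{\mathcal{X}}_{et},\mathbb{Z}(d_{\mathcal{X}}))\simeq\mathbb{Q}/\mathbb{Z}\simeq H^{\delta_{\mathcal{Y}}}(\overline{\mathcal{Y}}_{et},\mathbb{Z}(d_{\mathcal{Y}}))$ --- a projection-formula identity built into the duality formalism of \cite{Geisser-Duality} (the push-forward map of \cite{Geisser-Duality}, Corollary 7.2 (b)). When $\mathcal{X}(\mathbb{R})\neq\emptyset$ the same comparison is carried out through $\mathrm{R}\hat{\Gamma}_c$ and one-dimensional Artin-Verdier duality as in the case $\mathcal{X}(\mathbb{R})\neq\emptyset$ of Theorem \ref{thm-alpha}, and one passes from $\mathrm{R}\Gamma(\mathcal{X}_{et},\mathbb{Z})$ back to $\mathrm{R}\Gamma(\overline{\mathcal{X}}_{et},\mathbb{Z})$ using that $\mathrm{R}\Gamma_{\mathcal{X}_{\infty}}(\overline{\mathcal{X}}_{et},\mathbb{Z})$ is killed by $2$ while $D_{\mathcal{Y}}$ is divisible.

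\textbf{Filling on cones and uniqueness.} Given the commutative left square, TR3 produces a map $\mathrm{R}\Gamma_W(\overline{\mathcal{Y}},\mathbb{Z})\to\mathrm{R}\Gamma_W(\overline{\mathcal{X}},\mathbb{Z})$ completing the two triangles to a morphism of triangles. Two such fillings differ by a map that factors through the connecting morphism $\partial_{\mathcal{Y}}\colon\mathrm{R}\Gamma_W(\overline{\mathcal{Y}},\mathbb{Z})\to D_{\mathcal{Y}}[1]$ and then into $\mathrm{R}\Gamma_W(\overline{\mathcal{X}},\mathbb{Z})$, and one shows it vanishes by the $\mathrm{Hom}$-vanishing computation underlying Corollary \ref{cor-unicity}, applied with $D_{\mathcal{Y}}$ in place of $D_{\mathcal{X}}$: one uses the defining triangle of $\mathrm{R}\Gamma_W(\overline{\mathcal{X}},\mathbb{Z})$, the fact that $H^i(\overline{\mathcal{X}}_{et},\mathbb{Z})$ is torsion for $i\geq2$ (so $\mathrm{Hom}_{\mathcal{D}}(\mathrm{R}\Gamma(\overline{\mathcal{X}}_{et},\mathbb{Z}),D_{\mathcal{X}})=\mathrm{Hom}_{\mathcal{D}}(\mathrm{R}\Gamma(\overline{\mathcal{Y}}_{et},\mathbb{Z}),D_{\mathcal{X}})=0$, as $D_{\mathcal{X}}$ is a $\mathbb{Q}$-complex concentrated in degrees $\geq2$), the finite generation of the relevant motivic cohomology groups, and the divisibility and boundedness of $D_{\mathcal{Y}}$. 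Functoriality of $\alpha_{\mathcal{X}}$ as asserted in Theorem \ref{thm-alpha} is then the special case $\mathcal{Y}=\mathrm{Spec}(\mathbb{Z})$-free instance of the left square here. I expect the main obstacle to be precisely the interplay between the $(-)_{\geq0}$-truncation and the transfer --- which forces the dichotomy in the hypotheses --- together with the bookkeeping of $\mathrm{Hom}$-vanishing needed for uniqueness; the existence of the transfer and its compatibility with arithmetic duality are essentially inherited from \cite{Geisser-Duality}.
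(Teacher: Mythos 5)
Your proposal follows the paper's proof essentially step for step: the transfer on cycle complexes, the dichotomy $c\geq0$ versus $\mathbf{L}(\overline{\mathcal{X}}_{et},d_{\mathcal{X}})$ used to make the push-forward descend to the $(-)_{\geq0}$-truncations, TR3 for existence of $f_W^*$, and uniqueness via the fact that $\mathrm{Hom}_{\mathcal{D}}(D_{\mathcal{Y}}[1],\mathrm{R}\Gamma_W(\overline{\mathcal{X}},\mathbb{Z}))$ is divisible while $\mathrm{Hom}_{\mathcal{D}}(\mathrm{R}\Gamma_W(\overline{\mathcal{Y}},\mathbb{Z}),\mathrm{R}\Gamma_W(\overline{\mathcal{X}},\mathbb{Z}))$ is finitely generated (both complexes being perfect). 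One small slip: your appeal to ``the Hom-vanishing computation underlying Corollary \ref{cor-unicity}'' is circular, as that corollary is deduced from this theorem; but the actual divisible-into-finitely-generated argument you also cite is exactly what the paper uses.
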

\begin{proof}
Let $\mathcal{X}$ and $\mathcal{Y}$ be connected, proper and regular arithmetic schemes of dimension $d_{\mathcal{X}}$ and $d_{\mathcal{Y}}$ respectively. We set $\delta_{\X}:=2d_{\X}+2$ and $\delta_{\Y}:=2d_{\Y}+2$. We choose complexes $\mbox{R}\Gamma_W(\overline{\mathcal{X}},\mathbb{Z})$ and $\mbox{R}\Gamma_W(\overline{\mathcal{Y}},\mathbb{Z})$ as in Definition \ref{def-cohomology}. Let $f:\mathcal{X}\rightarrow\mathcal{Y}$ be a morphism of relative dimension $c=d_{\mathcal{X}}-d_{\mathcal{Y}}$. The morphism $f$ is proper and the map
$$z^n(\mathcal{X},*)\rightarrow z^{n-c}(\mathcal{Y},*)$$
induces a morphism
$$f_*\mathbb{Q}(d_{\mathcal{X}})\rightarrow \mathbb{Q}(d_{\mathcal{Y}}))[-2c]$$
of complexes of abelian Zariski sheaves on $\mathcal{Y}$. We need to see that
\begin{equation}\label{f=Rf}
f_*\mathbb{Q}(d_{\mathcal{X}})\simeq\mbox{R}f_*\mathbb{Q}(d_{\mathcal{X}}).
\end{equation}
Localizing over the base, it is enough to show this fact for $f$ a proper map over a discrete valuation ring. Over a discrete valuation ring, Zariski hypercohomology of the cycle complex coincides with its cohomology as a complex of abelian group (see \cite{Levine-localization} and \cite{Geisser-MotvicCohDed} Theorem 3.2). This yields (\ref{f=Rf}) and a morphism of complexes
$$\mbox{R}\Gamma(\mathcal{X},\mathbb{Q}(d_{\mathcal{X}}))
\simeq \mbox{R}\Gamma(\mathcal{Y},f_*\mathbb{Q}(d_{\mathcal{X}}))\rightarrow
\mbox{R}\Gamma(\mathcal{Y},\mathbb{Q}(d_{\mathcal{Y}}))[-2c].$$
If $c\geq0$ this induces a morphism
$$\mbox{R}\Gamma(\mathcal{X},\mathbb{Q}(d_{\mathcal{X}}))_{\geq0}
\rightarrow\mbox{R}\Gamma(\mathcal{Y},\mathbb{Q}(d_{\mathcal{Y}}))_{\geq0}[-2c].$$
If $c<0$ then  $\textbf{L}(\mathcal{X}_{et},d_{\mathcal{X}})$ holds by assumption. It follows that $H^i(\mathcal{X},\mathbb{Q}(d_{\mathcal{X}}))=0$ for $i<0$. Indeed, we have $H^i(\X_{et},\mathbb{Z}(d))\simeq H^i(\mathcal{X},\mathbb{Q}(d_{\mathcal{X}}))$ for $i<0$ (by Lemma \ref{firstLemma} for $\X(\mathbb{R})=\emptyset$ and by (\ref{perfpair}) for the general case). Hence for $i<0$, $H^i(\mathcal{X},\mathbb{Q}(d_{\mathcal{X}}))\simeq H^i(\X_{et},\mathbb{Z}(d))$ is both uniquely divisible and finitely generated by $\textbf{L}(\mathcal{X}_{et},d_{\mathcal{X}})$, hence must be trivial. So we may consider the map
$$\mbox{R}\Gamma(\mathcal{X},\mathbb{Q}(d_{\mathcal{X}}))_{\geq0}
\stackrel{\sim}{\leftarrow}\mbox{R}\Gamma(\mathcal{X},\mathbb{Q}(d_{\mathcal{X}}))
\rightarrow
\mbox{R}\Gamma(\mathcal{Y},\mathbb{Q}(d_{\mathcal{Y}}))[-2c]\rightarrow \mbox{R}\Gamma(\mathcal{Y},\mathbb{Q}(d_{\mathcal{Y}}))_{\geq0}[-2c].$$
In both cases we get a morphism
$$\mbox{RHom}(\mbox{R}\Gamma(\mathcal{Y},\mathbb{Q}(d_{\mathcal{Y}}))_{\geq0},
\mathbb{Q}[-\delta_{\mathcal{Y}}])
\rightarrow \mbox{RHom}(\mbox{R}\Gamma(\mathcal{X},\mathbb{Q}(d_{\mathcal{X}}))_{\geq0},
\mathbb{Q}[-\delta_{\mathcal{X}}])$$
such that the following square is commutative:
\[ \xymatrix{
\mbox{RHom}(\mbox{R}\Gamma(\mathcal{Y},\mathbb{Q}(d_{\mathcal{Y}}))_{\geq0},
\mathbb{Q}[-\delta_{\mathcal{Y}}])\ar[r]^{\,\,\,\,\,\,\,\,\,\,\,\,\,\,\,\,\,\,\,\,\,\,\,\,\,\,\,\,\,\,\,\,\,\,\,\,\alpha_{\mathcal{Y}}}\ar[d]
&\mbox{R}\Gamma(\overline{\mathcal{Y}}_{et},\mathbb{Z})\ar[d]_{}  \\
 \mbox{RHom}(\mbox{R}\Gamma(\mathcal{X},\mathbb{Q}(d_{\mathcal{X}}))_{\geq0},
\mathbb{Q}[-\delta_{\mathcal{X}}])\ar[r]^{\,\,\,\,\,\,\,\,\,\,\,\,\,\,\,\,\,\,\,\,\,\,\,\,\,\,\,\,\,\,\,\,\,\,\,\,\alpha_{\mathcal{X}}}
&\mbox{R}\Gamma(\overline{\mathcal{X}}_{et},\mathbb{Z})
}
\]
Showing that the diagram above is indeed commutative is tedious but straightforward, using the push-forward map defined in \cite{Geisser-Duality} Corollary 7.2 (b). Hence there exists a morphism
$$f_W^*:\mbox{R}\Gamma_W(\overline{\mathcal{Y}},\mathbb{Z})\rightarrow
\mbox{R}\Gamma_W(\overline{\mathcal{X}},\mathbb{Z})$$
inducing a morphism of exact triangles. We claim that such a morphism $f_W^*$ is unique. In order to ease the notations, we set
$$D_{\mathcal{X}}:=\mbox{RHom}(\mbox{R}\Gamma(\mathcal{X},\mathbb{Q}(d_{\mathcal{X}}))_{\geq0},
\mathbb{Q}[-\delta_{\mathcal{X}}])\mbox{ and }D_{\mathcal{Y}}:=\mbox{RHom}(\mbox{R}\Gamma(\mathcal{Y},\mathbb{Q}(d_{\mathcal{Y}}))_{\geq0},
\mathbb{Q}[-\delta_{\mathcal{Y}}]).$$
The complexes $\mbox{R}\Gamma_W(\overline{\mathcal{X}},\mathbb{Z})$ and $\mbox{R}\Gamma_W(\overline{\mathcal{Y}},\mathbb{Z})$ are both perfect complexes of abelian groups, since they are bounded complexes with finitely generated cohomology groups. Applying the functor $\mbox{Hom}_{\mathcal{D}}(-,\mbox{R}\Gamma_W(\overline{\mathcal{X}},\mathbb{Z}))$ to the exact triangle
$$D_{\mathcal{Y}}\rightarrow \mbox{R}\Gamma(\overline{\mathcal{Y}}_{et},\mathbb{Z})\rightarrow \mbox{R}\Gamma_W(\overline{\mathcal{Y}},\mathbb{Z})\rightarrow D_{\mathcal{Y}}[1]$$
we obtain an exact sequence of abelian groups:
$$\mbox{Hom}_{\mathcal{D}}(D_{\mathcal{Y}}[1],\mbox{R}\Gamma_W(\overline{\mathcal{X}},\mathbb{Z}))\rightarrow
\mbox{Hom}_{\mathcal{D}}(\mbox{R}\Gamma_W(\overline{\mathcal{Y}},\mathbb{Z}),\mbox{R}\Gamma_W(\overline{\mathcal{X}},\mathbb{Z}))$$
$$\rightarrow\mbox{Hom}_{\mathcal{D}}
(\mbox{R}\Gamma(\overline{\mathcal{Y}}_{et},\mathbb{Z}),\mbox{R}\Gamma_W(\overline{\mathcal{X}},\mathbb{Z})).$$
On the one hand, $\mbox{Hom}_{\mathcal{D}}(D_{\mathcal{Y}}[1],\mbox{R}\Gamma_W(\overline{\mathcal{X}},\mathbb{Z}))$ is uniquely divisible since $D_{\mathcal{Y}}[1]$ is a complex of $\mathbb{Q}$-vector spaces. On the other hand, the abelian group $\mbox{Hom}_{\mathcal{D}}(\mbox{R}\Gamma_W(\overline{\mathcal{Y}},\mathbb{Z}),
\mbox{R}\Gamma_W(\overline{\mathcal{X}},\mathbb{Z}))$ is finitely generated
as it follows from the spectral sequence
$$\prod_{i\in\mathbb{Z}}\mbox{Ext}^p(H^i_W(\overline{\mathcal{Y}},\mathbb{Z}),H^{q+i}_W(\overline{\mathcal{X}},\mathbb{Z}))
\Rightarrow H^{p+q}(\mbox{RHom}(\mbox{R}\Gamma_W(\overline{\mathcal{Y}},\mathbb{Z}),\mbox{R}\Gamma_W(\overline{\mathcal{X}},\mathbb{Z})))$$
since $\mbox{R}\Gamma_W(\overline{\mathcal{X}},\mathbb{Z})$ and $\mbox{R}\Gamma_W(\overline{\mathcal{Y}},\mathbb{Z})$ are both perfect. Hence the morphism
$$\mbox{Hom}_{\mathcal{D}}(\mbox{R}\Gamma_W(\overline{\mathcal{Y}},\mathbb{Z}),
\mbox{R}\Gamma_W(\overline{\mathcal{X}},\mathbb{Z}))\rightarrow
\mbox{Hom}_{\mathcal{D}}
(\mbox{R}\Gamma(\overline{\mathcal{Y}}_{et},\mathbb{Z}),\mbox{R}\Gamma_W(\overline{\mathcal{X}},\mathbb{Z}))$$
is injective, which implies the uniqueness of the morphism $f_W^*$ sitting in the morphism of exact triangles of the theorem.

Assume now that $\mathcal{X}$ satisfies $\textbf{L}(\mathcal{X}_{et},d_{\X})_{\geq0}$, and let $\mbox{R}\Gamma_{W}(\overline{\mathcal{X}},\mathbb{Z})$ and $\mbox{R}\Gamma_{W}(\overline{\mathcal{X}},\mathbb{Z})'$
be two complexes given with exact triangles as in Definition \ref{def-cohomology}. Then the identity map $Id:\mathcal{X}\rightarrow\mathcal{X}$ induces a unique isomorphism $\mbox{R}\Gamma_{W}(\overline{\mathcal{X}},\mathbb{Z})
\simeq\mbox{R}\Gamma_{W}(\overline{\mathcal{X}},\mathbb{Z})'$ in $\mathcal{D}$.

\end{proof}

We denote by $H^i_{cont}(\overline{\mathcal{X}}_{et},\mathbb{Z}_l):=H^i(\textrm{R}\underleftarrow{\,\textrm{lim}}\, \textrm{R}\Gamma(\overline{\mathcal{X}}_{et},\mathbb{Z}/l^{\nu}\mathbb{Z}))$ continuous $l$-adic cohomology. 

\begin{cor}\label{cor-comp-l-adic} Let $\X$ be a proper regular connected arithmetic scheme of dimension $d$ satisfying $\textbf{\emph{L}}(\mathcal{X}_{et},d)_{\geq0}$. For any prime number $l$ and any $i\in\mathbb{Z}$, there is an isomorphism
$$H^i_W(\overline{\mathcal{X}},\mathbb{Z})\otimes\mathbb{Z}_l\simeq H^i_{cont}(\overline{\mathcal{X}}_{et},\mathbb{Z}_l).$$
\end{cor}
\begin{proof}
Consider the exact triangle
$$\mbox{RHom}(\mbox{R}\Gamma(\mathcal{X},\mathbb{Q}(d))_{\geq0},\mathbb{Q}[-2d-2])\rightarrow \mbox{R}\Gamma(\overline{\mathcal{X}}_{et},\mathbb{Z})\rightarrow \mbox{R}\Gamma_W(\overline{\mathcal{X}},\mathbb{Z})$$
Let $n$ be any positive integer. Applying the functor $-\otimes^L_{\mathbb{Z}}\mathbb{Z}/n\mathbb{Z}$
we obtain an isomorphism 
$$\mbox{R}\Gamma(\overline{\mathcal{X}}_{et},\mathbb{Z}/n\mathbb{Z})
\simeq \mbox{R}\Gamma(\overline{\mathcal{X}}_{et},\mathbb{Z})\otimes^L_{\mathbb{Z}}\mathbb{Z}/n\mathbb{Z}\stackrel{\sim}{\longrightarrow}\mbox{R}\Gamma_W(\overline{\mathcal{X}},\mathbb{Z})\otimes^L_{\mathbb{Z}}\mathbb{Z}/n\mathbb{Z}$$
since $\mbox{RHom}(\mbox{R}\Gamma(\mathcal{X},\mathbb{Q}(d_{\mathcal{X}}))_{\geq0},\mathbb{Q}[-2d-2])\otimes^L_{\mathbb{Z}}\mathbb{Z}/n\mathbb{Z}\simeq 0$.
Let $l$ be a prime number and let $\nu$ be a positive integer. We obtain a short exact sequence
\begin{equation}\label{shorty}
0\rightarrow H^i_W(\overline{\mathcal{X}},\mathbb{Z})_{l^{\nu}}\rightarrow H^i(\overline{\mathcal{X}}_{et},\mathbb{Z}/l^{\nu}\mathbb{Z})
\rightarrow {_{l^{\nu}}}H^{i+1}_W(\overline{\mathcal{X}},\mathbb{Z})\rightarrow0
\end{equation}
for any $i\in\mathbb{Z}$. By left exactness of projective limits we get
$$0\rightarrow\underleftarrow{\mbox{lim}}\,H^i_W(\overline{\mathcal{X}},\mathbb{Z})_{l^{\nu}}\rightarrow \underleftarrow{\mbox{lim}}\,H^i(\overline{\mathcal{X}}_{et},\mathbb{Z}/l^{\nu}\mathbb{Z})
\rightarrow \underleftarrow{\mbox{lim}}\,{_{l^{\nu}}}H^{i+1}_W(\overline{\mathcal{X}},\mathbb{Z}).$$
But $\underleftarrow{\mbox{lim}}\,{_{l^{\nu}}}H^{i+1}_W(\overline{\mathcal{X}},\mathbb{Z})=0$ since $H^{i+1}_W(\overline{\mathcal{X}},\mathbb{Z})$ is finitely generated. Moreover,
we have
$$H^i_{cont}(\overline{\mathcal{X}}_{et},\mathbb{Z}_l)\simeq \underleftarrow{\mbox{lim}}\,H^i(\overline{\mathcal{X}}_{et},\mathbb{Z}/l^{\nu}\mathbb{Z})$$
since $H^i(\overline{\mathcal{X}}_{et},\mathbb{Z}/l^{\nu}\mathbb{Z})$ is finite by (\ref{shorty}).
\end{proof}

\subsection{Relationship with Lichtenbaum's definition over finite fields}

Let $Y$ be a scheme of finite type over a finite field $k$. We denote by $G_k$ and $W_k$
the Galois group and the Weil group of $k$ respectively.
The (small) Weil-\'etale topos $Y^{sm}_W$ is the category of $W_k$-equivariant
sheaves of sets on the \'etale site of $Y\otimes_{k}\overline{k}$.
The big Weil-\'etale topos is defined  \cite{Flach-moi} as the fiber product
$$Y_W:=Y_{et}\times_{\overline{\mbox{Spec}(\mathbb{Z})}_{et}}\overline{\mbox{Spec}(\mathbb{Z})}_{W}\simeq
Y_{et}\times_{B^{sm}_{G_k}}B_{W_k}$$
where ${B^{sm}_{G_k}}$ (resp. $B_{W_k}$) denotes
the small classifying topos of $G_k$
(resp. the big classifying topos of $W_k$). The topoi $Y_W$ and $Y_{W}^{sm}$ are cohomologically equivalent (see \cite{Flach-moi} Corollary 2). Therefore, by \cite{Geisser-Weiletale} one has an exact triangle in the derived category of abelian sheaves on $Y_{et}$
$$\mathbb{Z}\rightarrow \mbox{R}\gamma_*\mathbb{Z}\rightarrow \mathbb{Q}[-1]\rightarrow\mathbb{Z}[1]$$
where $\gamma:Y_W\rightarrow Y_{et}$ is the first projection. Applying $\mbox{R}\Gamma(Y_{et},-)$ and rotating, we get
$$\mbox{R}\Gamma(Y_{et},\mathbb{Q}[-2])\stackrel{a_{Y}}{\rightarrow} \mbox{R}\Gamma(Y_{et},\mathbb{Z})\rightarrow \mbox{R}\Gamma(Y_{W},\mathbb{Z})\rightarrow \mbox{R}\Gamma(Y_{et},\mathbb{Q}[-2])[1].$$

\begin{thm}\label{thm-comparison-char-p} Let $Y$ be a $d$-dimensional connected projective smooth scheme over $k$ satisfying $\textbf{\emph{L}}(Y_{et},d)_{\geq0}$. Then there is an isomorphism in $\mathcal{D}$
\begin{equation}\label{iso66}
\mbox{\emph{R}}\Gamma(Y_{W},\mathbb{Z})\stackrel{\sim}{\longrightarrow}\mbox{\emph{R}}\Gamma_W(Y,\mathbb{Z})
\end{equation}
where $\mbox{\emph{R}}\Gamma(Y_{W},\mathbb{Z})$ is the cohomology of the Weil-\'etale topos and $\mbox{\emph{R}}\Gamma_W(Y,\mathbb{Z})$ is the complex defined in this paper. Moreover, the exact triangle of Definition \ref{def-cohomology} is isomorphic to Geisser's triangle (\cite{Geisser-Weiletale} Corollary 5.2).
\end{thm}
\begin{proof} We shall define a commutative square in $\mathcal{D}$
\[ \xymatrix{
\mbox{R}\Gamma(Y_{et},\mathbb{Q}[-2])\ar[d]_{\simeq}\ar[r]^{a_Y}&\mbox{R}\Gamma(Y_{et},\mathbb{Z})\ar[d]_{Id}\\
\mbox{RHom}(\mbox{R}\Gamma(Y,\mathbb{Q}(d))_{\geq0},\mathbb{Q}[-2d-2])
\ar[r]^{\,\,\,\,\,\,\,\,\,\,\,\,\,\,\,\,\,\,\,\,\,\,\,\,\,\,\,\,\,\,\,\,\,\,\,\,\alpha_Y} &\mbox{R}\Gamma(Y_{et},\mathbb{Z})
}
\]
where the vertical maps are isomorphisms. The existence of the isomorphism (\ref{iso66}) and the compatibility with Geisser's triangle (\cite{Geisser-Weiletale} Corollary 5.2 for $\mathcal{G}^{\cdot}=\bz$) will immediately follow. Moreover, the uniqueness of (\ref{iso66}) will follow from the argument of the proof of Theorem \ref{cor-functoriality}.

One is therefore reduced to define the commutative square above. Replacing $k$ with a finite extension if necessary, one may suppose that $Y$ is geometrically connected over $k$. One has $H^{2d}(Y,\mathbb{Q}(d))=CH^d(Y)_{\mathbb{Q}}\simeq\mathbb{Q}$ (\cite{Kato-Saito86} Theorem 6.1) and $H^{i}(Y,\mathbb{Q}(d))=0$ for $i>2d$. This yields a map $\mbox{R}\Gamma(Y,\mathbb{Q}(d))\rightarrow \mathbb{Q}[-2d]$. The morphism
$$\mbox{RHom}_{Y}(\mathbb{Q},\mathbb{Q}(d))\rightarrow \mbox{RHom}(\mbox{R}\Gamma(Y,\mathbb{Q}),\mbox{R}\Gamma(Y,\mathbb{Q}(d)))
\rightarrow \mbox{RHom}(\mbox{R}\Gamma(Y,\mathbb{Q}),\mathbb{Q}[-2d])$$
induces a morphism
\begin{equation}\label{unemapdeplus+}
\mbox{R}\Gamma(Y_{et},\mathbb{Q})\simeq\mbox{R}\Gamma(Y,\mathbb{Q})\rightarrow \mbox{RHom}(\mbox{R}\Gamma(Y,\mathbb{Q}(d))_{\geq0},\mathbb{Q}[-2d]).
\end{equation}
It follows from Conjecture $\textbf{L}(Y_{et},d)_{\geq0}$, Lemma \ref{prop-duality-Z-coefs} and from the fact that $H^i(Y_{et},\mathbb{Z})$ is finite for $i\neq 0,2$, that the group $H^i(Y_{et},\mathbb{Z}(d))_{\geq0}$ is finite for $i\neq 2d,2d+2$ and torsion for $i\neq 2d+2$. It follows easily that (\ref{unemapdeplus+}) is a quasi-isomorphism.

It remains to check the commutativity of the square above. The complex $$D_Y:= \mbox{RHom}(\mbox{R}\Gamma(Y,\mathbb{Q}(d))_{\geq0},\mathbb{Q}[-2d-2])\simeq\mbox{R}\Gamma(Y_{et},\mathbb{Q}[-2])$$
is concentrated in degree 2. It follows that both $a_Y$ and $\alpha_Y$ uniquely factor through the truncated complex
$\mbox{R}\Gamma(Y_{et},\mathbb{Z})_{\leq2}$. It is therefore enough to show that the square
\[ \xymatrix{
\mbox{R}\Gamma(Y_{et},\mathbb{Q}[-2])\ar[d]\ar[r]&\mbox{R}\Gamma(Y_{et},\mathbb{Z})_{\leq2}\ar[d]_{Id}\\
\mbox{RHom}(\Gamma(Y,\mathbb{Q}(d))_{\geq0},\mathbb{Q}[-2d-2])
\ar[r]
&\mbox{R}\Gamma(Y_{et},\mathbb{Z})_{\leq2}
}
\]
commutes. The exact triangle
$$\mathbb{Z}[0]\rightarrow\mbox{R}\Gamma(Y_{et},\mathbb{Z})_{\leq2}\rightarrow\pi_1(Y_{et})^D[-2]\rightarrow \mathbb{Z}[1]$$
induces an exact sequence of abelian groups
$$\mbox{Hom}_{\mathcal{D}}(D_Y,\mathbb{Z}[0])\rightarrow\mbox{Hom}_{\mathcal{D}}(D_Y,\mbox{R}\Gamma(Y_{et},\mathbb{Z})_{\leq2})\rightarrow\mbox{Hom}_{\mathcal{D}}(D_Y,\pi_1(Y_{et})^D[-2])$$
which shows that the map
$$\mbox{Hom}_{\mathcal{D}}(D_Y,\mbox{R}\Gamma(Y_{et},\mathbb{Z})_{\leq2})\rightarrow \mbox{Hom}_{\mathcal{D}}(D_Y,\pi_1(Y_{et})^D[-2])$$
is injective. Indeed, $\mathbb{Z}[0]\simeq[\mathbb{Q}\rightarrow \mathbb{Q}/\mathbb{Z}]$ has an injective resolution of length one and $D_Y$ is concentrated in degree $2$, hence $\mbox{Hom}_{\mathcal{D}}(D_Y,\mathbb{Z}[0])=0$.
In view of the quasi-isomorphism $$D_Y\simeq\mbox{Hom}(H^{2d}(Y,\mathbb{Q}(d)),\mathbb{Q})[-2]\simeq H^0(Y,\mathbb{Q})[-2],$$
one is reduced to show the commutativity of the following square (of abelian groups):
\[ \xymatrix{
H^0(Y_{et},\mathbb{Q})\ar[d]\ar[r]^{d_2^{0,1}}&H^2(Y_{et},\mathbb{Z})\ar[d]_{Id}\\
\mbox{Hom}(H^{2d}(Y,\mathbb{Q}(d)),\mathbb{Q})
\ar[r]^{\,\,\,\,\,\,\,\,\,\,\,\,\,\,\,\,\,\,\,\,\,\,\,\,H^2(\alpha_Y)} &H^2(Y_{et},\mathbb{Z})
}
\]
By construction the map $H^2(\alpha_Y)$ is the following composition
$$\mbox{Hom}(H^{2d}(Y,\mathbb{Q}(d)),\mathbb{Q})\simeq
\mbox{Hom}(CH^{d}(Y),\mathbb{Q}) \rightarrow \mbox{Hom}(CH^{d}(Y),\mathbb{Q}/\mathbb{Z})\stackrel{\sim}{\leftarrow}\pi_1(Y_{et})^D\simeq H^2(Y_{et},\mathbb{Z})$$
where  the isomorphism $CH^{d}(Y)^D\stackrel{\sim}{\leftarrow}\pi_1(Y_{et})^D$ is the dual of the map $CH^{d}(Y)\rightarrow\pi_1(Y_{et})^{ab}$ given by class field theory, which is injective with dense image (see \cite{Wiesend07} Corollary 3). The top horizontal map in the last commutative square is the differential $d_2^{0,1}$ of the spectral sequence
$$H^i(Y_{et},R^j(\gamma_*)\mathbb{Z})\Rightarrow H^{i+j}(Y_{W},\mathbb{Z}).$$
There is a canonical isomorphism
$$H^0(Y_{et},R^1(\gamma_*)\mathbb{Z})=\underrightarrow{\mbox{lim}}_{_{k'/k}}\,\mbox{Hom}(W_{k'},\mathbb{Z})=\mbox{Hom}(W_k,\mathbb{Q}),$$
$k'/k$ runs over the finite extensions of $k$, as it follows from the isomorphism of pro-discrete groups $\pi_1(Y'_W,p)\simeq \pi_1(Y'_{et},p)\times_{G_k} W_k$,
which is valid for any $Y'$ connected \'etale over $Y$. Then the left vertical map in the last square above is the map
$$\mbox{deg}^*:\mbox{Hom}(W_k,\mathbb{Q})\rightarrow \mbox{Hom}(CH^d(Y),\mathbb{Q})$$ induced by the degree map
$$\mbox{deg}: CH^d(Y)\rightarrow \mathbb{Z}\simeq W_k$$
and the differential $d_2^{0,1}$  is the following map:
$$\mbox{Hom}(W_k,\mathbb{Q})=\mbox{Hom}(\pi_1(Y_W,p),\mathbb{Q})\rightarrow \mbox{Hom}_c(\pi_1(Y_W,p),\mathbb{Q}/\bz)\simeq \pi_1(Y_{et},p)^D$$
where $\mathrm{Hom}_c(-,-)$ denotes the group of continuous morphisms. One is therefore reduced to observe that the square
\[ \xymatrix{
\mbox{Hom}(W_k,\mathbb{Q})\ar[d]_{\mbox{deg}^*}\ar[r]^{d_2^{0,1}}&\pi_1(Y_{et})^D\ar[d]_{Id}\\
\mbox{Hom}(CH^d(Y),\mathbb{Q})
\ar[r]^{^{\,\,\,\,\,\,\,\,\,\,\,\,\,\,\,\,\,\,H^2(\alpha_Y)}} &\pi_1(Y_{et})^D
}
\]
commutes.

\end{proof}

We shall need the following result in Section \ref{sect-TNC}.
\begin{prop}\label{prop-functoriality-flat-p}
Let $f:Y\rightarrow\mathcal{X}$ be a  morphism of proper regular arithmetic schemes, such that $\mathcal{X}$ is flat over $\mathrm{Spec}(\mathbb{Z})$ and $Y$ has characteristic $p$. Assume that $\mathcal{X}$ has pure dimension $d$ and that $\textbf{\emph{L}}(\mathcal{X}_{et},d)_{\geq0}$ holds. Then there exists
a \emph{unique} map in $\mathcal{D}$
$$\tilde{f}^*_{W}:\mbox{\emph{R}}\Gamma_W(\overline{\mathcal{X}},\mathbb{Z})\rightarrow \mbox{\emph{R}}\Gamma(Y_W,\mathbb{Z})$$
which renders the following square commutative
\[ \xymatrix{
 \mbox{\emph{R}}\Gamma(Y_{et},\mathbb{Z})\ar[r]& \mbox{\emph{R}}\Gamma(Y_W,\mathbb{Z}) \\
 \mbox{\emph{R}}\Gamma(\overline{\mathcal{X}}_{et},\mathbb{Z})\ar[r]\ar[u]_{f^*_{et}}& \mbox{\emph{R}}\Gamma_W(\overline{\mathcal{X}},\mathbb{Z})\ar[u]_{\tilde{f}^*_{W}}
}
\]
where $f_{et}^*$ is induced by the map $Y_{et}\rightarrow\overline{\mathcal{X}}_{et}$ and $\mbox{\emph{R}}\Gamma(Y_W,\mathbb{Z})$ is the cohomology of the Weil-\'etale topos.
\end{prop}

\begin{proof}
We shall define a morphism of exact triangles
\[ \xymatrix{
\mbox{R}\Gamma(Y,\mathbb{Q})[-2]\ar[r]^{a_Y}& \mbox{R}\Gamma(Y_{et},\mathbb{Z})\ar[r]& \mbox{R}\Gamma(Y_W,\mathbb{Z}) \\
\mbox{RHom}(R\Gamma(\mathcal{X},\mathbb{Q}(d))_{\geq0},\mathbb{Q}[-\delta])\ar[r]^{\hspace{1.6cm}\alpha_{\X}}\ar[u]_0& \mbox{R}\Gamma(\overline{\mathcal{X}}_{et},\mathbb{Z})\ar[r]\ar[u]_{f^*_{et}}& \mbox{R}\Gamma_W(\overline{\mathcal{X}},\mathbb{Z})\ar[u]
}
\]
where the left vertical map is the zero map and $\delta:=2d+2$. The existence of $\tilde{f}_W^*$ will follow from the commutativity of the left square and the uniqueness of $\tilde{f}_W^*$ will follow from the facts that  $\mbox{R}\Gamma(Y_W,\mathbb{Z})$ and $\mbox{R}\Gamma_W(\overline{\mathcal{X}},\mathbb{Z})$ are both perfect (by \cite{Lichtenbaum-finite-field} Theorem 7.4 and Proposition \ref{finitelygenerated-cohomology} respectively) and that $\mbox{RHom}(\mbox{R}\Gamma(\mathcal{X},\mathbb{Q}(d))_{\geq0},\mathbb{Q}[-\delta])$ is a complex of $\mathbb{Q}$-vector spaces, as in the proof of Theorem \ref{cor-functoriality}.

In order to show that such a morphism of exact triangles does exist, we only need to check that the composite map
\begin{equation}\label{triv-map}
\mbox{RHom}(\mbox{R}\Gamma(\mathcal{X},\mathbb{Q}(d))_{\geq0},\mathbb{Q}[-\delta])\stackrel{\alpha_{\X}}{\longrightarrow}
\mbox{R}\Gamma(\overline{\mathcal{X}}_{et},\mathbb{Z})\stackrel{f^*_{et}}{\longrightarrow} \mbox{R}\Gamma(Y_{et},\mathbb{Z})
\end{equation}
is the zero map. The complex of $\mathbb{Q}$-vector spaces $D_{\mathcal{X}}:=\mbox{RHom}(\mbox{R}\Gamma(\mathcal{X},\mathbb{Q}(d))_{\geq0},\mathbb{Q}[-\delta])$ is concentrated in degrees $\geq3$, because $\mathcal{X}$ is flat (see (\ref{flat-vanish})). Moreover, we have $H^0(Y_{et},\mathbb{Z})=\mathbb{Z}^{\pi_0(Y)}$, $H^1(Y_{et},\mathbb{Z})=0$,  $H^2(Y_{et},\mathbb{Z})\simeq(\mathbb{Q}/\mathbb{Z})^{\pi_0(Y)}\oplus A$ is the direct sum of a divisible group and a finite group $A$, and
 $H^i(Y_{et},\mathbb{Z})$ is  finite for $i>2$. The spectral sequence
$$\prod_{i\in\mathbb{Z}}\mbox{Ext}^p(H^i(D_{\mathcal{X}}),H^{q+i}(Y_{et},\mathbb{Z}))
\Rightarrow H^{p+q}(\mbox{RHom}(D_{\mathcal{X}},\mbox{R}\Gamma(Y_{et},\mathbb{Z})))$$
then shows that 
$$\mathrm{Hom}_{\mathcal{D}}(D_{\mathcal{X}},\mbox{R}\Gamma(Y_{et},\mathbb{Z}))=H^{0}(\mbox{RHom}(D_{\mathcal{X}},\mbox{R}\Gamma(Y_{et},\mathbb{Z})))=0.$$
Hence (\ref{triv-map}) must be the zero map, and the result follows.
\end{proof}

In cases where Theorem \ref{cor-functoriality} and Proposition \ref{prop-functoriality-flat-p} both apply, we have a commutative diagram:
\[ \xymatrix{
\mbox{R}\Gamma_W(\overline{\mathcal{X}},\mathbb{Z})\ar[r]^{\tilde{f}_W^*}\ar[dr]_{f_W^*}& \mbox{R}\Gamma(Y_W,\mathbb{Z})\ar[d]^{\simeq} \\
&\mbox{R}\Gamma_W(Y,\mathbb{Z})
}
\]
where $f_W^*$ is the map defined in Theorem \ref{cor-functoriality}, $\tilde{f}_W^*$ is the map defined in Proposition \ref{prop-functoriality-flat-p} and the vertical isomorphism is defined in Theorem \ref{thm-comparison-char-p}. Indeed, up to the identification given by Theorem \ref{thm-comparison-char-p}, the map $f_W^*$ sits in the commutative square of Proposition \ref{prop-functoriality-flat-p}, hence must coincide with $\tilde{f}_W^*$.

\subsection{Relationship with Lichtenbaum's definition for number rings}
In this section we consider a totally imaginary number field $F$ and we set $\mathcal{X}=\mbox{Spec}(\mathcal{O}_F)$. The complex $\mbox{R}\Gamma_W(\overline{\mathcal{X}},\mathbb{Z})$ is well defined since ${\bf{L}}(\mathcal{X}_{et},\mathbb{Z}(1))$ holds (see Theorem \ref{thm-comparison-mot-K}).
\begin{thm}\label{thm-compare-nbrrings}
There is a canonical isomorphism in $\mathcal{D}$
$$\mbox{\emph{R}}\Gamma_W(\overline{\mathcal{X}},\mathbb{Z})
\stackrel{\sim}{\longrightarrow}\mbox{\emph{R}}\Gamma(\overline{\mathcal{X}}_W,\mathbb{Z})_{\leq3}
$$
where $\mbox{\emph{R}}\Gamma(\overline{\mathcal{X}}_W,\mathbb{Z})_{\leq3}$ is the truncation of Lichtenbaum's complex \cite{Lichtenbaum} and $\mbox{\emph{R}}\Gamma_W(\overline{\mathcal{X}},\mathbb{Z})$ is the complex defined in this paper.
\end{thm}

\begin{proof}
By \cite{On the WE} Theorem 9.5, we have a quasi-isomorphism
$$\mbox{R}\Gamma(\overline{\mathcal{X}}_{et},R\mathbb{Z})\stackrel{\sim}{\longrightarrow} \mbox{R}\Gamma(\overline{\mathcal{X}}_W,\mathbb{Z})$$
inducing
\begin{equation}\label{hehe}
\mbox{R}\Gamma(\overline{\mathcal{X}}_{et},R_W\mathbb{Z})\stackrel{\sim}{\longrightarrow} \mbox{R}\Gamma(\overline{\mathcal{X}}_W,\mathbb{Z})_{\leq3}
\end{equation}
where $R\mathbb{Z}$ is the complex defined in \cite{On the WE} Theorem 8.5 and $R_W\mathbb{Z}:=R\mathbb{Z}_{\leq2}$. The complex $\mbox{R}\Gamma(\overline{\mathcal{X}}_W,\mathbb{Z})$, defined in \cite{Lichtenbaum}, is the cohomology of the Weil-\'etale topos $\overline{\mathcal{X}}_W$ which is defined in \cite{Fund-group-II}. We have an exact triangle
$$\mathbb{Z}[0]\rightarrow R_W\mathbb{Z}\rightarrow R^2_W\mathbb{Z}[-2]$$
in the derived category of \'etale sheaves on $\overline{\mathcal{X}}$.
Rotating and applying $\mbox{R}\Gamma(\overline{\mathcal{X}}_{et},-)$
we get an exact triangle
$$\mbox{R}\Gamma(\overline{\mathcal{X}}_{et},R^2_W\mathbb{Z})[-3]\rightarrow
\mbox{R}\Gamma(\overline{\mathcal{X}}_{et},\mathbb{Z})\rightarrow\mbox{R}\Gamma(\overline{\mathcal{X}}_{et},R_W\bz).$$
We have canonical quasi-isomorphisms (see \cite{On the WE} Theorem 9.4 and \cite{On the WE} Isomorphism (35))
$$\mbox{R}\Gamma(\overline{\mathcal{X}}_{et},R^2_W\mathbb{Z})[-3]\simeq \mbox{Hom}_{\mathbb{Z}}(\mathcal{O}_F^{\times},\mathbb{Q})[-3]
\simeq \mbox{RHom}(\mbox{R}\Gamma(\mathcal{X},\mathbb{Q}(1)),\mathbb{Q}[-4]).$$
It follows that the morphism $\mbox{R}\Gamma(\overline{\mathcal{X}}_{et},R^2_W\mathbb{Z})[-3]\rightarrow
\mbox{R}\Gamma(\overline{\mathcal{X}}_{et},\mathbb{Z})$ in the triangle above is determined by the induced map
$$\mbox{Hom}_{\mathbb{Z}}(\mathcal{O}_F^{\times},\mathbb{Q})=H^0(\overline{\mathcal{X}}_{et},R^2_W\mathbb{Z})\rightarrow
H^3(\overline{\mathcal{X}}_{et},\mathbb{Z})=\mbox{Hom}_{\mathbb{Z}}(\mathcal{O}_F^{\times},\mathbb{Q}/\mathbb{Z})$$
and so is the morphism $\alpha_{\mathcal{X}}$. In both cases this map is the obvious one. Hence the square
\[ \xymatrix{
\mbox{R}\Gamma(\overline{\mathcal{X}}_{et},R^2_W\mathbb{Z})[-3]\ar[r]&\mbox{R}\Gamma(\overline{\mathcal{X}}_{et},\mathbb{Z})\\
 \mbox{RHom}(\mbox{R}\Gamma(\mathcal{X},\mathbb{Q}(1)),\mathbb{Q}[-4])\ar[r]\ar[u]_{\simeq} &\mbox{R}\Gamma(\overline{\mathcal{X}}_{et},\mathbb{Z})\ar[u]_{Id}
}
\]
is commutative. Hence there exists an isomorphism
$\mbox{R}\Gamma_W(\overline{\mathcal{X}},\mathbb{Z})\simeq \mbox{R}\Gamma(\overline{\mathcal{X}}_{et},R_W\mathbb{Z})$. The uniqueness of this isomorphism can be shown as in the proof of Theorem \ref{cor-functoriality}. Composing this isomorphism with (\ref{hehe}), we obtain the result.
\end{proof}

\section{Weil-\'etale Cohomology with compact support}

We recall below the definition given in \cite{Flach-moi} of the Weil-\'etale topos and some results concerning its cohomology with $\tilde{\mathbb{R}}$-coefficients. Then we define Weil-\'etale cohomology with compact support and $\mathbb{Z}$-coefficients and we study the expected map from $\mathbb{Z}$ to $\mathbb{R}$-coefficients.

\subsection{Cohomology with $\tilde{\mathbb{R}}$-coefficients}\label{sect-Rcoefs}

 Let $\mathcal{X}$ be any proper regular connected arithmetic scheme. The Weil-\'etale topos is defined as a 2-fiber product of topoi $$\overline{\mathcal{X}}_W:=\overline{\mathcal{X}}_{et}\times_{\overline{\mbox{Spec}(\mathbb{Z})}_{et}}
\overline{\mbox{Spec}(\mathbb{Z})}_{W}.$$
There is a canonical morphism
$$\mathfrak{f}:\overline{\mathcal{X}}_W\rightarrow\overline{\mbox{Spec}(\mathbb{Z})}_{W}\rightarrow B_{\mathbb{R}}$$
where $B_{\mathbb{R}}$ is Grothendieck's classifying topos of the topological group $\mathbb{R}$ (see \cite{SGA4} or \cite{Flach-moi}). Consider the sheaf $y\mathbb{R}$ on $B_{\mathbb{R}}$ represented by $\mathbb{R}$ with the standard topology and trivial $\mathbb{R}$-action. Then one defines the sheaf $$\tilde{\mathbb{R}}:=\mathfrak{f}^*(y\mathbb{R})$$ on $\overline{\mathcal{X}}_W$. By \cite{Flach-moi}, the following diagram consists of two pull-back squares of topoi, and the rows give open-closed decompositions:
\[ \xymatrix{
\mathcal{X}_W\ar[d]\ar[r]^{\phi}&\overline{\mathcal{X}}_W\ar[d]& \mathcal{X}_{\infty,W}\ar[d]\ar[l]_{i_{\infty}}\\
\mathcal{X}_{et}\ar[r]^{\varphi}&\overline{\mathcal{X}}_{et}& Sh(\mathcal{X}_{\infty})\ar[l]_{u_{\infty}}
}
\]
Here the map
$$\overline{\mathcal{X}}_W:=\overline{\mathcal{X}}_{et}\times_{\overline{\mbox{Spec}(\mathbb{Z})}_{et}}
\overline{\mbox{Spec}(\mathbb{Z})}_{W}\longrightarrow\overline{\mathcal{X}}_{et}$$
is the first projection,  $Sh(\mathcal{X}_{\infty})$ is the category of sheaves on the space $\mathcal{X}_{\infty}$ and $$\mathcal{X}_{\infty,W}=B_{\mathbb{R}}\times Sh(\mathcal{X}_{\infty})$$
where the product is taken over the final topos. As shown in \cite{Flach-moi}, the topos $\overline{\mathcal{X}}_W$ has the right $\tilde{\mathbb{R}}$-cohomology with and without compact supports. We have
$$\mbox{R}\Gamma_{W}(\overline{\mathcal{X}},\tilde{\mathbb{R}}):=\mbox{R}\Gamma(\overline{\mathcal{X}}_{W},\tilde{\mathbb{R}})\simeq \mbox{R}\Gamma(B_{\mathbb{R}},\tilde{\mathbb{R}})\simeq\mathbb{R}[-1]\oplus \mathbb{R}.$$
Concerning the cohomology with compact support, one has
\begin{equation}\label{cohomology-R-cpct-decompo}
\mbox{R}\Gamma_{W,c}(\mathcal{X},\tilde{\mathbb{R}}):=\mbox{R}\Gamma(\overline{\mathcal{X}}_{W},\phi_!\tilde{\mathbb{R}})\simeq
\mbox{R}\Gamma(\overline{\mathcal{X}}_{et},\varphi_!\mathbb{R})[-1]\oplus \mbox{R}\Gamma(\overline{\mathcal{X}}_{et},\varphi_!\mathbb{R})
\end{equation}
where the complex $\mbox{R}\Gamma_c(\mathcal{X}_{et},\mathbb{R}):=\mbox{R}\Gamma(\overline{\mathcal{X}}_{et},\varphi_!\mathbb{R})$ is quasi-isomorphic to
$$\mbox{Cone}(\mathbb{R}[0]\rightarrow\mbox{R}\Gamma(\mathcal{X}_{\infty},\mathbb{R}))[-1].$$
Cup-product with the fundamental class $\theta\in H^1(\mathcal{X}_W,\tilde{\mathbb{R}})$  yields a morphism
\begin{equation}\label{mapcup}
\cup\theta:\mbox{R}\Gamma_{W,c}(\mathcal{X},\tilde{\mathbb{R}})\rightarrow \mbox{R}\Gamma_{W,c}(\mathcal{X},\tilde{\mathbb{R}})[1]
\end{equation}
such that the induced sequence
$$...\rightarrow H_{W,c}^{i-1}(\mathcal{X},\tilde{\mathbb{R}})\rightarrow H_{W,c}^{i}(\mathcal{X},\tilde{\mathbb{R}})\rightarrow H_{W,c}^{i+1}(\mathcal{X},\tilde{\mathbb{R}})\rightarrow$$
is a bounded acyclic complex of finite dimensional $\mathbb{R}$-vector spaces. In view of
$\mbox{R}\Gamma_{W,c}(\mathcal{X},\tilde{\mathbb{R}})[1]= \mbox{R}\Gamma(\overline{\mathcal{X}}_{et},\varphi_!\mathbb{R})\oplus \mbox{R}\Gamma(\overline{\mathcal{X}}_{et},\varphi_!\mathbb{R})[1]$,
the map (\ref{mapcup}) is simply given by projection and inclusion 
$$\mbox{R}\Gamma_{W,c}(\mathcal{X},\tilde{\mathbb{R}})\twoheadrightarrow \mbox{R}\Gamma(\overline{\mathcal{X}}_{et},\varphi_!\mathbb{R})\hookrightarrow
\mbox{R}\Gamma_{W,c}(\mathcal{X},\tilde{\mathbb{R}})[1].$$

\subsection{Cohomology with $\mathbb{Z}$-coefficients}
In the remaining part of this section, $\mathcal{X}$ denotes a proper regular connected arithmetic scheme of dimension $d$ satisfying $\textbf{L}(\mathcal{X}_{et},d)_{\geq0}$. If $\mathcal{X}$ has characteristic $p$ then we set $\mbox{R}\Gamma_{W,c}(\mathcal{X},\mathbb{Z}):=\mbox{R}\Gamma_{W}(\mathcal{X},\mathbb{Z})$
and the cohomology with compact support is defined as $H_{W,c}^i(\mathcal{X},\mathbb{Z}):=H_{W}^i(\mathcal{X},\mathbb{Z})$.
The case when $\mathcal{X}$ is flat over $\mathbb{Z}$ is the case of interest. The closed embedding $u_{\infty}:Sh(\mathcal{X}_{\infty})\rightarrow\mathcal{X}_{et}$
induces a morphism $u_{\infty}^*:\mbox{R}\Gamma(\overline{\mathcal{X}}_{et},\mathbb{Z})\rightarrow \mbox{R}\Gamma(\mathcal{X}_{\infty},\mathbb{Z})$.
\begin{prop}\label{prop-iinfty}
There exists a unique morphism
$i^*_{\infty}:\mbox{\emph{R}}\Gamma_W(\overline{\mathcal{X}},\mathbb{Z})\rightarrow \mbox{\emph{R}}\Gamma(\mathcal{X}_{\infty,W},\mathbb{Z})$ which makes the following diagram commutative:
\[ \xymatrix{
\mbox{\emph{RHom}}(\mbox{\emph{R}}\Gamma(\mathcal{X},\mathbb{Q}(d))_{\geq0},\mathbb{Q}[-2d-2])\ar[d]\ar[r]& \mbox{\emph{R}}\Gamma(\overline{\mathcal{X}}_{et},\mathbb{Z})\ar[d]^{u^*_{\infty}}\ar[r]& \mbox{\emph{R}}\Gamma_W(\overline{\mathcal{X}},\mathbb{Z})\ar[d]_{\exists\,!}^{i^*_{\infty}}\\
0\ar[r]&\mbox{\emph{R}}\Gamma(\mathcal{X}_{\infty},\mathbb{Z})\ar[r]&\mbox{\emph{R}}\Gamma(\mathcal{X}_{\infty,W},\mathbb{Z})
}
\]
\end{prop}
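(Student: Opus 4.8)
The plan is to use that, by Definition~\ref{def-cohomology}, $\mathrm{R}\Gamma_W(\overline{\mathcal{X}},\mathbb{Z})$ is the cone of $\alpha_{\mathcal{X}}$: producing $i^*_\infty$ then amounts to producing a morphism out of $\mathrm{R}\Gamma(\overline{\mathcal{X}}_{et},\mathbb{Z})$ (namely $u^*_\infty$ followed by the canonical map) together with a reason why its composite with $\alpha_{\mathcal{X}}$ vanishes, and then checking that the lift so obtained is forced. First I would observe that the bottom row is itself an exact triangle: since $\mathcal{X}_{\infty,W}=B_{\mathbb{R}}\times Sh(\mathcal{X}_\infty)$ and $\mathrm{R}\Gamma(B_{\mathbb{R}},\mathbb{Z})\simeq\mathbb{Z}[0]$ (the constant sheaf has no higher cohomology on $B_{\mathbb{R}}$, as $\mathbb{R}$ is connected and contractible), the projection to $Sh(\mathcal{X}_\infty)$ induces a quasi-isomorphism $\mathrm{R}\Gamma(\mathcal{X}_\infty,\mathbb{Z})\stackrel{\sim}{\to}\mathrm{R}\Gamma(\mathcal{X}_{\infty,W},\mathbb{Z})$, which is the map appearing in the diagram; in particular the diagram is a morphism of exact triangles and $\mathrm{R}\Gamma(\mathcal{X}_{\infty,W},\mathbb{Z})$ is a perfect complex of abelian groups.

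For existence, it suffices, by the axiom that a commutative square relating the first legs of two exact triangles can be completed to a morphism of triangles, to see that the left-hand square commutes, i.e. that $u^*_\infty\circ\alpha_{\mathcal{X}}=0$ in $\mathcal{D}$. I would deduce this from the fact that $\alpha_{\mathcal{X}}$ factors through compactly supported \'etale cohomology: the construction of $\alpha_{\mathcal{X}}$ runs through Milne's $\mathrm{R}\hat{\Gamma}_c(\mathcal{X}_{et},\mathbb{Z})$ (cf. the proof of Theorem~\ref{thm-alpha}), which is identified with $\mathrm{R}\Gamma(\overline{\mathcal{X}}_{et},\varphi_!\mathbb{Z})$, so $\alpha_{\mathcal{X}}$ fits in a commutative triangle $D_{\mathcal{X}}\to\mathrm{R}\Gamma(\overline{\mathcal{X}}_{et},\varphi_!\mathbb{Z})\to\mathrm{R}\Gamma(\overline{\mathcal{X}}_{et},\mathbb{Z})$. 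Since $u_\infty^*\varphi_!=0$, the composite $\mathrm{R}\Gamma(\overline{\mathcal{X}}_{et},\varphi_!\mathbb{Z})\to\mathrm{R}\Gamma(\overline{\mathcal{X}}_{et},\mathbb{Z})\xrightarrow{u^*_\infty}\mathrm{R}\Gamma(\mathcal{X}_\infty,\mathbb{Z})$ vanishes, whence $u^*_\infty\circ\alpha_{\mathcal{X}}=0$. Equivalently, applying the octahedral axiom to $D_{\mathcal{X}}\to\mathrm{R}\Gamma(\overline{\mathcal{X}}_{et},\varphi_!\mathbb{Z})\to\mathrm{R}\Gamma(\overline{\mathcal{X}}_{et},\mathbb{Z})$ and using $\mathrm{Cone}(\varphi_!\mathbb{Z}\to\mathbb{Z})=u_{\infty*}\mathbb{Z}$ produces directly the exact triangle with third term $\mathrm{R}\Gamma(\mathcal{X}_\infty,\mathbb{Z})$ together with the map $i^*_\infty$ (and, incidentally, realizes $\mathrm{R}\Gamma_{W,c}(\mathcal{X},\mathbb{Z})$ as $\mathrm{Cone}(D_{\mathcal{X}}\to\mathrm{R}\Gamma(\overline{\mathcal{X}}_{et},\varphi_!\mathbb{Z}))$).

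For uniqueness, I would apply $\mathrm{Hom}_{\mathcal{D}}(-,\mathrm{R}\Gamma(\mathcal{X}_{\infty,W},\mathbb{Z}))$ to the defining triangle $D_{\mathcal{X}}\xrightarrow{\alpha_{\mathcal{X}}}\mathrm{R}\Gamma(\overline{\mathcal{X}}_{et},\mathbb{Z})\to\mathrm{R}\Gamma_W(\overline{\mathcal{X}},\mathbb{Z})\xrightarrow{h}D_{\mathcal{X}}[1]$: the difference of any two morphisms $i^*_\infty$ making the diagram commute lies in the image of $h^*\colon\mathrm{Hom}_{\mathcal{D}}(D_{\mathcal{X}}[1],\mathrm{R}\Gamma(\mathcal{X}_{\infty,W},\mathbb{Z}))\to\mathrm{Hom}_{\mathcal{D}}(\mathrm{R}\Gamma_W(\overline{\mathcal{X}},\mathbb{Z}),\mathrm{R}\Gamma(\mathcal{X}_{\infty,W},\mathbb{Z}))$. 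The source of $h^*$ is a $\mathbb{Q}$-vector space, because $D_{\mathcal{X}}$ is a complex of $\mathbb{Q}$-vector spaces; its target is a finitely generated abelian group, because $\mathrm{R}\Gamma_W(\overline{\mathcal{X}},\mathbb{Z})$ (Proposition~\ref{finitelygenerated-cohomology}) and $\mathrm{R}\Gamma(\mathcal{X}_{\infty,W},\mathbb{Z})$ are perfect complexes of abelian groups. A homomorphism from a divisible group to a finitely generated group is zero, so $h^*=0$ and $i^*_\infty$ is unique.

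The non-formal step, which I expect to require the most care, is the vanishing $u^*_\infty\circ\alpha_{\mathcal{X}}=0$: this rests on tracing the construction of $\alpha_{\mathcal{X}}$ to confirm that it genuinely factors through $\mathrm{R}\Gamma(\overline{\mathcal{X}}_{et},\varphi_!\mathbb{Z})$, which when $\mathcal{X}(\mathbb{R})\neq\emptyset$ uses the comparison of Milne's cohomology with compact support and the compact-support cohomology of the Artin-Verdier topos, together with the uniqueness clause used to pin down $\alpha_{\mathcal{X}}$. By contrast the uniqueness of $i^*_\infty$ is purely formal.
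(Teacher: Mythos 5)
Your uniqueness argument is exactly the paper's (the source $\mathrm{Hom}_{\mathcal D}(D_{\mathcal X}[1],\mathrm{R}\Gamma(\mathcal X_{\infty,W},\mathbb Z))$ is a $\mathbb Q$-vector space, the target group of morphisms of perfect complexes is finitely generated, hence $h^*=0$), and the identification of the bottom row via the projection $B_{\mathbb R}\times Sh(\mathcal X_\infty)\to Sh(\mathcal X_\infty)$ is also what the paper uses. But the existence argument has a genuine gap at precisely the step you flag as ``requiring the most care.'' You want $\alpha_{\mathcal X}$ to factor through $\mathrm{R}\Gamma(\overline{\mathcal X}_{et},\varphi_!\mathbb Z)$, and you say this follows because the construction ``runs through Milne's $\mathrm{R}\hat\Gamma_c$.'' This does not follow: Milne's modified complex $\mathrm{R}\hat\Gamma_c(\mathcal X_{et},\mathbb Z):=\mathrm{R}\hat\Gamma_c(S_{et},Rf_*\mathbb Z)$ carries a canonical map to $\mathrm{R}\Gamma(\mathcal X_{et},\mathbb Z)$ (not to $\mathrm{R}\Gamma(\overline{\mathcal X}_{et},\mathbb Z)$), and the morphism $\alpha_{\mathcal X}$ is then produced as the \emph{unique lift along $\varphi^*$}, whose fibre is $\mathrm{R}\Gamma_{\mathcal X_\infty}(\overline{\mathcal X}_{et},\mathbb Z)$ --- not along $u_\infty^*$, whose fibre is $\mathrm{R}\Gamma(\overline{\mathcal X}_{et},\varphi_!\mathbb Z)$. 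Lifting along one does not give a factorisation through the fibre of the other, and there is no established canonical map $\mathrm{R}\hat\Gamma_c(\mathcal X_{et},\mathbb Z)\to\mathrm{R}\Gamma(\overline{\mathcal X}_{et},\varphi_!\mathbb Z)$ in the paper. (Also, in the case $\mathcal X(\mathbb R)=\emptyset$ but $\mathcal X(\mathbb C)\neq\emptyset$, the topos $\overline{\mathcal X}_{et}$ is still different from $\mathcal X_{et}$ and $u_\infty^*$ is nontrivial, yet the construction of $\alpha_{\mathcal X}$ there does not pass through any compact-support complex at all.) In fact the paper's Remark immediately after the definition of $\mathrm{R}\Gamma_{W,c}(\mathcal X,\mathbb Z)$ explicitly labels the existence of a canonical lift $D_{\mathcal X}\to\mathrm{R}\Gamma_c(\mathcal X_{et},\mathbb Z)$ as an \emph{unproved} desideratum --- precisely the statement you are invoking as if it were available.

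The paper's actual proof of the vanishing is not formal. It decomposes $D_{\mathcal X}\approx\bigoplus_{k\geq2}H^kD_{\mathcal X}[-k]$, identifies each component $\beta_k\in\mathrm{Hom}_{\mathcal D}(H^kD_{\mathcal X}[-k],\mathrm{R}\Gamma(\mathcal X_\infty,\mathbb Z))\simeq\mathrm{Ext}^1(H^kD_{\mathcal X},H^{k-1}(\mathcal X_\infty,\mathbb Z)_{cotor})$, constructs an explicit lift $\tilde\beta_k$ into $\mathrm{Hom}(H^kD_{\mathcal X},H^{k-1}(\mathcal X_\infty,\mathbb Q/\mathbb Z)_{div})$ by tracing through $H^k(\alpha_{\mathcal X})$ and the comparison between \'etale and Betti cohomology, and then kills $\tilde\beta_k$ by showing $(H^{k-1}(\mathcal X_{\overline{\mathbb Q}},\mathbb Q_\ell/\mathbb Z_\ell)^{G_{\mathbb Q}})_{div}=0$ for $k\geq2$ via smooth-proper base change and the Weil conjectures (purity of weight $k-1>0$ of Frobenius on $H^{k-1}(\mathcal X_{\overline{\mathbb F}_p},\mathbb Q_\ell)$). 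This geometric input is essential and is exactly what your proposal is missing.
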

\begin{proof}
Recall from \cite{Flach-moi} that the second projection $$\mathcal{X}_{\infty,W}:=B_{\mathbb{R}}\times Sh(\mathcal{X}_{\infty})\rightarrow Sh(\mathcal{X}_{\infty})=\mathcal{X}_{\infty,et}$$
induces a quasi-isomorphism $\mbox{R}\Gamma(\mathcal{X}_{\infty},\mathbb{Z})\stackrel{\sim}{\longrightarrow}\mbox{R}\Gamma(\mathcal{X}_{\infty,W},\mathbb{Z})$.
Hence the existence of the map $i^*_{\infty}$ will follow (Axiom TR3 of triangulated categories) from the fact that the map
\begin{equation}\label{zero-map}
\beta:\mbox{RHom}(\mbox{R}\Gamma(\mathcal{X},\mathbb{Q}(d))_{\geq0},\mathbb{Q}[-2d-2])
\stackrel{\alpha_{\X}}{\longrightarrow }\mbox{R}\Gamma(\overline{\mathcal{X}}_{et},\mathbb{Z})\stackrel{u_{\infty}^*}{\longrightarrow }\mbox{R}\Gamma(\mathcal{X}_{\infty},\mathbb{Z})
\end{equation}
is the zero map. Again, we set $D_{\mathcal{X}}=\mbox{RHom}(\mbox{R}\Gamma(\mathcal{X},\mathbb{Q}(d))_{\geq0},\mathbb{Q}[-2d-2])$ for brevity. Then the uniqueness of $i_{\infty}^*$ follows from the exact sequence
$$\mbox{Hom}_{\mathcal{D}}(D_{\mathcal{X}}[1],\mbox{R}\Gamma(\mathcal{X}_{\infty,W},\mathbb{Z}))\stackrel{0}{\rightarrow}
\mbox{Hom}_{\mathcal{D}}(\mbox{R}\Gamma_W(\overline{\mathcal{X}},\mathbb{Z}),\mbox{R}\Gamma(\mathcal{X}_{\infty,W},\mathbb{Z}))$$
$$\rightarrow
\mbox{Hom}_{\mathcal{D}}(\mbox{R}\Gamma(\overline{\mathcal{X}}_{et},\mathbb{Z}),\mbox{R}\Gamma(\mathcal{X}_{\infty,W},\mathbb{Z})),$$
whose exactness follows from the fact that $\mbox{Hom}_{\mathcal{D}}(D_{\mathcal{X}}[1],\mbox{R}\Gamma(\mathcal{X}_{\infty,W},\mathbb{Z}))$ is divisible while
$\mbox{Hom}_{\mathcal{D}}(\mbox{R}\Gamma_W(\overline{\mathcal{X}},\mathbb{Z}),\mbox{R}\Gamma(\mathcal{X}_{\infty,W},\mathbb{Z}))$ is finitely generated.

It remains to show that the morphism (\ref{zero-map}) is indeed trivial. Since $D_{\mathcal{X}}$ is a bounded complex of $\mathbb{Q}$-vector spaces acyclic in degrees $<2$, one can choose a (non-canonical) isomorphism
$D_{\mathcal{X}}\simeq \bigoplus_{k\geq2}H^k(D_{\mathcal{X}})[-k]$.
Then $\beta$ is identified with the collection of maps $(\beta_k)_{k\geq2}$ in $\mathcal{D}$, with
$$\beta_k:H^k(D_{\mathcal{X}})[-k]\rightarrow\bigoplus_{k\geq2}H^k(D_{\mathcal{X}})[-k]\simeq D_{\mathcal{X}}\rightarrow
\mbox{R}\Gamma(\mathcal{X}_{\infty},\mathbb{Z}).$$
It is enough to show that $\beta_k=0$ for $k\geq2$. 
We fix such a $k$, and we consider the spectral sequence
$$E_2^{p,q}=\mbox{Ext}^p(H^kD_{\mathcal{X}},H^{q+k}(\mathcal{X}_{\infty},\mathbb{Z}))
\Rightarrow H^{p+q}(\mbox{RHom}(H^k(D_{\mathcal{X}})[-k],\mbox{R}\Gamma(\mathcal{X}_{\infty},\mathbb{Z}))).$$
The group $H^kD_{\mathcal{X}}$ is uniquely divisible and $H^{q+k}(\mathcal{X}_{\infty},\mathbb{Z})$ is finitely generated (hence has an injective resolution of length 1), so that $\mbox{Ext}^p(H^k(D_{\mathcal{X}}),H^{q+k}(\mathcal{X}_{\infty},\mathbb{Z}))=0$ for $p\neq1$. Hence the spectral sequence above degenerates and gives a canonical isomorphism
$$\mbox{Hom}_{\mathcal{D}}(H^kD_{\mathcal{X}}[-k],\mbox{R}\Gamma(\mathcal{X}_{\infty},\mathbb{Z}))\simeq
\mbox{Ext}^1(H^kD_{\mathcal{X}},H^{k-1}(\mathcal{X}_{\infty},\mathbb{Z})).$$
Moreover, the long exact sequence for $\mbox{Ext}^*(H^kD_{\mathcal{X}},-)$ yields
$$\mbox{Ext}^1(H^kD_{\mathcal{X}},H^{k-1}(\mathcal{X}_{\infty},\mathbb{Z}))\simeq \mbox{Ext}^1(H^kD_{\mathcal{X}},H^{k-1}(\mathcal{X}_{\infty},\mathbb{Z})_{cotor})$$
since the maximal torsion subgroup of $H^{k-1}(\mathcal{X}_{\infty},\mathbb{Z})$ is finite and $H^kD_{\mathcal{X}}$ is uniquely divisible.
The short exact sequence
$$0\rightarrow H^{k-1}(\mathcal{X}_{\infty},\mathbb{Z})_{cotor}\rightarrow H^{k-1}(\mathcal{X}_{\infty},\mathbb{Q})\rightarrow H^{k-1}(\mathcal{X}_{\infty},\mathbb{Q}/\mathbb{Z})_{div}\rightarrow0$$
is an injective resolution of the $\mathbb{Z}$-module $H^{k-1}(\mathcal{X}_{\infty},\mathbb{Z})_{cotor}$.
This yields an exact sequence
$$0\rightarrow\mbox{Hom}(H^kD_{\mathcal{X}},H^{k-1}(\mathcal{X}_{\infty},\mathbb{Q}))\rightarrow
\mbox{Hom}(H^kD_{\mathcal{X}},H^{k-1}(\mathcal{X}_{\infty},\mathbb{Q}/\mathbb{Z})_{div})$$
$$\rightarrow\mbox{Ext}^1(H^kD_{\mathcal{X}},H^{k-1}(\mathcal{X}_{\infty},\mathbb{Z})_{cotor})\rightarrow 0.$$
Let us define a natural lifting $$\tilde{\beta}_k\in \mbox{Hom}(H^kD_{\mathcal{X}},H^{k-1}(\mathcal{X}_{\infty},\mathbb{Q}/\mathbb{Z})_{div})$$ of $\beta_k\in \mbox{Ext}^1(H^kD_{\mathcal{X}},H^{k-1}(\mathcal{X}_{\infty},\mathbb{Z})_{cotor})$ and show that this lifting $\tilde{\beta}_k$ is already zero. One can assume $k\geq2$. Recall that $H^kD_{\mathcal{X}}=\mbox{Hom}(H^{\delta-k}(\mathcal{X},\mathbb{Q}(d))_{\geq0},\mathbb{Q})$. We have the following commutative diagram:
\[ \xymatrix{
 &\mbox{Hom}(H^{\delta-k}(\mathcal{X},\mathbb{Q}(d))_{\geq0},\mathbb{Q})\ar[d]\ar[dr]^{H^k(\alpha_{\mathcal{X}})}&\\
&H^{k-1}(\overline{\mathcal{X}}_{et},\mathbb{Q}/\mathbb{Z})\ar[d]\ar[r]^{\simeq}&H^k(\overline{\mathcal{X}}_{et},\mathbb{Z})\ar[d]\\
&H^{k-1}(\mathcal{X}_{\mathbb{Q},\,et},\mathbb{Q}/\mathbb{Z})\ar[d]\ar[r]^{\simeq}
&H^k(\mathcal{X}_{\mathbb{Q},\,et},\mathbb{Z})\ar[d]\\
&H^{k-1}(\mathcal{X}_{\overline{\mathbb{Q}},\,et},\mathbb{Q}/\mathbb{Z})\ar[d]^{\simeq}\ar[r]^{\simeq}
&H^k(\mathcal{X}_{\overline{\mathbb{Q}},\,et},\mathbb{Z})\ar[d]\\
H^{k-1}(\mathcal{X}(\mathbb{C}),\mathbb{Q})\ar[r]
&H^{k-1}(\mathcal{X}(\mathbb{C}),\mathbb{Q}/\mathbb{Z})\ar[r]&H^k(\mathcal{X}(\mathbb{C}),\mathbb{Z})
}
\]
The morphism given by the central column of the diagram above factors through
$$H^{k-1}(\mathcal{X}_{\infty},\mathbb{Q}/\mathbb{Z})_{div}\subseteq H^{k-1}(\mathcal{X}_{\infty},\mathbb{Q}/\mathbb{Z})\rightarrow H^{k-1}(\mathcal{X}(\mathbb{C}),\mathbb{Q}/\mathbb{Z})$$
and yields the desired lifting
$$\tilde{\beta}_k:\mbox{Hom}(H^{\delta-k}(\mathcal{X},\mathbb{Q}(d))_{\geq0},\mathbb{Q})\rightarrow H^{k-1}(\mathcal{X}_{\infty},\mathbb{Q}/\mathbb{Z})_{div}.$$
Here the morphism 
\begin{equation}\label{one++map}
H^{k-1}(\mathcal{X}_{\infty},\mathbb{Q}/\mathbb{Z})\rightarrow H^{k-1}(\mathcal{X}(\mathbb{C}),\mathbb{Q}/\mathbb{Z})
\end{equation} 
is induced by the projection $\mathcal{X}(\mathbb{C})\rightarrow \mathcal{X}_{\infty}$. Using standard spectral sequences for equivariant cohomology, it is easy to see that the kernel of the morphism (\ref{one++map}) is of finite exponent (more precisely, this kernel is finite and killed by a power of $2$). It follows that
$\tilde{\beta}_k=0$ if and only if the map
\begin{equation}\label{one---map}
\mbox{Hom}(H^{\delta-k}(\mathcal{X},\mathbb{Q}(d))_{\geq0},\mathbb{Q})\rightarrow H^{k-1}(\mathcal{X}(\mathbb{C}),\mathbb{Q}/\mathbb{Z}),
\end{equation}
given by the central column of the previous diagram, is the zero map. Moreover, the map $$H^{k-1}(\mathcal{X}_{\mathbb{Q},\,et},\mathbb{Q}/\mathbb{Z})\rightarrow
H^{k-1}(\mathcal{X}_{\overline{\mathbb{Q}},\,et},\mathbb{Q}/\mathbb{Z})$$
factors through $H^{k-1}(\mathcal{X}_{\overline{\mathbb{Q}},\,et},\mathbb{Q}/\mathbb{Z})^{G_{\mathbb{Q}}}$ hence so does
the map (\ref{one---map}). In order to show that $\tilde{\beta}_k=0$ it is therefore enough to show that
$$(H^{k-1}(\mathcal{X}_{\overline{\mathbb{Q}},\,et},\mathbb{Q}/\mathbb{Z})^{G_{\mathbb{Q}}})_{div}=
\bigoplus_{l}(H^{k-1}(\mathcal{X}_{\overline{\mathbb{Q}},\,et},\mathbb{Q}_l/\mathbb{Z}_l)^{G_{\mathbb{Q}}})_{div}=0.$$
Let $l$ be a fixed prime number. Let $U\subseteq \Spec(\mathbb{Z})$ on which $l$ is invertible and such that $\mathcal{X}_U\rightarrow U$ is smooth. Let $p\in U$. By smooth and proper base change we have:
$$H^{k-1}(\mathcal{X}_{\overline{\mathbb{Q}},\,et},\mathbb{Q}_l/\mathbb{Z}_l)^{I_{p}}\simeq H^{k-1}(\mathcal{X}_{\overline{\mathbb{F}}_p,\,et},\mathbb{Q}_l/\mathbb{Z}_l).$$
Recall that $H^{k-1}(\mathcal{X}_{\overline{\mathbb{F}}_p,\,et},\mathbb{Z}_l)$ is a finitely generated
$\mathbb{Z}_l$-module. We have an exact sequence
$$0\rightarrow H^{k-1}(\mathcal{X}_{\overline{\mathbb{F}}_p,\,et},\mathbb{Z}_l)_{cotor}\rightarrow H^{k-1}(\mathcal{X}_{\overline{\mathbb{F}}_p,\,et},\mathbb{Q}_l)\rightarrow
H^{k-1}(\mathcal{X}_{\overline{\mathbb{F}}_p,\,et},\mathbb{Q}_l/\mathbb{Z}_l)_{div}\rightarrow 0.$$
We get
$$0\rightarrow (H^{k-1}(\mathcal{X}_{\overline{\mathbb{F}}_p,\,et},\mathbb{Z}_l)_{cotor})^{G_{\mathbb{F}_p}}\rightarrow H^{k-1}(\mathcal{X}_{\overline{\mathbb{F}}_p,\,et},\mathbb{Q}_l)^{G_{\mathbb{F}_p}}$$
$$\rightarrow
(H^{k-1}(\mathcal{X}_{\overline{\mathbb{F}}_p,\,et},\mathbb{Q}_l/\mathbb{Z}_l)_{div})^{G_{\mathbb{F}_p}}
\rightarrow H^1(G_{\mathbb{F}_p},H^{k-1}(\mathcal{X}_{\overline{\mathbb{F}}_p,\,et},\mathbb{Z}_l)_{cotor}).$$
Again, $ H^1(G_{\mathbb{F}_p},H^{k-1}(\mathcal{X}_{\overline{\mathbb{F}}_p,\,et},\mathbb{Z}_l)_{cotor})$ is a finitely generated
$\mathbb{Z}_l$-module, hence we get a surjective map
$$H^{k-1}(\mathcal{X}_{\overline{\mathbb{F}}_p,\,et},\mathbb{Q}_l)^{G_{\mathbb{F}_p}}\rightarrow
((H^{k-1}(\mathcal{X}_{\overline{\mathbb{F}}_p,\,et},\mathbb{Q}_l/\mathbb{Z}_l)_{div})^{G_{\mathbb{F}_p}})_{div}
\rightarrow0.$$
Note that
$$((H^{k-1}(\mathcal{X}_{\overline{\mathbb{F}}_p,\,et},\mathbb{Q}_l/\mathbb{Z}_l)_{div})^{G_{\mathbb{F}_p}})_{div}
= (H^{k-1}(\mathcal{X}_{\overline{\mathbb{F}}_p,\,et},\mathbb{Q}_l/\mathbb{Z}_l)^{G_{\mathbb{F}_p}})_{div}.$$
But  $H^{k-1}(\mathcal{X}_{\overline{\mathbb{F}}_p,\,et},\mathbb{Q}_l)$ is pure of weight $k-1>0$ by \cite{Deligne74}, hence there is no non-trivial element in $H^{k-1}(\mathcal{X}_{\overline{\mathbb{F}}_p,\,et},\mathbb{Q}_l)$ fixed by the Frobenius. This shows that
$$
(H^{k-1}(\mathcal{X}_{\overline{\mathbb{Q}},\,et},\mathbb{Q}_l/\mathbb{Z}_l)^{G_{\mathbb{Q}_p}})_{div}
=(H^{k-1}(\mathcal{X}_{\overline{\mathbb{F}}_p,\,et},\mathbb{Q}_l/\mathbb{Z}_l)^{G_{\mathbb{F}_p}})_{div}
=H^{k-1}(\mathcal{X}_{\overline{\mathbb{F}}_p,\,et},\mathbb{Q}_l)^{G_{\mathbb{F}_p}}=0.$$
A fortiori, one has $(H^{k-1}(\mathcal{X}_{\overline{\mathbb{Q}},\,et},\mathbb{Q}_l/\mathbb{Z}_l)^{G_{\mathbb{Q}}})_{div}=0$
and the result follows.

\end{proof}

\begin{defn}\label{defncpctsuppcoh}
There exists an object $\mbox{\emph{R}}\Gamma_{W,c}(\mathcal{X},\mathbb{Z})$, well defined up to isomorphism in $\mathcal{D}$, \emph{endowed with} an exact triangle
\begin{equation}\label{exact-triangleforcompact}
\mbox{\emph{R}}\Gamma_{W,c}(\mathcal{X},\mathbb{Z})\rightarrow \mbox{\emph{R}}\Gamma_W(\overline{\mathcal{X}},\mathbb{Z})\stackrel{i_{\infty}^*}{\rightarrow} \mbox{\emph{R}}\Gamma(\mathcal{X}_{\infty,W},\mathbb{Z}).
\end{equation}
The determinant $\mbox{\emph{det}}_{\mathbb{Z}} \mbox{\emph{R}}\Gamma_{W,c}(\mathcal{X},\mathbb{Z}):=\bigotimes_{i\in\mathbb{Z}}
\mbox{\emph{det}}_{\mathbb{Z}} H^i_{W,c}(\mathcal{X},\mathbb{Z})^{(-1)^i}$ is well defined up to a \emph{canonical} isomorphism.
\end{defn}
The cohomology with compact support is defined (up to isomorphism only) as follows:
$$H_{W,c}^i(\mathcal{X},\mathbb{Z}):=H^i(\mbox{R}\Gamma_{W,c}(\mathcal{X},\mathbb{Z})).$$
To see that $\mbox{det}_{\mathbb{Z}} \mbox{R}\Gamma_{W,c}(\mathcal{X},\mathbb{Z})$ is indeed well defined, consider another object $\mbox{R}\Gamma_{W,c}(\mathcal{X},\mathbb{Z})'$ of $\mathcal{D}$ endowed with an exact triangle (\ref{exact-triangleforcompact}). There exists a (non-unique) morphism $u: \mbox{R}\Gamma_{W,c}(\mathcal{X},\mathbb{Z})\rightarrow\mbox{R}\Gamma_{W,c}(\mathcal{X},\mathbb{Z})'$ lying in a morphism of exact triangles
\[ \xymatrix{
\mbox{R}\Gamma_{W,c}(\mathcal{X},\mathbb{Z})\ar[d]_{\exists\,u}^{\simeq}\ar[r]
&\mbox{R}\Gamma_W(\overline{\mathcal{X}},\mathbb{Z})\ar[d]_{Id}\ar[r]
&\mbox{R}\Gamma(\mathcal{X}_{\infty,W},\mathbb{Z})\ar[d]_{Id}\\
\mbox{R}\Gamma_{W,c}(\mathcal{X},\mathbb{Z})'\ar[r]
&\mbox{R}\Gamma_W(\overline{\mathcal{X}},\mathbb{Z})\ar[r]
&\mbox{R}\Gamma(\mathcal{X}_{\infty,W},\mathbb{Z})
}
\]
The map $u$ induces
$$\mbox{det}_{\mathbb{Z}}(u):\mbox{det}_{\mathbb{Z}} \mbox{R}\Gamma_{W,c}(\mathcal{X},\mathbb{Z})\stackrel{\sim}{\longrightarrow}\mbox{det}_{\mathbb{Z}} \mbox{R}\Gamma_{W,c}(\mathcal{X},\mathbb{Z})'.$$
By \cite{Knudsen-Mumford} p. 43 Corollary 2, $\mbox{det}_{\mathbb{Z}}(u)$ does not depend on the choice of $u$, since it coincides with the following canonical isomorphism
\begin{eqnarray*}
\mbox{det}_{\mathbb{Z}} \mbox{R}\Gamma_{W,c}(\mathcal{X},\mathbb{Z})&\simeq&
\mbox{det}_{\mathbb{Z}} \mbox{R}\Gamma_{W}(\overline{\mathcal{X}},\mathbb{Z})\otimes \mbox{det}^{-1}_{\mathbb{Z}} \mbox{R}\Gamma(\mathcal{X}_{\infty,W},\mathbb{Z})\\
&\simeq&\mbox{det}_{\mathbb{Z}} \mbox{R}\Gamma_{W,c}(\mathcal{X},\mathbb{Z})'.
\end{eqnarray*}
Given a complex  of abelian groups $C$, we write $C_{\mathbb{R}}$ for $C\otimes\mathbb{R}$.

\begin{prop}\label{propiinftydecompos}
We set $\delta:=2d+2$. There is a \emph{canonical and functorial} direct sum decomposition in $\mathcal{D}$:
$$\mbox{\emph{R}}\Gamma_W(\overline{\mathcal{X}},\mathbb{Z})_{\mathbb{R}}\simeq \mbox{\emph{R}}\Gamma(\overline{\mathcal{X}}_{et},\mathbb{R})\oplus \mbox{\emph{RHom}}(\mbox{\emph{R}}\Gamma(\mathcal{X},\mathbb{Q}(d))_{\geq0},\mathbb{R}[-\delta])[1]$$
such that the following square commutes:
\[ \xymatrix{
\mbox{\emph{R}}\Gamma_W(\overline{\mathcal{X}},\mathbb{Z})_{\mathbb{R}}\ar[d]_{\simeq}\ar[r]^{i^*_{\infty}\otimes\mathbb{R}}& \mbox{\emph{R}}\Gamma(\mathcal{X}_{\infty,W},\mathbb{Z})_{\mathbb{R}}\ar[d]_{\simeq}\\
\mbox{\emph{R}}\Gamma(\overline{\mathcal{X}}_{et},\mathbb{R})\oplus \mbox{\emph{RHom}}(\mbox{\emph{R}}\Gamma(\mathcal{X},\mathbb{Q}(d))_{\geq0},\mathbb{R}[-\delta])[1]\ar[r]^{\hspace{3cm}(u^*_{\infty}\otimes \mathbb{R},0)}
&\mbox{\emph{R}}\Gamma(\mathcal{X}_{\infty},\mathbb{R})
}
\]

\end{prop}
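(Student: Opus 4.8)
The plan is to base--change the defining exact triangle of $\mbox{R}\Gamma_W(\overline{\mathcal{X}},\mathbb{Z})$ (Definition \ref{def-cohomology}) along $\mathbb{Z}\to\mathbb{R}$ and observe that it splits canonically. First I would record that $\mbox{R}\Gamma(\overline{\mathcal{X}}_{et},\mathbb{Z})\otimes^L_{\mathbb{Z}}\mathbb{R}\simeq\mbox{R}\Gamma(\overline{\mathcal{X}}_{et},\mathbb{R})$ is concentrated in degree $0$ and equals $\mathbb{R}[0]$: the scheme is normal and connected, $H^i(\overline{\mathcal{X}}_{et},\mathbb{Z})$ vanishes for $i<0$, equals $\mathbb{Z}$ for $i=0$, and is torsion for $i\geq 1$ by Lemma \ref{prop-duality-Z-coefs} (the topos $\overline{\mathcal{X}}_{et}$ is coherent, so cohomology commutes with the filtered colimit $\mathbb{R}=\underrightarrow{\mathrm{lim}}\,\mathbb{Z}^{n}$, and $\mathbb{R}$ is flat). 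On the other hand, setting $D_{\mathcal{X},\mathbb{R}}:=\mbox{RHom}(\mbox{R}\Gamma(\mathcal{X},\mathbb{Q}(d))_{\geq0},\mathbb{R}[-\delta])$, this is a complex of $\mathbb{R}$--vector spaces concentrated in degrees $\geq 2$, since $H^j(\mathcal{X},\mathbb{Q}(d))=0$ for $j>2d$. Hence $\mbox{Hom}_{\mathcal{D}}(D_{\mathcal{X},\mathbb{R}},\mbox{R}\Gamma(\overline{\mathcal{X}}_{et},\mathbb{R}))=\mbox{Hom}_{\mathcal{D}}(D_{\mathcal{X},\mathbb{R}},\mathbb{R}[0])=0$ for degree reasons, so $\alpha_{\mathcal{X}}\otimes\mathbb{R}=0$.

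Consequently the triangle $D_{\mathcal{X},\mathbb{R}}\xrightarrow{0}\mbox{R}\Gamma(\overline{\mathcal{X}}_{et},\mathbb{R})\xrightarrow{f}\mbox{R}\Gamma_W(\overline{\mathcal{X}},\mathbb{Z})_{\mathbb{R}}\xrightarrow{g}D_{\mathcal{X},\mathbb{R}}[1]$ splits, and both the retraction $r$ of $f$ and the section $s$ of $g$ are unique, because two of either differ by an element of $\mbox{Hom}_{\mathcal{D}}(D_{\mathcal{X},\mathbb{R}}[1],\mathbb{R}[0])=0$. This gives the canonical isomorphism $(f,s)\colon\mbox{R}\Gamma(\overline{\mathcal{X}}_{et},\mathbb{R})\oplus D_{\mathcal{X},\mathbb{R}}[1]\xrightarrow{\sim}\mbox{R}\Gamma_W(\overline{\mathcal{X}},\mathbb{Z})_{\mathbb{R}}$, with inverse $(r,g)$. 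Functoriality in $\mathcal{X}$ then follows from Theorem \ref{cor-functoriality}: a morphism induces a morphism between the defining triangles, hence after $-\otimes\mathbb{R}$ between the split triangles, and the two splittings are matched using the uniqueness of $r$ and $s$ just established.

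It remains to compare $i^*_{\infty}\otimes\mathbb{R}$ with this decomposition. On the summand $\mbox{R}\Gamma(\overline{\mathcal{X}}_{et},\mathbb{R})$: by Proposition \ref{prop-iinfty} the square there gives $i^*_{\infty}\circ(\mbox{R}\Gamma(\overline{\mathcal{X}}_{et},\mathbb{Z})\to\mbox{R}\Gamma_W(\overline{\mathcal{X}},\mathbb{Z}))$ equal to the composite of $u^*_{\infty}$ with the quasi-isomorphism $\mbox{R}\Gamma(\mathcal{X}_{\infty},\mathbb{Z})\xrightarrow{\sim}\mbox{R}\Gamma(\mathcal{X}_{\infty,W},\mathbb{Z})$; tensoring with $\mathbb{R}$, $(i^*_{\infty}\otimes\mathbb{R})\circ f$ is (the base change of) $u^*_{\infty}$, i.e. the restriction to $\mbox{R}\Gamma(\overline{\mathcal{X}}_{et},\mathbb{R})$ of the desired bottom arrow. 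The point to establish is therefore that $(i^*_{\infty}\otimes\mathbb{R})\circ s=0$, so that $i^*_{\infty}\otimes\mathbb{R}$ annihilates the summand $D_{\mathcal{X},\mathbb{R}}[1]$.

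This last vanishing is the main obstacle. Over $\mathbb{Z}$, $i^*_{\infty}$ is pinned down uniquely by the square of Proposition \ref{prop-iinfty} because $\mbox{Hom}_{\mathcal{D}}(\mbox{R}\Gamma_W(\overline{\mathcal{X}},\mathbb{Z}),\mbox{R}\Gamma(\mathcal{X}_{\infty,W},\mathbb{Z}))$ is finitely generated and receives a divisible group; but after $-\otimes\mathbb{R}$ this uniqueness breaks down, the lifts of $u^*_{\infty}\otimes\mathbb{R}$ forming a torsor under the non--zero group $\mbox{Hom}_{\mathcal{D}}(D_{\mathcal{X},\mathbb{R}}[1],\mbox{R}\Gamma(\mathcal{X}_{\infty},\mathbb{R}))\cong\prod_{k}\mbox{Hom}_{\mathbb{R}}(H^{k}(D_{\mathcal{X},\mathbb{R}}),H^{k-1}(\mathcal{X}_{\infty},\mathbb{R}))$. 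Hence one cannot conclude by uniqueness and must return to the construction of $i^*_{\infty}$. Concretely, using Lemma \ref{prop-sequence-structure-cohomology} and Proposition \ref{finitelygenerated-cohomology} one has $H^i_W(\overline{\mathcal{X}},\mathbb{Z})_{\mathbb{R}}\cong\mbox{Hom}_{\mathbb{Z}}(H^{\delta-i-1}(\overline{\mathcal{X}}_{et},\mathbb{Z}(d))_{\geq0},\mathbb{Z})_{\mathbb{R}}$ for $i\geq 1$, so that (everything being a complex of $\mathbb{R}$--vector spaces) $(i^*_{\infty}\otimes\mathbb{R})\circ s=0$ is equivalent, degree by degree, to the assertion that $H^i(i^*_{\infty})$ carries the lattice $\mbox{Hom}_{\mathbb{Z}}(H^{\delta-i-1}(\overline{\mathcal{X}}_{et},\mathbb{Z}(d))_{\geq0},\mathbb{Z})$ into a torsion subgroup of $H^i(\mathcal{X}_{\infty},\mathbb{Z})$. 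I expect this to be the technical heart, to be obtained by tracing the explicit diagram chase in the proof of Proposition \ref{prop-iinfty}: there the composite $u^*_{\infty}\circ\alpha_{\mathcal{X}}$ is shown to vanish by reducing its components to the canonical classes $\tilde{\beta}_k\in\mbox{Hom}(H^kD_{\mathcal{X}},H^{k-1}(\mathcal{X}_{\infty},\mathbb{Q}/\mathbb{Z})_{div})$ and invoking $(H^{k-1}(\mathcal{X}_{\overline{\mathbb{Q}},\,et},\mathbb{Q}_l/\mathbb{Z}_l)^{G_{\mathbb{Q}}})_{div}=0$ (the Weil conjectures); the same purity input shows that no divisible, hence no non--torsion, class of $\mathcal{X}_{\infty}$ can be reached, forcing $(i^*_{\infty}\otimes\mathbb{R})\circ s=0$ and the commutativity of the square with bottom arrow equal to $u^*_{\infty}\otimes\mathbb{R}$ on the first factor and $0$ on the second.
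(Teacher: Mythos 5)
Your argument for the existence and canonicity of the decomposition coincides with the paper's: compute $\mbox{R}\Gamma(\overline{\mathcal{X}}_{et},\mathbb{Z})\otimes^{L}\mathbb{R}\simeq\mathbb{R}[0]$, observe that $\alpha_{\mathcal{X}}\otimes\mathbb{R}=0$ for degree reasons (the paper phrases this via injectivity of $\mathbb{R}[0]$), deduce a splitting, pin it down by the uniqueness of the retraction (resp.\ section) since $\mbox{Hom}_{\mathcal{D}}(D_{\mathcal{X},\mathbb{R}}[1],\mathbb{R}[0])=0$, and obtain functoriality from this uniqueness.

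Where you diverge is in how seriously you take the last sentence of the paper's proof. Your observation that the uniqueness argument of Proposition \ref{prop-iinfty} does \emph{not} survive $-\otimes\mathbb{R}$, because $\mbox{Hom}_{\mathcal{D}}(D_{\mathcal{X},\mathbb{R}}[1],\mbox{R}\Gamma(\mathcal{X}_{\infty},\mathbb{R}))$ is a nonzero group of possible corrections, is sharp and correct: the terse "follows from Proposition \ref{prop-iinfty}" does compress a real step, and your reduction of the problem to the torsionness of $H^{i}(i^{*}_{\infty})$ on the free quotient of $H^{i}_{W}(\overline{\mathcal{X}},\mathbb{Z})$ is accurate. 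However, you over-estimate the size of the gap: there is no need to re-run the Weil-conjecture/purity computation. Proposition \ref{prop-iinfty} already supplies the vanishing $u^{*}_{\infty}\circ\alpha_{\mathcal{X}}=0$ (that is exactly where the Weil conjectures were spent), so $\alpha_{\mathcal{X}}$ factors --- non-canonically --- through some $\tilde{\alpha}_{\mathcal{X}}\colon D_{\mathcal{X}}\to\mbox{fib}(u^{*}_{\infty})$. The target has finitely generated cohomology, so each $H^{i}(\tilde{\alpha}_{\mathcal{X}})$ lies in $\mbox{Hom}(\text{a }\mathbb{Q}\text{-vector space},\text{a finitely generated abelian group})=0$; hence $\tilde{\alpha}_{\mathcal{X}}\otimes\mathbb{R}=0$ for \emph{any} choice of lift, since $D_{\mathbb{R}}$ and $\mbox{fib}(u^{*}_{\infty})\otimes\mathbb{R}$ are formal as complexes of $\mathbb{R}$-vector spaces. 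The octahedron applied to $D_{\mathcal{X}}\to\mbox{fib}(u^{*}_{\infty})\to\mbox{R}\Gamma(\overline{\mathcal{X}}_{et},\mathbb{Z})$, tensored with $\mathbb{R}$, then produces precisely the commutative square with bottom arrow $u^{*}_{\infty}\otimes\mathbb{R}$ on the $\mathbb{R}[0]$-summand and $0$ on $D_{\mathcal{X},\mathbb{R}}[1]$. So the gap you flag is genuine but its closure is formal once the lift from Proposition \ref{prop-iinfty} is in hand, and does not require a second "purity" input as you suggest.
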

\begin{proof}
Applying $(-)\otimes\mathbb{R}$ to the exact triangle of Definition \ref{def-cohomology}, we obtain an exact triangle
$$\mbox{RHom}(\mbox{R}\Gamma(\mathcal{X},\mathbb{Q}(d))_{\geq0},\mathbb{R}[-\delta])\rightarrow
\mbox{R}\Gamma(\overline{\mathcal{X}}_{et},\mathbb{R})
\rightarrow\mbox{R}\Gamma_W(\overline{\mathcal{X}},\mathbb{Z})_{\mathbb{R}}.$$
But the map $$\mbox{RHom}(\mbox{R}\Gamma(\mathcal{X},\mathbb{Q}(d))_{\geq0},\mathbb{R}[-\delta])\rightarrow
\mbox{R}\Gamma(\overline{\mathcal{X}}_{et},\mathbb{R})$$
is trivial since $\mbox{R}\Gamma(\overline{\mathcal{X}}_{et},\mathbb{R})\simeq\mathbb{R}[0]$ is injective and $\mbox{RHom}(\mbox{R}\Gamma(\mathcal{X},\mathbb{Q}(d))_{\geq0},\mathbb{R}[-\delta])$ is acyclic in degrees $\leq1$. This shows the existence of the direct sum decomposition. We write $D_{\mathbb{R}}:=\mbox{RHom}(\mbox{R}\Gamma(\mathcal{X},\mathbb{Q}(d))_{\geq0},\mathbb{R}[-\delta])$ and $\mbox{R}\Gamma(\overline{\mathcal{X}}_{et},\mathbb{R})\simeq\mathbb{R}[0]$ for brevity. The exact sequence
$$\mbox{Hom}_{\mathcal{D}}(D_{\mathbb{R}}[1],\mathbb{R}[0])\rightarrow \mbox{Hom}_{\mathcal{D}}(\mbox{R}\Gamma_W(\overline{\mathcal{X}},\mathbb{Z}),\mathbb{R}[0])$$
$$\rightarrow \mbox{Hom}_{\mathcal{D}}(\mathbb{R}[0],\mathbb{R}[0])\rightarrow \mbox{Hom}_{\mathcal{D}}(D_{\mathbb{R}},\mathbb{R}[0])$$
yields an isomorphism $\mbox{Hom}_{\mathcal{D}}(\mbox{R}\Gamma_W(\overline{\mathcal{X}},\mathbb{Z}),\mathbb{R}[0])\stackrel{\sim}{\rightarrow}
\mbox{Hom}_{\mathcal{D}}(\mathbb{R}[0],\mathbb{R}[0])$. Hence there exists a unique map $s_{\overline{\mathcal{X}}}:\mbox{R}\Gamma_W(\overline{\mathcal{X}},\mathbb{Z})_{\mathbb{R}}\rightarrow \mathbb{R}[0]$
such that $s_{\overline{\mathcal{X}}}\circ \gamma^*_{\overline{\mathcal{X}}}=\mbox{Id}_{\mathbb{R}[0]}$ where $\gamma^*_{\overline{\mathcal{X}}}:\mathbb{R}[0]\rightarrow \mbox{R}\Gamma_W(\overline{\mathcal{X}},\mathbb{Z})$ is the given map. The functorial behavior of $s_{\overline{\mathcal{X}}}$ follows from the fact that it is the unique map such that $s_{\overline{\mathcal{X}}}\circ \gamma^*_{\overline{\mathcal{X}}}=\mbox{Id}_{\mathbb{R}[0]}$. The direct sum decomposition is therefore canonical and functorial. The commutativity of the square follows from  Proposition \ref{prop-iinfty}.
\end{proof}
Recall that we denote $\mbox{R}\Gamma_{c}(\mathcal{X}_{et},\mathbb{R}):=\mbox{R}\Gamma(\overline{\mathcal{X}}_{et},\varphi_!\mathbb{R})$.
\begin{prop}\label{lapropcanonique}
We set $\delta:=2d+2$. There is a \emph{non-canonical} direct sum decomposition
\begin{equation}\label{directsumdecompo}
\mbox{\emph{R}}\Gamma_{W,c}(\mathcal{X},\mathbb{Z})_{\mathbb{R}}\simeq \mbox{\emph{R}}\Gamma_{c}(\mathcal{X}_{et},\mathbb{R})\oplus \mbox{\emph{RHom}}(\mbox{\emph{R}}\Gamma(\mathcal{X},\mathbb{Q}(d))_{\geq0},\mathbb{R}[-\delta])[1]
\end{equation}
inducing a \emph{canonical} isomorphism
$$\mbox{\emph{det}}_{\mathbb{R}} \mbox{\emph{R}}\Gamma_{W,c}(\mathcal{X},\mathbb{Z})_{\mathbb{R}}\simeq
\mbox{\emph{det}}_{\mathbb{R}}\mbox{\emph{R}}\Gamma_{c}(\mathcal{X}_{et},\mathbb{R})\otimes \mbox{\emph{det}}^{-1}_{\mathbb{R}}
\mbox{\emph{RHom}}(\mbox{\emph{R}}\Gamma(\mathcal{X},\mathbb{Q}(d))_{\geq0},\mathbb{R}[-\delta]).$$
\end{prop}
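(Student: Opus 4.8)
The plan is to deduce the decomposition of $\mbox{R}\Gamma_{W,c}(\mathcal{X},\mathbb{Z})_{\mathbb{R}}$ from the decomposition of $\mbox{R}\Gamma_W(\overline{\mathcal{X}},\mathbb{Z})_{\mathbb{R}}$ already established in Proposition \ref{propiinftydecompos}, by feeding it into the defining triangle (\ref{exact-triangleforcompact}) tensored with $\mathbb{R}$. First I would apply $(-)\otimes\mathbb{R}$ to the triangle
$$\mbox{R}\Gamma_{W,c}(\mathcal{X},\mathbb{Z})\rightarrow \mbox{R}\Gamma_W(\overline{\mathcal{X}},\mathbb{Z})\stackrel{i_{\infty}^*}{\rightarrow} \mbox{R}\Gamma(\mathcal{X}_{\infty,W},\mathbb{Z})\rightarrow$$
and use the commutative square of Proposition \ref{propiinftydecompos} to identify $i_\infty^*\otimes\mathbb{R}$ with the sum of the map $\mbox{R}\Gamma(\overline{\mathcal{X}}_{et},\mathbb{R})\rightarrow \mbox{R}\Gamma(\mathcal{X}_\infty,\mathbb{R})$ (which is $u_\infty^*\otimes\mathbb{R}$ composed with the quasi-isomorphism $\mbox{R}\Gamma(\mathcal{X}_{\infty},\mathbb{Z})_{\mathbb{R}}\simeq\mbox{R}\Gamma(\mathcal{X}_{\infty,W},\mathbb{Z})_{\mathbb{R}}$) and the zero map on the summand $\mbox{RHom}(\mbox{R}\Gamma(\mathcal{X},\mathbb{Q}(d))_{\geq0},\mathbb{R}[-\delta])[1]$. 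Since the cone of a sum of two maps is the sum of the cones, and since $\mbox{R}\Gamma_{c}(\mathcal{X}_{et},\mathbb{R})$ is by definition the shifted cone of $\mbox{R}\Gamma(\overline{\mathcal{X}}_{et},\mathbb{R})\rightarrow\mbox{R}\Gamma(\mathcal{X}_{\infty},\mathbb{R})$ (via the open-closed decomposition $\varphi$, $u_\infty$), the cone of $i_\infty^*\otimes\mathbb{R}$ decomposes as
$$\mbox{R}\Gamma_{W,c}(\mathcal{X},\mathbb{Z})_{\mathbb{R}}\simeq \mbox{R}\Gamma_{c}(\mathcal{X}_{et},\mathbb{R})\oplus \mbox{RHom}(\mbox{R}\Gamma(\mathcal{X},\mathbb{Q}(d))_{\geq0},\mathbb{R}[-\delta])[1],$$
because the summand on which $i_\infty^*\otimes\mathbb{R}$ vanishes contributes its own cone, namely $\mbox{RHom}(\mbox{R}\Gamma(\mathcal{X},\mathbb{Q}(d))_{\geq0},\mathbb{R}[-\delta])[1]$ itself (the cone of a zero map $0\to A$ is $A$, and here the source summand is $0$ because the triangle (\ref{exact-triangleforcompact}) has that summand only on the middle term). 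This gives the asserted (non-canonical, since the splitting of a triangle is non-canonical) direct sum decomposition (\ref{directsumdecompo}).

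Next I would address canonicity of the induced isomorphism of determinants. The point is the standard fact (\cite{Knudsen-Mumford}) that for an exact triangle $A\to B\to C\to A[1]$ there is a canonical isomorphism $\mbox{det}_{\mathbb{R}}B\simeq \mbox{det}_{\mathbb{R}}A\otimes\mbox{det}_{\mathbb{R}}C$, independent of any splitting. Applying this to the $\mathbb{R}$-linearization of (\ref{exact-triangleforcompact}) and to the defining triangle of $\mbox{R}\Gamma_{c}(\mathcal{X}_{et},\mathbb{R})$ (which, after the canonical splitting of Proposition \ref{propiinftydecompos} is taken into account on the middle term $\mbox{R}\Gamma_W(\overline{\mathcal{X}},\mathbb{Z})_{\mathbb{R}}$), I obtain chains of canonical determinant identities:
$$\mbox{det}_{\mathbb{R}} \mbox{R}\Gamma_{W,c}(\mathcal{X},\mathbb{Z})_{\mathbb{R}}\simeq \mbox{det}_{\mathbb{R}}\mbox{R}\Gamma_W(\overline{\mathcal{X}},\mathbb{Z})_{\mathbb{R}}\otimes \mbox{det}^{-1}_{\mathbb{R}}\mbox{R}\Gamma(\mathcal{X}_{\infty,W},\mathbb{Z})_{\mathbb{R}},$$
and then using the canonical decomposition of Proposition \ref{propiinftydecompos} together with the (canonical) determinant identity coming from the open-closed triangle for $\mbox{R}\Gamma(\overline{\mathcal{X}}_{et},\mathbb{R})$, the two "extra" factors $\mbox{det}_{\mathbb{R}}\mbox{R}\Gamma(\overline{\mathcal{X}}_{et},\mathbb{R})$ and $\mbox{det}^{-1}_{\mathbb{R}}\mbox{R}\Gamma(\mathcal{X}_\infty,\mathbb{R})$ reassemble canonically into $\mbox{det}_{\mathbb{R}}\mbox{R}\Gamma_{c}(\mathcal{X}_{et},\mathbb{R})$, leaving
$$\mbox{det}_{\mathbb{R}} \mbox{R}\Gamma_{W,c}(\mathcal{X},\mathbb{Z})_{\mathbb{R}}\simeq \mbox{det}_{\mathbb{R}}\mbox{R}\Gamma_{c}(\mathcal{X}_{et},\mathbb{R})\otimes \mbox{det}^{-1}_{\mathbb{R}}\mbox{RHom}(\mbox{R}\Gamma(\mathcal{X},\mathbb{Q}(d))_{\geq0},\mathbb{R}[-\delta]).$$
One should check that the composite canonical isomorphism so obtained agrees with the one read off from any chosen splitting (\ref{directsumdecompo}); this is again the Knudsen–Mumford independence, applied to the split triangle.

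I expect the main obstacle to be purely bookkeeping: one must be careful that the identification of $\mbox{R}\Gamma_{c}(\mathcal{X}_{et},\mathbb{R})$ used here — namely, as the cone (shifted) of $\mbox{R}\Gamma(\overline{\mathcal{X}}_{et},\mathbb{R})\xrightarrow{u_\infty^*}\mbox{R}\Gamma(\mathcal{X}_\infty,\mathbb{R})$ — is the same as the one implicit in Section 8's $\mbox{R}\Gamma(\mathcal{X}_{et},\varphi_!\mathbb{R})$, and that the quasi-isomorphism $\mbox{R}\Gamma(\mathcal{X}_{\infty},\mathbb{Z})_{\mathbb{R}}\simeq\mbox{R}\Gamma(\mathcal{X}_{\infty,W},\mathbb{Z})_{\mathbb{R}}$ is compatible with the square of Proposition \ref{propiinftydecompos}. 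Once these compatibilities are in place, the argument is a formal manipulation in the triangulated category together with the functoriality of $\mbox{det}_{\mathbb{R}}$; no genuinely new input beyond Propositions \ref{prop-iinfty} and \ref{propiinftydecompos} is needed.
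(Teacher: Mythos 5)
Your proposal is correct and follows essentially the same route as the paper's proof: both tensor the defining triangle (\ref{exact-triangleforcompact}) with $\mathbb{R}$, invoke the canonical decomposition and compatibility square of Proposition \ref{propiinftydecompos} to identify $i_\infty^*\otimes\mathbb{R}$ with $u_\infty^*\oplus 0$, obtain the non-canonical splitting of the fiber, and then appeal to Knudsen--Mumford to see that the induced determinant isomorphism is independent of the chosen splitting. The only cosmetic difference is that you phrase the splitting step as a direct sum of cones while the paper produces a (non-unique) isomorphism $u$ via TR3 and then shows $\mathrm{det}_{\mathbb{R}}(u)$ agrees with the canonical determinant isomorphism; the content is identical.
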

\begin{proof}
Again we write $D_{\mathbb{R}}:=\mbox{RHom}(\mbox{R}\Gamma(\mathcal{X},\mathbb{Q}(d))_{\geq0},\mathbb{R}[-\delta])$ and $\mbox{R}\Gamma(\mathcal{X}_{et},\mathbb{R})\simeq\mathbb{R}[0]$ for brevity (recall that $\mathcal{X}$ is connected).
Consider the following morphism of exact triangles:
\[ \xymatrix{
 \mbox{R}\Gamma_{W,c}(\mathcal{X},\mathbb{Z})_{\mathbb{R}}\ar[r]\ar[d]_{\exists\,u}
 &\mbox{R}\Gamma_W(\overline{\mathcal{X}},\mathbb{Z})_{\mathbb{R}}\ar[d]_{\simeq}\ar[r]^{i^*_{\infty}\otimes\mathbb{R}}& \mbox{R}\Gamma(\mathcal{X}_{\infty,W},\mathbb{Z})_{\mathbb{R}}\ar[d]_{\simeq}\\
\mbox{R}\Gamma_c(\mathcal{X}_{et},\mathbb{R})\oplus D_{\mathbb{R}}[1] \ar[r]&\mbox{R}\Gamma(\overline{\mathcal{X}}_{et},\mathbb{R})\oplus D_{\mathbb{R}}[1]\ar[r]
&\mbox{R}\Gamma(\mathcal{X}_{\infty},\mathbb{R})
}
\]
Here all the maps but the isomorphism $u$ are canonical. The non-canonical direct sum decomposition (\ref{directsumdecompo}) follows. A choice of such an isomorphism $u$ induces
$$\mbox{det}_{\mathbb{R}}(u):\mbox{det}_{\mathbb{R}} \mbox{R}\Gamma_{W,c}(\mathcal{X},\mathbb{Z})_{\mathbb{R}}
\stackrel{\sim}{\longrightarrow} \mbox{det}_{\mathbb{R}} (\mbox{R}\Gamma_c(\mathcal{X}_{et},\mathbb{R})\oplus D_{\mathbb{R}}[1]).$$
But $\mbox{det}_{\mathbb{R}}(u)$  coincides (see \cite{Knudsen-Mumford} p. 43 Corollary 2) with the following (canonical) isomorphism
\begin{eqnarray*}
\mbox{det}_{\mathbb{R}} \mbox{R}\Gamma_{W,c}(\mathcal{X},\mathbb{Z})_{\mathbb{R}}&\simeq&
\mbox{det}_{\mathbb{R}} \mbox{R}\Gamma_{W}(\overline{\mathcal{X}},\mathbb{Z})_{\mathbb{R}}\otimes \mbox{det}^{-1}_{\mathbb{R}} \mbox{R}\Gamma(\mathcal{X}_{\infty,W},\mathbb{Z})_{\mathbb{R}}\\
&\simeq& \mbox{det}_{\mathbb{R}} (\mbox{R}\Gamma(\overline{\mathcal{X}}_{et},\mathbb{R})\oplus D_{\mathbb{R}}[1])\otimes \mbox{det}^{-1}_{\mathbb{R}}\mbox{R}\Gamma(\overline{\mathcal{X}}_{et},\mathbb{R})\\
&\simeq& \mbox{det}_{\mathbb{R}} (\mbox{R}\Gamma_c(\mathcal{X}_{et},\mathbb{R})\oplus D_{\mathbb{R}}[1])
\end{eqnarray*}
hence does not depend on the choice of $u$.
\end{proof}

\subsection{The conjecture $\textbf{B}(\mathcal{X},d)$ and the regulator map}

Let $\mathcal{X}$ be a proper flat regular connected arithmetic scheme of dimension $d$ with generic fibre $X=\mathcal{X}_\mathbb{Q}$. The "integral part in the motivic cohomology" $H^{2d-1-i}_M(X_{/\mathbb{Z}},\mathbb{Q}(d))$ is defined as the image of the morphism
$$H^{2d-1-i}(\mathcal{X},\mathbb{Q}(d))\rightarrow H^{2d-1-i}(X,\mathbb{Q}(d)).$$
Let $H^p_{\mathcal D}(X_{/\mathbb{R}},\mathbb{R}(q))$ denote the real Deligne cohomology and let
$$\rho^i_\infty:H^{2d-1-i}_M(X_{/\mathbb{Z}},\mathbb{Q}(d))_{\mathbb{R}}\rightarrow H^{2d-1-i}_{\mathcal{D}}(X_{/\mathbb{R}},\mathbb{R}(d))$$
be the Beilinson regulator. According to a classical conjecture of Beilinson, the map $\rho^i_\infty$ should be an isomorphism for $i\geq 1$ and there should be an
exact sequence
$$0\rightarrow
H^{2d-1}_M(X_{/\mathbb{Z}},\mathbb{Q}(d))_{\mathbb{R}}\xrightarrow {\rho^0_\infty} H^{2d-1}_{\mathcal{D}}(X_{/\mathbb{R}},\mathbb{R}(d))\rightarrow CH^0(X)^*_{\mathbb{R}}\rightarrow 0$$
for $i=0$. Moreover, the natural map
$$H^{2d-1-i}(\mathcal{X},\mathbb{Q}(d))_{\mathbb{R}}\rightarrow H^{2d-1-i}(X,\mathbb{Q}(d))_{\mathbb{R}}$$ is expected to be injective for $i\geq0$. This suggests the following conjecture, where we consider the composite map
$$H^{2d-1-i}(\mathcal{X},\mathbb{Q}(d))_{\mathbb{R}}\rightarrow H^{2d-1-i}(X,\mathbb{Q}(d))_{\mathbb{R}}
\rightarrow H^{2d-1-i}_{\mathcal{D}}(X_{/\mathbb{R}},\mathbb{R}(d)).$$
\begin{conj}\emph{$\textbf{B}(\mathcal{X},d)$}
The map $$H^{2d-1-i}(\mathcal{X},\mathbb{Q}(d))_{\mathbb{R}}\rightarrow H^{2d-1-i}_{\mathcal{D}}(X_{/\mathbb{R}},\mathbb{R}(d))$$ is an isomorphism for $1\leq i\leq 2d-1$ and there is an
exact sequence
$$0\rightarrow
H^{2d-1}(\mathcal{X},\mathbb{Q}(d))_{\mathbb{R}}\xrightarrow {} H^{2d-1}_{\mathcal{D}}(X_{/\mathbb{R}},\mathbb{R}(d))\rightarrow CH^0(X)^*_{\mathbb{R}}\rightarrow 0
$$
for $i=0$.
\end{conj}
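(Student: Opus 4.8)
The plan is to deduce $\textbf{B}(\mathcal{X},d)$ from Conjectures \ref{Beilinson-Conjecture} and \ref{conj-Flach}, which have just been stated; since $\textbf{B}(\mathcal{X},d)$ is open in general this is the only available form of proof, and running the same steps backwards recovers the two conjectures, which accounts for the asserted equivalence. The whole argument is a short diagram chase organized around the factorization of the map in $\textbf{B}(\mathcal{X},d)$ through the integral part $H^{2d-1-i}_M(X_{/\mathbb{Z}},\mathbb{Q}(d))$.

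First I would set up the factorization. By definition $H^{2d-1-i}_M(X_{/\mathbb{Z}},\mathbb{Q}(d))$ is the image of the restriction $r_i\colon H^{2d-1-i}(\mathcal{X},\mathbb{Q}(d))\to H^{2d-1-i}(X,\mathbb{Q}(d))$, and since $-\otimes_{\mathbb{Q}}\mathbb{R}$ is exact, $H^{2d-1-i}_M(X_{/\mathbb{Z}},\mathbb{Q}(d))_{\mathbb{R}}$ is the image of $r_i\otimes\mathbb{R}$. The map occurring in $\textbf{B}(\mathcal{X},d)$ is, by construction, the composite of $r_i\otimes\mathbb{R}$ with the Beilinson regulator on $H^{2d-1-i}(X,\mathbb{Q}(d))_{\mathbb{R}}$, and the restriction of that regulator to the integral part is precisely $\rho^i_\infty$; hence it factors as
$$H^{2d-1-i}(\mathcal{X},\mathbb{Q}(d))_{\mathbb{R}}\twoheadrightarrow H^{2d-1-i}_M(X_{/\mathbb{Z}},\mathbb{Q}(d))_{\mathbb{R}}\xrightarrow{\ \rho^i_\infty\ }H^{2d-1-i}_{\mathcal{D}}(X_{/\mathbb{R}},\mathbb{R}(d)).$$
By Conjecture \ref{conj-Flach} the map $r_i\otimes\mathbb{R}$ is injective, so the first arrow is an isomorphism for every $i\geq0$. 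For $i\geq1$, Conjecture \ref{Beilinson-Conjecture} makes $\rho^i_\infty$ an isomorphism, whence the composite is an isomorphism — the first assertion of $\textbf{B}(\mathcal{X},d)$. For $i=0$, composing the isomorphism $H^{2d-1}(\mathcal{X},\mathbb{Q}(d))_{\mathbb{R}}\xrightarrow{\sim}H^{2d-1}_M(X_{/\mathbb{Z}},\mathbb{Q}(d))_{\mathbb{R}}$ with the exact sequence of Conjecture \ref{Beilinson-Conjecture} produces the exact sequence
$$0\to H^{2d-1}(\mathcal{X},\mathbb{Q}(d))_{\mathbb{R}}\to H^{2d-1}_{\mathcal{D}}(X_{/\mathbb{R}},\mathbb{R}(d))\to CH^0(X)^*_{\mathbb{R}}\to0,$$
the second assertion. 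Conversely, injectivity of the $i=0$ map of $\textbf{B}(\mathcal{X},d)$ forces $r_0\otimes\mathbb{R}$ injective, and for $i\geq1$ an isomorphism factoring through $\rho^i_\infty$ forces both $r_i\otimes\mathbb{R}$ and $\rho^i_\infty$ to be isomorphisms, recovering \ref{Beilinson-Conjecture} and \ref{conj-Flach}.

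There is essentially no obstacle beyond the bookkeeping that the map of $\textbf{B}(\mathcal{X},d)$, the restriction $r_i\otimes\mathbb{R}$, and $\rho^i_\infty$ sit in a single commuting triangle, together with exactness of $-\otimes_{\mathbb{Q}}\mathbb{R}$ so that "image of $r_i$" base-changes to "image of $r_i\otimes\mathbb{R}$". The one step deserving a sentence of care is the identification of the second arrow above with $\rho^i_\infty$ — that is, that the Beilinson regulator on $H^{2d-1-i}(X,\mathbb{Q}(d))$ restricts on the $\mathbb{Q}$-subspace $H^{2d-1-i}_M(X_{/\mathbb{Z}},\mathbb{Q}(d))$ to the map appearing in Conjecture \ref{Beilinson-Conjecture} — which is immediate from the $\mathbb{Q}$-linearity of the regulator and the definitions.
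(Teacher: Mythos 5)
Your argument is correct and is precisely the factorization through the integral part $H^{2d-1-i}_M(X_{/\mathbb{Z}},\mathbb{Q}(d))$ that the paper leaves implicit: the paper only asserts, without proof, that $\textbf{B}(\mathcal{X},d)$ is equivalent to the conjunction of Conjectures \ref{Beilinson-Conjecture} and \ref{conj-Flach}, and your diagram chase (using exactness of $-\otimes_{\mathbb{Q}}\mathbb{R}$ and the fact that $\rho^i_\infty$ is the regulator restricted to the integral part) supplies exactly the intended justification in both directions. One phrase should be tightened: in the converse direction for $i\geq1$, the isomorphism forces $r_i\otimes\mathbb{R}$ to be \emph{injective} (equivalently, an isomorphism onto the integral part), not an isomorphism onto all of $H^{2d-1-i}(X,\mathbb{Q}(d))_{\mathbb{R}}$, which is all that Conjecture \ref{conj-Flach} asserts.
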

By \cite{Goncharov}, there is a canonical morphism of complexes
$$\Gamma(X,\mathbb{Q}(d))\rightarrow C_{\mathcal{D}}(X_{/\mathbb{R}},\mathbb{R}(d))$$
inducing Beilinson's regulator on cohomology \cite{Burgos-GoncharovRegulator}, where the complex $C_{\mathcal{D}}(X_{/\mathbb{R}},\mathbb{R}(d))$ computes real Deligne cohomology. Let $j:X_{Zar}\rightarrow\mathcal{X}_{Zar}$ be the natural embedding. Consider the map
$$\mbox{R}\Gamma(\mathcal{X},\mathbb{Q}(d))\rightarrow\mbox{R}\Gamma(\mathcal{X},j_*\mathbb{Q}(d))
\rightarrow\mbox{R}\Gamma(\mathcal{X},\mbox{R}j_*\mathbb{Q}(d))\simeq\mbox{R}\Gamma(X,\mathbb{Q}(d))\simeq\Gamma(X,\mathbb{Q}(d))$$
where the last isomorphism follows from the fact that, over a field, the Zariski hypercohomology of the cycle complex coincides with its cohomology. Then we consider the composite map
$$\rho_{\infty}:\mbox{R}\Gamma(\mathcal{X},\mathbb{Q}(d))\rightarrow\Gamma(X,\mathbb{Q}(d))\rightarrow C_{\mathcal{D}}(X_{/\mathbb{R}},\mathbb{R}(d)).$$
and we denote by $\mathcal{D}(\br)$ the derived category of $\br$-vector spaces. 
\begin{thm}\label{thm-beilinson} Let $\mathcal{X}$ be a proper flat regular connected scheme of dimension $d$ satisfying 
$\textbf{\emph{L}}(\mathcal{X}_{et},d)_{\geq 0}$ and  $\textbf{\emph{B}}(\mathcal{X},d)$. A choice of a direct sum decomposition (\ref{directsumdecompo}) induces, in a canonical way, an isomorphism in $\mathcal{D}(\br)$:
$$\mbox{\emph{R}}\Gamma_{W,c}(\mathcal{X},\mathbb{Z})\otimes\mathbb{R}\stackrel{\sim}{\longrightarrow} \mbox{\emph{R}}\Gamma_{W,c}(\mathcal{X},\tilde{\mathbb{R}}).$$
\end{thm}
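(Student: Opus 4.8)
The goal is to build a canonical quasi-isomorphism $\mbox{R}\Gamma_{W,c}(\mathcal{X},\mathbb{Z})\otimes\mathbb{R}\simeq\mbox{R}\Gamma_{W,c}(\mathcal{X},\widetilde{\mathbb{R}})$ once a direct sum decomposition $(\ref{directsumdecompo})$ is fixed. The natural strategy is to compare both sides summand by summand. On the left, Proposition \ref{lapropcanonique} gives
$$\mbox{R}\Gamma_{W,c}(\mathcal{X},\mathbb{Z})_{\mathbb{R}}\simeq \mbox{R}\Gamma_c(\mathcal{X}_{et},\mathbb{R})\oplus \mbox{RHom}(\mbox{R}\Gamma(\mathcal{X},\mathbb{Q}(d))_{\geq0},\mathbb{R}[-\delta])[1].$$
On the right, the description in Section 8, formula $(\ref{cohomology-R-cpct-decompo})$, reads $\mbox{R}\Gamma_{W,c}(\mathcal{X},\widetilde{\mathbb{R}})\simeq \mbox{R}\Gamma(\mathcal{X}_{et},\varphi_!\mathbb{R})[-1]\oplus\mbox{R}\Gamma(\mathcal{X}_{et},\varphi_!\mathbb{R})$, and one notes $\mbox{R}\Gamma_c(\mathcal{X}_{et},\mathbb{R})\simeq\mbox{R}\Gamma(\mathcal{X}_{et},\varphi_!\mathbb{R})$ by definition of cohomology with compact support. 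So the first summand on each side matches tautologically, and the content is to produce a canonical isomorphism identifying the second summand on the right, namely $\mbox{R}\Gamma_c(\mathcal{X}_{et},\mathbb{R})[-1]$, with $\mbox{RHom}(\mbox{R}\Gamma(\mathcal{X},\mathbb{Q}(d))_{\geq0},\mathbb{R}[-\delta])[1]$.

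First I would make precise the two complexes to be matched. Using the exact triangle $\mbox{R}\Gamma(\mathcal{X}_{et},\varphi_!\mathbb{R})\to\mathbb{R}[0]\to\mbox{R}\Gamma(\mathcal{X}_\infty,\mathbb{R})$ recalled at the end of Section 8.1, $\mbox{R}\Gamma_c(\mathcal{X}_{et},\mathbb{R})$ is quasi-isomorphic to $\mbox{Cone}(\mathbb{R}[0]\to\mbox{R}\Gamma(\mathcal{X}_\infty,\mathbb{R}))[-1]$. Its cohomology in degree $p$ is therefore the reduced singular cohomology $\widetilde{H}^{p}(\mathcal{X}_\infty,\mathbb{R})$ for $p\geq 1$ (and vanishes in degree $0$). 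On the other side, the complex $D_{\mathbb{R}}:=\mbox{RHom}(\mbox{R}\Gamma(\mathcal{X},\mathbb{Q}(d))_{\geq0},\mathbb{R}[-\delta])$ has $H^i(D_{\mathbb{R}})=\mbox{Hom}(H^{\delta-i}(\mathcal{X},\mathbb{Q}(d))_{\geq0},\mathbb{R})$, i.e. (after shift by $[1]$) the dual of the integral part of motivic cohomology. The bridge between the two is exactly Conjecture $\textbf{B}(\mathcal{X},d)$: the Beilinson regulator $\rho_\infty$ of Section 11 identifies $H^{2d-1-i}(\mathcal{X},\mathbb{Q}(d))_{\mathbb{R}}$ with real Deligne cohomology $H^{2d-1-i}_{\mathcal{D}}(X_{/\mathbb{R}},\mathbb{R}(d))$ for $i\geq1$, and gives the stated exact sequence involving $CH^0(X)^*_{\mathbb{R}}$ for $i=0$. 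Combined with the standard identification of real Deligne cohomology with the relevant piece of the singular cohomology of $\mathcal{X}_\infty=\mathcal{X}(\mathbb{C})/G_{\mathbb{R}}$ (twisted Betti cohomology in the right weights), this produces the desired degreewise isomorphism.

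The key steps, in order, are: (1) write down the regulator map $\rho_\infty:\mbox{R}\Gamma(\mathcal{X},\mathbb{Q}(d))\to C_{\mathcal{D}}(X_{/\mathbb{R}},\mathbb{R}(d))$ of Section 11 and dualize, obtaining a map from the dual of Deligne cohomology to $D_{\mathbb{R}}[1]$; (2) identify the truncated Deligne complex $C_{\mathcal{D}}(X_{/\mathbb{R}},\mathbb{R}(d))$, at the level of cohomology groups and compatibly with the $CH^0$-exact sequence, with $\mbox{R}\Gamma(\mathcal{X}_\infty,\mathbb{R})[\text{shift}]$ — the point being that $H^{2d-1-i}_{\mathcal{D}}(X_{/\mathbb{R}},\mathbb{R}(d))\cong H^{i}(\mathcal{X}_\infty,\mathbb{R})$-type groups via Poincaré duality on $\mathcal{X}(\mathbb{C})$ in complex dimension $d-1$ and the weight bookkeeping, and that the extra $CH^0(X)^*_{\mathbb{R}}$ in degree $i=0$ corresponds to $H^0$, matching the reduction $\widetilde{H}^{\bullet}$ versus $H^{\bullet}$ that already appears in $\mbox{Cone}(\mathbb{R}[0]\to\mbox{R}\Gamma(\mathcal{X}_\infty,\mathbb{R}))[-1]$; (3) assemble these into a morphism of complexes $D_{\mathbb{R}}[1]\to \mbox{R}\Gamma_c(\mathcal{X}_{et},\mathbb{R})[-1]$ and check, using $\textbf{B}(\mathcal{X},d)$, that it is a quasi-isomorphism in every degree; (4) combine with the tautological identification of the first summands and with the chosen decomposition $(\ref{directsumdecompo})$ to get $\mbox{R}\Gamma_{W,c}(\mathcal{X},\mathbb{Z})_{\mathbb{R}}\xrightarrow{\sim}\mbox{R}\Gamma_{W,c}(\mathcal{X},\widetilde{\mathbb{R}})$; (5) verify that the resulting map depends on nothing but the choice of $(\ref{directsumdecompo})$, which is the sense in which it is "canonical" — this uses, as in the proof of Proposition \ref{lapropcanonique}, that the ambiguity in the comparison isomorphisms lives in groups of the form $\mbox{Hom}_{\mathcal{D}}(D_{\mathbb{R}}[1],\mathbb{R}[0])$ or similar, which vanish because $D_{\mathbb{R}}$ is acyclic in low degrees while $\mathbb{R}[0]$ is concentrated in degree $0$.

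The main obstacle is step (2): matching the truncated real Deligne cohomology complex $C_{\mathcal{D}}(X_{/\mathbb{R}},\mathbb{R}(d))$ with the topological complex $\mbox{R}\Gamma(\mathcal{X}_\infty,\mathbb{R})$ (and its cone against $\mathbb{R}[0]$) \emph{at the level of complexes}, not just cohomology groups, in a way that is compatible with the $G_{\mathbb{R}}$-action and with the $CH^0(X)^*_{\mathbb{R}}$-term. One expects this to follow from the explicit form of Deligne cohomology of a smooth proper variety: $H^{p}_{\mathcal{D}}(X_{/\mathbb{R}},\mathbb{R}(d))$ fits in exact sequences built from $H^{p-1}(X(\mathbb{C}),\mathbb{R}(d-1))^{G_{\mathbb{R}}}$ and Hodge pieces, and since $2d-1-i < 2(d-1)$ in the relevant range the Hodge-filtration contribution degenerates, leaving precisely the Betti cohomology of $\mathcal{X}_\infty$ in the right degrees; the degree $i=0$ boundary case, where $2d-1=2(d-1)+1$ sits just above the middle dimension, is exactly where the extra $CH^0(X)^*_{\mathbb{R}}=H^0(\mathcal{X}_\infty,\mathbb{R})^*_{\mathbb{R}}$ appears, matching the reduced-cohomology shift on the Weil-étale side. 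Making this identification functorial and compatible with the determinant line — so that step (5) goes through — is the technical heart of the argument; once it is in place the rest is formal triangulated-category manipulation of the kind already used repeatedly in Sections 8--10.
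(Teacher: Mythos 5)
Your strategy coincides with the paper's: compare the direct-sum decompositions summand by summand, take the $\mbox{R}\Gamma_c(\mathcal{X}_{et},\mathbb{R})$-summand tautologically, and produce the isomorphism on the second summand by dualizing the Beilinson regulator and matching twist-$d$ real Deligne cohomology with the Betti cohomology of $\mathcal{X}_\infty$. You correctly flag the one step you leave open — carrying out that match at the level of complexes, functorially, and compatibly with the $CH^0(X)^*_{\mathbb{R}}$ term — and that is precisely what turns your outline into a proof. The paper closes it by citing the duality theorem for real Deligne cohomology (\cite{Burgos-Gil} Cor.~2.28), which packages your Poincar\'e-duality/Hodge-degeneration reasoning directly as a quasi-isomorphism in $\mathcal{D}$:
$$C_{\mathcal{D}}(X_{/\mathbb{R}},\mathbb{R})\ \longrightarrow\ \mbox{RHom}\bigl(C_{\mathcal{D}}(X_{/\mathbb{R}},\mathbb{R}(d)),\mathbb{R}[-2d+1]\bigr).$$
Composing with the quasi-isomorphism $\mbox{R}\Gamma(\mathcal{X}_\infty,\mathbb{R})\to C_{\mathcal{D}}(X_{/\mathbb{R}},\mathbb{R})$ (the twist-$0$ Deligne complex degenerates to Betti cohomology because $F^0$ is everything), and then passing to the subcomplex $\tilde{C}_{\mathcal{D}}(X_{/\mathbb{R}},\mathbb{R}(d)):=\ker(C_{\mathcal{D}}(X_{/\mathbb{R}},\mathbb{R}(d))\to CH^0(X)^*_{\mathbb{R}}[-2d+1])$, yields a morphism of exact triangles over $\mathbb{R}[0]\to\mbox{R}\Gamma(\mathcal{X}_\infty,\mathbb{R})\to\mbox{R}\Gamma_c(\mathcal{X}_{et},\mathbb{R})[1]$; this forces the unique vertical isomorphism $\mbox{R}\Gamma_c(\mathcal{X}_{et},\mathbb{R})[-1]\stackrel{\sim}{\to}\mbox{RHom}(\tilde{C}_{\mathcal{D}}(X_{/\mathbb{R}},\mathbb{R}(d)),\mathbb{R}[-2d-1])$ at once, handling both the $CH^0$ bookkeeping and the complex-level compatibility you were worried about. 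Your degreewise reasoning is correct as far as it goes (for $X$ of complex dimension $d-1$ the Hodge piece $F^d$ vanishes, so twist-$d$ Deligne cohomology reduces to $H^{p-1}(X(\mathbb{C}),\mathbb{R}(d-1))^{G_{\mathbb{R}}}$, which is Poincar\'e-dual to $H^{2d-1-p}(\mathcal{X}_\infty,\mathbb{R})$); the missing ingredient is just that this must be done at the level of complexes, and citing the theorem supplies exactly that. With this input your steps (1)--(5) assemble into the paper's proof.
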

\begin{proof}
Duality for Deligne cohomology (see \cite{Burgos-Gil} Corollary 2.28)
\begin{equation}\label{duality-Deligne-coh}
H^i_{\mathcal{D}}(X_{/\mathbb{R}},\mathbb{R}(p))\times H^{2d-1-i}_{\mathcal{D}}(X_{/\mathbb{R}},\mathbb{R}(d-p))\rightarrow\mathbb{R}
\end{equation}
yields an isomorphism in $\mathcal{D}(\br)$
$$C_{\mathcal{D}}(X_{/\mathbb{R}},\mathbb{R})\rightarrow
\mbox{RHom}(C_{\mathcal{D}}(X_{/\mathbb{R}},\mathbb{R}(d)),\mathbb{R}[-2d+1]).$$
Composing with $\mbox{R}\Gamma(\mathcal{X}_{\infty},\mathbb{R})\stackrel{\sim}{\longrightarrow}
C_{\mathcal{D}}(X_{/\mathbb{R}},\mathbb{R})$
we obtain
$$\mbox{R}\Gamma(\mathcal{X}_{\infty},\mathbb{R})\stackrel{\sim}{\longrightarrow}
\mbox{RHom}(C_{\mathcal{D}}(X_{/\mathbb{R}},\mathbb{R}(d)),\mathbb{R}[-2d+1]).$$

One has a morphism of complexes $C_{\mathcal{D}}(X_{/\mathbb{R}},\mathbb{R}(d))\rightarrow CH^0(X)_{\mathbb{R}}^*[-2d+1]$ and we define $\tilde{C}_{\mathcal{D}}(X_{/\mathbb{R}},\mathbb{R}(d))$ to be its mapping fiber. Applying the functor $\mbox{RHom}(-,\mathbb{R}[-2d+1])$, we obtain an exact triangle (note that $X$ is irreducible)
$$\mathbb{R}[0]\rightarrow \mbox{RHom}(C_{\mathcal{D}}(X_{/\mathbb{R}},\mathbb{R}(d)),\mathbb{R}[-2d+1])
\rightarrow \mbox{RHom}(\tilde{C}_{\mathcal{D}}(X_{/\mathbb{R}},\mathbb{R}(d)),\mathbb{R}[-2d+1]).$$
We have a morphism of exact triangles
\[ \xymatrix{
\mathbb{R}[0]\ar[d]\ar[r]&\mbox{R}\Gamma(\mathcal{X}_{\infty},\mathbb{R})\ar[d]\ar[r]&\mbox{R}\Gamma_c(\mathcal{X}_{et},\mathbb{R})[1] \ar[d]_{\exists !}\\
\mathbb{R}[0]\ar[r]&\mbox{RHom}(C_{\mathcal{D}}(X_{/\mathbb{R}},\mathbb{R}(d)),\mathbb{R}[-2d+1])
\ar[r]&\mbox{RHom}(\tilde{C}_{\mathcal{D}}(X_{/\mathbb{R}},\mathbb{R}(d)),\mathbb{R}[-2d+1])
}
\]
where the vertical map on the right hand side is uniquely determined. This yields a canonical quasi-isomorphism
\begin{equation}\label{uneptitmorphism}
\mbox{R}\Gamma_c(\mathcal{X}_{et},\mathbb{R})[-1]\stackrel{\sim}{\longrightarrow}
\mbox{RHom}(\tilde{C}_{\mathcal{D}}(X_{/\mathbb{R}},\mathbb{R}(d)),\mathbb{R}[-2d-1]).
\end{equation}
It follows from Conjecture $\textbf{B}(\mathcal{X},d)$ that the morphism of complexes
$\rho_{\infty}:\mbox{R}\Gamma(\mathcal{X},\mathbb{Q}(d))_{\geq0}\rightarrow C_{\mathcal{D}}(X_{/\mathbb{R}},\mathbb{R}(d))$
induces a quasi-isomorphism
$$\tilde{\rho}_{\infty,\mathbb{R}}:\mbox{R}\Gamma(\mathcal{X},\mathbb{Q}(d))_{\geq0,\mathbb{R}}\stackrel{\sim}{\longrightarrow} \tilde{C}_{\mathcal{D}}(X_{/\mathbb{R}},\mathbb{R}(d)).$$
Applying  the functor $\mbox{RHom}(-,\mathbb{R}[-2d-1])$ and composing with the map (\ref{uneptitmorphism}), we obtain an isomorphism:
$$\mbox{R}\Gamma_c(\mathcal{X}_{et},\mathbb{R})[-1]
\stackrel{\sim}{\rightarrow}\mbox{RHom}(\tilde{C}_{\mathcal{D}}(X_{/\mathbb{R}},\mathbb{R}(d)),\mathbb{R}[-2d-1])
\stackrel{\sim}{\rightarrow}\mbox{RHom}(\mbox{R}\Gamma(\mathcal{X},\mathbb{Q}(d))_{\geq0},\mathbb{R}[-2d-1]).$$
The inverse isomorphism
$\mbox{RHom}(\mbox{R}\Gamma(\mathcal{X},\mathbb{Q}(d))_{\geq0},\mathbb{R}[-2d-1])\stackrel{\sim}{\rightarrow} \mbox{R}\Gamma_c(\mathcal{X}_{et},\mathbb{R})[-1]$
together with a direct sum decomposition (\ref{directsumdecompo}) provide us with the desired map:
$$\mbox{R}\Gamma_{W,c}(\mathcal{X},\mathbb{Z})_{\mathbb{R}}
\stackrel{\sim}{\rightarrow}\mbox{R}\Gamma_c(\mathcal{X}_{et},\mathbb{Z})_{\mathbb{R}}\oplus
\mbox{RHom}(\mbox{R}\Gamma(\mathcal{X},\mathbb{Q}(d))_{\geq0},\mathbb{R}[-2d-1])$$
$$\stackrel{\sim}{\rightarrow}\mbox{R}\Gamma_c(\mathcal{X}_{et},\mathbb{R})\oplus \mbox{R}\Gamma_c(\mathcal{X}_{et},\mathbb{R})[-1]\stackrel{\sim}{\rightarrow}\mbox{R}\Gamma_{W,c}(\mathcal{X},\tilde{\mathbb{R}}).
$$
\end{proof}

\begin{cor}\label{cor-canmapondet} Let $\mathcal{X}$ be a proper regular connected scheme of dimension $d$ satisfying 
$\textbf{\emph{L}}(\mathcal{X}_{et},d)_{\geq0}$ and  $\textbf{\emph{B}}(\mathcal{X},d)$.
Then there is a \emph{canonical} isomorphism
$$\mbox{\emph{det}}_{\mathbb{R}} \mbox{\emph{R}}\Gamma_{W,c}(\mathcal{X},\mathbb{Z})_{\mathbb{R}}\stackrel{\sim}{\longrightarrow}
\mbox{\emph{det}}_{\mathbb{R}}\mbox{\emph{R}}\Gamma_{W,c}(\mathcal{X},\tilde{\mathbb{R}}).$$
\end{cor}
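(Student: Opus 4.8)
The plan is to obtain the corollary from Theorem~\ref{thm-beilinson} by checking that the determinant of the isomorphism produced there does not depend on the choice of direct sum decomposition (\ref{directsumdecompo}). Fix such a decomposition $u$. By Theorem~\ref{thm-beilinson} it determines an isomorphism $f_u\colon \mathrm{R}\Gamma_{W,c}(\mathcal{X},\mathbb{Z})_{\mathbb{R}}\stackrel{\sim}{\to}\mathrm{R}\Gamma_{W,c}(\mathcal{X},\widetilde{\mathbb{R}})$ in $\mathcal{D}$, hence an isomorphism $\det_{\mathbb{R}}(f_u)$ on determinants; the point is that $\det_{\mathbb{R}}(f_u)$ is independent of $u$, and then it may be taken as the asserted canonical isomorphism.

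First I would revisit the construction of $f_u$ in the proof of Theorem~\ref{thm-beilinson} and present it as a composite of four isomorphisms in $\mathcal{D}$: (i) the decomposition $\mathrm{R}\Gamma_{W,c}(\mathcal{X},\mathbb{Z})_{\mathbb{R}}\simeq \mathrm{R}\Gamma_c(\mathcal{X}_{et},\mathbb{R})\oplus D_{\mathbb{R}}[1]$ supplied by $u$, where $D_{\mathbb{R}}=\mathrm{RHom}(\mathrm{R}\Gamma(\mathcal{X},\mathbb{Q}(d))_{\geq0},\mathbb{R}[-\delta])$; (ii) the identification $D_{\mathbb{R}}[1]\simeq \mathrm{RHom}(\mathrm{R}\Gamma(\mathcal{X},\mathbb{Q}(d)),\mathbb{R}[-2d-1])$, which is canonical because $\mathrm{R}\Gamma(\mathcal{X},\mathbb{Q}(d))_{\geq0}\simeq \mathrm{R}\Gamma(\mathcal{X},\mathbb{Q}(d))$ (recall $\mathcal{X}$ satisfies $\textbf{L}(\overline{\mathcal{X}}_{et},d)$, hence the Beilinson--Soul\'e vanishing, by Proposition~\ref{prop-L-L+}); (iii) the canonical isomorphism inverse to (\ref{partofthemap0}), built from duality for real Deligne cohomology and the quasi-isomorphism $\widetilde{\rho}_{\infty,\mathbb{R}}$ furnished by Conjecture $\textbf{B}(\mathcal{X},d)$, which identifies $\mathrm{RHom}(\mathrm{R}\Gamma(\mathcal{X},\mathbb{Q}(d)),\mathbb{R}[-2d-1])$ with $\mathrm{R}\Gamma_c(\mathcal{X}_{et},\mathbb{R})[-1]$; and (iv) the canonical splitting (\ref{cohomology-R-cpct-decompo}) of $\mathrm{R}\Gamma_{W,c}(\mathcal{X},\widetilde{\mathbb{R}})$ as $\mathrm{R}\Gamma_c(\mathcal{X}_{et},\mathbb{R})[-1]\oplus \mathrm{R}\Gamma_c(\mathcal{X}_{et},\mathbb{R})$. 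Steps (ii), (iii), (iv) involve no auxiliary choices; only (i) depends on $u$.

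It then remains to treat step (i) on determinants. There it yields precisely the isomorphism $\det_{\mathbb{R}}\mathrm{R}\Gamma_{W,c}(\mathcal{X},\mathbb{Z})_{\mathbb{R}}\simeq \det_{\mathbb{R}}\mathrm{R}\Gamma_c(\mathcal{X}_{et},\mathbb{R})\otimes\det^{-1}_{\mathbb{R}}D_{\mathbb{R}}$, and Proposition~\ref{lapropcanonique} (which rests on \cite{Knudsen-Mumford} p.~43 Cor.~2: the determinant of a morphism of exact triangles that is the identity on two of the three vertices is independent of the chosen third morphism) says that this isomorphism coincides with the one coming from the exact triangle (\ref{exact-triangleforcompact}), hence is independent of $u$. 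Composing with the determinants of the canonical isomorphisms in (ii)--(iv) we conclude that $\det_{\mathbb{R}}(f_u)$ is independent of $u$, which gives the canonical isomorphism of the corollary.

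The main obstacle I anticipate is bookkeeping rather than substance: one must verify that the construction of $f_u$ in Theorem~\ref{thm-beilinson} genuinely isolates all of its $u$-dependence in the single step (i), and that the various ``canonical'' identifications --- in particular the Deligne-duality isomorphism (\ref{partofthemap0}) and the passage between $\mathrm{R}\Gamma(\mathcal{X},\mathbb{Q}(d))$ and $\mathrm{R}\Gamma(\mathcal{X},\mathbb{Q}(d))_{\geq0}$ --- are compatible with taking determinants and with the canonical determinant isomorphism of Proposition~\ref{lapropcanonique}. Once this compatibility is spelled out the statement is immediate.
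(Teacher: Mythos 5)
Your proposal is correct and follows essentially the same route as the paper, which simply asserts that the corollary ``follows from Proposition \ref{lapropcanonique} and Theorem \ref{thm-beilinson}'' without further elaboration. What you have done is spell out why this combination works: you decompose the isomorphism $f_u$ of Theorem \ref{thm-beilinson} into steps (i)--(iv), observe that only the splitting step (i) depends on the choice $u$, and then invoke the determinant-canonicity of Proposition \ref{lapropcanonique} (via the Knudsen--Mumford argument) to see that the $u$-dependence disappears upon taking determinants.
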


\begin{proof}
If $\mathcal{X}$ is flat over $\mathbb{Z}$, the result follows from Proposition \ref{lapropcanonique} and Theorem \ref{thm-beilinson}. So we may assume that $\mathcal{X}$ is smooth and proper over a finite field. Then we have
$\mbox{R}\Gamma_{W,c}(\mathcal{X},\mathbb{Z})=\mbox{R}\Gamma_{W}(\mathcal{X},\mathbb{Z})$
and a canonical isomorphism
$$\mbox{R}\Gamma_{W}(\mathcal{X},\mathbb{Z})_{\mathbb{R}}\simeq \mbox{R}\Gamma(\mathcal{X}_{et},\mathbb{R})\oplus
\mbox{RHom}(\mbox{R}\Gamma(\mathcal{X},\mathbb{Q}(d))_{\geq0},\mathbb{R}[-2d-1]).$$
It follows from $\textbf{L}(\mathcal{X}_{et},d)_{\geq0}$ that the natural map
$$\mbox{R}\Gamma(\mathcal{X}_{et},\mathbb{R})[-1]\rightarrow \mbox{RHom}(\mbox{R}\Gamma(\mathcal{X},\mathbb{Q}(d))_{\geq0},\mathbb{R}[-2d-1])$$
is a quasi-isomorphism (see the proof of Theorem \ref{thm-comparison-char-p}). This yields a canonical map
\begin{eqnarray}
\mbox{R}\Gamma_{W}(\mathcal{X},\mathbb{Z})_{\mathbb{R}}&\stackrel{\sim}{\rightarrow}&\mbox{R}\Gamma(\mathcal{X}_{et},\mathbb{R})\oplus
\mbox{RHom}(\mbox{R}\Gamma(\mathcal{X},\mathbb{Q}(d))_{\geq0},\mathbb{R}[-2d-1])\\
\label{charpiso}&\stackrel{\sim}{\rightarrow}& \mbox{R}\Gamma(\mathcal{X}_{et},\mathbb{R})\oplus\mbox{R}\Gamma(\mathcal{X}_{et},\mathbb{R})[-1]
\stackrel{\sim}{\rightarrow}\mbox{R}\Gamma_{W}(\mathcal{X},\tilde{\mathbb{R}}).
\end{eqnarray}

\end{proof}

\section{Zeta functions at $s=0$}

Recall that the zeta-function of a scheme $\X$ of finite type over $\mathrm{Spec}(\mathbb{Z})$ is defined as an infinite product 
\begin{equation}\label{zeta-product}
\zeta(\X,s):=\prod_{x\in \X_0}\frac{1}{1-N(x)^{-s}}
\end{equation}
where $\X_0$ denotes the set of closed points of $\X$ and $N(x)$ is the cardinality of the residue field of $x\in\X_0$. The product (\ref{zeta-product}) converges for $\Re(s)>\mathrm{dim}(\X)$. It is expected to have a meromorphic continuation to the whole complex plane. We refer to \cite{Knudsen-Mumford} for generalities on the determinant functor.

\subsection{The main Conjecture}
The following theorem summarizes some results obtained previously (see \cite{Flach-moi} for a) and b), Proposition \ref{finitelygenerated-cohomology} and Definition \ref{defncpctsuppcoh} for c) and Theorem \ref{thm-beilinson} and (\ref{charpiso}) for d)).
\begin{thm}\label{thm-recallat0}
Let $\mathcal{X}$ be a proper regular arithmetic scheme of pure dimension $d$.
\begin{itemize}
\item[a)] The compact support cohomology groups $H^i_{W,c}(\mathcal{X},\tilde{\mathbb{R}})$
are finite dimensional vector spaces over $\mathbb{R}$, vanish for almost
all $i$ and satisfy
\[ \sum_{i\in\mathbb{Z}}(-1)^i\dim_\mathbb{R} H^i_{W,c}(\mathcal{X},\tilde{\mathbb{R}})=0.\]
\item[b)] Cup product with the fundamental class $\theta\in H^1_W(\X,\tilde{\mathbb{R}})$ yields an acyclic complex
\[\cdots\xrightarrow{\cup\theta}H^i_{W,c}(\mathcal{X},\tilde{\mathbb{R}})\xrightarrow{\cup\theta}
H^{i+1}_{W,c}(\mathcal{X},\tilde{\mathbb{R}})\xrightarrow{\cup\theta}\cdots\]
\item[c)] If $\textbf{\emph{L}}(\mathcal{X}_{et},d)_{\geq0}$ holds, then the compact support cohomology groups $H^i_{W,c}(\mathcal{X},\mathbb{Z})$ are finitely generated over $\mathbb{Z}$ and they vanish for almost all $i$.
\item[d)] If $\textbf{\emph{B}}(\mathcal{X},d)$ also holds then there are isomorphisms
\[H^i_{W,c}(\mathcal{X},\mathbb{Z})\otimes_\mathbb{Z}\mathbb{R}\xrightarrow{\sim}H^i_{W,c}(\mathcal{X},\tilde{\mathbb{R}}).\]
\end{itemize}
\end{thm}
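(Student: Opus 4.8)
The plan is to read off each of the four assertions from material already established, so the proof will mainly consist of pointing to the right statements while keeping track of the two cases, $\mathcal{X}$ flat over $\mathbb{Z}$ and $\mathcal{X}$ of characteristic $p$. Since all the complexes involved ($R\Gamma_{W,c}(\mathcal{X},-)$, $R\Gamma(\mathcal{X}_{\infty},-)$, $R\Gamma_W(\overline{\mathcal{X}},\mathbb{Z})$, \dots) are additive for disjoint unions, I would first reduce to $\mathcal{X}$ connected.

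For a) I would start from the decomposition (\ref{cohomology-R-cpct-decompo}), which writes $R\Gamma_{W,c}(\mathcal{X},\tilde{\mathbb{R}})$ in the form $A[-1]\oplus A$ with $A:=R\Gamma(\mathcal{X}_{et},\varphi_!\mathbb{R})$, and then use the identification $A\simeq\mathrm{Cone}(\mathbb{R}[0]\rightarrow R\Gamma(\mathcal{X}_{\infty},\mathbb{R}))[-1]$ recalled just before Proposition \ref{prop-R-cohomology}. As $\mathcal{X}_{\infty}$ is compact, $R\Gamma(\mathcal{X}_{\infty},\mathbb{R})$ is bounded with finite-dimensional cohomology, hence so is $A$, and therefore so is $R\Gamma_{W,c}(\mathcal{X},\tilde{\mathbb{R}})$; this gives finite-dimensionality and vanishing for almost all $i$. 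The Euler characteristic then vanishes for the formal reason that $\sum_i(-1)^i\dim_{\mathbb{R}}H^i(A[-1])=-\sum_i(-1)^i\dim_{\mathbb{R}}H^i(A)$, so the two summands cancel. In characteristic $p$ one repeats the argument with the analogous splitting of $R\Gamma_W(\mathcal{X},\tilde{\mathbb{R}})=R\Gamma_{W,c}(\mathcal{X},\tilde{\mathbb{R}})$ into $R\Gamma(\mathcal{X}_{et},\mathbb{R})$ and $R\Gamma(\mathcal{X}_{et},\mathbb{R})[-1]$.

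Part b) is exactly Proposition \ref{prop-R-cohomology} (and its evident characteristic $p$ counterpart): there $\cup\theta$ is exhibited as the composite of the projection onto, and the inclusion of, the summand $R\Gamma(\mathcal{X}_{et},\varphi_!\mathbb{R})$ of (\ref{cohomology-R-cpct-decompo}), whence acyclicity of the cup-product complex. For c) I would apply $H^{\ast}$ to the defining exact triangle (\ref{exact-triangleforcompact}): by Proposition \ref{finitelygenerated-cohomology} the complex $R\Gamma_W(\overline{\mathcal{X}},\mathbb{Z})$ is bounded with finitely generated cohomology, while $R\Gamma(\mathcal{X}_{\infty,W},\mathbb{Z})\simeq R\Gamma(\mathcal{X}_{\infty},\mathbb{Z})$ is likewise bounded with finitely generated cohomology since $\mathcal{X}_{\infty}$ is compact; the long exact cohomology sequence then forces $H^i_{W,c}(\mathcal{X},\mathbb{Z})$ to be finitely generated and zero for almost all $i$. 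In characteristic $p$ this is just Proposition \ref{finitelygenerated-cohomology}, since $R\Gamma_{W,c}(\mathcal{X},\mathbb{Z})=R\Gamma_W(\mathcal{X},\mathbb{Z})$ there.

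For d) I would take cohomology in the isomorphism $R\Gamma_{W,c}(\mathcal{X},\mathbb{Z})\otimes\mathbb{R}\stackrel{\sim}{\rightarrow}R\Gamma_{W,c}(\mathcal{X},\tilde{\mathbb{R}})$ supplied by Theorem \ref{thm-beilinson} under Conjecture $\textbf{B}(\mathcal{X},d)$, together with the parallel construction of the map (\ref{mapregcharp}) when $\mathcal{X}$ lies over a finite field, where $\textbf{B}(\mathcal{X},d)$ is automatic. I do not expect a genuine obstacle here: the only points requiring attention are the uniform handling of the two cases, the reduction to connected components, and --- a remark rather than a difficulty --- checking that the isomorphism of d) is indeed the one induced by the natural map $\mathbb{Z}\rightarrow\tilde{\mathbb{R}}$, which is built into the construction of the regulator isomorphism in Theorem \ref{thm-beilinson}.
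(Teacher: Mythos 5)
Your proposal is correct and follows exactly the route the paper intends: the theorem is merely a summary and carries no explicit proof beyond the implicit references, namely (\ref{cohomology-R-cpct-decompo}) together with the cone description for a), Proposition \ref{prop-R-cohomology} for b), Proposition \ref{finitelygenerated-cohomology} plus the long exact sequence of the triangle (\ref{exact-triangleforcompact}) together with finite generation of $H^i(\mathcal{X}_\infty,\mathbb{Z})$ for c), and Theorem \ref{thm-beilinson} (resp.\ the map (\ref{mapregcharp}) in characteristic $p$) for d). Your extra remarks about reducing to $\mathcal{X}$ connected and treating the characteristic-$p$ case uniformly via $R\Gamma_{W,c}=R\Gamma_W$ and $\varphi_!\mathbb{R}=\mathbb{R}$ there are exactly the right bookkeeping, and nothing more is needed.
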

Consider now a proper regular connected arithmetic scheme $\X$ of dimension $d$ satisfying $\textbf{L}(\mathcal{X}_{et},d)_{\geq0}$ and $\textbf{B}(\mathcal{X},d)$. By Corollary \ref{cor-canmapondet} we have canonical isomorphisms
\begin{eqnarray*}
(\mbox{det}_{\mathbb{Z}}\mbox{R}\Gamma_{W,c}(\mathcal{X},\mathbb{Z}))_{\mathbb{R}}
&\simeq&\mbox{det}_{\mathbb{R}}(\mbox{R}\Gamma_{W,c}(\mathcal{X},\mathbb{Z})_{\mathbb{R}})\\
&\simeq& \mbox{det}_{\mathbb{R}}\mbox{R}\Gamma_{W,c}(\mathcal{X},\tilde{\mathbb{R}}). 
\end{eqnarray*}
Moreover, the exact sequence b) above provides us with
$$
\lambda:\mathbb{R}\stackrel{\sim}{\longrightarrow}
\bigotimes_{i\in\mathbb{Z}}\mbox{det}_{\mathbb{R}}H^i_{W,c}(\mathcal{X},\tilde{\mathbb{R}})^{(-1)^i}
\stackrel{\sim}{\longrightarrow}\mbox{det}_{\mathbb{R}}\mbox{R}\Gamma_{W,c}(\mathcal{X},\tilde{\mathbb{R}})
\stackrel{\sim}{\longrightarrow}(\mbox{det}_{\mathbb{Z}}\mbox{R}\Gamma_{W,c}(\mathcal{X},\mathbb{Z}))_{\mathbb{R}}.
$$
The following conjecture presupposes that $\zeta(\mathcal{X},s)$ has a meromorphic continuation to a neighborhood of $s=0$.
\begin{conj}\label{Lichtenbaum Conjecture}\
\begin{itemize}
\item[e)] The vanishing order of $\zeta(\mathcal{X},s)$ at $s=0$ is given by
\[ \mbox{\emph{ord}}_{s=0}\zeta(\mathcal{X},s)=\sum_{i\in\mathbb{Z}}(-1)^i\cdot i\cdot\mathrm{rank}_{\mathbb{Z}}H^i_{W,c}(\mathcal{X},\mathbb{Z}).\]
\item[f)] The leading coefficient $\zeta^*(\mathcal{X},0)$ in the Taylor expansion
 of $\zeta(\mathcal{X},s)$ at $s=0$ is given up to sign by
\[\mathbb{Z}\cdot\lambda(\zeta^*(\mathcal{X},0)^{-1})=\mbox{\emph{det}}_{\mathbb{Z}}\mbox{\emph{R}}\Gamma_{W,c}(\mathcal{X},\mathbb{Z}).\]
\end{itemize}
\end{conj}

\subsection{Relation to Soul\'e's conjecture}
Let $\mathcal{X}$ be a regular connected arithmetic scheme of dimension $d$. We assume in this subsection that $\mathcal{X}$ is moreover flat and projective over $\mbox{Spec}(\mathbb{Z})$.
The following conjecture is Bloch's reformulation (see \cite{Bloch86} Section 7) of Soul\'e's conjecture \cite{Soule} in terms of motivic cohomology (thanks to \cite{Levine-motivic-and-K-theory} Theorem 14.7 (5)). It presupposes that $\zeta(\mathcal{X},s)$ has a meromorphic continuation near $s=0$ and that the $\mathbb{Q}$-vector space $H^{i}(\mathcal{X},\mathbb{Q}(d))$ is finite dimensional for all $i$ and zero for almost all $i$.

\begin{conj}\label{Soul\'e-Conjectute} (Soul\'e)
One has $$\mbox{\emph{ord}}_{s=0}\zeta(\mathcal{X},s)=\sum_{i}(-1)^{i+1}\mbox{\emph{dim}}_{\mathbb{Q}}H^{2d-i}(\mathcal{X},\mathbb{Q}(d)).$$ \end{conj}

If $\mathcal{X}$ satisfies $\textbf{L}(\mathcal{X}_{et},d)$, then $H^{i}(\mathcal{X},\mathbb{Q}(d))=0$ for $i<0$ (see the proof of Theorem \ref{cor-functoriality}). Hence the conjunction of $\textbf{L}(\mathcal{X}_{et},d)$ and $\textbf{B}(\mathcal{X},d)$ is equivalent to the conjunction of Conjecture \ref{conjLintro} and Conjecture \ref{confBFintro} for $\X$.
\begin{prop} Assume that $\mathcal{X}$ satisfies $\textbf{\emph{L}}(\mathcal{X}_{et},d)$ and $\textbf{\emph{B}}(\mathcal{X},d)$. Then Conjecture \ref{Lichtenbaum Conjecture} e) is equivalent to Conjecture \ref{Soul\'e-Conjectute}.
\end{prop}
\begin{proof}
Assuming $\textbf{L}(\mathcal{X}_{et},d)$ and $\textbf{B}(\mathcal{X},d)$ one has
\begin{eqnarray*}
\sum_{i\in\mathbb{Z}}(-1)^i\cdot i\cdot\mathrm{rank}_{\mathbb{Z}}H^i_{W,c}(\mathcal{X},\mathbb{Z})&=&\sum_{i\in\mathbb{Z}}(-1)^i\cdot i\cdot\mbox{\mbox{dim}}_{\mathbb{Q}}H^i_{W,c}(\mathcal{X},\mathbb{Z})_{\mathbb{Q}}\\
&=&\sum_{i\in\mathbb{Z}}(-1)^i\cdot i\cdot(\mbox{\mbox{dim}}_{\mathbb{Q}}H^i_{c}(\mathcal{X}_{et},\mathbb{Q})+
\mbox{\mbox{dim}}_{\mathbb{Q}}H^{2d+2-i-1}(\mathcal{X},\mathbb{Q}(d))^*)\\
&=&\sum_{i\in\mathbb{Z}}(-1)^i\cdot i\cdot(\mbox{\mbox{dim}}_{\mathbb{Q}}H^{2d-i}(\mathcal{X},\mathbb{Q}(d))^*+
\mbox{\mbox{dim}}_{\mathbb{Q}}H^{2d+1-i}(\mathcal{X},\mathbb{Q}(d))^*)\\
&=&\sum_{i\in\mathbb{Z}}(-1)^i
\mbox{\mbox{dim}}_{\mathbb{Q}}H^{2d+1-i}(\mathcal{X},\mathbb{Q}(d)).
\end{eqnarray*}
The second equality follows from Proposition \ref{lapropcanonique}. Since $H^{i}(\mathcal{X},\mathbb{Q}(d))=0$ for $i<0$ by  $\textbf{L}(\mathcal{X}_{et},d)$, the third equality follows from Conjecture $\textbf{B}(\mathcal{X},d)$ and from duality for Deligne cohomology, see the proof of Theorem \ref{thm-beilinson}. The fourth equality follows from $\textbf{L}(\mathcal{X}_{et},d)$ since it implies that $H^{2d-i}(\mathcal{X},\mathbb{Q}(d))$ is finite dimensional and zero for almost all $i$. The result follows.
\end{proof}

\subsection{Relation to the Tamagawa number conjecture}\label{sect-TNC} In this section we consider the Tamagawa number conjecture of Bloch and Kato in the formulation of Fontaine and Perrin-Riou (see \cite{Fontaine92} and \cite{Fontaine-Perrin-Riou}).

Throughout this section we let $\X$ be a smooth projective scheme over a number ring $\mathcal{O}_F$, and we assume that $\mathcal{X}$ is connected of dimension $d$.  We set $\mathcal{X}_F:=\mathcal{X}\otimes_{\mathcal{O}_F}F$ and $\mathcal{X}_{\p}:=\mathcal{X}\otimes_{\mathcal{O}_F}\mathbb{F}_{\p}$ where $\mathbb{F}_{\p}:=\mathcal{O}_F/\p$ for any maximal ideal $\p\subset\mathcal{O}_F$.  We assume further that $\mathcal{X}$ satisfies ${\bf L}(\mathcal{X}_{et},d)_{\geq0}$ and ${\bf B}(\mathcal{X},d)$. In order to ease the notation, we also assume $\mathcal{O}_F=\mathcal{O}_{\X}(\X)$. In particular, $\X_F$ and $\mathcal{X}_{\p}$ (for any finite prime $\p$) are geometrically irreducible (see \cite{Liu} Corollary 5.3.17).

Note that there is no loss of generality caused by the assumption $\mathcal{O}_F=\mathcal{O}_{\X}(\X)$. Indeed, if $\X$ is connected,  smooth and projective over $\mathcal{O}_F$, then $\mathcal{O}_{\X}(\X)$ is a number ring (finite over $\mathcal{O}_F$) and $\X$ is also smooth and projective over $\mathcal{O}_{\X}(\X)$.

\subsubsection{Motivic L-functions}

Recall that for any connected, smooth and projective scheme $X/F$ and $0\leq i\leq 2\cdot \textrm{dim}(X)$ one defines the $L$-function
\begin{equation}\label{L-function}
L(h^i(X),s)=\prod_{\mathfrak{p}} L_{\mathfrak{p}}(h^i(X),s)=\prod_{\mathfrak{p}} P_{\mathfrak{p}}(h^i(X),N({\mathfrak{p}})^{-s})^{-1}
\end{equation}
as an Euler product over all finite primes $\mathfrak{p}$ of $F$ where
$N(\mathfrak{p})$ is the cardinal of $\mathbb{F}_{\mathfrak{p}}$ and
$$P_{\mathfrak{p}}(h^i(X),T)= \mydet_{\bq_l}\bigl(1-\mathrm{Fr}_{\mathfrak{p}}^{-1}\cdot T\vert
H^i(X_{\overline{F},et},\bq_l)^{I_{\mathfrak{p}}}\bigr)$$ is a polynomial (conjecturally) with
rational coefficients and independent of the chosen prime
$l$ as long as  $\mathfrak{p}$ does not divide $l$. Here $\mathrm{Fr}_{\mathfrak{p}}$ denotes a Frobenius element. The product (\ref{L-function}) is known to converge for
$\Re(s)>\frac{i}{2}+1$ by \cite{Deligne74}.

Assume now that $X/F$ is the generic fiber $\mathcal{X}\otimes_{\mathcal{O}_F}F$ of the smooth proper scheme $\mathcal{X}$ over $\mathcal{O}_F$.
By smooth and proper base change one has
$$H^i(\mathcal{X}_{\overline{\mathbb{F}}_{\mathfrak{p}},et},\mathbb{Q}_l)\simeq
H^i(\mathcal{X}_{\overline{F},et},\mathbb{Q}_l)\simeq
H^i(\mathcal{X}_{\overline{F},et},\mathbb{Q}_l)^{I_p}$$
and by Grothendieck's
formula, one has
$$\zeta(\X,s):=\prod_{x\in \X_0}\frac{1}{1-N(x)^{-s}}=\prod_{\mathfrak{p}}\zeta(\mathcal{X}_{\mathfrak{p}},s)=\prod_{i=0}^{2\,\textrm{dim}(\mathcal{X}_F)}L(h^i(\mathcal{X}_F),s)^{(-1)^i}$$
where $\X_0$ is the set of closed points in $\mathcal{X}$.

\subsubsection{Statement of the Tamagawa number conjecture}

Let $\mathcal{X}/\mathcal{O}_F$ be a scheme satisfying the assumptions of the introduction of this section \ref{sect-TNC}. We set $X:=\mathcal{X}_F$. Recall that the "integral part in the motivic cohomology" $H^{j}_M(X_{/\mathbb{Z}},\mathbb{Q}(d))$ is defined as the image of the map $H^{j}(\mathcal{X},\mathbb{Q}(d))\rightarrow H^{j}(X,\mathbb{Q}(d))$. But this map is injective for $j\geq0$ by ${\bf B}(\mathcal{X},d)$, hence we may identity $H^{j}_M(X_{/\mathbb{Z}},\mathbb{Q}(d))=H^{j}(\mathcal{X},\mathbb{Q}(d))$ for $j\geq0$. Define the fundamental line
$$\Delta_f(h^i(X))=\textrm{det}_{\mathbb{Q}}^{-1}(H^i(X(\mathbb{C}),\mathbb{Q})^+)\otimes_{\mathbb{Q}}
\textrm{det}_{\mathbb{Q}} H^{2d-1-i}_M(X_{/\mathbb{Z}},\mathbb{Q}(d))^*$$ for $0<i\leq 2d-2$ and
$$\Delta_f(h^0(X))=\textrm{det}_{\mathbb{Q}}CH^0(X)_{\mathbb{Q}}\otimes_{\mathbb{Q}}\textrm{det}_{\mathbb{Q}}^{-1}(H^0(X(\mathbb{C}),\mathbb{Q})^+)\otimes_{\mathbb{Q}}
\textrm{det}_{\mathbb{Q}} H^{2d-1}_M(X_{/\mathbb{Z}},\mathbb{Q}(d))^*$$ for $i=0$. There is an
isomorphism
$$ \vartheta^i_\infty:\mathbb{R}\simeq \Delta_f(h^i(X))_{\mathbb{R}}$$
induced by ${\bf B}(\mathcal{X},d)$ and duality for Deligne cohomology (\ref{duality-Deligne-coh}). Assuming that $L(h^i(X),0)$ has a meromorphic continuation near $s=0$, we denote by $L^*(h^i(X),0)$ the leading coefficient in the Taylor expansion of $L(h^i(X),0)$ at $s=0$.

\begin{conj}\label{beil} (Beilinson--Deligne) There is an identity of $\bq$-subspaces of  $\Delta_f(h^i(X))_{\mathbb{R}}$:
$$\bq\cdot \vartheta^i_\infty(L^*(h^i(X),0)^{-1})=\Delta_f(h^i(X)).$$
\end{conj}
Now fix a prime number $l$ and let $U\subseteq\textrm{Spec}(\mathcal{O}_F)$ be an open
subscheme in which $l$ is invertible. For any locally constant $l$-adic sheaf
$V$ on $U$ (i.e. any $\mathbb{Q}_l$-representation of the fundamental group $\pi_1(U_{et},\bar{u})$ with base point $\bar{u}:\textrm{Spec}(\overline{F})\rightarrow U$) and any finite prime $\mathfrak{p}$ of $F$ not dividing $l$, one defines a complex concentrated in
degrees $0$ and $1$
$$R\Gamma_f(F_{\mathfrak{p}},V)=R\Gamma(\mathbb{F}_{\mathfrak{p}},V^{I_\mathfrak{p}})
=V^{I_{\mathfrak{p}}}\stackrel{1-\textrm{Fr}_{\mathfrak{p}}^{-1}}{\longrightarrow}V^{I_{\mathfrak{p}}}$$
where $I_{\mathfrak{p}}$ is the inertia subgroup at ${\mathfrak{p}}$. For $\mathfrak{p}$ dividing $l$ define
$$R\Gamma_f(F_{\mathfrak{p}},V)=D_{cris,\p}(V) \stackrel{(1-\phi,\iota)}{\longrightarrow}D_{cris,\p}(V)\oplus D_{dR,\p}(V)/F^0D_{dR,\p}(V)$$
where $D_{cris,\p}(V)=(B_{cris,\mathfrak{p}}\otimes_{\mathbb{Q}_p}V)^{G_{F_\mathfrak{p}}}$ and $D_{dR,\p}(V)=(B_{dR,\mathfrak{p}}\otimes_{\mathbb{Q}_p}V)^{G_{F_\mathfrak{p}}}$ (see \cite{Fontaine92} and \cite{Fontaine-Perrin-Riou}). In both cases there is a map of complexes
$ R\Gamma_f(F_{\mathfrak{p}},V)\to R\Gamma(F_{\mathfrak{p}},V)$
and one defines $R\Gamma_{/f}(F_{\mathfrak{p}},V)$ as the mapping cone.  Then one defines a global complex $R\Gamma_f(F,V)$ as the mapping fibre of the composite map
$$R\Gamma(U_{et},V)\rightarrow\bigoplus_{\mathfrak{p}\notin U}R\Gamma(F_{\mathfrak{p}},V)\rightarrow\bigoplus_{\mathfrak{p}\notin U}R\Gamma_{/f}(F_{\mathfrak{p}},V).$$
There is an exact triangle in the derived category of
$\bq_l$-vector spaces 
\begin{equation}
R\Gamma_c(U_{\et},V)\to
R\Gamma_f(F,V)\to\bigoplus_{\mathfrak{p}\notin U}R\Gamma_f(F_{\mathfrak{p}},V)\label{tri1}
\end{equation} where the primes $\mathfrak{p}\notin U$ include archimedean $\mathfrak{p}$ with the convention $R\Gamma_f(\mathbb{R},V)=R\Gamma(\mathbb{R},V)$ and $R\Gamma_f(\mathbb{C},V)=R\Gamma(\mathbb{C},V)$. 

\begin{notation}\label{onenotat}
For an archimedean prime $\p$ of $F$, we choose a complex embedding $\sigma_{\p}:F\rightarrow \bc$ representing $\p$, and we set $\X_{\p}:=\X_{F,\sigma_{\p}}(\mathbb{C})/G_{F_{\p}}$, where $\mathcal{X}_{F,\sigma_{\p}}(\bc):=\mathrm{Hom}_{\mathrm{Spec}(F)}(\mathrm{Spec}(\mathbb{C}),\mathcal{X}_F)$ is the space of complex points of $\X_{F}$ lying over  $\sigma_{\p}$. One has $\X_{\infty}=\coprod_{\p\mid\infty}\X_{\p}$. Finally, we set $\X_{\p,\et}:=Sh(\X_{\p})$ and  $\X_{\p,W}:=Sh(\X_{\p})\times B_{\br}$.
\end{notation}
The following result is proven in \cite{Flach-moi} Proposition 9.1. Note that (\cite{Flach-moi} Conjecture 7) is known in the smooth case.

\begin{prop}\label{prop-matthias} Let
$\pi:\X\rightarrow\Spec(\mathcal{O}_F)$ be as above and let
$\overline{\X}_\et$ be its Artin-Verdier \'etale topos. Let
$U\subseteq\Spec(\mathcal{O}_F)$ be an open subscheme in which the prime number $l$ is invertible. Then there is an isomorphism of exact triangles in the derived
category of $\bq_l$-vector spaces:
\[\minCDarrowwidth1em\begin{CD}
R\Gamma_c(\X_{U,\et},\bq_l) @>>> R\Gamma(\overline{\X}_{\et},\bq_l) @>>>
\bigoplus\limits_{\mathfrak{p}\notin U}R\Gamma(\X_{\mathfrak{p},\et},\bq_l) \\
@VVV @VVV @VVV @.\\
\bigoplus\limits_{i=0}^{2d-2}R\Gamma_c(U_\et,V^i_l)[-i] @>>>
\bigoplus\limits_{i=0}^{2d-2}R\Gamma_f(F,V^i_l)[-i]
@>>>\bigoplus\limits_{\mathfrak{p}\notin U}\bigoplus\limits_{i=0}^{2d-2}R\Gamma_f(F_{\mathfrak{p}},V^i_l)[-i]
\end{CD}\]
Here $V^i_l:=H^i(X_{\overline{F},\et},\bq_l)$ is viewed as a $\bq_l$-representation of $G_F$, the bottom exact
triangle is a sum over triangles (\ref{tri1}) and $R\Gamma_c(\X_{U,\et},\bq_l):=R\Gamma(\overline{\X}_{\et},j_{!}\bq_l)$
where $j:\X_{U,et}\rightarrow \overline{\X}_{\et} $ is the canonical open embedding.
\label{reform}\end{prop}
The statement of the Tamagawa number conjecture requires the following
\begin{conj} (Bloch-Kato) We have $H^1_f(F,V_l^i)=0$ for any $i$.
\label{bk}\end{conj}
One can show that $H^0_f(F,V_l^0)\cong CH^0(\mathcal{X}_F)_{\bq_l}$,
$H^0_f(F,V_l^i)=0$ for $i>0$ and $H^3_f(F,V_l^i)=0$. Therefore, by Corollary \ref{cor-comp-l-adic} and Proposition \ref{prop-matthias}, Conjecture \ref{bk} induces an isomorphism:
\begin{equation}\label{matmap}
\rho^i_l:H^2_f(F,V_l^i)\stackrel{\sim}{\rightarrow}H^{i+2}(\overline{\X}_{et},\mathbb{Q}_l)\stackrel{\sim}{\rightarrow}
H_W^{i+2}(\overline{\X},\mathbb{Z})_{\mathbb{Q}_l}\stackrel{\sim}{\rightarrow} H^{2d-1-i}(\mathcal{X},\bq(d))_{\bq_l}^*.
\end{equation}
Moreover Artin's comparison isomorphism yields
\begin{eqnarray}
H^i(\mathcal{X}(\bc),\bq)^+_{\bq_l}&\simeq&(\bigoplus_{\sigma:F\hookrightarrow\mathbb{C}}H^i(\mathcal{X}_{F,\sigma}(\bc),\bq))_{\bq_l}^+ \\
\label{atinfty}&\simeq& \bigoplus_{\mathfrak{p}|\infty}H^i(\mathcal{X}_{\overline{F},\et},\bq_l)^{G_{\mathfrak{p}}}=  \bigoplus_{\mathfrak{p}|\infty}(V^i_l)^{G_{\mathfrak{p}}}
\end{eqnarray}
where $\sigma$ runs over the complex embeddings of $F$ and $\mathcal{X}_{F,\sigma}(\bc)=\textrm{Hom}_{\textrm{Spec}(F)}(\textrm{Spec}(\mathbb{C}),\mathcal{X}_F)$ with respect to the map $\sigma:\textrm{Spec}(\mathbb{C})\rightarrow \textrm{Spec}(F)$. We obtain an isomorphism (for $0\leq i\leq 2d-2$):
\begin{eqnarray}
\label{onedet}\vartheta^i_l:\,\,\, \Delta_f(h^i(X))_{\bq_l} &\stackrel{\sim}{\longrightarrow}& \mydet_{\bq_l}R\Gamma_f(F,V_l^i)\otimes\bigotimes_{\mathfrak{p}|\infty}\mydet^{-1}_{\bq_l}R\Gamma(F_{\mathfrak{p}},V_l^i)\\
\label{twodet}&\stackrel{\sim}{\longrightarrow}& \mydet_{\bq_l}R\Gamma_c(U_{\et},V^i_l)\otimes\bigotimes_{\mathfrak{p}\in Z}\mydet_{\bq_l}R\Gamma_f(F_{\mathfrak{p}},V_l^i)\\
\label{threedet}&\stackrel{\sim}{\longrightarrow}& \mydet_{\bq_l}R\Gamma_c(U_{\et},V^i_l)
\end{eqnarray}
where $Z=\mathrm{Spec}(\mathcal{O}_F)-U$ is the closed complement of $U$. Indeed, (\ref{onedet}) is induced by (\ref{matmap}) and (\ref{atinfty}), (\ref{twodet}) is induced by (\ref{tri1}), and (\ref{threedet}) is induced
by the isomorphism (for $\mathfrak{p}\in Z$)
$$\iota_{\mathfrak{p}}:\mydet_{\bq_l}R\Gamma_f(F_{\mathfrak{p}},V_l^i)\cong \bq_l  $$
which is in turn induced by the identity map on $(V_l^i)^{I_{\mathfrak{p}}}$ and $D_{cris,\p}(V_l^i)$. Below is the Tamagawa number conjecture in the formulation of Fontaine and Perrin-Riou (see \cite{Fontaine92} and \cite{Fontaine-Perrin-Riou}). It presupposes Conjecture \ref{beil}.
\begin{conj} ($l$-part of the Tamagawa number conjecture) There is an identity of
free rank one $\bz_l$-submodules of
$\mydet_{\bq_l}R\Gamma_c(U_{\et},V^i_l)$
\[
\bz_l\cdot\vartheta^i_l\circ\vartheta^i_\infty(L^*(h^i(X),0)^{-1})=\mydet_{\bz_l}R\Gamma_c(U_{\et},T^i_l)\]
for any constructible $\mathbb{Z}_l$-sheaf $T^i_l$ such that $T^i_l\otimes_{\mathbb{Z}_l}\mathbb{Q}_l= V^i_l$.
\label{tam}\end{conj}This conjecture is independent of the choice of $T_l^i$, as well as of the choice of $U$ in which $l$ is invertible \cite{Fontaine92}.

\subsubsection{}
One can reformulate the Tamagawa number conjecture in terms of the L-function
$$ L_U(h^i(X),s)=\prod_{\mathfrak{p}\in U} L_{\mathfrak{p}}(h^i(X),s)$$
associated to the smooth $l$-adic sheaf $V_l^i$ over $U$, using a second isomorphism
\[ \tilde{\iota}_{\mathfrak{p}}:\mydet_{\bq_l}R\Gamma_f(F_{\mathfrak{p}},V^i_l)\simeq \bq_l  \]
which satisfies
\begin{equation}\label{treslong}
\iota_{\mathfrak{p}}=P^*_{\mathfrak{p}}(h^i(X),1)^{-1}\tilde{\iota}_{\mathfrak{p}}=L_{\mathfrak{p}}^*(h^i(X),0)\log(N(\mathfrak{p})^{r_{i,\mathfrak{p}}})
\tilde{\iota}_{\mathfrak{p}}
\end{equation}
where $r_{i,\mathfrak{p}}=\ord_{T=1}P_{\mathfrak{p}}(h^i(X),T)=-\ord_{s=0}L_{\mathfrak{p}}(h^i(X),s)$. The isomorphism $\tilde{\iota}_{\mathfrak{p}}$ is defined as follows. We shall see below that the complex $R\Gamma_f(F_{\mathfrak{p}},V^i_l)$ is semi-simple at $0$; in other words, the identity map (on $(V_l^i)^{I_{\mathfrak{p}}}$ for $\p \nmid l$ and $D_{cris,\p}(V_l^i)$ for $\p\mid l$) induces an isomorphism $H^0_f(F_{\mathfrak{p}},V^i_l)\stackrel{\sim}{\rightarrow }H^1_f(F_{\mathfrak{p}},V^i_l)$  hence a trivialization
$$\tilde{\iota}_{\mathfrak{p}}:\mydet_{\bq_l}R\Gamma_f(F_{\mathfrak{p}},V^i_l)\simeq \mydet_{\bq_l}H^0_f(F_{\mathfrak{p}},V^i_l)\otimes_{\bq_l} \mydet^{-1}_{\bq_l}H^1_f(F_{\mathfrak{p}},V^i_l)\simeq \bq_l.$$
Then we define 
\begin{equation}\label{tilde-vartheta-l}
\tilde{\vartheta}^i_l: \Delta_f(h^i(X))_{\bq_l}\stackrel{\sim}{\rightarrow}
\mydet_{\bq_l}R\Gamma_c(U_{\et},V^i_l)\otimes\bigotimes_{\mathfrak{p}\in Z}\mydet_{\bq_l}R\Gamma_f(F_{\mathfrak{p}},V_l^i)
\stackrel{\sim}{\rightarrow} \mydet_{\bq_l}R\Gamma_c(U_{\et},V^i_l)
\end{equation}
for $0\leq i\leq 2d-2$, where the first isomorphism is (\ref{twodet}) and the second is induced by the $\tilde{\iota}_{\mathfrak{p}}$'s. By (\ref{treslong}) we have  $\tilde{\vartheta}^i_l=\prod_{\mathfrak{p}\in Z} P^*_{\mathfrak{p}}(h^i(X),1)\cdot \vartheta_l^i$ and we need to define
\begin{equation}\label{tilde-vartheta-infty}
\tilde{\vartheta}^i_\infty:=\prod_{\mathfrak{p}\in Z}\log(N(\mathfrak{p}))^{r_{i,\mathfrak{p}}}\vartheta_\infty^i.
\end{equation}
In our situation, we have 
$r_{i,\p}=0$ for $i>0$ (for weight reasons) and $r_{0,\p}=1$ (because $V_l^0=\mathbb{Q}_l$ with trivial $G_F$-action, see below) for any $\p\in Z$. The Tamagawa
number conjecture becomes
\begin{conj}\label{conjTNCtrunc}
There is an identity of
free rank one $\bz_l$-submodules of
$\mydet_{\bq_l}R\Gamma_c(U_{\et},V^i_l)$
\begin{equation}\label{tam1}
\bz_l\cdot\tilde{\vartheta}^i_l\circ\tilde{\vartheta}^i_\infty(L^*_U(h^i(X),0)^{-1})=
\mydet_{\bz_l}R\Gamma_c(U_{\et},T^i_l).
\end{equation}
\end{conj}

Now we observe that $R\Gamma_f(F_{\mathfrak{p}},V^i_l)$ is indeed semi-simple at $0$, and explain the compatibility between $\tilde{\iota}_{\mathfrak{p}}$ and the canonical trivialization of  $(\mathrm{det}_{\mathbb{Z}}R\Gamma(\mathcal{X}_{\mathfrak{p},W},\mathbb{Z}))_{\mathbb{Q}_l}$. Let $\p\in Z$. By Proposition \ref{prop-matthias} we have
\begin{equation}\label{shortiso}
\bigoplus_{i\geq0}R\Gamma_f(F_{\mathfrak{p}},V^i_l)[-i]\simeq R\Gamma(\mathcal{X}_{\mathfrak{p},et},\bq_l)\simeq R\Gamma(\mathcal{X}_{\mathfrak{p},W},\mathbb{Z})_{\mathbb{Q}_l}.
\end{equation}
But $H^i(\mathcal{X}_{\mathfrak{p},W},\mathbb{Z})_{\mathbb{Q}_l}=\mathbb{Q}_l$ for $i=0,1$ (since $\X_{\p}$ is connected) and $H^i(\mathcal{X}_{\mathfrak{p},W},\mathbb{Z})_{\mathbb{Q}_l}=0$ otherwise (by \cite{Lichtenbaum-finite-field} Theorem 7.4). It follows that $R\Gamma_f(F_{\mathfrak{p}},V^i_l)$ is acyclic for $i>0$, hence semi-simple at $0$. For $i=0$, we have $V^0_l=\bq_l$ with trivial $G_{F}$-action (since $\X_F$ is geometrically connected), hence $(V_l^0)^{I_{\p}}=\mathbb{Q}_l$ for $\p\nmid l$ and $D_{cris,\p}(V_l^0)=(F_{\p})_0$ for $\p\mid l$. In both cases, $R\Gamma_f(F_{\mathfrak{p}},V^0_l)$ is semi-simple at $0$. Moreover, (\ref{treslong}) is given by (\cite{Burns-Flach-98} Lemma 1). Indeed, for $i>0$ and $\p\mid l$, one has $$P_{\p}(V_l^i,1)=P_{l}((V_l^i)',1)=\mydet_{\bq_l}(1-\phi\mid D_{cris,\p}(V_l^i))$$
where $(V_l^i)':=\mathrm{Ind}_{F_{\p}/\bq_l}(V_l^i)$ is the $l$-adic representation of $G_{\bq_l}$ induced by $V_l^i$ (seen as a representation of $G_{F_{\p}}$). The case $\p\nmid l$ is similar and the case $i=0$ is obvious.

Note also that $H^i_f(F_{\mathfrak{p}},V^0_l)\simeq H^i(\mathcal{X}_{\mathfrak{p},W},\mathbb{Z})_{\mathbb{Q}_l}$ for $i=0,1$. Under this identification, the isomorphism $H^0_f(F_{\mathfrak{p}},V^0_l)\stackrel{\sim}{\rightarrow }H^1_f(F_{\mathfrak{p}},V^0_l)$ given by semi-simplicity of $R\Gamma_f(F_{\mathfrak{p}},V^0_l)$ corresponds to the map $H^0(\mathcal{X}_{\mathfrak{p},W},\mathbb{Z})_{\mathbb{Q}_l}\stackrel{\cup e}{\rightarrow} 
H^1(\mathcal{X}_{\mathfrak{p},W},\mathbb{Z})_{\mathbb{Q}_l}$ given by cup-product with the fundamental class $e\in H^1(\mathcal{X}_{\mathfrak{p},W},\mathbb{Z})$. Recall that $e$ is the pull-back of the homomorphism in $H^1(\mathrm{Spec}(\mathbb{F}_{\p})_W,\mathbb{Z})=\mathrm{Hom}(W_{\mathbb{F}_{\p}},\mathbb{Z})$ which sends $\mathrm{Fr}_{\p}$ to $1$. It follows that we have a commutative square of isomorphisms
\begin{equation}\begin{CD}
\bigotimes_{i\geq0}\mydet_{\bq_l}^{(-1)^i}R\Gamma_f(F_{\mathfrak{p}},V^i_l)
 @>\bigotimes_{i}\tilde{\iota}_{p}^{(-1)^i}>>
\bq_l \\
@VVV \Vert@.\\
\mydet_{\bq_l}R\Gamma(\mathcal{X}_{\mathfrak{p},W},\mathbb{Z})_{\mathbb{Q}_l}@>\mu_{\X_{\p},\bq_l}^{-1}>>
\mathbb{Q}_l
\end{CD}\label{dia00}
\end{equation}
where the left vertical isomorphism is induced by (\ref{shortiso}), and the lower horizontal isomorphism is induced by $\mu_{\X_{\p}}:\mathbb{Q}\stackrel{\sim}{\rightarrow } \mydet_{\bq}R\Gamma(\mathcal{X}_{\mathfrak{p},W},\mathbb{Z})_{\bq}$ which is in turn induced by the exact sequence (see \cite{Lichtenbaum-finite-field} Theorem 7.4)
\begin{equation}\label{licht-exactsequ}
...\stackrel{\cup e}{\rightarrow} H^i(\mathcal{X}_{\mathfrak{p},W},\mathbb{Z})_{\mathbb{Q}}\stackrel{\cup e}{\rightarrow} 
H^{i+1}(\mathcal{X}_{\mathfrak{p},W},\mathbb{Z})_{\mathbb{Q}}\stackrel{\cup e}{\rightarrow} ...
\end{equation}

\subsubsection{}
Let $\mathcal{X}/\mathcal{O}_F$ be a smooth projective scheme satisfying the assumptions of the introduction of this section \ref{sect-TNC}. Let $U\subseteq\textrm{Spec}(\mathcal{O}_F)$ be an open
subscheme on which the prime number $l$ is invertible. Consider the morphism $$\mbox{R}\Gamma_W(\overline{\mathcal{X}},\mathbb{Z})
\longrightarrow\bigoplus_{\mathfrak{p}\notin U}\mbox{R}\Gamma(\mathcal{X}_{\mathfrak{p},W},\mathbb{Z})$$
given by Proposition \ref{prop-functoriality-flat-p} and Proposition \ref{prop-iinfty}, where $\mathcal{X}_{\mathfrak{p},W}$ is the Weil-\'etale topos of $\mathcal{X}_{\mathfrak{p}}$. Here the primes $\p\notin U$ include archimedean primes, see Notation \ref{onenotat}. We define $\mbox{R}\Gamma_{W,c}(\mathcal{X}_U,\mathbb{Z})$ such that the triangle
\begin{equation}\label{prop-exact-triangles-with-support}
\mbox{R}\Gamma_{W,c}(\mathcal{X}_U,\mathbb{Z})\rightarrow \mbox{R}\Gamma_W(\overline{\mathcal{X}},\mathbb{Z})
\rightarrow\bigoplus_{\mathfrak{p}\notin U}\mbox{R}\Gamma(\mathcal{X}_{\mathfrak{p},W},\mathbb{Z})
\end{equation}
is exact. We do not show nor use that $\mbox{R}\Gamma_{W,c}(\mathcal{X}_U,\mathbb{Z})$ only depends on $\mathcal{X}_U$. In fact
$\mbox{R}\Gamma_{W,c}(\mathcal{X}_U,\mathbb{Z})$ is only defined up to a non-canonical isomorphism
but $\mbox{det}_{\mathbb{Z}}\mbox{R}\Gamma_{W,c}(\mathcal{X}_U,\mathbb{Z})$ is canonically defined and we have a canonical isomorphism
\begin{equation}\label{unedeplus}
\mbox{det}_{\mathbb{Z}}\mbox{R}\Gamma_{W,c}(\mathcal{X}_U,\mathbb{Z})\simeq \mbox{det}_{\mathbb{Z}}\mbox{R}\Gamma_{W,c}(\mathcal{X},\mathbb{Z})\otimes\mbox{det}^{-1}_{\mathbb{Z}}\mbox{R}\Gamma
(\mathcal{X}_{Z,W},\mathbb{Z})
\end{equation}
where $Z=\textrm{Spec}(\mathcal{O}_F)-U$. We define $$\mbox{R}\Gamma_{W,c}(\mathcal{X}_U,\tilde{\mathbb{R}}):=\mbox{R}\Gamma(\overline{\mathcal{X}}_W,j_!\tilde{\mathbb{R}})$$
where $j:\mathcal{X}_{U,W}\rightarrow \overline{\mathcal{X}}_W$ is the obvious open embedding of topoi (i.e. induced by $\X_{U,et}\rightarrow \overline{\X}_{et}$). The triangle
$$\mbox{R}\Gamma_{W,c}(\mathcal{X}_U,\tilde{\mathbb{R}})\rightarrow\mbox{R}\Gamma_{W}(\overline{\mathcal{X}},\tilde{\mathbb{R}})
\rightarrow\bigoplus_{\mathfrak{p}\notin U}\mbox{R}\Gamma(\mathcal{X}_{\mathfrak{p},W},\tilde{\mathbb{R}})$$
is exact, where the map on the right hand side is induced by the closed embedding 
$\coprod_{\mathfrak{p}\notin U}\mathcal{X}_{\mathfrak{p},W}\rightarrow\overline{\mathcal{X}}_W$
(which is the closed complement of $j$). We obtain a canonical isomorphism
\begin{equation}\label{2deplus}
\mbox{det}_{\mathbb{R}}\mbox{R}\Gamma_{W,c}(\mathcal{X}_U,\tilde{\mathbb{R}})\simeq \mbox{det}_{\mathbb{R}}\mbox{R}\Gamma_{W,c}(\mathcal{X},\tilde{\mathbb{R}})\otimes
\mbox{det}^{-1}_{\mathbb{R}}\mbox{R}\Gamma_W(\mathcal{X}_{Z},\tilde{\mathbb{R}}).
\end{equation}
By Corollary \ref{cor-canmapondet}, (\ref{unedeplus}) and (\ref{2deplus}) we have a canonical isomorphism
\begin{equation}\label{3deplus}
\mbox{det}_{\mathbb{R}}\mbox{R}\Gamma_{W,c}(\mathcal{X}_U,\mathbb{Z})_{\mathbb{R}}\stackrel{\sim}{\longrightarrow}
\mbox{det}_{\mathbb{R}}\mbox{R}\Gamma_{W,c}(\mathcal{X}_U,\tilde{\mathbb{R}})
\end{equation}
and we obtain
\begin{equation}\label{4deplus}
\lambda_{\X_U}:\mathbb{R}\stackrel{\sim}{\longrightarrow}
\mbox{det}_{\mathbb{R}}\mbox{R}\Gamma_{W,c}(\mathcal{X}_U,\tilde{\mathbb{R}})\stackrel{\sim}{\longrightarrow}
\mbox{det}_{\mathbb{R}}\mbox{R}\Gamma_{W,c}(\mathcal{X}_U,\mathbb{Z})_{\mathbb{R}}
\end{equation}
such that the following square of isomorphisms commutes:
\begin{equation}\begin{CD}
\mathbb{R}\otimes\mathbb{R} @>\lambda_{\X}\otimes \lambda_{\X_Z}^{-1}>>
\mbox{det}_{\mathbb{R}}\mbox{R}\Gamma_{W,c}(\mathcal{X},\mathbb{Z})_{\mathbb{R}} \otimes \mbox{det}^{-1}_{\mathbb{R}}\mbox{R}\Gamma(\mathcal{X}_{Z,W},\mathbb{Z})_{\mathbb{R}}\\
@VVV @VVV\\
\mathbb{R}@>\lambda_{\X_U}>>
\mbox{det}_{\mathbb{R}}\mbox{R}\Gamma_{W,c}(\mathcal{X}_U,\mathbb{Z})_{\mathbb{R}}
\end{CD}\label{dia3}\end{equation}
Here we identify $\lambda_{\X_Z}$ with its dual, the left vertical map is induced by the product map and the right vertical map is induced by (\ref{unedeplus}).

\begin{thm} Let $\mathcal{X}/\mathcal{O}_F$ be a smooth projective scheme over $\mathcal{O}_F$. Assume that $\X$ is connected of dimension $d$ and that $\X$ satisfies ${\bf L}(\mathcal{X}_{et},d)_{\geq0}$ and ${\bf B}(\mathcal{X},d)$. Assume moreover that  $H^1_f(F,H^i(\X_{\overline{F},\et},\bq_l))=0$ for all $i$, and that $\zeta(\X,s)$ has a meromorphic continuation to $s=0$. Then the Tamagawa number conjecture (Conjecture
\ref{tam}) for the motive
$\bigoplus_{i=0}^{2d-2}h^i(X)[-i]$
and all $l$ is equivalent to statement f) of Conjecture \ref{Lichtenbaum Conjecture} for $\X$.
\label{tamcompare}\end{thm}

\begin{proof}
We consider an open subscheme $U\subseteq\Spec(\mathcal{O}_F)$ on which $l$ is invertible and we let $Z$ be the closed complement of $U$. By \cite{Lichtenbaum-finite-field} $\X_{\p}$ satisfies Conjecture \ref{conjLichtenbaumIntro} for any $\p\in Z$. Hence the factorization
$\zeta(\X,s)=\zeta(\X_U,s)\cdot \zeta(\X_Z,s)$
together with (\ref{unedeplus}), (\ref{2deplus}) and (\ref{dia3}) show that Conjecture \ref{Lichtenbaum Conjecture} f) for
$\X$ is equivalent to Conjecture \ref{Lichtenbaum Conjecture} f) for $\X_U$.

By Proposition \ref{lapropcanonique} and by definition of $\lambda_{\X}$, we have a canonical isomorphism
\begin{equation}\label{isotriv}
\mydet_\bq R\Gamma_{W,c}(\X,\bz)_\bq\simeq \bigotimes_{i=0}^{2d-2}\Delta_f(h^i(X))^{(-1)^i}
\end{equation}
which is compatible (in the obvious sense) with $\lambda_{\X}$ and  $\otimes_i(\vartheta_\infty^i)^{(-1)^i}$. In particular, Conjecture \ref{Lichtenbaum Conjecture} f) implies Conjecture
\ref{beil} for $\bigoplus_{i=0}^{2d-2}h^i(X)[-i]$.
Moreover, for any prime $\p\in Z$,
cup-product with the canonical class $e\in H^1(\X_{\p,W},\bz)$
yields a trivialization (induced by (\ref{licht-exactsequ}))
\begin{equation}\label{triv-char-p}
\mu_{\X_{\p}}:\mathbb{Q}\cong\mydet_\bq R\Gamma(\X_{\p,W},\bz)_\bq.
\end{equation}
Then (\ref{unedeplus}), (\ref{isotriv}) and (\ref{triv-char-p}) yield an isomorphism
\begin{align*}
\vartheta_W:\mydet_\bq R\Gamma_{W,c}(\X_{U},\bz)_\bq\cong & \mydet_\bq
R\Gamma_{W,c}(\X,\bz)_\bq\otimes \bigotimes_{\p\in
Z}\mydet_\bq^{-1} R\Gamma(\X_{\p,W},\bz)_\bq\\
\cong & \bigotimes_{i=0}^{2d-2}\Delta_f(h^i(X))^{(-1)^i}.
\end{align*}
We claim that the following diagram of isomorphisms is commutative:
\begin{equation}\begin{CD} \br @>\lambda_{\X_U} >> \mydet_\br
R\Gamma_{W,c}(\X_{U},\bz)_\br\\
\Vert@. @VV \vartheta_{W,\br} V\\
\br @>\otimes_i(\tilde{\vartheta}_\infty^i)^{(-1)^i} >>
\bigotimes_{i=0}^{2d-2}\Delta_f(h^i(X))_\br^{(-1)^i}
\end{CD}\label{dia0R}\end{equation}
where $\lambda_{\X_U}$ (respectively $\tilde{\vartheta}_\infty^i$) is defined in (\ref{4deplus}) (respectively in (\ref{tilde-vartheta-infty})). Similarly,  for any prime $l$ invertible on $U$ we have a
commutative diagram of isomorphisms
\begin{equation}\begin{CD}
\mydet_{\bq_l} R\Gamma_{W,c}(\X_{U},\bz)_{\bq_l} @>>>
\mydet_{\bq_l} R\Gamma_c(\X_{U,\et},\bq_l) \\
@VV \vartheta_{W,\bq_l} V @VVV\\
\bigotimes_{i=0}^{2d-2}\Delta_f(h^i(X))_{\bq_l}^{(-1)^i}@>\otimes_i(\tilde{\vartheta}_l^i)^{(-1)^i}>>
\bigotimes_{i=0}^{2d-2}\mydet_{\bq_l}^{(-1)^i}R\Gamma_c(U_\et,V_l^i)
\end{CD}\label{dia4}\end{equation}
where the top horizontal isomorphism is induced by the isomorphism
\begin{equation}\label{vaserviraussi}
\mydet_{\bz} R\Gamma_{W,c}(\X_{U},\bz)\otimes_\bz\bz_l\cong \mydet_{\bz_l} R\Gamma_c(\X_{U,\et},\bz_l) 
\end{equation}
given by (\ref{unedeplus}) and Corollary \ref{cor-comp-l-adic}, while the right vertical isomorphism is
induced by the isomorphism
\begin{equation}\label{vaservir}
\mydet_{\bz_l} R\Gamma_c(\X_{U,\et},\bz_l)\cong \mydet_{\bz_l} R\Gamma_c(U_\et,R\pi_*\bz_l)\cong \bigotimes_{i=0}^{2d-2}\mydet_{\bz_l}^{(-1)^i}
R\Gamma_c(U_\et,T_l^i)
\end{equation}
where $T_l^i:=R^i\pi_*\bz_l$. Hence the $\bz_l$-lattice of $\bigotimes_{i=0}^{2d-2}\Delta_f(h^i(X))_{\bq_l}^{(-1)^i}$ given by the images of $\mydet_{\bz_l} R\Gamma_{W,c}(\X_{U},\bz)_{\bz_l}$ and $\bigotimes_{i=0}^{2d-2}\mydet_{\bz_l}^{(-1)^i}
R\Gamma_c(U_\et,T_l^i)$ coincide. This shows that Conjecture \ref{Lichtenbaum Conjecture} f) implies Conjecture
\ref{conjTNCtrunc} (hence Conjecture \ref{tam}) for $\bigoplus_{i=0}^{2d-2} h^i(\X_F)[-i]$ and all $l$. Conversely, Conjecture \ref{conjTNCtrunc} for $\bigoplus_{i=0}^{2d-2} h^i(\X_F)[-i]$ and all $l$ implies the $l$-primary part of Conjecture \ref{Lichtenbaum Conjecture} f) for $\X[1/l]$ and all $l$, hence the $l$-primary part of Conjecture \ref{Lichtenbaum Conjecture} f) for $\X$ and all $l$, hence Conjecture \ref{Lichtenbaum Conjecture} f) for $\X$. Here by the $l$-primary part of Conjecture \ref{Lichtenbaum Conjecture} f) for $\X$ we mean an identity
\[\mathbb{Z}_{(l)}\cdot\lambda_{\X}(\zeta^*(\mathcal{X},0)^{-1})=\mbox{det}_{\mathbb{Z}_{(l)}}\mbox{\emph{R}}\Gamma_{W,c}(\mathcal{X},\mathbb{Z})_{\mathbb{Z}_{(l)}}\]
where $\mathbb{Z}_{(l)}$ is the localization of $\mathbb{Z}$ at the prime ideal $l\mathbb{Z}$.

It remains to check that the squares (\ref{dia0R}) and (\ref{dia4}) are indeed commutative. Consider the following diagram.
{\tiny{
\[\begin{CD}
\mydet_{\bq_l} R\Gamma_{W,c}(\X_{U},\bz)_{\bq_l}\otimes\bq_l @>1\otimes \mu_{\X_Z,\bq_l}>> \mydet_{\bq_l} R\Gamma_{W,c}(\X_{U},\bz)_{\bq_l}\otimes\mydet_{\bq_l} R\Gamma(\X_{Z,W},\bz)_{\bq_l} @>b>> \mydet_{\bq_l} R\Gamma_{W,c}(\X,\bz)_{\bq_l}
 \\
@VVa\otimes 1 V @VVa\otimes fV @VVdV\\
\bigotimes_{i}\mydet_{\bq_l}^{(-1)^i}R\Gamma_c(U_\et,V_l^i)\otimes\bq_l@>1\otimes \bigotimes_{\p,i}\tilde{\iota}_{\p}^{(-1)^{i+1}}>> 
\bigotimes_{i}\mydet_{\bq_l}R\Gamma_c(U_{\et},V_l^i)\otimes\bigotimes_{\p,i} \mydet_{\bq_l}R\Gamma_f(F_{\p},V_l^i)
@>c>> \bigotimes_{i}\Delta_f(h^i(X))_{\bq_l}^{(-1)^i}
\end{CD}\]}}
Here we identify Weil-\'etale cohomology tensor $\bq_l$ with $l$-adic \'etale cohomology. Then the left vertical isomorphism $a$ is induced by (\ref{vaservir}), the map $f$ in the central vertical isomorphism is given by (\ref{shortiso}),  and $b$, $c$ and $d$ are induced by (\ref{unedeplus}), (\ref{twodet}) and (\ref{isotriv}) respectively. The commutativity of the left square follows from the commutativity of (\ref{dia00}). The commutativity of the right square follows from Proposition \ref{prop-matthias}. By definition of $\vartheta_{W}$ we have $d\circ b\circ (1\otimes \mu_{\X_Z,\bq_l})=\vartheta_{W,\bq_l}$. By definition of $\tilde{\vartheta}_l^i$ we have $\otimes_i(\tilde{\vartheta}_l^i)^{(-1)^i}=(c\circ (1\otimes \bigotimes_{\p,i}\tilde{\iota}_{\p}^{(-1)^{i+1}})^{-1}$. The commutativity of (\ref{dia4}) follows immediately.

The square (\ref{dia0R}) can be decomposed as follows:
{\tiny{
\[\begin{CD}
\br \otimes \br @>\lambda_{\X}\otimes \lambda_{\X_Z}^{-1}>> \mydet_{\br} R\Gamma_{W,c}(\X,\bz)_{\br}\otimes\mydet^{-1}_{\br} R\Gamma(\X_{Z,W},\bz)_{\br} @>b>> \mydet_{\br} R\Gamma_{W,c}(\X_U,\bz)_{\br}
 \\
@VV 1\otimes 1 V @VV\eta\otimes \mu_{\X_{Z},\br}V @VV\vartheta_{W,\br}V\\
\br \otimes \br@>(\otimes_i(\vartheta_\infty^i)^{(-1)^i})\otimes(\prod_{\p} \mathrm{log}(N(\p)))>> (\bigotimes_{i}\Delta_f(h^i(X))_{\br}^{(-1)^i})\otimes \hspace{0.5cm}\br\hspace{2cm}
@>Id>> \bigotimes_{i}\Delta_f(h^i(X))_{\br}^{(-1)^i}
\end{CD}\]
}}
Indeed, under the canonical identification $\br\otimes\br=\br$, the composition of the top horizontal maps  (respectively of the lower horizontal maps) is $\lambda_{\X_U}$ (respectively $\otimes_i(\tilde{\vartheta}_\infty^i)^{(-1)^i}$). Here $Id$ is the obvious identification, $\eta$ is induced by (\ref{isotriv}) and $b$ is induced by (\ref{unedeplus}). The right hand side square is commutative by definition of $\vartheta_{W}$. The left hand side square is the tensor product of two squares; it is therefore enough to check commutativity of both factors separately. The commutativity of the square corresponding to the first factor follows from the fact that (\ref{isotriv}) is compatible with $\lambda_{\X}$ and $\otimes_i(\vartheta_\infty^i)^{(-1)^i}$. The commutativity of the square corresponding to the second factor boils down to the identity
\begin{equation}\label{mulambda}
\lambda^{-1}_{\X_{\p}}=\mathrm{log}(N(\p))\cdot \mu^{-1}_{\X_{\p},\br}:\mydet_{\br} R\Gamma(\X_{\p,W},\bz)_{\br}\rightarrow \br.
\end{equation}
Identity (\ref{mulambda}) follows from the fact that $\mu_{\X_{\p},\br}$ is obtained by cup-product with the class $e\in H^1(W_{\mathbb{F}_\p},\mathbb{R})=\mathrm{Hom}(W_{\mathbb{F}_\p},\br)$ sending $\mathrm{Fr}_{\p}\in W_{\mathbb{F}_\p}$ to $1$, while $\lambda_{\X_{\p}}$ is obtained by cup-product with the class $\theta\in H^1(W_{\mathbb{F}_\p},\mathbb{R})$ sending $\mathrm{Fr}_{\p}$ to $\mathrm{log}(N(\p))$. This is an easy computation in view of the fact that both complexes $[...\rightarrow H^i(\X_{\p,W},\tilde{\mathbb{R}})\rightarrow  H^i(\X_{\p,W},\tilde{\mathbb{R}})\rightarrow...]$ are isomorphic to $[\br\rightarrow \mathrm{Hom}(W_{\mathbb{F}_\p},\br)]$ put in degrees $0,1$, because $\X_{\p}/\mathbb{F}_{\p}$ is geometrically connected. Identity (\ref{mulambda}) is of course compatible with $Z^*(\X_{\p},1)=\mathrm{log}(N(\p))\cdot\zeta^*(\X_{\p},0)$. This concludes the proof of the theorem.
\end{proof}

\subsection{Proven cases}\label{subsect-proven}

Let $\mathbb{F}_q$ be a finite field and let $A(\mathbb{F}_q)$ be the class of smooth projective varieties over $\mathbb{F}_q$
defined in Section \ref{subsection-assumtion-F_q}.
\begin{thm}\label{thm-char-p}
Let $\mathcal{X}$ be a smooth projective variety over the finite field $\mathbb{F}_q$. If $\mathcal{X}$ lies in $A(\mathbb{F}_q)$ then Conjecture  \ref{Lichtenbaum Conjecture} holds for $\mathcal{X}$.
\end{thm}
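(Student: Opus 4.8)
The plan is to deduce the statement from the finite-field results of Lichtenbaum and Geisser via the comparison isomorphism of Theorem~\ref{thm-comparison-char-p}. First I would reduce to the case where $\mathcal{X}$ is connected: each side of the two formulas in Conjecture~\ref{Lichtenbaum Conjecture} is multiplicative over connected components (the zeta function is a product, $\mathrm{R}\Gamma_{W,c}$ is the corresponding direct sum, so ranks add and determinants tensor), and $A(\mathbb{F}_q)$ is stable under passing to a connected component --- apply axiom~(2) in the definition of $A(k)$ with $c$ and $c'$ the inclusion and projection correspondences of the component, whose composite is the identity. So assume $\mathcal{X}$ is connected of dimension $d$; being smooth and projective over $\mathbb{F}_q$, it is a proper regular connected arithmetic scheme. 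By Corollary~\ref{corAk}, $\textbf{L}(\mathcal{X}_{et},d)$ holds, hence so does $\textbf{L}(\overline{\mathcal{X}}_{et},d)_{\geq0}$ (in characteristic $p$ one has $\overline{\mathcal{X}}_{et}=\mathcal{X}_{et}$), so $\mathrm{R}\Gamma_W(\mathcal{X},\mathbb{Z})=\mathrm{R}\Gamma_{W,c}(\mathcal{X},\mathbb{Z})$ is well defined by Corollary~\ref{cor-unicity}. Moreover $\textbf{B}(\mathcal{X},d)$ holds automatically for a scheme over a finite field, and $\zeta(\mathcal{X},s)$ is a rational function of $q^{-s}$, so the meromorphic continuation demanded in~e) is automatic; the finite generation of $H^i_{W,c}(\mathcal{X},\mathbb{Z})$ and the isomorphism $H^i_{W,c}(\mathcal{X},\mathbb{Z})\otimes\mathbb{R}\simeq H^i_{W,c}(\mathcal{X},\tilde{\mathbb{R}})$ needed to make sense of $\mathrm{rank}$ and of $\lambda$ are supplied by Theorem~\ref{thm-recallat0}.

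Next I would transfer the two identities to Lichtenbaum's Weil-\'etale cohomology. Theorem~\ref{thm-comparison-char-p} provides a canonical isomorphism $\mathrm{R}\Gamma(\mathcal{X}_W,\mathbb{Z})\stackrel{\sim}{\longrightarrow}\mathrm{R}\Gamma_W(\mathcal{X},\mathbb{Z})$, and by the construction of the maps~(\ref{mapregcharp}) and~(\ref{onedetcharp}) this isomorphism is compatible with passage to $\tilde{\mathbb{R}}$-coefficients. Consequently it identifies the trivialization $\lambda$ of $\mathrm{det}_{\mathbb{Z}}\mathrm{R}\Gamma_{W,c}(\mathcal{X},\mathbb{Z})\otimes\mathbb{R}$ occurring in Conjecture~\ref{Lichtenbaum Conjecture} with the trivialization defined on $\mathrm{R}\Gamma(\mathcal{X}_W,\mathbb{Z})$ by cup product with the fundamental class $\theta$. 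Hence e) and f) for $\mathcal{X}$ become the statements that $\mathrm{ord}_{s=0}\,\zeta(\mathcal{X},s)=\sum_i(-1)^i\,i\,\mathrm{rank}_{\mathbb{Z}}H^i(\mathcal{X}_W,\mathbb{Z})$ and that $\zeta^*(\mathcal{X},0)$ equals, up to sign, the Euler characteristic of $\mathrm{R}\Gamma(\mathcal{X}_W,\mathbb{Z})$ relative to $\cup\theta$.

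Finally I would invoke the known finite-field case. For $\mathcal{X}\in A(\mathbb{F}_q)$, Theorem~\ref{GeisserTHM} gives $\textbf{L}(\mathcal{X}_W,n)$ for $n\leq1$, in particular $\textbf{L}(\mathcal{X}_W,0)$: the groups $H^i(\mathcal{X}_W,\mathbb{Z})$ are finitely generated, and $\textbf{K}(\mathcal{X}_W,0)$ holds, so $H^i(\mathcal{X}_W,\mathbb{Z})\otimes\mathbb{Z}_l$ computes $l$-adic cohomology. Under this finite generation the theorems of Lichtenbaum~\cite{Lichtenbaum-finite-field} and Geisser~\cite{Geisser-Weiletale} on Weil-\'etale cohomology and special values of zeta functions at $s=0$ yield precisely the vanishing-order formula (an elementary telescoping of the $l$-adic Betti numbers at $T=1$) and the special-value formula (Geisser's computation of $\zeta^*(\mathcal{X},0)$ as the cup-product Euler characteristic of $\mathrm{R}\Gamma(\mathcal{X}_W,\mathbb{Z})$). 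Combined with the previous paragraph, this establishes e) and f) for $\mathcal{X}$.

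The main obstacle is the normalization check in the middle step: one must verify that the comparison isomorphism of Theorem~\ref{thm-comparison-char-p} is compatible with $\tilde{\mathbb{R}}$-coefficients and with the cup-product structure on the nose, not just up to a nonzero scalar, so that $\lambda$ as defined in Conjecture~\ref{Lichtenbaum Conjecture} coincides with the trivialization implicit in Geisser's special-value formula. This is exactly what is arranged by the explicit construction of~(\ref{mapregcharp}) for schemes over finite fields; once it is in place, the theorem reduces with no further calculation to results already in the literature.
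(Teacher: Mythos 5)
Your proposal is correct and follows essentially the same route as the paper's proof: reduce to the connected case, use Corollary~\ref{corAk} and Proposition~\ref{prop-equiconjectures-char-p} to obtain $\textbf{L}(\mathcal{X}_{et},d)$, pass through the comparison isomorphism of Theorem~\ref{thm-comparison-char-p}, and invoke Lichtenbaum's finite-field special-value theorem. Your normalization worry about the compatibility of the comparison map with $\tilde{\mathbb{R}}$-coefficients is legitimate and is exactly what the explicit identification of~(\ref{mapregcharp}) with the map $\mathrm{R}\Gamma(\mathcal{X}_W,\mathbb{Z})\to\mathrm{R}\Gamma(\mathcal{X}_W,\tilde{\mathbb{R}})$ (stated just after~(\ref{onedetcharp})) is there to guarantee, so no extra work is needed beyond what you describe.
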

\begin{proof}
The variety $\mathcal{X}$ lies in $A(\mathbb{F}_q)$ hence
$\textbf{L}(\mathcal{X}_{et},d)\Leftrightarrow \textbf{L}(\mathcal{X}_{W},d)$ holds (see Proposition \ref{prop-equiconjectures-char-p}). In view of Theorem \ref{thm-comparison-char-p} the result follows from \cite{Lichtenbaum-finite-field} Theorem 7.4.
\end{proof}

\begin{thm} If $\mathcal{X}=\mbox{\emph{Spec}}(\mathcal{O}_F)$ is the spectrum of a number ring, then Conjecture \ref{Lichtenbaum Conjecture}  holds for $\mathcal{X}$.
\end{thm}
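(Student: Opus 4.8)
The plan is to verify that $\mathcal{X}=\mathrm{Spec}(\mathcal{O}_F)$ satisfies the two hypotheses of Conjecture \ref{Lichtenbaum Conjecture} and then reduce statements e) and f) to classical results on Dedekind zeta functions. First I would recall that $\textbf{L}(\overline{\mathcal{X}}_{et},1)$ holds for any number ring: this is established earlier in the excerpt using the quasi-isomorphism $\mathbb{Z}(1)\simeq\mathbb{G}_m[-1]$ together with the Dirichlet unit theorem and finiteness of the class group. So the complex $\mathrm{R}\Gamma_{W,c}(\mathcal{X},\mathbb{Z})$ is defined and has finitely generated cohomology. Next, since $\mathcal{X}$ has relative dimension $0$ over itself, the "integral part" motivic cohomology $H^{2d-1-i}(X_{/\mathbb{Z}},\mathbb{Q}(d))$ with $d=1$ reduces to $H^{1-i}(\mathcal{O}_F,\mathbb{Q}(1))$, which is $\mathcal{O}_F^\times\otimes\mathbb{Q}$ in degree $1$ (i.e. $i=0$) and vanishes for $i\geq 1$; the Beilinson regulator in this case is the classical Dirichlet regulator, and Conjecture $\textbf{B}(\mathcal{X},1)$ amounts to the statement that $\mathcal{O}_F^\times\otimes\mathbb{R}\to H^1_{\mathcal{D}}(\mathrm{Spec}(F)_{/\mathbb{R}},\mathbb{R}(1))$ is injective with cokernel $CH^0(F)^*_{\mathbb{R}}=\mathbb{R}$, which is exactly Dirichlet's theorem (the regulator is an isomorphism onto the trace-zero hyperplane). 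Hence both hypotheses are satisfied unconditionally.

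Granting these, part e) follows from Lemma \ref{lemforSoule}: one computes
$$\mathrm{rank}_{\mathbb{Z}}\mathrm{R}\Gamma_{W,c}(\mathcal{X},\mathbb{Z})=\sum_i(-1)^i\dim_{\mathbb{Q}}H^{3-i}(\mathcal{O}_F,\mathbb{Q}(1)),$$
and the only nonzero term is $i=2$, giving $\dim_{\mathbb{Q}}(\mathcal{O}_F^\times\otimes\mathbb{Q})=r_1+r_2-1$. On the other hand $\zeta(\mathcal{X},s)=\zeta_F(s)$ and $\mathrm{ord}_{s=0}\zeta_F(s)=r_1+r_2-1$ by the functional equation for the Dedekind zeta function, so the two orders coincide. (Alternatively one invokes the previous theorem that Conjecture \ref{Lichtenbaum Conjecture} e) is equivalent to Soul\'e's conjecture, and Soul\'e's conjecture is trivially true here since $\zeta_F$ is known.) For part f), I would first observe that the complex $\mathrm{R}\Gamma_{W,c}(\mathcal{X},\mathbb{Z})$ is, by the comparison theorem with the truncation $\tau_{\leq 3}$ of Lichtenbaum's complex (for $F$ totally imaginary) and by direct computation in general, quasi-isomorphic to a complex whose cohomology is concentrated in degrees $0,1,2,3$ and given (up to the archimedean contribution) by $\mathbb{Z}$, $\mathrm{Pic}(\mathcal{O}_F)\oplus(\text{something})$, and so on. The determinant of this complex, paired with the cup-product trivialization $\lambda$, reproduces exactly the combination of the regulator, the class number, the number of roots of unity and the powers of $2$ appearing in the analytic class number formula; the resulting identity $\mathbb{Z}\cdot\lambda(\zeta_F^*(0))=\det_{\mathbb{Z}}\mathrm{R}\Gamma_{W,c}(\mathcal{X},\mathbb{Z})$ is then the class number formula $\zeta_F^*(0)=-\tfrac{h_F R_F}{w_F}$.

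The cleanest route, and the one I expect the author takes, is to cite the already-proven comparison isomorphisms. By Theorem \ref{thm-comparison-char-p}'s number-ring analogue (the quasi-isomorphism $\mathrm{R}\Gamma(\overline{\mathcal{X}}_W,\mathbb{Z})_{\leq 3}\xrightarrow{\sim}\mathrm{R}\Gamma_W(\overline{\mathcal{X}},\mathbb{Z})$), the complexes constructed here agree with the truncation of Lichtenbaum's original Weil-\'etale complex, for which Lichtenbaum already proved in \cite{Lichtenbaum} that the Euler characteristic recovers $\zeta_F^*(0)$ and the vanishing order $r_1+r_2-1$. One must still check compatibility of the trivialization maps $\lambda$ — that the cup-product-with-$\theta$ map used here matches the one in \cite{Lichtenbaum} under the comparison quasi-isomorphism — and that the archimedean correction factor (the difference between $\mathrm{R}\Gamma_{W,c}$ and $\mathrm{R}\Gamma_W$, involving $\mathrm{R}\Gamma(\mathcal{X}_\infty,\mathbb{Z})$) contributes only the expected power of $2$ and the sign. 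This bookkeeping of $2$-torsion and signs at the real places is the main technical nuisance; everything else is a direct translation of the classical class number formula into the present formalism.
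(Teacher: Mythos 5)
Your plan for part e) is sound and, via Lemma \ref{lemforSoule} (whose hypotheses $\textbf{L}$ and $\textbf{B}$ you correctly verify), would indeed yield the vanishing order $r_1+r_2-1$; that part is a legitimate alternative.  But your declared ``cleanest route'' for part f) --- identify $\mathrm{R}\Gamma_W(\overline{\mathcal{X}},\mathbb{Z})$ with $\tau_{\leq3}$ of Lichtenbaum's complex and import his special-value statement --- is not the route the paper takes, and as written it has real gaps.  The comparison quasi-isomorphism $\tau_{\leq3}\mathrm{R}\Gamma(\overline{\mathcal{X}}_W,\mathbb{Z})\xrightarrow{\sim}\mathrm{R}\Gamma_W(\overline{\mathcal{X}},\mathbb{Z})$ is proved in the paper only for \emph{totally imaginary} $F$, so for general $F$ you are, as you yourself note, falling back on a direct computation anyway.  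Moreover the ``compatibility of the trivialization maps $\lambda$'' that you flag as bookkeeping is in fact the substantive content one would have to supply; Lichtenbaum's formula in \cite{Lichtenbaum} is also proved there only under a vanishing hypothesis on $H^3_W$, so it cannot simply be cited.  The paper instead proceeds by direct computation: it uses Proposition \ref{finitelygenerated-cohomology} to confine the cohomology to degrees $0$--$3$, Lemma \ref{prop-sequence-structure-cohomology} to produce the explicit extensions $0\to Cl(F)^D\to H^2_W\to\mathrm{Hom}(\mathcal{O}_F^\times,\mathbb{Z})\to0$ and $H^3_W\simeq\mu_F^D$, identifies the map $H^i_{W,c}(\mathcal{X},\mathbb{Z})\otimes\mathbb{R}\to H^i_{W,c}(\mathcal{X},\tilde{\mathbb{R}})$ degree by degree as (essentially) the transpose of the Dirichlet regulator, and then reads off both e) and f) simultaneously from $\mathrm{ord}_{s=0}\zeta_F=\sharp\mathcal{X}_\infty-1$ and $\zeta_F^*(0)=-hR/w$.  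One small correction to your sketch: there is no power of $2$ in the analytic class number formula at $s=0$; the value is exactly $-hR/w$, and the paper's hands-on computation produces it without any archimedean $2$-torsion correction factor entering the answer.
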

\begin{proof}
The result follows from the explicit computation of the Weil-\'etale cohomology in this case (see Section \ref{sect-numberrings}) and from the analytic class number formula $$\mbox{ord}_{s=0}\zeta(\mathcal{X},s)=\sharp\mathcal{X}_{\infty}-1\mbox{ and } \zeta_F^*(0)=-hR/w$$
where $h$ (respectively $w$) is the order of $Cl(F)$ (respectively of $\mu_F$) and $R$ is the regulator of the number field $F$.
\end{proof}

\begin{thm}\label{cor-big}
Let $\X$ be a smooth projective scheme over the number ring $\mathcal{O}_{\X}(\X)=\mathcal{O}_F$, where $F$ is an abelian number field.  Assume that $\X_F$ admits a smooth cellular decomposition (see Definition \ref{def-cellular}) and that $\X_{\p}\in\mathcal{L}(\mathbb{Z})$ for any finite prime $\p$ of $F$. Then Conjecture \ref{Lichtenbaum Conjecture} holds for $\X$.
\end{thm}
\begin{proof} The scheme $\mathcal{X}$ is connected and we set $d=\textrm{dim}(\X)$. The scheme $\X$ satisfies Conjecture ${\bf L}(\mathcal{X}_{et},d)$ (resp. Conjecture ${\bf B}(\X,d)$) by Proposition \ref{prop-cellular-L} (resp. by Proposition \ref{prop-knowncase-B}). Moreover, $\mathcal{X}_F$ is a cellular variety hence $h(\mathcal{X}_F)$ is of the form $\bigoplus_{k} h^0(F)(r_k)$ (in the category of Chow motives, see \cite{Brosnan05} Theorem 3.1). Hence $L(h^i(\X_F),s)$ is a product of shifts of the Dedekind zeta function $\zeta_F(s)$; in particular $L(h^i(\X_F),s)$ satisfies meromorphic continuation and the functional equation. By Theorem \ref{thm-beilinson} and \cite{Flach-moi} Theorem 1.1, Statement e) of Conjecture \ref{Lichtenbaum Conjecture} for $\mathcal{X}$ follows. Moreover, the Galois representations $H^i(\X_{\overline{F}},\bq_l)$ is a sum of $\bq_l(r)$ (with $r\leq0$), hence satisfy Conjecture \ref{bk} (see \cite{Benois-Nguyen-Quang-Do-02} Lemme 4.3.1). Finally, Conjecture \ref{tam} is known for $h^i(\X_F)$ (which is a sum of $h^0(F)(r)$) since $F/\bq$ is abelian \cite{Flach04}. Hence Statement f) of Conjecture \ref{Lichtenbaum Conjecture} for $\X$ follows from Theorem \ref{tamcompare}.
\end{proof}

The simplest non-trivial example of a scheme satisfying the assumptions of the previous theorem is $\mathbb{P}^n_{\mathcal{O}_F}$ where $F/\mathbb{Q}$ is abelian. The result then gives a cohomological interpretation of $\zeta_F^*(n)$ for $n\leq0$ (see Section \ref{K-theory}).

\subsection{Examples}

\subsubsection{Number rings}\label{sect-numberrings}

Let $\mathcal{O}_F$ be a number ring and let $\mathcal{X}=\mbox{Spec}(\mathcal{O}_F)$. Note that ${\bf{L}}(\mathcal{X}_{et},\mathbb{Z}(1))$ holds (see Theorem \ref{thm-comparison-mot-K}). The cohomology $H^*(\overline{\X}_{et},\bz)$ is computed in (\cite{On the WE} Proposition 6.6). By Proposition \ref{finitelygenerated-cohomology}
 one has $$H^i_{W}(\overline{\mathcal{X}},\mathbb{Z})=0 \mbox{ for $i<0$ and $i>3$}.$$
By  Proposition \ref{finitelygenerated-cohomology} again one has
$H^0_{W}(\overline{\mathcal{X}},\mathbb{Z})=\mathbb{Z}$, $H^1_{W}(\overline{\mathcal{X}},\mathbb{Z})=0$, an exact sequence
\begin{equation}\label{unesuiteexacte}
0\rightarrow H^2(\overline{\mathcal{X}}_{et},\mathbb{Z})_{codiv}\rightarrow H_W^2(\overline{\mathcal{X}},\mathbb{Z})
\rightarrow \mbox{Hom}(H^1(\overline{\mathcal{X}}_{et},\mathbb{Z}(1)),\mathbb{Z})\rightarrow 0
\end{equation}
and an isomorphism
$$H_W^3(\overline{\mathcal{X}},\mathbb{Z})\simeq H^3(\overline{\mathcal{X}}_{et},\mathbb{Z})_{codiv}=\textrm{Hom}(\mathcal{O}_F^{\times},\bq/\bz)_{codiv}=\mu_F^D.$$
since $H^0(\overline{\mathcal{X}}_{et},\mathbb{Z}(1))=0$.
The sequence (\ref{unesuiteexacte}) reads as follows (\cite{On the WE} Proposition 6.6.)
$$0\rightarrow Cl(F)^D\rightarrow H_W^2(\overline{\mathcal{X}},\mathbb{Z})
\rightarrow \mbox{Hom}(\mathcal{O}_F^{\times},\mathbb{Z})\rightarrow 0
$$
where $Cl(F)$ is the class group of $F$, $\mathcal{O}_F^{\times}$ is the unit group and $\mu_F:=(\mathcal{O}_F^{\times})_{tor}$. The map
$$H_{W,c}^i(\mathcal{X},\mathbb{Z})\otimes\mathbb{R}\stackrel{\sim}{\rightarrow} H_{W,c}^i(\mathcal{X},\tilde{\mathbb{R}})$$
is trivial for $i\neq1,2$, the obvious isomorphism for $i=1$ and the inverse of the dual of the classical regulator map
$$H_{W,c}^2(\mathcal{X},\mathbb{Z})\otimes\mathbb{R}
\simeq\mbox{Hom}(\mathcal{O}_F^{\times},\mathbb{R})\rightarrow (\prod_{\mathcal{X}_{\infty}}\mathbb{R})/\mathbb{R}$$
for $i=2$. The acyclic complex
$(H_{W,c}^*(\mathcal{X},\tilde{\mathbb{R}}),\cup\theta)$ is reduced to the identity map
$$H_{W,c}^1(\mathcal{X},\tilde{\mathbb{R}})=(\prod_{\mathcal{X}_{\infty}}\mathbb{R})/\mathbb{R}\stackrel{Id}{\longrightarrow}  (\prod_{\mathcal{X}_{\infty}}\mathbb{R})/\mathbb{R}=H_{W,c}^2(\mathcal{X},\tilde{\mathbb{R}}).$$
We obtain (see \cite{Lichtenbaum} Section 7)
$$\lambda_{\X}^{-1}(\mbox{det}_{\mathbb{Z}}\mbox{R}\Gamma_{W,c}(\mathcal{X},\mathbb{Z}))=(w/hR)\cdot\bz.$$

\subsubsection{Projective spaces over number rings}\label{K-theory}

Let $\mathcal{O}_F$ be the ring of integers in a totally imaginary number field  $F$ and let $n\geq1$. We set $\mathcal{X}=\mathbb{P}^{n}_{\mathcal{O}_F}$ and $d=\mathrm{dim}(\X)=n+1$. It follows easily from Proposition \ref{prop-cellular-L} (or directly from (\ref{proj-bund}) below and Theorem \ref{thm-comparison-mot-K}) that ${\bf L}(\X_{et},d)$ holds.
\begin{prop}\label{prop-K-theory}
For $2\leq i\leq2d+1$, we have an exact sequence 
$$0\rightarrow (K_{i-2}(\mathcal{O}_{F})_{tor})^D\rightarrow H_W^i(\overline{\mathcal{X}},\mathbb{Z})\rightarrow
\mathrm{Hom}_{\mathbb{Z}}(K_{i-1}(\mathcal{O}_{F}),\mathbb{Z})\rightarrow 0.$$
\end{prop}
\begin{proof}
The localization sequence (see \cite{Geisser-Duality} Corollary 7.2(a))
$$...\rightarrow H^{i-2}(\mathbb{P}^{n-1}_{\mathcal{O}_F,et},\mathbb{Z}(d-1))\rightarrow
H^{i}(\mathbb{P}^{n}_{\mathcal{O}_F,et},\mathbb{Z}(d))\rightarrow H^{i}(\mathbb{A}^n_{\mathcal{O}_F,et},\mathbb{Z}(d))\rightarrow...$$
is split by $H^{i}(\mathbb{A}^{n}_{\mathcal{O}_F,et},\mathbb{Z}(d))\stackrel{\sim}{\leftarrow}H^{i}(\mathrm{Spec}(\mathcal{O}_F)_{et},\mathbb{Z}(d))\rightarrow H^{i}(\mathbb{P}^{n}_{\mathcal{O}_F,et},\mathbb{Z}(d))$ (see Lemma \ref{lem-affine-bundle-formula}). By induction, this yields the projective bundle formula
\begin{equation}\label{proj-bund}
H^i(\overline{\mathcal{X}}_{et},\mathbb{Z}(d))\simeq H^i(\mathcal{X}_{et},\mathbb{Z}(d))\simeq \bigoplus_{j=0}^{n}H^{i-2j}(\mathrm{Spec}(\mathcal{O}_F)_{et},\mathbb{Z}(d-j)).
\end{equation}
If $0\leq j\leq n-1$, then $H^{i-2j}(\mathrm{Spec}(\mathcal{O}_F)_{et},\mathbb{Z}(d-j))=0$ for $i-2j\neq 1,2$ (by Lemma \ref{prop-fordegenerate}) and 
$$K_{2d-i}(\mathcal{O}_F)=K_{2(d-j)-(i-2j)}(\mathcal{O}_F)\simeq H^{i-2j}(\mathrm{Spec}(\mathcal{O}_F),\mathbb{Z}(d-j))\simeq H^{i-2j}(\mathrm{Spec}(\mathcal{O}_F)_{et},\mathbb{Z}(d-j))$$ for $i-2j=1,2$ by Theorem \ref{thm-comparison-mot-K}(6) and (\ref{Beilinson-Lichtenbaum}). 

We obtain
$H^{i}(\overline{\mathcal{X}}_{et},\mathbb{Z}(d))\simeq K_{2d-i}(\mathcal{O}_F)$ for $1\leq i\leq  2d-1$  and $H^{2d}(\overline{\mathcal{X}}_{et},\mathbb{Z}(d))\simeq Cl(F)=K_0(\mathcal{O}_F)_{tor}$. The result then follows from the exact sequence (see Proposition \ref{finitelygenerated-cohomology})
$$0\rightarrow H^i(\overline{\mathcal{X}}_{et},\mathbb{Z})_{codiv} \rightarrow H_W^i(\overline{\mathcal{X}},\mathbb{Z})\rightarrow \mbox{Hom}_{\mathbb{Z}}(H^{2d+2-(i+1)}(\overline{\mathcal{X}}_{et},\mathbb{Z}(d))_{\geq0},\mathbb{Z})\rightarrow 0$$
and from $H^i(\overline{\mathcal{X}}_{et},\mathbb{Z})_{codiv}\simeq (H^{2d+2-i}(\overline{\mathcal{X}}_{et},\mathbb{Z}(d))_{\geq0,tor})^D$ for $i\geq1$  (see Lemma \ref{prop-duality-Z-coefs}). 

\end{proof}

Recall from \cite{Borel74} that $K_i(\mathcal{O}_F)$ is finitely generated for any $i\geq0$ and finite for $i\neq0$ even. By Proposition \ref{prop-K-theory} and Proposition \ref{finitelygenerated-cohomology}, $H_W^*(\overline{\mathcal{X}},\mathbb{Z})$ is given by the following identifications and exact sequences:
\begin{eqnarray*}
H_W^i(\overline{\mathcal{X}},\mathbb{Z})&=&\mathbb{Z}\mbox{ for }i=0,\\
&=&0\mbox{ for }i=1,\\
0\rightarrow Cl(F)^D\rightarrow H_W^2(\overline{\mathcal{X}},\mathbb{Z})&\rightarrow&\mbox{Hom}(\mathcal{O}_F^{\times},\mathbb{Z})\rightarrow 0,\\
&=&\mu_{F}^D\mbox{ for }i=3,\\
0\rightarrow K_2(\mathcal{O}_F)^D\rightarrow H_W^4(\overline{\mathcal{X}},\mathbb{Z})&\rightarrow&\mbox{Hom}(K_3(\mathcal{O}_F),\mathbb{Z})\rightarrow 0,\\
&=&(K_3(\mathcal{O}_F)_{tor})^D\mbox{ for }i=5,\\
&...&\\
0\rightarrow K_{2n}(\mathcal{O}_F)^D\rightarrow H_W^{2n+2}(\overline{\mathcal{X}},\mathbb{Z})&\rightarrow&\mbox{Hom}(K_{2n+1}(\mathcal{O}_F),\mathbb{Z})\rightarrow 0,\\
&=&(K_{2n+1}(\mathcal{O}_F)_{tor})^D\mbox{ for }i=2n+3,\\
&=&0\mbox{ for }i>2n+3.
\end{eqnarray*}
The long exact sequence
$$...\rightarrow H_{W,c}^i(\mathcal{X},\mathbb{Z})\otimes\mathbb{R}\rightarrow H_{W}^i(\overline{\mathcal{X}},\mathbb{Z})\otimes\mathbb{R} \rightarrow H^i(\mathcal{X}_{\infty},\mathbb{Z})\otimes\mathbb{R}\rightarrow ...$$
and Proposition \ref{propiinftydecompos} show that, for $i\geq2$, $H_{W,c}^i(\mathcal{X},\mathbb{Z})\otimes\mathbb{R}$ is canonically isomorphic to either $H_{W}^i(\overline{\mathcal{X}},\mathbb{Z})\otimes\mathbb{R}$ or $H^{i-1}(\mathcal{X}_{\infty},\mathbb{Z})\otimes\mathbb{R}$ depending on the parity of $i$. Hence the acyclic  complex $$...\stackrel{\cup\theta}{\longrightarrow} H_{W,c}^i(\mathcal{X},\mathbb{Z})\otimes\mathbb{R}\stackrel{\cup\theta}{\longrightarrow} H_{W,c}^{i+1}(\mathcal{X},\mathbb{Z})\otimes\mathbb{R}\stackrel{\cup\theta}{\longrightarrow}...$$
is canonically isomorphic to
$$0\longrightarrow(\prod_{F\hookrightarrow\mathbb{C}}\mathbb{R})^{G_{\br}}/\mathbb{R}\stackrel{R_0^*}{\longrightarrow}\mbox{Hom}(\mathcal{O}_F^{\times},\mathbb{R})
\stackrel{0}{\longrightarrow}(\prod_{F\hookrightarrow\mathbb{C}}(2i\pi)^{-1}\mathbb{R})^{G_{\br}}\stackrel{R_1^*}{\longrightarrow}
\mbox{Hom}(K_3(\mathcal{O}_F),\mathbb{R})
\stackrel{0}{\longrightarrow}...$$ $$...\stackrel{0}{\longrightarrow}(\prod_{F\hookrightarrow\mathbb{C}}(2i\pi)^{-n}\mathbb{R})^{G_{\br}}\stackrel{R_n^*}{\longrightarrow}
\mbox{Hom}(K_{2n+1}(\mathcal{O}_F),\mathbb{R})
\stackrel{0}{\longrightarrow}0$$
where the isomorphisms $R_m^*$ are dual to the regulator maps.
It follows (see \cite{Lichtenbaum} Section 7) that $\lambda_{\X}^{-1}$ maps $\mbox{det}_{\mathbb{Z}}\mbox{R}\Gamma_{W,c}(\mathcal{X},\mathbb{Z})$ to
$$(w/hR_0)\cdot(\sharp K_{3}(\mathcal{O}_F)_{tor}/\sharp K_{2}(\mathcal{O}_F)\cdot R_1)\cdot...\cdot( \sharp  K_{2n+1}(\mathcal{O}_F)_{tor}/\sharp K_{2n}(\mathcal{O}_F)\cdot R_{n})\cdot\bz$$
where $R_m$ is the Beilinson regulator (we follow the indexing of \cite{Lichtenbaum73}). In view of $$\zeta^*(\mathbb{P}^n_{\mathcal{O}_F},0)=\zeta^*_F(0)\cdot\zeta^*_F(-1)\cdot ... \cdot \zeta^*_F(-n)$$
we see that Conjecture \ref{Lichtenbaum Conjecture} for $\X=\mathbb{P}^n_{\mathcal{O}_F}$ gives a cohomological reformulation of the classical version of Lichtenbaum's conjecture (see \cite{Lichtenbaum73} 4.2). Note that Weil-\'etale cohomology  gives (conjecturally) the right $2$-torsion for any number field, i.e. possibly with some real places (see \cite{Lichtenbaum73} 2.6). For example, if $F/\bq$ is abelian, then Conjecture \ref{Lichtenbaum Conjecture} holds for $\X=\mathbb{P}^n_{\mathcal{O}_F}$ and any $n\geq0$ by Theorem \ref{cor-big}.

\section{Appendix}
The goal of this section is to establish simple cases of Conjectures $\textbf{L}(\mathcal{X}_{et},d)$ and $\textbf{B}(\mathcal{X},d)$ which are used in Section \ref{subsect-proven}.

\subsection{Motivic cohomology of number rings} Let $F$ be a number field and let $\mathcal{O}_F$ be its ring of integers. In order to ease the notations, in this subsection we denote by $H^i(\mathcal{O}_F,\mathbb{Z}(n)):=H^p(\mathrm{Spec}(\mathcal{O}_F)_{Zar},\mathbb{Z}(n))$ and $H^i_{et}(\mathcal{O}_F,\mathbb{Z}(n)):=H^p(\mathrm{Spec}(\mathcal{O}_F)_{et},\mathbb{Z}(n))$ Zariski and \'etale hypercohomology of the cycle complex $\mathbb{Z}(n)$ over $\mathrm{Spec}(\mathcal{O}_F)$. We consider the spectral sequence constructed by Levine (see \cite{Levine-localization} Spectral Sequence (8.8))
\begin{equation}\label{ss}
E_2^{p,q}=H^p(\mathcal{O}_F,\mathbb{Z}(-q/2))\Rightarrow K_{-p-q}(\mathcal{O}_F).
\end{equation}
The aim of this subsection is to prove Theorem \ref{thm-comparison-mot-K}. It is well-known (see for example \cite{Kolster-Sands08} Proposition 2.1) that this result follows one way or another from the Bloch-Kato conjecture (relating Milnor K-theory to Galois cohomology), which is proven in \cite{Voevodsky11}. However, we could not find a proof of Theorem \ref{thm-comparison-mot-K} in the literature.
\begin{thm}\label{thm-comparison-mot-K} The following assertions are true.
\begin{enumerate}
\item For $i\leq2$, $H_{et}^i(\mathcal{O}_F,\mathbb{Z}(1))$ is finitely generated. 
\item For any $n\geq0$ and any $i\in\mathbb{Z}$, $H^i(\mathcal{O}_F,\mathbb{Z}(n))$ is finitely generated. 
\item For any $n\geq2$ and any $i\in\mathbb{Z}$, $H^i_{et}(\mathcal{O}_F,\mathbb{Z}(n))$ is finitely generated. 

\item The edge morphisms from the spectral sequence (\ref{ss}) yield maps
$$c_{i,n}:K_{2n-i}(\mathcal{O}_F)\longrightarrow H^i(\mathcal{O}_F,\mathbb{Z}(n))$$
for $n\geq2$ and $i=1,2$. 
\item The kernel and the cokernel of $c_{i,n}$ are both finite and $2$-primary torsion. 
\item If  $F$ is totally imaginary, then the maps $c_{i,n}$ are isomorphisms. 
\end{enumerate}

\end{thm}

\begin{proof} The proof of Theorem \ref{thm-comparison-mot-K} requires the following lemmas.

\begin{lem}\label{lemZ-Q/Z}
Let $n\geq2$. If $i\neq 1,2$, then 
$$H^i_{et}(\mathcal{O}_F,\mathbb{Z}(n))\simeq H^{i-1}_{et}(\mathcal{O}_F,
\mathbb{Q}/\mathbb{Z}(n)).$$
\end{lem}
\begin{proof}
Applying the exact functor $-\otimes_{\mathbb{Z}}\mathbb{Q}$ to the spectral sequence (\ref{ss}), we obtain a spectral sequence with $\mathbb{Q}$-coefficients. By \cite{Levine-motivic-and-K-theory} Theorem 11.7, (\ref{ss}) degenerates with $\mathbb{Q}$-coefficients and yields
$$H^i(\mathcal{O}_F,\mathbb{Q}(n))\simeq K_{2n-i}(\mathcal{O}_F)^{(n)}_{\mathbb{Q}}.$$
By Borel's theorem \cite{Borel74}, $K_{2n-i}(\mathcal{O}_F)_{\mathbb{Q}}=0$ for $i$ even, $i\neq 2n$. Moreover, one has $K_{2n-i}(\mathcal{O}_F)_{\mathbb{Q}}^{(n)}=0$ for $i$ odd, $i\neq1$ (see \cite{Weibel-Handbook} Theorem 47). Since $K_{0}(\mathcal{O}_F)_{\mathbb{Q}}=K_{0}(\mathcal{O}_F)_{\mathbb{Q}}^{(0)}$, this
gives $H^i(\mathcal{O}_F,\mathbb{Q}(n))=0$ for $i$ even, except for $(i,n) = (0,0)$, and 
$H^i(\mathcal{O}_F,\mathbb{Q}(n))=0$ for $i$ odd, except for $i=1$.
Lemma \ref{lemZ-Q/Z} then follows from the long exact sequence
$$...\rightarrow H_{et}^i(\mathcal{O}_F,\mathbb{Z}(n))\rightarrow H_{et}^i(\mathcal{O}_F,\mathbb{Q}(n))\rightarrow H^i_{et}(\mathcal{O}_F,\mathbb{Q}/\mathbb{Z}(n))\rightarrow...$$
and from the isomorphism $H^i(\mathcal{O}_F,\mathbb{Q}(n))\simeq H_{et}^i(\mathcal{O}_F,\mathbb{Q}(n))$ (see \cite{Geisser-MotvicCohDed} Proposition 3.6).
\end{proof}

For a prime number $p$, we write $j_p:\mathrm{Spec}(\mathcal{O}_F[1/p])\rightarrow \mathrm{Spec}(\mathcal{O}_F)$ for the obvious open embedding, $j_{p,*}$ for the direct image functor with respect to the \'etale topology and $Rj_{p,*}$ for its total derived functor.

\begin{lem}\label{n>1}
For $n\geq2$ one has an isomorphism in the derived category of \'etale sheaves on $\mathrm{Spec}(\mathcal{O}_F)$:
$$\mathbb{Q}/\mathbb{Z}(n)\simeq \bigoplus_p \underrightarrow{\mathrm{lim}}\, Rj_{p,*}\mu_{p^{r}}^{\otimes n}$$
where $\mu_{p^r}$ is the \'etale sheaf of $p^r$-th roots of unity and the direct sum (respectively the colimit) is taken over all prime numbers (respectively over $r$).

\end{lem}
\begin{proof}
It is enough to showing that $\mathbb{Z}/p^{r}\mathbb{Z}(n)=Rj_{p,*}\mu_{p^{r}}^{\otimes n}$ for any prime number $p$. Using \cite{Geisser-Duality} Corollary 7.2, \cite{Geisser-Levine00} Theorem 8.5 and \cite{Geisser-MotvicCohDed} Theorem 1.2(4), we obtain an exact triangle
$$ i_{p,*}(\nu_r^{n-1})[-(n-1)-2]\rightarrow\mathbb{Z}/p^{r}\mathbb{Z}(n)\rightarrow Rj_{p,*}(\mu_{p^{r}}^{\otimes n})$$
where $\nu_r^{n}=\nu_{p^r}^{n}=W\Omega^n_{r,log}$ is the logarithmic de Rham-Witt sheaf, and $i_p$ is the closed immersion of the points lying over $p$.  But $\nu_r^{n-1}$ is trivial on the finite field $\mathbb{F}_{\mathfrak{p}}:=\mathcal{O}_F/\p$ (for $\mathfrak{p}\mid p$) because $n-1\geq 1$. The result then follows from $\mathbb{Q}/\mathbb{Z}(n)\simeq \bigoplus_p \underrightarrow{\mathrm{lim}}\,\mathbb{Z}/p^{r}\mathbb{Z}(n)$.
\end{proof}

\begin{lem}\label{prop-fordegenerate} The following assertions are true.
\begin{itemize}
\item If $n\geq1$ and $i\leq 0$, then we have $H_{et}^i(\mathcal{O}_F,\mathbb{Z}(n))=0$. 
\item If $n\geq2$ and $i\geq3$, then $H^i_{et}(\mathcal{O}_F,\mathbb{Z}(n))$ is finite and $2$-torsion. 
\item Assume that $F$ is totally imaginary. If $n\geq2$ and $i\geq3$, then $H_{et}^i(\mathcal{O}_F,\mathbb{Z}(n))=0$. 
\end{itemize}
\end{lem}
\begin{proof}
In view of $\mathbb{Z}(1)\simeq\mathbb{G}_m[-1]$ where $\mathbb{G}_m$ is the multiplicative group (see \cite{Geisser-Duality} Lemma 7.4), the fact that $H^i_{et}(\mathcal{O}_F,\mathbb{Z}(n))=0$ for $n\geq1$ and $i\leq 0$ follows immediately from Lemma \ref{lemZ-Q/Z} and Lemma \ref{n>1}.

We assume now that $n\geq2$. By \cite{Soule79} Theorem 5, the group 
$H_{et}^2(\mathcal{O}_F[1/p],\underrightarrow{\mathrm{lim}}\, \mu_{p^{r}}^{\otimes n})$ is of finite exponent  for any $p\neq 2$ (the colimit is taken over $r$). But $H_{et}^3(\mathcal{O}_F[1/p],\mu_{p^{r}}^{\otimes n})=0$ for $p\neq 2$ by Artin-Verdier duality (see \cite{Milne-duality} Corollary 3.3), hence $H_{et}^2(\mathcal{O}_F[1/p],\underrightarrow{\mathrm{lim}}\, \mu_{p^{r}}^{\otimes n})$ is divisible. This yields $H_{et}^2(\mathcal{O}_F[1/p],\underrightarrow{\mathrm{lim}}\, \mu_{p^{r}}^{\otimes n})=0$ for $p\neq 2$, since this group is both divisible and of finite exponent. Moreover, we have
$$H_{et}^i(\mathcal{O}_F[1/p],\underrightarrow{\mathrm{lim}}\, \mu_{p^{r}}^{\otimes n})=0$$
for any $i\geq 3$ and $p\neq2$, again by Artin-Verdier duality. By \cite{Rognes-Weibel00} Corollary 4.4 and \cite{Rognes-Weibel00} Proposition 4.6, for any $i\geq2$, we have  
$H_{et}^i(\mathcal{O}_F[1/2],\underrightarrow{\mathrm{lim}}\, \mu_{2^{r}}^{\otimes n})=(\mathbb{Z}/2\mathbb{Z})^{r_1}$ for $n-i$ odd and
$H^i_{et}(\mathcal{O}_F[1/2],\underrightarrow{\mathrm{lim}}\, \mu_{2^{r}}^{\otimes n})=0$ for  $n-i$ even, where $r_1$ is the number of real places of $F$.

Using $H_{et}^i(\mathcal{O}_F,\mathbb{Q}/\mathbb{Z}(n))\simeq \bigoplus_p \underrightarrow{\mathrm{lim}}\,H_{et}^i(\mathcal{O}_F[1/p],\mu_{p^{r}}^{\otimes n})$ (see Lemma \ref{n>1}), it then follows that $H_{et}^i(\mathcal{O}_F,\mathbb{Q}/\mathbb{Z}(n))$ is finite $2$-torsion for $i\geq 2$, and $H_{et}^i(\mathcal{O}_F,\mathbb{Q}/\mathbb{Z}(n))=0$ for $F$ totally imaginary and $i\geq2$. We obtain the result using Lemma \ref{lemZ-Q/Z}.

\end{proof}

\hspace{-0.5cm}\emph{Proof of Theorem \ref{thm-comparison-mot-K}.} The finite generation of $H^i_{et}(\mathcal{O}_F,\mathbb{Z}(1))$ for $i\leq2$ follows from $\mathbb{Z}(1)\simeq\mathbb{G}_m[-1]$ (see \cite{Geisser-Duality} Lemma 7.4). Indeed, $H^i_{et}(\mathcal{O}_F,\mathbb{G}_m)=0$ for $i<0$, $H^0_{et}(\mathcal{O}_F,\mathbb{G}_m)=\mathcal{O}_F^{\times}$ is  finitely generated and the class group $H^1_{et}(\mathcal{O}_F,\mathbb{G}_m)=Cl(F)$ is finite.

By \cite{Voevodsky11} Theorem 6.16 and by \cite{Geisser-MotvicCohDed} Theorem 1.2(2), the map
\begin{equation}\label{Beilinson-Lichtenbaum}
H^i(\mathcal{O}_F,\mathbb{Z}(n))\stackrel{\sim}{\longrightarrow} H_{et}^i(\mathcal{O}_F,\mathbb{Z}(n)),
\end{equation}
induced by the morphism from the \'etale site to the Zariski site, is an isomorphism for $i\leq n+1$. Note that, for dimension reasons, we have $H^i(\mathcal{O}_F,\mathbb{Z}(n))=0$ for $i>n+1$. By Lemma \ref{prop-fordegenerate}, we have
$H^i(\mathcal{O}_F,\mathbb{Z}(n))=0$ for $n\geq1$ and $i\leq0$. It follows that the edge morphisms from the spectral sequence (\ref{ss}) give maps
$$c_{i,n}:K_{2n-i}(\mathcal{O}_F)\longrightarrow H^i(\mathcal{O}_F,\mathbb{Z}(n))$$
for $n\geq2$ and $i=1,2$. This yields assertion (4) of Theorem \ref{thm-comparison-mot-K}.

Let us prove assertion (2) of Theorem \ref{thm-comparison-mot-K}. The codomain of any non-trivial differential $d_{r}^{p,q}$ of the spectral sequence (\ref{ss}) at the $E_r$-page for $r\geq2$ is a finite $2$-torsion abelian group. Moreover, for $(p,q)$ fixed, the differential $d_{r}^{p,q}$ is zero for $r>>0$ big enough. Since $E_{\infty}^{p,q}$ is a sub-quotient of some $K$-group, it is finitely generated by \cite{Borel74}. It follows that $E_2^{p,q}$ is finitely generated for any $p$ and any $q$. Hence $H^i(\mathcal{O}_F,\mathbb{Z}(n))$ is finitely generated for any $i$ and any $n$.

If $F$ is totally imaginary, the spectral sequence (\ref{ss}) degenerates by Lemma \ref{prop-fordegenerate} (any differential at the $E_2$-page has either trivial domain or trivial codomain), and the maps $c_{i,n}$ are isomorphisms. This proves assertion (6) of Theorem \ref{thm-comparison-mot-K}. For an arbitrary number field $F$, the spectral sequence (\ref{ss}) degenerates with $\mathbb{Z}[1/2]$-coefficients  by Lemma \ref{prop-fordegenerate}, and yields isomorphisms
$$c_{i,n}\otimes\mathbb{Z}[1/2]: K_{2n-i}(\mathcal{O}_F)\otimes\mathbb{Z}[1/2]\stackrel{\sim}{\longrightarrow} H^i(\mathcal{O}_F,\mathbb{Z}(n))\otimes\mathbb{Z}[1/2]$$
for $n\geq2$ and $i=1,2$. This proves assertion (5) of Theorem \ref{thm-comparison-mot-K}.

Finally, $H^i_{et}(\mathcal{O}_F,\mathbb{Z}(n))$ is finitely generated for $n\geq2$ and any $i\in\mathbb{Z}$. Indeed, (\ref{Beilinson-Lichtenbaum}) is an isomorphism for $i\leq n+1$ and $H^i_{et}(X,\mathbb{Z}(n))$ is finite for $i\geq n+2\geq4$ by Lemma \ref{prop-fordegenerate}. This gives assertion (3) of Theorem \ref{thm-comparison-mot-K}.

\end{proof}

\begin{rem}
Using arguments of Levine \cite{Levine-motivic-and-K-theory}, one can determine most of the differentials of the spectral sequence (\ref{ss}) (in particular (\ref{ss}) degenerates at $E_4$), and obtain precise information on the kernel and the cokernel of the map $c_{i,n}$ (see \cite{Levine-motivic-and-K-theory} Theorem 14.10). 
\end{rem}

\subsection{Smooth projective varieties over finite fields}\label{subsection-assumtion-F_q}

For a smooth projective scheme  $Y$ over a finite field $\mathbb{F}_q$, we consider the cohomology $H^i(Y_W,\mathbb{Z}(n))$ of the Weil-\'etale topos $Y_W$ with coefficients in Bloch's cycle complex (see \cite{Lichtenbaum-finite-field} and \cite{Geisser-Weiletale}). The following conjecture is due to Lichtenbaum and Geisser.
\begin{conj}
$\textbf{\emph{L}}(Y_W,n)$ For any $i\in\bz$, $H^i(Y_W,\mathbb{Z}(n))$ is finitely generated.
\end{conj}

Following \cite{Soul84} and \cite{Geisser-Weiletale}, we consider the full subcategory $A(\mathbb{F}_q)$ of the category of smooth projective varieties over $\mathbb{F}_{q}$ generated by products of curves and the following operations:

(1) If $X$ and $Y$ are in $A(\mathbb{F}_q)$ then $X\coprod Y$ is in $A(\mathbb{F}_q)$.

(2) If $Y$ is in $A(\mathbb{F}_q)$ and there are morphisms $c:X\rightarrow Y$ and $c':Y\rightarrow X$ in the category of Chow motives, such that $c'\circ c:X\rightarrow X$ is multiplication by a constant, then $X$ is in $A(\mathbb{F}_q)$.

(3) If $\mathbb{F}_{q^m}/\mathbb{F}_{q}$ is a finite extension and $X\times_{\mathbb{F}_{q}}\mathbb{F}_{q^m}$ is in $A(\mathbb{F}_{q^m})$, then $X$ is in $A(\mathbb{F}_{q})$.

(4) If $Y$ is a closed subscheme of $X$ with $X$ and $Y$ in $A(\mathbb{F}_{q})$, then the blow-up $X'$ of $X$ along $Y$ is in $A(\mathbb{F}_{q})$.

\begin{prop}\label{prop-equiconjectures-char-p}
Let $Y$ be a connected smooth projective scheme over a finite field $\mathbb{F}_q$ of dimension $d$. The following assertions are true.
\begin{itemize}
\item We have
$\textbf{\emph{L}}(Y_W,d)\Leftrightarrow \textbf{\emph{L}}(Y_{et},d)$. 
\item If $Y$ belongs to $A(\mathbb{F}_{q})$ then $\textbf{\emph{L}}(Y_{et},d)$ holds.
\item If $Y$ belongs to $A(\mathbb{F}_{q})$ and $n>d$, then $H^i(Y_{et},\mathbb{Z}(n))$ is finitely generated for any $i\in\mathbb{Z}$.
\end{itemize}
\end{prop}
\begin{proof}
By \cite{Geisser-Weiletale} Theorem 7.1, there is an exact sequence (for any $n$)
\begin{equation}\label{geisser-sequence}
...\rightarrow H^i(Y_{et},\mathbb{Z}(n))\rightarrow H^i(Y_{W},\mathbb{Z}(n))\rightarrow H^{i-1}(Y_{et},\mathbb{Q}(n))\rightarrow H^{i+1}(Y_{et},\mathbb{Z}(n))\rightarrow...
\end{equation}
which yields isomorphisms
\begin{equation}\label{geisser-splitting}
H^i(Y_W,\mathbb{Z}(n))\otimes\mathbb{Q}\simeq H^i(Y_W,\mathbb{Q}(n))\simeq H^i(Y_{et},\mathbb{Q}(n))\oplus H^{i-1}(Y_{et},\mathbb{Q}(n)).
\end{equation}
Assume now that Conjecture $\textbf{L}(Y_{W},d)$ holds. By \cite{Geisser-Weiletale} Theorem 8.4, it follows from $\textbf{L}(Y_{W},d)$ that we have an isomorphism
$$H^i (Y_W,\mathbb{Z}(d))\otimes\mathbb{Z}_l\simeq H^i_{cont}(Y,\mathbb{Z}_l(d))$$
for any prime number $l$ and any $i$. But for $i\neq 2d, 2d+1$, $H^i_{cont}(Y,\mathbb{Z}_l(d))$ is finite for any $l$ and zero for almost all $l$ (see \cite{Kahn-equivalence-rationnelle} Proof of Corollaire 3.8 for references). Hence $H^i(Y_W,\mathbb{Z}(d))$ is finite for $i\neq 2d,2d+1$. Then (\ref{geisser-splitting}) gives $H^i(Y_{et},\mathbb{Q}(d))=0$ for $i<2d$, hence $H^i(Y_{et},\mathbb{Q}(d))\simeq H^i(Y,\mathbb{Q}(d))=0$ for $i\neq2d$. The exact sequence (\ref{geisser-sequence}) then shows that $H^i(Y_{et},\mathbb{Z}(d))\rightarrow H^i(Y_{W},\mathbb{Z}(d))$ is injective for $i\leq 2d+1$.
Hence $H^i(Y_{et},\mathbb{Z}(d))$ is finitely generated for $i\leq 2d+1$. This yields $\textbf{L}(Y_{W},d)\Rightarrow \textbf{L}(Y_{et},d)$. Conversely, we have $\textbf{L}(Y_{et},d)\Rightarrow \textbf{L}(Y_{W},d)$ by Theorem \ref{thm-comparison-char-p}.

Consider now a variety $Y\in A(\mathbb{F}_{q})$ of pure dimension $d$. The fact that $\textbf{L}(Y_W,d)$ holds is given by  \cite{Geisser-Weiletale} Theorem 9.5. Let $n>d$. It follows from the proofs of \cite{Geisser-Weiletale} Theorems 9.4 and 9.5 that $H^i(Y,\mathbb{Q}(n))=0$ for any $i<2n$. Moreover $H^i(Y,\mathbb{Q}(n))=0$ for any $i\geq 2n> n+d$. The last claim of the proposition then follows from (\ref{geisser-sequence}) since $H^i(Y_{W},\mathbb{Z}(n))$ is known to be finitely generated for any $i\in\mathbb{Z}$ by \cite{Geisser-Weiletale} Theorem 9.5.

\end{proof}

\subsection{The class $\mathcal{L}(\mathbb{Z})$}

In this section we follow Geisser's notation (see \cite{Geisser-Duality} Section 2) for the cycle complex $\mathbb{Z}^c(n)$. If $\mathcal{X}$ is a $d$-dimensional connected scheme which is proper over $\mathbb{Z}$ then we have 
\begin{equation}\label{Zc-vers-Z}
\mathbb{Z}^c(n)=\mathbb{Z}(d-n)[2d].
\end{equation}
\begin{defn} \label{conj-clef-c}
Let $\mathcal{X}$ be a separated scheme of finite type over $\Spec(\mathbb{Z})$. We say that $\mathcal{X}$ satisfies \emph{$\textbf{L}^c(\mathcal{X}_{et})$} if one has:
\begin{itemize}
\item $H^i(\mathcal{X}_{et},\mathbb{Z}^c(0))$ is finitely generated for any $i\leq 0$;
\item $H^i(\mathcal{X}_{et},\mathbb{Z}^c(n))$ is finitely generated for any $i\in\mathbb{Z}$ and any $n<0$.
\end{itemize}
\end{defn}
Note that if $\mathcal{X}$ is a regular scheme connected of dimension $d$ which is proper over $\mathbb{Z}$, then $\textbf{L}^c(\mathcal{X}_{et})\Rightarrow \textbf{L}(\mathcal{X}_{et},d)$ (see (\ref{Zc-vers-Z}) above). We define below a class of (simple) arithmetic schemes satisfying Property $\textbf{L}^c(\mathcal{X}_{et})$. Let $SFT(\mathbb{Z})$ be the category of separated schemes of finite type over $\Spec(\mathbb{Z})$.

\begin{defn}\label{def-C(Z)}
We denote by $\mathcal{L}(\mathbb{Z})$ the class of schemes of $SFT(\bz)$ generated by the following objects:
\begin{itemize}
\item the empty scheme $\emptyset$;
\item varieties $Y\in A(\mathbb{F}_{q})$ for any finite field $\mathbb{F}_{q}$;
\item spectra of number rings $\Spec(\mathcal{O}_F)$;
\end{itemize}
and the following operations:
\begin{itemize}
\item $(\mathcal{L}0)$ Let $Z\hookrightarrow X$ be a closed immersion with open complement $U$ such that $Z$ is regular and proper. If two object of $(Z,X,U)$ belong to $\mathcal{L}(\bz)$ then so does the third.
\item $(\mathcal{L}1)$ Let $Z\hookrightarrow X$ be a closed immersion with open complement $U\in \mathcal{L}(\mathbb{Z})$. Then $X\in \mathcal{L}(\mathbb{Z})$ if and only if $Z\in \mathcal{L}(\mathbb{Z})$.
\item $(\mathcal{L}2)$ We have $X_i\in \mathcal{L}(\mathbb{Z})$ for $0\leq i\leq p$ if and only if $\coprod_{0\leq i\leq p}X_i\in \mathcal{L}(\mathbb{Z})$.
\item $(\mathcal{L}3)$ If $V\rightarrow U$ is an affine bundle and $U$ belongs to $\mathcal{L}(\mathbb{Z})$, then so does $V$.
\item $(\mathcal{L}4)$ Let $\{U_i\rightarrow X,\,i\in I\}$ be a finite surjective family of \'etale morphisms. If $U_{i_0,...,i_p}$  belongs to $\mathcal{L}(\mathbb{Z})$ for any  $(i_0,...,i_p)\in I^{p+1}$ and any $p\geq0$, then so does $X$.
\end{itemize}
\end{defn}
In the statement of $(\mathcal{L}4)$, we write $U_{i_0,...,i_p}:=U_{i_0}\times_X... \times_X U_{i_p}$ as usual. In practice, we use $(\mathcal{L}4)$ for a finite \'etale Galois cover $U\rightarrow X$, in which case it is enough to check that $U\in \mathcal{L}(\bz)$. 

\begin{prop}\label{prop-LZimpliesL}
Any object $\mathcal{X}$ in the class $\mathcal{L}(\mathbb{Z})$ satisfies \emph{$\textbf{L}^c(\mathcal{X}_{et})$}.
\end{prop}
We say that the property ${\bf L}^c$ is stable under operation $(\mathcal{L}i)$, if any scheme $X\in SFT(\bz)$ constructed out of schemes $X_{\alpha}$ satisfying $\textbf{L}^c(X_{\alpha,et})$ by operation $(\mathcal{L}i)$ also satisfies $\textbf{L}^c(X_{et})$.
\begin{proof}
By Theorem \ref{thm-comparison-mot-K} (see also (\ref{Zc-vers-Z})), any number ring $\Spec(\mathcal{O}_F)$ satisfies $\textbf{L}^c(\Spec(\mathcal{O}_F)_{et})$. By Proposition \ref{prop-equiconjectures-char-p}, any variety $Y\in A(\mathbb{F}_{q})$ satisfies $\textbf{L}^c(Y_{et})$. It remains to check that the property ${\bf L}^c$ is stable under operations $(\mathcal{L}i)$ for $i=0,...,4$. By purity (see \cite{Geisser-Duality} Corollary 7.2), we have a long exact sequence
$$...\rightarrow H^{i}(Z_{et},\mathbb{Z}^c(n))\rightarrow H^{i}(X_{et},\mathbb{Z}^c(n))\rightarrow H^{i}(U_{et},\mathbb{Z}^c(n))\rightarrow H^{i+1}(Z_{et},\mathbb{Z}^c(n))\rightarrow...$$
for any open--closed decomposition $U\hookrightarrow X\hookleftarrow Z$ and any $n\leq 0$. Moreover, if $Z$ is regular proper, then $H^{1}(Z_{et},\mathbb{Z}^c(0))$ is finitely generated. Indeed $H^{1}(Z_{et},\mathbb{Z}^c(0))=0$ if $Z(\br)=\emptyset$ and $H^{1}(Z_{et},\mathbb{Z}^c(0))$ is a finite dimensional $\bz/2\bz$-vector space otherwise (see Lemma \ref{firstLemma}).  It follows that the property ${\bf L}^c$ is stable under operations $(\mathcal{L}0)$ and $(\mathcal{L}1)$.

The fact that ${\bf L}^c$ is stable under operation $(\mathcal{L}2)$ is obvious since cohomology respects finite direct sums. It is stable under operation $(\mathcal{L}3)$ by Lemma \ref{lem-affine-bundle-formula}.

Let $\{U_i\rightarrow X,\,i\in I\}$ be a finite \'etale covering family.  We write $X_p=\coprod_{(i_0,...,i_p)\in I^{p+1}} U_{i_0,...,i_p}$ for $p\geq0$. The Cartan-Leray spectral sequence
\begin{equation}\label{onespectsequ}
E_1^{p,q}=H^q(X_{p,et},\mathbb{Z}^c(n))\Longrightarrow H^{p+q}(X_{et},\mathbb{Z}^c(n))
\end{equation}
converges by Lemma \ref{lem-vanishing}. Indeed Lemma \ref{lem-vanishing} implies that $H^q(X_{p,et},\mathbb{Z}^c(n))$ is a $\bq$-vector space for $q<-2\cdot \textrm{dim}(X)$, which must be trivial since $H^q(X_{p,et},\mathbb{Z}^c(n))$ is assumed to be a finitely generated abelian group. The spectral sequence (\ref{onespectsequ}) then shows that ${\bf L}^c$ is stable under operation $(\mathcal{L}4)$.

\end{proof}

\begin{lem}\label{lem-affine-bundle-formula} Let $X$ be separated of finite type over $\Spec(\bz)$ and let
 $f:\mathbb{A}^{r}_{X,et}\rightarrow X_{et}$ be the natural map. Then one has
$Rf_*\mathbb{Z}^c(n)\simeq\mathbb{Z}^c(n-r)[2r]$ for any $n\leq0$.
\end{lem}
\begin{proof}
Since $\mathbb{Z}^c(n)$ satisfies \'etale cohomological descent for $n\leq0$ (see \cite{Geisser-Duality} Theorem 7.1), one is reduced to show the analogous statement for the Zariski topology. Using \cite{Geisser-MotvicCohDed} Corollary 3.4, the result follows from the homotopy formula $Rp_*\mathbb{Z}^c(n)\simeq\mathbb{Z}^c(n-1)[2]$, where $p:\mathbb{A}^{1}_{Y,Zar}\rightarrow Y_{Zar}$ is the natural map and $Y$ is defined over a field (see \cite{Geisser-MotvicCohDed} Corollary 3.5).
\end{proof}
One defines $\bz/m\bz^c(n)=\bz^c(n)\otimes \bz/m\bz\simeq \bz^c(n)\otimes^L \bz/m\bz$ (since $\bz^c(n)$ is a complex of flat sheaves) and $\bq/\bz^c(n)=\underrightarrow{\textrm{lim}}\,\bz/m\bz^c(n)$.
Note that in the situation of Lemma \ref{lem-affine-bundle-formula} one has
\begin{equation}\label{affine-forQ/Z}
Rf_*\bq/\mathbb{Z}^c(n)\simeq\bq/\mathbb{Z}^c(n-r)[2r].
\end{equation}
Indeed, applying Lemma \ref{lem-affine-bundle-formula}, the functor $-\otimes^L\bz/m\bz$ and passing to the limit, we obtain (\ref{affine-forQ/Z}).
Here we use the fact that $Rf_*$ commutes with filtered inductive limits (since $X$ is separated of finite type over $\Spec(\bz)$).

\begin{lem}\label{lem-vanishing}
Let $X$ be separated of finite type over $\Spec(\bz)$ and let $n\leq0$. Then
$$H^i(X_{et},\mathbb{Q}/\mathbb{Z}^c(n))=0$$
for $i< - 2\cdot \textrm{\emph{dim}}(X)$.
\end{lem}
\begin{proof}
Replacing $X$ with $\mathbb{A}^{-n}_X$,  we may assume $n=0$ by (\ref{affine-forQ/Z}). There exists a finite \'etale covering family $\{V_i\rightarrow\Spec(\bz)\}$ such that $V_i(\mathbb{R})=\emptyset$, hence a finite \'etale covering
family $\{U_i\rightarrow X\}$ such that $U_i$ is defined over $\Spec(\mathcal{O}_{K_i})$ for some totally imaginary number field $K_i$. If the result is known for the $U_{i_0,...,i_p}$'s,
then it follows for $X$ by the spectral sequence
$$E_1^{p,q}=H^q(X_{p,et},\bq/\mathbb{Z}^c(n))\Longrightarrow H^{p+q}(X_{et},\bq/\mathbb{Z}^c(n))$$
where $X_p=\coprod_{(i_0,...,i_p)\in I^{p+1}} U_{i_0,...,i_p}$ and $U_{i_0,...,i_p}=U_{i_0}\times_X...\times_X U_{i_p}$ is of dimension $\leq \textrm{dim}(X)$. So we may assume that $X$ is defined over $\Spec(\mathcal{O}_{K})$ for some totally imaginary number field $K$. We have
an isomorphism of finite groups (\cite{Geisser-Duality} Theorem 7.8)
\begin{equation}\label{dualityweird}
H^{1-i}(X_{et},\bz/m\bz^c(0))=H^i_c(X_{et},\bz/m\bz)^D
\end{equation}
where the right hand side is the Pontryagin dual of $H^i_c(X_{et},\bz/m\bz)$ (here $H^i_c(X_{et},-)$ denotes usual \'etale cohomology with compact support). Take a Nagata compactification of $X$ over $\Spec(\mathcal{O}_{K})$, i.e. an open immersion $j:X\hookrightarrow X'$ with dense image such that $X'$ is proper over $\Spec(\mathcal{O}_{K})$. Note that $X'(\br)=\emptyset$ hence $X'$ is of $l$-cohomological dimension $2\cdot\textrm{dim}(X)+1$ for any prime number $l$ (see \cite{SGA4} X Theorem 6.2). We obtain
$$H^i_c(X_{et},\bz/m\bz)=H^i(X'_{et},j_!\bz/m\bz)=0\textrm{ for } i>2\cdot\textrm{dim}(X)+1$$
hence $H^{i}(X_{et},\bz/m\bz^c(0))=0$ for $i<-2\cdot \textrm{dim}(X)$ by (\ref{dualityweird}). Now the result follows from
$$H^{i}(X_{et},\bq/\bz^c(0))=\underrightarrow{\textrm{lim}}\,H^{i}(X_{et},\bz/m\bz^c(0))$$
which is valid since the \'etale site of $X$ is notherian ($X$ is separated of finite type).
\end{proof}

The class $\mathcal{L}(\mathbb{Z})$ contains singular schemes. For example, it easy to see that any proper curve (possibly singular) over a finite field lies in $\mathcal{L}(\mathbb{Z})$.

\subsection{Geometrically cellular schemes}

\begin{defn}\label{def-cellular}
Let $k$ be a field and let $Y$ be a scheme separated and of finite type over $k$. We say that the $k$-scheme $Y$ has a \emph{cellular decomposition} if there exists a filtration of $Y$ by reduced closed subschemes
\begin{equation}\label{cel-decom}
Y^{\mathrm{red}}=Y_N\supseteq Y_{N-1}\supseteq ...\supseteq Y_{-1}=\emptyset
\end{equation}
such that $Y_i\setminus Y_{i-1}\simeq\mathbb{A}_{k}^{a_i}$ is $k$-isomorphic to an affine space over $k$. We say that \emph{(\ref{cel-decom})} is a \emph{smooth cellular decomposition} if $Y_i$ is moreover smooth over $k$ for any $i\geq 0$.

The $k$-scheme $Y$ is \emph{geometrically cellular} if
$Y\otimes_k\bar{k}$ has a cellular decomposition, where $\bar{k}$ is a separable closure of $k$. 

An $S$-scheme $\mathcal{X}\rightarrow S$ separated and of finite type is \emph{geometrically cellular} if the fiber $\mathcal{X}_s$ is geometrically cellular for any $s\in S$.

\end{defn}

One can easily show that a $k$-scheme $Y$ is geometrically cellular if and only if there exists a finite Galois extension $k'/k$ such that $Y\otimes_kk'$ is cellular. It follows from the proof of Proposition \ref{prop-cellular-L} below that any geometrically cellular scheme over a finite field belongs to $\mathcal{L}(\mathbb{Z})$. More generally, any geometrically cellular scheme over a number ring belongs to $\mathcal{L}(\mathbb{Z})$.

\begin{prop}\label{prop-cellular-L}
Let $\mathcal{X}\rightarrow \Spec(\mathcal{O}_F)$ be flat, separated and of finite type over a number ring $\mathcal{O}_F$, such that $\mathcal{X}_F$ is geometrically cellular. The following conditions are equivalent.
\begin{itemize}
\item For any finite prime $\mathfrak{p}$ of $F$, $\mathcal{X}_{\p}\in\mathcal{L}(\mathbb{Z})$.
\item $\X\in\mathcal{L}(\mathbb{Z})$.
\end{itemize}
 \end{prop}
Here we set $\X_F:=\X\otimes_{\mathcal{O}_F}F$ and $\X_{\p}:=\X\otimes_{\mathcal{O}_F}(\mathcal{O}_F/\p)$.
\begin{proof}
By assumption there exists a finite Galois extension $K/F$ such that $\mathcal{X}_K$ is cellular. We write
$$(\mathcal{X}\otimes_{\mathcal{O}_{F}}K)^{\mathrm{red}}=Y_N\supseteq Y_{N-1}\supseteq ...\supseteq Y_{-1}=\emptyset$$
such that $Y_i\setminus Y_{i-1}\simeq\mathbb{A}_{K}^{a_i}$ and we consider the closure $\overline{Y_i}$ of $Y_i$ in $\mathcal{X}\otimes_{\mathcal{O}_{F}}\mathcal{O}_K$, where $\overline{Y_i}$ is endowed with its structure of reduced closed subscheme. We obtain an isomorphism $$(\overline{Y_i}\setminus \overline{Y_{i-1}})\otimes_{\mathcal{O}_{K}}K\simeq Y_i\setminus Y_{i-1}\simeq \mathbb{A}_{K}^{a_i}$$
and it follows that there exist an open $\textrm{Spec}(\mathcal{O}_{K,S_i})\subseteq\textrm{Spec}(\mathcal{O}_{K})$ and an isomorphism over $\textrm{Spec}(\mathcal{O}_{K,S_i})$
$$(\overline{Y_i}\setminus \overline{Y_{i-1}})\otimes_{\mathcal{O}_{K}}\mathcal{O}_{K,S_i}\simeq \mathbb{A}_{\mathcal{O}_{K,S_i}}^{r_i}.$$
Now we take a finite set $S\supseteq\cup S_i$ big enough so that $\textrm{Spec}(\mathcal{O}_{K,S})\rightarrow \textrm{Spec}(\mathcal{O}_{F,S})$ is an \'etale Galois cover of group $G$. Here $S$ also denotes its image in $\textrm{Spec}(\mathcal{O}_F)$. Then we set $\mathcal{Y}_i=\overline{Y_{i}}\otimes_{\mathcal{O}_{K}}\mathcal{O}_{K,S}$ and we obtain a filtration by (reduced) closed subschemes
$$(\mathcal{X}\otimes_{\mathcal{O}_{F}}\mathcal{O}_{K,S})^{\mathrm{red}}=\mathcal{Y}_N\supseteq \mathcal{Y}_{N-1}\supseteq ...\supseteq \mathcal{Y}_{-1}=\emptyset$$
such that $\mathcal{Y}_i\setminus \mathcal{Y}_{i-1}\simeq\mathbb{A}_{\mathcal{O}_{K,S}}^{r_i}$. But $\Spec(\mathcal{O}_{K,S})$ belongs to $\mathcal{L}(\mathbb{Z})$ by $(\mathcal{L}0)$, hence so does $\mathbb{A}_{\mathcal{O}_{K,S}}^{r_i}$ by $(\mathcal{L}3)$, hence so does $\mathcal{X}\otimes_{\mathcal{O}_{F}}\mathcal{O}_{K,S}$ by induction and $(\mathcal{L}1)$. Notice that a scheme $X\in SFT(\mathbb{Z})$ belongs to $\mathcal{L}(\mathbb{Z})$ if and only if $X^{\mathrm{red}}$ does: this follows from $(\mathcal{L}1)$ since $\emptyset\in \mathcal{L}(\mathbb{Z})$.

Moreover
$$\mathcal{X}\otimes_{\mathcal{O}_{F}}\mathcal{O}_{K,S}\longrightarrow \mathcal{X}\otimes_{\mathcal{O}_{F}}\mathcal{O}_{F,S}$$
is a finite \'etale Galois cover, hence $\mathcal{X}\otimes_{\mathcal{O}_{F}}\mathcal{O}_{F,S}$ lies in $\mathcal{L}(\mathbb{Z})$ by $(\mathcal{L}4)$. Therefore, we have 
$$\forall\p\in S,\,\,\X_{\p}\in\mathcal{L}(\mathbb{Z})\Longleftrightarrow\X_S=\coprod_{\p\in S}\X_{\p}\in\mathcal{L}(\mathbb{Z})\Longleftrightarrow \X\in\mathcal{L}(\mathbb{Z})$$
by $(\mathcal{L}2)$ and $(\mathcal{L}1)$. One may enlarge the finite set $S$ so as to include any given $\p$.
\end{proof}

In order to obtain a more functorial formulation of Conjecture \ref{confBFintro} (which implies $\textbf{B}(\mathcal{X},d)$) we use results and notations of \cite{Holm-Scho11}. We denote by $\widehat{H}^n(\X,\br(p))$  Arakelov motivic cohomology with real coefficients in the sense of \cite{Holm-Scho11} Remark 4.7. These groups are defined following the construction of \cite{Holm-Scho11} using the spectrum $H_{\mathrm{B}}\otimes\br$ instead of $H_{\mathrm{B}}$ so that there is an exact sequence (see \cite{Holm-Scho11} Theorem 4.5 (ii))
$$...\rightarrow \widehat{H}^n(\X,\br(p))\rightarrow H^n(\X,\bq(p))_{\br}\rightarrow H_{\mathcal{D}}^n(X_{/\mathbb{R}},\br(p))\rightarrow...$$
at least for $\X$ an l.c.i. scheme over a number ring in the sense of (\cite{Holm-Scho11} Definition 2.3), where $X=\X_{\bq}$. Here we use the identification $H^n(\X,\bq(p))_{\br}\simeq K_{2p-n}(\X)^{(p)}_{\br}$ for $\mathcal{X}$ regular (\cite{Levine-localization} Theorem 11.7). Let $\X$ be an l.c.i. scheme over a number ring. If $\mathcal{X}$ is moreover proper, regular, connected and $d$-dimensional then Conjecture \ref{confBFintro} for $\X$ is equivalent to
\begin{equation}\label{B-reform}
\widehat{H}^n(\X,\br(d))=0 \textrm{ for }n\neq 2d\textrm{  and }\widehat{H}^{2d}(\X,\br(d))=\br.
\end{equation}
If the (proper, regular, connected and $d$-dimensional) scheme $\mathcal{X}$ lies over a finite field, we say that $\X$  satisfies Conjecture \ref{confBFintro} if (\ref{B-reform}) holds, i.e. if one has
$$H^n(\X,\bq(d))=0 \textrm{ for }n\neq 2d\textrm{  and }H^{2d}(\X,\bq(d))=\bq.$$
Note that for $\mathcal{X}$ (proper, regular, connected and $d$-dimensional) over a finite field one has
\begin{equation}\label{L-implique-B}
\textbf{L}(\mathcal{X}_{et},d)\Rightarrow \mbox{Conjecture \ref{confBFintro} for $\X$}.
\end{equation}

\begin{prop}\label{prop-knowncase-B}
Let $\X$ be a smooth and projective scheme over the number ring $\mathcal{O}_{\X}(\X)=\mathcal{O}_F$. Assume that $\X\in\mathcal{L}(\mathbb{Z})$ and that $\X_{F}$ admits a smooth cellular decomposition. Then $\X$ satisfies Conjecture \ref{confBFintro}. In particular, $\textbf{\emph{B}}(\mathcal{X},d)$ holds, where $d=\mathrm{dim}(\X)$.
\end{prop}

\begin{proof}
By assumption there exist a filtration
$$\mathcal{X}_F=Y_N\varsupsetneq Y_{N-1}\varsupsetneq ...\varsupsetneq Y_{0}\varsupsetneq Y_{-1}=\emptyset$$
by smooth closed subschemes $Y_i$ and isomorphisms $Y_i\setminus Y_{i-1}\simeq\mathbb{A}^{a_i}_F$. As in the proof of Proposition \ref{prop-cellular-L}, one can show that there exist an open subscheme $U\subset\Spec(\mathcal{O}_F)$, a filtration
$$\mathcal{X}_U=\mathcal{Y}_N\varsupsetneq \mathcal{Y}_{N-1}\varsupsetneq ...\varsupsetneq \mathcal{Y}_{0}\varsupsetneq \mathcal{Y}_{-1}=\emptyset$$
and $U$-isomorphisms $\mathcal{Y}_i\setminus\mathcal{Y}_{i-1}\simeq\mathbb{A}^{a_i}_U$ such that the following holds. One has $\mathcal{Y}_i\otimes_UF\simeq Y_i$, the scheme $\mathcal{Y}_{i}$ is a closed subscheme of $\mathcal{Y}_{i+1}$, and $\mathcal{Y}_{i}$ is smooth over $U$.

By (\cite{Liu} Corollary 5.3.17) $\X_F$ is smooth and connected, hence irreducible. This gives $CH^0(\X_F)=\mathbb{Z}$. On the other hand, there is a direct sum decomposition $h(\mathcal{X}_F)=\bigoplus_{0\leq i\leq N} h(F)(-a_i)$ in the category of Chow motives (\cite{Brosnan05} Theorem 3.1). It follows that there exists a unique index $0\leq i_0\leq N$ such that $a_{i_0}=0$. But $\X_F$ is proper (over $F$), hence so is $Y_0\simeq \mathbb{A}^{a_0}_F$, hence $i_0=0$.

The (absolute) dimension yields a locally constant map $d_i:\mathcal{Y}_i\rightarrow \mathbb{Z}$. We define
$c_i:\mathcal{Y}_i\rightarrow \mathbb{Z}$ similarly: Let $y\in \mathcal{Y}_i$ and let $\mathcal{Y}_{i,y}$ (resp. $\mathcal{Y}_{i+1,y}$) be the connected component of $\mathcal{Y}_i$ (resp. of $\mathcal{Y}_{i+1}$) containing $y$. Then we define $c_i(y)=\textrm{codim}(\mathcal{Y}_{i,y},\mathcal{Y}_{i+1,y})$ and
$$\widehat{H}^{n-2c_{i}}(\mathcal{Y}_{i},\br(d_{i})):=\bigoplus_{y\in\pi_0(\mathcal{Y}_{i})}\widehat{H}^{n-2c_{i}(y)}(\mathcal{Y}_{i,y},\br(d_{i}(y)))$$
For any $0\leq i\leq N$ we have a long exact sequence (see \cite{Holm-Scho11} Theorem 4.16 (iii))
$$...\rightarrow \widehat{H}^{n-2c_{i-1}}(\mathcal{Y}_{i-1},\br(d_{i-1}))\rightarrow\widehat{H}^n(\mathcal{Y}_{i},\br(d_{i}))\rightarrow \widehat{H}^n(\mathcal{Y}_{i}\setminus \mathcal{Y}_{i-1},\br(d_i))\rightarrow...$$
Notice that the locally constant function $d_i$ is constant on $\mathcal{Y}_{i}\setminus \mathcal{Y}_{i-1}\simeq \mathbb{A}^{a_i}_U$ with constant value $a_i+1$. But for $i\geq1$, one has $a_i>0$ and
$$\widehat{H}^n(\mathcal{Y}_{i}\setminus \mathcal{Y}_{i-1},\br(d_i))\simeq
\widehat{H}^n(\mathbb{A}^{a_i}_U,\br(a_i+1))=\widehat{H}^n(U,\br(a_i+1))=0\mbox{ for all $n$}.$$
Indeed, the second equality is given by (\cite{Holm-Scho11} Theorem 4.16 (ii)) and the vanishing of $\widehat{H}^n(U,\br(a_i+1))$ for $a_i>0$ follows from the fact that $U$ is of the form $\Spec(\mathcal{O}_{F,S})$ since the Beilinson regulator
$$H^n(\Spec(\mathcal{O}_{F,S}),\bq(a_i+1))_{\br}\stackrel{\sim}{\longrightarrow} H^n(\Spec(F),\bq(a_i+1))_{\br}\stackrel{\sim}{\longrightarrow} H^n_{\mathcal{D}}(\Spec(F)_{/\br},\br(a_i+1))$$
is an isomorphism for $a_i>0$ and all $n$.
We obtain an identity
$$\widehat{H}^{n-2c_{i-1}}(\mathcal{Y}_{i-1},\br(d_{i-1}))\simeq\widehat{H}^n(\mathcal{Y}_{i},\br(d_i))\mbox{ for all $n$}.$$
Notice that $\mathcal{Y}_0\simeq V$ and that $d_0$ and $\sum_{i=0}^{i=N-1} c_i$ are both constant on $\mathcal{Y}_0$  with constant value $1$ and $d-1$ respectively. An induction on $i$ yields
\begin{equation}\label{ici}
\widehat{H}^n(\X_U,\br(d))\simeq \widehat{H}^{n-2d+2}(\mathcal{Y}_0,\br(1))\simeq \widehat{H}^{n-2d+2}(U,\br(1))\mbox{ for all $n$}.
\end{equation}
Using (\ref{ici}), the fact that $U$ is of the form $\Spec(\mathcal{O}_{F,S})$ (with $S\neq\emptyset$) and well known facts concerning the Dirichlet regulator, we obtain $\widehat{H}^n(\X_U,\br(d))=0$ for $n\neq 2d-1$ and an exact sequence
$$0\rightarrow \widehat{H}^{2d-1}(\X_U,\br(d))\rightarrow \Prod_{\p\in S}\br\rightarrow\br\rightarrow 0$$
where $S$ denotes the closed complement of $U$ in $\Spec(\mathcal{O}_F)$. For any finite prime $\p$ of $F$, $\mathcal{X}_{\p}$ is (geometrically) connected (see \cite{Liu} Corollary 5.3.17) of dimension $d-1$. By Propositions \ref{prop-cellular-L} and \ref{prop-LZimpliesL}, $\X_{\p}$ satisfies $\textbf{L}(\mathcal{X}_{\p,et},d-1)$ hence $\X_{\p}$ satisfies Conjecture \ref{confBFintro} by (\ref{L-implique-B}). We set $\X_S:=\coprod_S\X_{\p}$. The fact that $\widehat{H}^{n-2}(\X_S,\br(d-1))=0$ for $n\neq2d$ and the long exact sequence
$$...\rightarrow\widehat{H}^{n-2}(\X_S,\br(d-1))\rightarrow\widehat{H}^n(\X,\br(d))\rightarrow \widehat{H}^n(\X_U,\br(d))\rightarrow...$$
then give $\widehat{H}^n(\X,\br(d))=0$ for $n\neq2d,2d-1$. The result follows from the fact that there is a morphism of exact sequences
{\small{
\[ \xymatrix{
0\ar[r]&\widehat{H}^{2d-1}(\X,\br(d))\ar[d]\ar[r]&\widehat{H}^{2d-1}(\X_U,\br(d))\ar[d]^{=}\ar[r]&\widehat{H}^{2d-2}(\X_S,\br(d-1))\ar[d]^{\simeq} \ar[r]&\widehat{H}^{2d}(\X,\br(d)) \ar[r]\ar[d]&0   \\
0\ar[r]& 0\ar[r]&\widehat{H}^{2d-1}(\X_U,\br(d))\ar[r]&\Prod_{\p\in S}\br \ar[r]&\br \ar[r]&0
}
\]}}
\end{proof}

\vspace{0.5cm}

{\bf {Acknowledgments.}} I am very grateful to C. Deninger for comments and advices and to M. Flach for many discussions related to Weil-\'etale cohomology. I would also like to thank T. Chinburg,  T. Geisser and S. Lichtenbaum for comments and discussions. This work is based on ideas of S. Lichtenbaum. Finally, I would like thank the referees for many constructive suggestions and comments. During the preparation of this work, the author was supported by the Marie Curie fellowship 253346.

\end{document}